\documentclass[a4paper,11pt]{article}
\usepackage{mathrsfs}
\usepackage{color,amsmath,mathtools,amsthm,amssymb,amsfonts}
\usepackage[numbers,sort&compress]{natbib}
\usepackage{enumerate}

\usepackage{booktabs}  % 专业表格线
\usepackage{array}     % 表格列格式
\usepackage{multirow}  % 合并行
\usepackage{caption}   % 表格标题
\usepackage{xcolor}
\usepackage{float}  % 在导言区添加
\usepackage[table]{xcolor}
\numberwithin{equation}{section}

\allowdisplaybreaks[4]

\newtheorem{Theorem}{Theorem}[section]

\newtheorem{Lemma}{Lemma}[section]
\newtheorem{Remark}{Remark}[section]

\newtheorem{Corollary}{Corollary}[section]
\newtheorem{Proposition}{Proposition}[section]

\newcounter{sublemma}[Lemma]  % sublemma 计数器在每个 lemma 环境内重置
\makeatletter%使用罗马数字

\newcommand{\Rmnum}[1]{\expandafter\@slowromancap\romannumeral #1@}
\renewenvironment{proof}[1][\proofname]{\par
  \pushQED{\qed}%
  \normalfont \topsep6\p@\@plus6\p@\relax
  \trivlist
  \item[\hskip\labelsep
        \itshape
    #1\@addpunct{.}]\ignorespaces
}{%
  \popQED\endtrivlist\@endpefalse
}
\makeatother

\usepackage{authblk}
\usepackage[title]{appendix}
\makeatother

\begin{document}
	\title{Vanishing capillary limit for compressible Navier-Stokes-Korteweg system in a bounded interval with large initial data}
	\author[a]{Jiaxin Ling}
	\author[a]{Huanyao Wen\thanks{Corresponding author.}}
	\author[b]{Xinhua Zhao}
	\affil[a]{School of Mathematics, South China University of Technology, Guangzhou 510641, China}
\affil[b]{School of Mathematics and Systems Science, Guangdong Polytechnic Normal University, Guangzhou 510665, China}
	\date{}
	\maketitle
	\renewcommand{\thefootnote}{}
	\footnote{ {E}-mail: scut\_jxling@163.com(Ling); mahywen@scut.edu.cn(Wen); xhzhao@gpnu.edu.cn(Zhao).}
			\begin{abstract}
		In this paper, we study vanishing capillary limit for isentropic compressible Navier-Stokes-Korteweg system in a bounded interval. The main challenges focus on the capillary term and the boundary effect. A new uniform dissipative estimate in terms of the third-order derivative of density and some correctors are derived to handle such difficulties. It leads to the optimal convergence rate of the solutions in $L^\infty$ norm globally in time with arbitrarily large initial data. This work provides a rigorous derivation of the  isentropic compressible Navier-Stokes system in a bounded interval from the isentropic compressible Navier-Stokes-Korteweg system via vanishing capillary limit.

	\end{abstract}
{\noindent  \small \textbf{Keywords:} compressible Navier-Stokes-Korteweg system, compressible Navier-Stokes system, initial-boundary value problem, vanishing capillary limit.}

\vspace{2mm}
	{\noindent\textbf{AMS Subject Classification (2020):} 35Q35, 35B40, 76N10.}

	\small \linespread{1.0}
\section{Introduction}	
~~~~In 1894, Van der Waals \cite{V1894} (translated in \cite{vdW1979}) discovered the diffuse interface between fluid mixtures, which replaces the sharp interface between two fluids in a physical perspective. Later in 1958, Cahn and Hilliard \cite{CH1958} described the diffuse interface phenomenon in a more standard way, which was translated to a mathematical model by Gurtin, Warne and Vi\~nals\cite{GPV1996}. In 1901, Korteweg \cite{K1901} derived a mathematical model for the diffuse interface phenomenon by giving the Cauchy stress tensor that included density gradients which was called Korteweg stress tensor. Then, Dunn and Serrin \cite{DS1985} provided a crucial correction to the Korteweg stress in \cite{K1901}. Later, Heida and M\'alek \cite{HM2010} developed a thermodynamic framework for Korteweg-type fluids based on an entropy formulation, which compared to \cite{DS1985,K1901,vdW1979} avoids the new concepts as \emph{interstitial working} to form the mathematical model. Recently, Elbar, Gwiazda, Skrzeczkowski and Gwiazda \cite{Eetal2025} established a theoretical connection between Cahn-Hilliard type fluids and Korteweg type fluids.
Within these theoretical frameworks, the Korteweg fluid model is capable of describing diffuse interface phenomena. For more information about diffuse interface model, we can refer to \cite{AMW1998}. Mathematically, the isentropic compressible Navier-Stokes-Korteweg (NSK) system in $d$ dimensions can be stated as follows:
\begin{equation}\label{1-1}
		\left\{\begin{aligned}
		&\rho_t+div(\rho \mathbf{u})=0 ,\\
&\rho\mathbf{u}_t+ \rho \mathbf{u} \cdot \nabla \mathbf{u} +\nabla P(\rho)=\mu_1\Delta \mathbf{u} +(\lambda+\mu_1) \nabla div \mathbf{u}+div \mathbb{K},\\
		\end{aligned}
		\right.
		\end{equation}
where $\rho$, $\mathbf{u}=(u_1,...,u_d)$ and $P(\rho)
=A\rho^\gamma$ represent density, velocity field of the fluid and pressure with adiabatic index $\gamma>1$, respectively. Here $\mu_1$ and $\lambda$ are viscosity coefficients that satisfy $\mu_1>0$ and $\mu_1 +\frac{d\lambda}{2}\ge 0$, and $\mathbb{K}$ is the capillary tensor satisfying
$$
\mathbb{K}=[\rho c(\rho)\Delta \rho +\frac{1}{2}(c(\rho)+\rho c'(\rho))|\nabla\rho|^2]\mathbb{I}-c(\rho) \nabla \rho \otimes\nabla \rho,
$$
where $c$ is called capillary coefficient. When $c$ is a constant, we get $div \mathbb{K}=c\rho \nabla \Delta \rho$.

In this paper, we consider the case that $\mu:=2\mu_1+\lambda=1$, $d=1$, $A=1$, $c=\epsilon$ and $(\rho,\mathbf{u})=(\rho^\epsilon, u^\epsilon)$. Then the system (\ref{1-1}) is translated into the following one:
\begin{equation}\label{equ-epsilon}
		\left\{\begin{aligned}
		&\rho_t^\epsilon+(\rho^\epsilon u^\epsilon)_x=0 ,~(x,t)\in [0,1]\times[0,T],\\
&\rho^\epsilon u_t^\epsilon+ \rho^\epsilon u^\epsilon u_x^\epsilon +[(\rho^\epsilon)^\gamma]_x= u_{xx}^\epsilon+\epsilon \rho^\epsilon \rho_{xxx}^\epsilon,~(x,t)\in [0,1]\times[0,T],\\
&(\rho^\epsilon, u^\epsilon)|_{t=0}=(\rho_0^\epsilon,u_0^\epsilon),~x\in[0,1],\\
&(\rho_x^\epsilon,u^\epsilon)|_{x=0,1}=(0,0),~t\in[0,T].
		\end{aligned}
		\right.
		\end{equation}
In this paper, we aim to investigate the convergence of the system (\ref{equ-epsilon}) as the capillary coefficient $\epsilon$ tends to zero.

There are a number of literatures on the vanishing parameters limits of the compressible NSK system such as vanishing Mach number limit, vanishing viscous-capillary limit and vanishing capillary limit.

  For the vanishing Mach number limit, Li and Yong\cite{2016LY} established the convergence rate for the isentropic compressible NSK system in ${\mathbb{R}}^3$. In addition, it was proved that the existence time of solutions extends to infinity as the Mach number tends to zero, if the incompressible NS system admits a global smooth solution. Later, Ju and Xu \cite{JX2022} improved the results of \cite{2016LY} by relaxing the restrictions on the initial data and obtaining more precise convergence rates. Recently, Li and Yin \cite{LY2026} studied the low Mach number limit of the full compressible NSK system in $\mathbb{R}$. With well-prepared initial data and with the difference between the states at $\infty$ being suitably small, they showed that the solutions of the full compressible NSK system converge to a nonlinear diffusion wave solution globally in time. For ill-prepared initial data with arbitrarily large far-field difference, they proved weak convergence on a finite time interval.
See \cite{JLW2014} for the vanishing limit of Mach number and capillarity.

%%In the vanishing viscous-capillary limit, the NSK system converges to the Euler system.

%%Both \cite{WZ2024} and \cite{YYZ2015} investigate the convergence of the incompressible inhomogeneous NSK system to the incompressible inhomogeneous Euler system, as both the viscosity and the capillary coefficients vanish.
%In \cite{YYZ2015}, Yang, Yao and Zhu establish the existence and uniqueness of local-in-time solution for incompressible inhomogeneous NSK system in ${\mathbb{R}}^3$ and investigate the convergence rate in $H^2$ norm.
%%In \cite{WZ2024}, Wang and Zhang analyze the incompressible inhomogeneous NSK system with slip boundary condition in three dimension,
%Under the condition $\mu = \bar\mu \epsilon$ and $c=c_1 \epsilon^s$ with $s>1$ and $\bar\mu, ~c_1>0$  are positive constants, \cite{WZ2024} provide the local well-posedness and convergence rate in $H^1$ norm of the incompressible NSK system. Moreover, in \cite{WZ2024}, and no boundary layer effect is present.

 For isentropic compressible NSK system, vanishing viscous-capillary limit leads to compressible Euler system. The one-dimensional analyses in \cite{CH2013, GL2016} proceeded under the condition that $\mu$ and $c$ are functions of density, with the domains being $\mathbb{R}$ in \cite{CH2013} and either $\mathbb{R}$ or $\mathbb{T}$ in \cite{GL2016}. More specifically, under the condition $\mu=\epsilon\rho^\epsilon$, $c=\frac{\epsilon^2}{\rho^\epsilon}$ and $\gamma>\frac{5}{3}$, Charve and Haspot \cite{CH2013} established almost everywhere convergence as $\epsilon \rightarrow 0$. Similar convergence was established in \cite{GL2016} with  $\mu=\epsilon \mu(\rho^\epsilon)$, $c=\delta(\epsilon) c(\rho^\epsilon)$ and $1<\gamma\le \frac{5}{3}$, where $\delta(\epsilon) $ tends to 0 with $\epsilon \rightarrow 0$. For vanishing viscous-capillary limit in the wave regime of the compressible NSK system, refer for instance to \cite{LL2016,LZ2020, YL2022}.

The Navier-Stokes-Korteweg system can be viewed as a dispersive correction for Navier-Stokes system (\cite{HS1983,S1996}). It is interesting to investigate the vanishing capillary limit. Bian, Yao and Zhu \cite{BYZ2014} considered the Cauchy problem for the isentropic compressible NSK system in three dimensions and established the $H^1$-norm convergence as the capillary coefficient $c=\epsilon$ tends to zero at a rate of order $\epsilon$. See \cite{YY2025} for further investigation. In \cite{PG2016}, Pu and Guo considered the Cauchy problem for a compressible Korteweg-type fluid system that describes quantum hydrodynamics. By investigating its vanishing capillary limit which reduces the system to the classical hydrodynamic system, \cite{PG2016} derived the semiclassical limit and obtained the convergence rate in $H^1$ norm. Hou, Yao and Zhu in \cite{HYZ2017} obtained the zero-capillary limit convergence in $H^2$ norm for the non-isentropic compressible NSK system in $\mathbb{R}^3$. Lai, Wen and Yao \cite{LWY2017} got the zero-capillary limit convergence in $H^1$ norm for the two fluid compressible NSK system in $\mathbb{R}^3$. In space $\mathbb{R}$,  Burtea and Haspot \cite{BH2022} considered the isentropic compressible NSK system with weaker condition of $\mu$ and $c$, where the typical example is $\mu=\mu^\alpha(\rho^\epsilon)$ with $\alpha\in(0,\frac{1}{2})$ and $c=\epsilon\frac{\mu^2(\rho^\epsilon)}{(\rho^\epsilon)^3}$.
%\cite{BH2022} demonstrates a weaker form of convergence, specifically weak star convergence and weakly in the sense of measures.

In summary, the convergence rate of vanishing capillary limit of the initial-boundary value problem for the compressible NSK system is still open. This is the main aim in this paper. Formally, vanishing of capillary leads to the following compressible Navier-Stokes system:
\begin{equation}\label{equ-NS}
		\left\{\begin{aligned}
		&\rho_t^{I,0}+(\rho^{I,0} u^{I,0})_x=0 ,\\
&\rho^{I,0} u_t^{I,0}+ \rho^{I,0} u^{I,0} u_x^{I,0} +[(\rho^{I,0})^\gamma]_x=u_{xx}^{I,0},\\
&(\rho^{I,0}, u^{I,0})|_{t=0}=(\rho_0,u_0),\\
&u^{I,0}|_{x=0,1}=0,
		\end{aligned}
		\right.
		\end{equation}for $(x,t)\in [0,1]\times[0,T]$.
Our analysis is built upon the foundational framework of Prandtl's boundary layer theory in 1904 \cite{P1904}.

There are some literatures on the well-posedness of compressible Korteweg-type fluids, see for instance \cite{CHZ2017,CZ2014,H2020,K2008,KSX2021,LPW2018} and references therein. More precisely, for Cauchy problem, Chen and Zhao\cite{CZ2014} obtained global stability of stationary solutions to the full compressible NSK system in three dimensions. Later, Chen, He and Zhao\cite{CHZ2017} derived the global existence and uniqueness of smooth solutions for one-dimensional nonisothermal compressible fluid models of Korteweg type with large initial data. Kawashima, Shibata and Xu\cite{KSX2021} obtained time-decay estimates for the compressible NSK system with small data in Besov space. Li, Peng and Wang\cite{LPW2018} obtained the global existence and uniqueness of strong solution to the Cauchy problem for three-dimensional compressible Navier-Stokes-Poisson-Korteweg system in three dimensions. In addition, they investigated the vanishing capillary limit. For the initial-boundary value problem, Kotschote\cite{K2008} proved the local existence and uniqueness of strong solution in multi-dimensions, see also \cite{H2020}.

This paper is organized as follows. In Section \ref{section 1.1}, we derive the boundary layer equations, where the details will be given in \ref{appendixA}. In Section \ref{section1.2}, well-posedness result of boundary layer equations, system (\ref{equ-epsilon}) and system (\ref{equ-NS}) are obtained. %The proof of the well-posedness result of boundary layer equations is given in \ref{appendixB}.
In Section \ref{section1.3}, we present the main result. Following in Section \ref{section1.4} we define the error function. In Section \ref{section1.5}, we state the main difficulties and innovations of this paper. Based on some known results stated in Section \ref{section2}, we will prove the main result in Section \ref{section3}.

%We derive a strict mathematical proof for vanishing capillarity limit from NSK to NS under the $\epsilon^{\frac{1}{2}}$.

\subsection*{Notation}
	
\noindent (1) $\overline{f}(t):=f(0,t)$, $\widetilde{f}(t):=f(1,t)$, $\langle y \rangle:=\sqrt{1+y^2}$.\\[2mm]
\noindent (2) For $x\in [0,1]$, we define $y=\frac{x}{\sqrt{\epsilon}} \in [0,\frac{1}{\sqrt\epsilon}]$, $z=\frac{x-1}{\sqrt{\epsilon}}\in [\frac{-1}{\sqrt\epsilon},0]$ for some $\epsilon>0$. Moreover, if $\epsilon \rightarrow 0$, then we have $y\in[0,+\infty)$, $z\in(-\infty,0]$.\\[2mm]
\noindent (3) For any $1\le p,q \le \infty$ and $m\in \mathbb{N}$, we denote
 \begin{itemize}
\item $L_t^p L^q:=L^p(0,t;L^q(I))$, $L_t^p W^{1,q}:=L^p(0,t;W^{1,q}(I)),$ where $I:=(0,1)$;

\item $L_t^p L^{q}_y:=L^p(0,t;L^q_y(0,+\infty))$, $L_t^p L^{q}_z:=L^p(0,t;L^q_z(-\infty,0))$;

\item $L_t^p H^{m}_y:=L^p(0,t;H^m_y(0,+\infty))$, $L_t^p H^m_z:=L^p(0,t;H^m_z(-\infty,0))$;

\item  $H_T^0:=L^2(0,T)$, $W_T^{0,\infty}:=L^\infty(0,T)$;

\item  $H_y^0:=L^2_y(0,+\infty)$,  $W_y^{0,\infty}:=L^\infty(0,+\infty)$;

\item  $L_T^p:=L^p(0,T)$, $H_T^m:=H^m(0,T)$.

\end{itemize}

\noindent (4) Let $C\ge1$ be a general positive constant that may positively correlated with time  $T$, and depend on initial data  and some other known constants, but independent of $\epsilon$.

\subsection{Derivation of boundary layer equations}\label{section 1.1}
~~~~In Section \ref{section 1.1}, the boundary-layer equations will be presented, whose details of derivation will be given in \ref{appendixA}. Namely, we assume that the solution of (\ref{equ-epsilon}) formally satisfies
\begin{equation}\label{Plantl}
\left\{
\begin{aligned}
\rho^\epsilon=\sum_{j=0}^\infty \epsilon^{\frac{j}{2}}[\rho^{I,j}(x,t) +\rho^{B,j}(y,t)+\rho^{b,j}(z,t)],\\
u^\epsilon=\sum_{j=0}^\infty u^{\frac{j}{2}}[u^{I,j}(x,t) +u^{B,j}(y,t)+u^{b,j}(z,t)],
\end{aligned}\right.
\end{equation}
where $\rho^{B,i},~u^{B,i}$ and their derivatives decay fastly to $0$ as $y\rightarrow +\infty$, and $\rho^{b,i},~u^{b,i}$ and their derivatives decay fastly to $0$ as $z\rightarrow -\infty$ for any $i\in \mathbb{N}$.

Firstly, it holds that
\begin{equation}\label{IBb-1}
\begin{aligned}
&u^{B,0}=u^{B,1}=\rho^{B,0}=0,~u^{b,0}=u^{b,1}=\rho^{b,0}=0,~u^{I,1}=\rho^{I,1}=0.\\
\end{aligned}
\end{equation}
The profiles $(\rho^{B,1},\rho^{b,1},u^{B,2},u^{b,2})$ satisfy the following equations:
\begin{equation}\label{IBb-3}
\left\{
\begin{aligned}
&\rho^{B,1}_t+y\overline{u^{I,0}_x}\rho^{B,1}_y+[\gamma (\overline{\rho^{I,0}})^{\gamma}+\overline{u^{I,0}_x}]\rho^{B,1}=(\overline{\rho^{I,0}})^2\rho^{B,1}_{yy},\\
& \rho^{B,1}(y,0)=0,\\
& \rho_y^{B,1}(0,t)=-\rho_x^{I,0} (0,t)=-\overline{\rho_x^{I,0}},\\
\end{aligned}\right.
\end{equation}

\begin{equation}\label{Ib-3}
\left\{
\begin{aligned}
&\rho^{b,1}_t+z\widetilde{u^{I,0}_x}\rho^{b,1}_z+[\gamma (\widetilde{\rho^{I,0}})^{\gamma}+\widetilde{u^{I,0}_x}]\rho^{b,1}=(\widetilde{\rho^{I,0}})^2\rho^{b,1}_{zz},\\
&\rho^{b,1}(z,0)=0,\\
& \rho_z^{b,1}(0,t)=-\rho_x^{I,0} (1,t)=-\widetilde{\rho_x^{I,0}},
\end{aligned}\right.
\end{equation}
and
\begin{equation}\label{IBb-2}
\left\{
\begin{aligned}
&u^{B,2}=-\overline{\rho^{I,0}} \rho_y^{B,1}-\gamma (\overline{\rho^{I,0}})^{\gamma-1} \int_y^{+\infty} \rho^{B,1}(\xi,t) \mathrm{d}\xi,\\
&u^{b,2}=-\widetilde{\rho^{I,0}} \rho_z^{b,1}+\gamma (\widetilde{\rho^{I,0}})^{\gamma-1} \int_{-\infty}^{z} \rho^{b,1}(\xi,t) \mathrm{d}\xi.\\
\end{aligned}\right.
\end{equation}

The profiles $(\rho^{B,2},\rho^{b,2}, u^{B,3},u^{b,3})$ satisfy the following equations:
\begin{equation}\label{IBb-4}
\left\{
\begin{aligned}
&\rho^{B,2}_t+\overline{\rho^{I,0}_x}u^{B,2} +\rho^{B,1}u^{B,2}_y+ \rho^{B,1}_y(\overline{u^{I,2}}+u^{B,2})+\rho^{B,2}\overline{u^{I,0}_x}+y\overline{u^{I,0}_{xx}}\rho^{B,1}\\
&+y\overline{\rho^{I,0}_x}u^{B,2}_y+y\overline{u^{I,0}_{x}}\rho^{B,2}_y+\frac{y^2}{2}\overline{u^{I,0}_{xx}}\rho^{B,1}_y \\
&=-\gamma (\overline{\rho^{I,0}})^{\gamma} \rho^{B,2}-\frac{1}{2}\gamma (\gamma-1) (\overline{\rho^{I,0}})^{\gamma-1} (\rho^{B,1})^2 -\gamma (\gamma-1)y (\overline{\rho^{I,0}})^{\gamma-1} \overline{\rho^{I,0}_x}\rho^{B,1}\\
&+(\overline{ \rho^{I,0} })^2\rho^{B,2}_{yy}+\overline{ \rho^{I,0} }\rho^{B,1} \rho^{B,1}_{yy}-\frac{1}{2}\overline{ \rho^{I,0} }(\rho^{B,1}_y )^2+y\overline{ \rho^{I,0} }\overline{\rho^{I,0}_x}\rho^{B,1}_{yy}-\overline{ \rho^{I,0} }\overline{\rho^{I,0}_x}\rho^{B,1}_{y},\\
&\rho^{B,2}(y,0)=0,\\
&\rho^{B,2}_y|_{y=0}= -\overline{\rho^{I,1}_x}=0,
\end{aligned}\right.
\end{equation}
\begin{equation}\label{IBb-5}
\left\{
\begin{aligned}
&\rho^{b,2}_t+\widetilde{\rho^{I,0}_x}u^{b,2}+\rho^{b,1}u^{b,2}_z+\rho^{b,1}_z(\widetilde{u^{I,2}}+u^{b,2})+\rho^{b,2}\widetilde{u^{I,0}_x}+z\widetilde{u^{I,0}_{xx}}\rho^{b,1}\\
&+z\widetilde{\rho^{I,0}_x}u^{b,2}_z+z\widetilde{u^{I,0}_{x}}\rho^{b,2}_z+\frac{z^2}{2}\widetilde{u^{I,0}_{xx}}\rho^{b,1}_z\\%=-\widetilde{\rho^{I,0}}u^{b,3}_z\\
&=-\gamma (\widetilde{\rho^{I,0}})^{\gamma} \rho^{b,2}-\frac{1}{2}\gamma (\gamma-1) (\widetilde{\rho^{I,0}})^{\gamma-1} (\rho^{b,1})^2 -\gamma (\gamma-1)z (\widetilde{\rho^{I,0}})^{\gamma-1} \widetilde{\rho^{I,0}_x}\rho^{b,1}\\
&+(\widetilde{ \rho^{I,0} })^2\rho^{b,2}_{zz}+\widetilde{ \rho^{I,0} }\rho^{b,1} \rho^{b,1}_{zz}-\frac{1}{2}\widetilde{ \rho^{I,0} }(\rho^{b,1}_z )^2+z\widetilde{ \rho^{I,0} }\widetilde{\rho^{I,0}_x}\rho^{b,1}_{zz}-\widetilde{ \rho^{I,0} }\widetilde{\rho^{I,0}_x}\rho^{b,1}_{z},\\
& ~\rho^{b,2}(z,0)=0,\\
&~\rho^{b,2}_z|_{z=0}= -\widetilde{\rho^{I,1}_x}=0,
\end{aligned}\right.
\end{equation}
and
%The profiles $(u^{B,3},u^{b,3})$ satisfy the following equations:
\begin{equation}\label{rho-B2-inibou}
\left\{
\begin{aligned}
u^{B,3}
=&-\gamma (\overline{\rho^{I,0}})^{\gamma-1} Q_2-\frac{1}{2}\gamma (\gamma-1) (\overline{\rho^{I,0}})^{\gamma-2} \int_y^{+\infty} (\rho^{B,1})^2(\xi,t) \mathrm{d}\xi\\
& -\gamma (\gamma-1) (\overline{\rho^{I,0}})^{\gamma-2} \overline{\rho^{I,0}_x}(yQ_{1,1}+Q_{1,2})-\overline{ \rho^{I,0} }\rho^{B,2}_{y}-\rho^{B,1} \rho^{B,1}_{y}\\
&-\frac{3}{2}\int_y^{+\infty} (\rho^{B,1}_\xi )^2(\xi,t)\mathrm{d}\xi-y\overline{\rho^{I,0}_x}\rho^{B,1}_{y}+2\overline{\rho^{I,0}_x}\rho^{B,1},\\
u^{b,3}
=&\gamma (\widetilde{\rho^{I,0}})^{\gamma-1} q_2+\frac{1}{2}\gamma (\gamma-1) (\widetilde{\rho^{I,0}})^{\gamma-2} \int_{-\infty}^{z} (\rho^{b,1})^2(\zeta,t) \mathrm{d}\zeta\\
& +\gamma (\gamma-1) (\widetilde{\rho^{I,0}})^{\gamma-2} \widetilde{\rho^{I,0}_x}(zq_{1,1}-q_{1,2})-\widetilde{ \rho^{I,0} }\rho^{b,2}_{z}-\rho^{b,1} \rho^{b,1}_{z}\\
&+\frac{3}{2}\int_{-\infty}^{z} (\rho^{b,1}_\zeta )^2(\zeta,t)\mathrm{d}\zeta-z\widetilde{\rho^{I,0}_x}\rho^{b,1}_{z}+2\widetilde{\rho^{I,0}_x}\rho^{b,1},\\
\end{aligned}\right.
\end{equation}
where
$$
Q_{1,1}(y,t)=\int_y^{+\infty}\rho^{B,1}(\xi,t) \mathrm{d}\xi ,~Q_{1,2}(y,t)=\int_y^{+\infty} \int_s^{+\infty} \rho^{B,1}(\xi,t) \mathrm{d}\xi \mathrm{d}s,~Q_2=\int_y^{+\infty}\rho^{B,2}(\xi,t) \mathrm{d}\xi ,
$$
$$
q_{1,1}(y,t)=\int_{-\infty}^z \rho^{b,1}(\zeta,t) \mathrm{d}\zeta , ~q_{1,2}(y,t)=\int_{-\infty}^z  \int_{-\infty}^s \rho^{b,1}(\zeta,t) \mathrm{d}\zeta\mathrm{d}s,~q_2= \int_{-\infty}^z \rho^{b,2}(\zeta,t)\mathrm{d}\zeta,
$$
$$\begin{aligned}
\overline{u^{I,2}}=& u^{I,2}(0,t)=-u^{B,2}(0,t)
=\overline{\rho^{I,0}}\overline{\rho^{I,0}_x}+\int_0^{+\infty}\gamma (\overline{\rho^{I,0}})^{\gamma-1} \rho^{B,1}(\xi,t)\mathrm{d}\xi,\\
\widetilde{u^{I,2}}=& u^{I,2}(1,t)=-u^{b,2}(0,t)
= \widetilde{\rho^{I,0}}\widetilde{\rho^{I,0}_x}-\int_{-\infty}^0\gamma (\widetilde{\rho^{I,0}})^{\gamma-1}\rho^{b,1}(\zeta,t)\mathrm{d}\zeta .
\end{aligned}
$$

\subsection{Well-posedness}\label{section1.2}
~~~~Existence and uniqueness of global smooth solution to compressible Navier-Stokes equations and existence and uniqueness of local strong solution to the compressible Navier-Stokes-Korteweg equations are stated as follows.
\begin{Proposition}\label{exi-ns}
 Suppose that $(\rho_0,u_0)\in H^6(I)\times (H_0^1(I)\cap H^6(I))$, $\inf\limits_{I}\rho_0\ge c_0> 0$ for some constant $c_0>0$ and that  the compatibility condition
$$
(u_t^{I,0}, u_{tt}^{I,0})(x,0)|_{x=0,1} =(0,0),
$$
holds, then system (\ref{equ-NS}) admits a unique global strong solution $(\rho^{I,0},u^{I,0})$ satisfying
$$
\partial_t^i\rho^{I,0} \in C([0,T]; H^{6-i}(I)),~for~any~i=0,1,...,6,
$$
$$
\partial_t^ju^{I,0} \in C([0,T]; H_0^1(I) \cap H^{6-2j}(I))\cap L_T^2 (H_0^1 \cap H^{7-2j}),~for~any~j=0,1,2,3,
$$
$$
\frac{1}{M}\le \rho^{I,0}(x,t) \le M,
$$ where $M>1$ depends on the initial data only,
and
$$
\begin{aligned}
&\sum_{i=0}^6\sup_{t\in[0,T]}\|\partial_t^i\rho^{I,0} \|_{H^{6-i}(I)} +\!\sum_{i=0}^3(\sup_{t\in[0,T]}\|\partial_t^iu^{I,0}\|_{  H^{2(3-i)}(I) }\!+\!\|\partial_t^iu^{I,0}\|_{  L_T^2 H^{2(3-i)+1}})\\
\le & C(\|\rho_0\|_{H^6(I)},\|(\rho_0)^{-1}\|_{L^\infty(I)},\|u_0\|_{H^6(I)},T,c_0)\\
\end{aligned}
$$
for any $T>0$ and $(x,t)\in I \times [0,T]$.
\end{Proposition}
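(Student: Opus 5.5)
The strategy is to combine the classical one-dimensional theory of Kazhikhov--Shelukhin type with a higher-order energy bootstrap. First, I would take a local strong solution in the stated regularity class, for instance by linearization and contraction in the Lagrangian mass coordinate. Then I would derive a priori estimates that are global in time and blow-up free; they are allowed to depend on $T$ but on nothing else except the data. Globality follows from the blow-up alternative once we have uniform positive upper and lower bounds on $\rho^{I,0}$ and bounds on the highest norms.

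\textbf{Step 1 (basic energy and density bounds).} The energy identity
\begin{equation*}
\int_I\Bigl(\tfrac12\rho u^2+\tfrac{\rho^\gamma}{\gamma-1}\Bigr)dx+\int_0^t\!\int_I u_x^2\,dx\,ds=E_0
\end{equation*}
uses only $u|_{x=0,1}=0$. In Lagrangian coordinates with $v=1/\rho$, I would then use the effective viscous flux $u_x/v-P$, or equivalently the Kazhikhov representation obtained by integrating the momentum equation in $x$ and then in time. This yields an explicit formula for $v$ in terms of the data and time integrals, and with it the bounds $1/M\le\rho\le M$, where $M$ depends on $c_0$, $E_0$ and $T$. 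This is the key step, and in my view the main obstacle, because the data are large. The boundary condition $u=0$ is exactly what makes the standard argument go through on a bounded interval: the total mass is conserved and the boundary terms vanish.

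\textbf{Step 2 (lower-order regularity).} With the density bounded above and below, I would obtain $\sup_t\|(\rho_x,u_x)\|_{L^2}+\|u_{xx}\|_{L^2_TL^2}$. To do this, I test the momentum equation with $u_{xx}$ and use the transport equation for $\rho_x$ via the flux $u_x-P$. Gronwall's inequality then applies.

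\textbf{Step 3 (higher regularity).} I would proceed inductively on time derivatives. Differentiating the momentum equation $j$ times in $t$, for $j\le 3$, preserves the Dirichlet condition $\partial_t^ju=0$ on the boundary. Testing with $\partial_t^ju$ and $\partial_t^{j+1}u$ therefore gives $\partial_t^ju\in L^\infty_TH^1_0\cap L^2_TH^2$. The initial values $\partial_t^ju(\cdot,0)$ are computed from the equations and lie in the appropriate spaces because $(\rho_0,u_0)\in H^6$. The stated compatibility conditions $(u_t,u_{tt})(x,0)|_{x=0,1}=0$ are needed exactly so that these initial time derivatives belong to $H_0^1$, which legitimizes the energy estimates up to $j=2$; the $j=3$ level only requires $L^2$ initial data. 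Spatial regularity then follows by viewing the momentum equation as an elliptic problem $u_{xx}=\rho u_t+\rho uu_x+P_x$ with Dirichlet data. This converts each time derivative into two space derivatives and gives $\partial_t^ju\in H^{6-2j}$ together with $L^2_TH^{7-2j}$. For the density, I would differentiate $\rho_t+u\rho_x+\rho u_x=0$ up to six times in $x$ and close with $\|u_x\|_{L^1_TW^{5,\infty}}$, which is controlled by $L^2_TH^7$. Time derivatives of $\rho$ are read off from the equation, giving $\partial_t^i\rho\in C([0,T];H^{6-i})$. Continuity in time follows by the standard Lions--Magenes argument, and uniqueness follows from an $L^2$ energy estimate for the difference of two solutions, using the Lipschitz bounds already established.
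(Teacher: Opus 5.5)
Your route (Kazhikhov--Shelukhin density bounds, time-differentiated energy estimates, elliptic regularity for $u$, a transport estimate for $\rho$) is the standard one. The paper gives no proof of its own and defers to a remark in a cited reference. Your energy, density-bound and time-derivative estimates are sound.

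The step that fails is the last one: ``time derivatives of $\rho$ are read off from the equation, giving $\partial_t^i\rho\in C([0,T];H^{6-i})$.'' Reading them off from $\rho_t=-(\rho u)_x$ does not give this. Already $\rho_{tt}=-(\rho_t u+\rho u_t)_x$ contains $-\rho\,\partial_x u_t$, and $u_t$ is only in $C([0,T];H^4)$, so you get $\rho_{tt}\in C([0,T];H^3)$.

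No other argument can do better. At $t=0$ the momentum equation gives $\rho_0u_t(\cdot,0)=u_{0,xx}-\rho_0u_0u_{0,x}-(\rho_0^\gamma)_x$. Hence $\rho_{tt}(\cdot,0)=-u_{0,xxx}+(\text{terms in }H^4)$, which lies in $H^4$ only if $u_0\in H^7$. The compatibility conditions involve only boundary values, so an interior $H^6\setminus H^7$ perturbation of $u_0$ keeps every hypothesis and makes $\partial_t^2\rho\in C([0,T];H^4)$ false.

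What the equation actually yields is the parabolically scaled class $\partial_t^i\rho\in C([0,T];H^{7-2i})$ for $i=1,2,3$, and $\partial_t^4\rho\in L^2_TL^2$. Nothing is obtained for $i=5,6$. The conclusion, and the matching terms of the displayed estimate, must be weakened accordingly.

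The same happens for $j=3$ in the velocity. As you note yourself, $u_{ttt}(\cdot,0)$ is only in $L^2$; it contains $\rho_0^{-3}\partial_x^6u_0$. So your estimate gives $\partial_t^3u\in C([0,T];L^2)\cap L^2_TH^1_0$. That matches the displayed estimate but not the stated class $C([0,T];H^1_0)$.

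Two smaller points. First, your Step 1 gives $M=M(c_0,E_0,T)$, while the proposition asserts that $M$ depends on the data only. That needs an additional uniform-in-time argument on Kazhikhov's representation. This is harmless for the paper, where every constant may depend on $T$.

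Second, Step 3 is circular as ordered. Getting $u\in L^2_TH^7$ from $u_{xx}=\rho u_t+\rho uu_x+(\rho^\gamma)_x$ requires $\rho\in L^2_TH^6$. Yet you then derive that from $\|u_x\|_{L^1_TW^{5,\infty}}$. Run the two estimates together: substitute $\partial_x^7u=\partial_x^6(\rho^\gamma)+\partial_x^5(\rho u_t+\rho uu_x)$ into the $\partial_x^6$-differentiated transport equation. This turns $\rho\,\partial_x^7u$ into the damping term $\gamma\rho^\gamma\partial_x^6\rho$, plus quantities controlled by $\|u_t\|_{H^5}$ and lower-order norms. The estimate for $\|\partial_x^6\rho\|_{L^2}$ is then linear and closes by Gronwall, level by level.
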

\begin{Remark}
The proof of Proposition \ref{exi-ns} is similar to that in Remark 1.3 of \cite{WWZ2023}.
\end{Remark}

\begin{Proposition}\label{exi-nsk}
 Suppose that $(\rho_0^\epsilon,u_0^\epsilon)\in H^2(I)\times H^1_0(I)$ satisfies $\partial_x\rho_0^\epsilon|_{x=0,1}=0$ and $\inf\limits_{I}\rho^{\epsilon}_0\ge c_0 > 0$ for some constant $c_0>0$, then there exists a positive constant $T_\epsilon$ such that system (\ref{equ-epsilon}) admits a unique local strong solution $(\rho^{\epsilon},u^{\epsilon})$ satisfying
 $$\rho^{\epsilon}\in C([0,T_\epsilon]; H^2(I)) \cap L^2_{T_\epsilon} H^3,\,\,(\rho^\epsilon)^{-1} \in C([0,T_\epsilon]; L^\infty(I)), u^\epsilon\in C([0,T_\epsilon]; H^1_0(I))\cap L^2_{T_\epsilon} (H^1_0\cap H^2)$$ and
$$
\begin{aligned}
&\|\sqrt{\epsilon}\rho^\epsilon_{x} \|_{C([0,T_\epsilon]; L^2(I))}^2+\|{\rho^\epsilon}\|_{C([0,T_\epsilon]; L^\gamma(I))} +\|\sqrt{\rho^\epsilon} u^\epsilon\|_{ C([0,T_\epsilon]; L^2(I))}^2+\|u^\epsilon_x\|_{ L_{T_\epsilon}^2 L^2(I)}^2
\\ &\le C(\|\rho_0^\epsilon\|_{H^1(I)},\|u_0^\epsilon\|_{L^2(I)}).
\end{aligned}
$$

\end{Proposition}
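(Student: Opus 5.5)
We would prove Proposition \ref{exi-nsk} in two steps: a contraction-mapping argument for local existence and uniqueness in the stated class, followed by the basic energy identity for the a priori bound. Fix $\epsilon>0$. For $T>0$ small and $R>0$ large, define the set $X_T$ of pairs $(\rho,u)$ with
\[
\rho\in C([0,T];H^2)\cap L^2_TH^3,\quad \rho_x|_{x=0,1}=0,\quad \tfrac{c_0}{2}\le\rho\le 2\|\rho_0^\epsilon\|_{L^\infty},
\]
\[
u\in C([0,T];H^1_0)\cap L^2_T(H^1_0\cap H^2),
\]
and norms bounded by $R$. Given $(\hat\rho,\hat u)\in X_T$, we solve two linear problems. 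The first is the parabolic problem
\[
\hat\rho u_t-u_{xx}=-\hat\rho\hat u\hat u_x-(\hat\rho^\gamma)_x+\epsilon\hat\rho\rho_{xxx},\qquad u|_{x=0,1}=0.
\]
The second is the density equation, which we would not treat as a pure transport equation. Since $\rho_{xxx}$ in the momentum equation costs one derivative more than transport can provide, we exploit the dispersive/energy structure of the coupled system instead.

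\textbf{Effective-velocity reformulation.} The main obstacle is the loss of derivatives from $\epsilon\rho\rho_{xxx}$: the mass equation gives no smoothing, yet $u\in L^2_TH^2$ requires $\rho\in L^2_TH^3$. To overcome this, we would introduce the standard effective-velocity variable for NSK with $c$ constant and $\mu=1$. Let $\phi=\log\rho$ and set $w=u+\kappa\phi_x$. The mass equation gives $\phi_t+u\phi_x+u_x=0$. Choosing $\kappa$ appropriately (essentially $\kappa=1$, up to terms involving $\epsilon$), the system can be rewritten so that $\phi$ satisfies a parabolic equation $\phi_t-\kappa\phi_{xx}=\dots$ (with Neumann condition $\phi_x=0$, which is compatible with $\rho_x=0$), coupled to a parabolic equation for $w$ with lower-order terms. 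In one dimension this is the known Bresch--Desjardins-type trick. The Korteweg term $\epsilon\rho\rho_{xxx}$ then appears only in combinations controlled by $\|\rho\|_{H^3}$ through parabolic regularity of $\phi$, so $\rho\in L^2_TH^3$ becomes a gain from the parabolic equation rather than a loss. Standard $L^2$ parabolic theory on $I$, with boundary data $u=0$ and $\rho_x=0$, then yields the solution map and bounds with constants depending on $\epsilon$. For $T=T_\epsilon$ small, the map sends $X_T$ into itself and is a contraction in the lower norm $C([0,T];H^1)\times C([0,T];L^2)$. This gives a unique fixed point, and the lower bound $\rho\ge c_0/2$ persists by continuity for $T$ small. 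The continuity in time and $(\rho^\epsilon)^{-1}\in C([0,T_\epsilon];L^\infty)$ follow from $\rho\in C([0,T];H^2)\subset C(\bar I\times[0,T])$.

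\textbf{Energy estimate.} For the final bound, we multiply the momentum equation by $u^\epsilon$ and integrate. Using the mass equation, $u^\epsilon|_{x=0,1}=0$, and integration by parts, we would compute
\[
\int\epsilon\rho\rho_{xxx}u\,dx=-\epsilon\int(\rho u)_x\rho_{xx}\,dx=\epsilon\int\rho_t\rho_{xx}\,dx=-\frac{\epsilon}{2}\frac{d}{dt}\int\rho_x^2\,dx,
\]
where the boundary term in the last step vanishes because $\rho_{t}\rho_x|_{x=0,1}=0$, as $\rho_x=0$ there. The pressure term gives $\frac{d}{dt}\int\frac{\rho^\gamma}{\gamma-1}\,dx$, and mass is conserved. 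Hence
\[
\frac{d}{dt}\int\Big(\tfrac12\rho u^2+\tfrac{\rho^\gamma}{\gamma-1}+\tfrac{\epsilon}{2}\rho_x^2\Big)dx+\int u_x^2\,dx=0.
\]
Integrating in time bounds the left-hand side of the estimate in Proposition \ref{exi-nsk} by the initial energy. That initial energy is controlled by $\|\rho_0^\epsilon\|_{H^1}$ and $\|u_0^\epsilon\|_{L^2}$, using $H^1\subset L^\infty$ for the factor $\rho_0^\epsilon$ in $\rho_0^\epsilon(u_0^\epsilon)^2$ and $\epsilon\le1$.
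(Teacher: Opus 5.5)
Your energy identity is correct: both boundary terms vanish because $u=0$ and $\rho_x=0$ at $x=0,1$, and it gives the stated bound. The gap is in the existence part. It rests entirely on the effective-velocity reformulation, since the linear iteration you start with (whose $u$-problem contains the unknown $\rho_{xxx}$) is never completed. That reformulation does not have the structure you claim. For any $w=u+\alpha(\rho)\rho_x$, the mass equation gives $\rho(\partial_t+u\partial_x)(\alpha(\rho)\rho_x)=-(\rho^2\alpha(\rho)u_x)_x$, hence
\[
\rho(w_t+uw_x)=-(\rho^\gamma)_x+\big((1-\rho^2\alpha)u_x\big)_x+\epsilon\rho\rho_{xxx}.
\]
Your choice $\phi=\log\rho$, $w=u+\kappa\phi_x$ corresponds to $\alpha=\kappa/\rho$. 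Inserting $u_x=w_x-\kappa\phi_{xx}$ gives
\[
\rho(w_t+uw_x)-\big((1-\kappa\rho)w_x\big)_x=-(\rho^\gamma)_x+(\kappa^2\rho-\kappa+\epsilon\rho^2)\phi_{xxx}+(\text{terms in }\phi_x\phi_{xx},\ \phi_x^3).
\]
For constant $\kappa$ (in particular $\kappa\approx1$) the coefficient of $\phi_{xxx}$ does not vanish. So $w$ remains coupled to $\phi$ at top order: $\phi_{xxx}$ sits at the level of $w_{xx}$ when $\phi\in L^2_TH^3$, $w\in L^2_TH^2$. Moreover, the diffusion coefficient $1-\kappa\rho$ is negative wherever $\kappa\rho>1$, so the $w$-equation by itself can be backward. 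Scalar parabolic theory therefore does not apply. The variable $u+(\log\rho)_x$ is the Bresch--Desjardins velocity for $\mu(\rho)=\rho$. For $\mu=1$ the analogue is $u+\rho_x/\rho^2$, which removes the viscous term altogether and leaves $\rho(w_t+uw_x)+(\rho^\gamma)_x=\epsilon\rho\rho_{xxx}$. That is a transport equation with exactly the derivative loss you wanted to avoid.

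The idea can be repaired only conditionally. The third-order terms cancel iff $\rho^2\alpha^2-\alpha+\epsilon\rho=0$, e.g.\ $\alpha(\rho)=\frac{1-\sqrt{1-4\epsilon\rho^3}}{2\rho^2}$. This choice yields $\rho_t-(\rho\alpha(\rho)\rho_x)_x=-(\rho w)_x$ and $\rho(w_t+uw_x)-(\beta(\rho)w_x)_x=-(\rho^\gamma)_x+O(|\rho_x\rho_{xx}|+|\rho_x|^3)$, with $\beta=\frac{1+\sqrt{1-4\epsilon\rho^3}}{2}>0$ and $\rho_x=w=0$ on the boundary, a genuinely decoupled parabolic pair. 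But it needs $4\epsilon\rho^3<1$ (real roots of the frozen symbol $\rho\lambda^2+\xi^2\lambda+\epsilon\rho^2\xi^4=0$), which is not among the hypotheses. It would cover the paper's regime ($\epsilon$ small, $\rho\le 2M$) but not the proposition as stated. The paper gives no proof and refers to \cite{H2020}. The ingredient it credits to that reference, and reuses in Lemma \ref{Phi-U-H^1-3} and Corollary \ref{Phi-U-H^1-all}, works directly in $(\rho,u)$. Besides the Korteweg--continuity cancellation you used, one tests the momentum equation with $\epsilon\rho_{xxx}$ and uses $\epsilon\int u_{xx}\rho_{xxx}+\frac12\int\rho^{-1}u_{xx}^2+\frac12\int\epsilon^2\rho\rho_{xxx}^2=\frac12\int\rho^{-1}(u_{xx}+\epsilon\rho\rho_{xxx})^2\ge0$. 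This gives $\epsilon\rho\rho_{xxx}\in L^2_{t,x}$, i.e.\ $\rho\in L^2_TH^3$, with no change of unknowns and no restriction on $\epsilon\rho^3$. An alternative is maximal regularity for the full mixed-order parabolic system, as in \cite{K2008}.
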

\begin{Remark} The proof of local existence and uniqueness of the compressible NSK equations stated in Proposition \ref{exi-nsk} is similar to that in \cite{H2020}, where the corresponding result with bounded domain in three dimensions was obtained.
\end{Remark}

\begin{Proposition}\label{B-b-Pro}
Under the assumption of Proposition \ref{exi-ns}, if $$\partial_x \rho_0 (x)|_{x=0,1}=0$$  holds for system (\ref{equ-NS}), then, for any $T>0$, systems (\ref{IBb-3})-(\ref{rho-B2-inibou}) have unique global solution respectively, where
for any $l\in \mathbb{N}$ and $k=0,1,2$, regularities of boundary layer profiles are stated as follows:
\[\langle y \rangle^{l} \rho^{B,1} \in W_T^{k,\infty} H_y^{5-2k} \cap H_T^k H_y^{6-2k},~ \langle z \rangle^{l} \rho^{b,1} \in W_T^{k,\infty} H_z^{5-2k} \cap H_T^k H_z^{6-2k},\]
\[\langle y \rangle^{l} \rho^{B,2} \in W_T^{k,\infty} H_y^{5-2k} \cap H_T^k H_y^{6-2k},~ \langle z \rangle^{l} \rho^{b,2} \in W_T^{k,\infty} H_z^{5-2k} \cap H_T^k H_z^{6-2k},\]
\[\langle y \rangle^{l} u^{B,2} \in W_T^{k,\infty} H_y^{4-2k} \cap H_T^k H_y^{5-2k},~ \langle z \rangle^{l} u^{b,2} \in W_T^{k,\infty} H_z^{4-2k} \cap H_T^k H_z^{5-2k},\]
\[\langle y \rangle^{l} u^{B,3} \in W_T^{k,\infty} H_y^{4-2k} \cap H_T^k H_y^{5-2k},~ \langle z \rangle^{l} u^{b,3} \in W_T^{k,\infty} H_z^{4-2k} \cap H_T^k H_z^{5-2k}.\]
\end{Proposition}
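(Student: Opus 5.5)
The proof is a sequence of linear problems solved in order. Each problem is a parabolic equation with time-dependent coefficients taken from the traces of $(\rho^{I,0},u^{I,0})$, posed on a half-line with a Neumann boundary condition. Three facts drive everything. First, the coefficients $a(t):=(\overline{\rho^{I,0}})^2\in[M^{-2},M^2]$, $b(t):=\overline{u^{I,0}_x}$ and $c(t):=\gamma(\overline{\rho^{I,0}})^\gamma+\overline{u^{I,0}_x}$ are bounded, with enough time regularity by Proposition \ref{exi-ns} and the trace theorem. Second, the boundary datum $g(t):=-\overline{\rho^{I,0}_x}$ lies in $H^3_T$ and satisfies $g(0)=0$, precisely because of the assumption $\partial_x\rho_0|_{x=0,1}=0$. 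This is the compatibility condition with the zero initial data. Third, once $\rho^{B,1}$ and $\rho^{B,2}$ are known, $u^{B,2}$ and $u^{B,3}$ are given by the explicit formulas (\ref{IBb-2}) and (\ref{rho-B2-inibou}), so their regularity follows by reading off the formulas. The $b$-profiles are handled identically after the reflection $z\mapsto -z$, so I only discuss the $B$-profiles.

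\textbf{Step 1: $\rho^{B,1}$.} Lift the boundary datum by setting $\rho^{B,1}=\phi+g(t)\chi(y)$, where $\chi(y)=-e^{-y}$ so that $\chi'(0)=1$. Then $\phi$ solves
\[\phi_t+yb\phi_y+c\phi-a\phi_{yy}=F,\qquad \phi_y(0,t)=0,\qquad \phi(y,0)=0,\]
with a forcing $F$ that is built from $g,g'$ and $e^{-y}$, decays exponentially, and lies in $H^2_T$. Existence follows from Galerkin approximation (or an explicit reflection to the whole line), using the a priori estimates below. The weighted estimates come from multiplying by $\langle y\rangle^{2l}\partial_y^{2m}\phi$ and integrating over $(0,\infty)$. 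Two points need care. The drift term gives $\int yb\phi_y\langle y\rangle^{2l}\phi=-\frac b2\int(\langle y\rangle^{2l}y)'\phi^2$, which is bounded by $C\|\langle y\rangle^l\phi\|^2$, so Gronwall handles it. Derivatives of the weight produce lower-order terms that are absorbed by the diffusion.

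To control higher $y$-derivatives I would differentiate in $t$ instead and use the equation to trade $\partial_t$ for $\partial_y^2$. This avoids boundary terms, because tangential time derivatives preserve the Neumann condition $\partial_t^k\phi_y(0)=0$. Here the compatibility conditions $\partial_t^k\phi(\cdot,0)=0$ for $k\le 2$ are needed; they follow from $g(0)=0$ and the time regularity of the data. Running $k=0,1,2$ gives exactly the stated classes $W^{k,\infty}_TH^{5-2k}_y\cap H^k_TH^{6-2k}_y$.

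\textbf{Step 2: $u^{B,2}$.} I would read this off from (\ref{IBb-2}). Polynomial weights pass through $\int_y^\infty$ by a Hardy-type inequality: $\|\langle y\rangle^l\int_y^\infty f\|_{L^2}\le C\|\langle y\rangle^{l+2}f\|_{L^2}$. The loss of one derivative accounts for the drop from $H^{5-2k}$ to $H^{4-2k}$.

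\textbf{Step 3: $\rho^{B,2}$.} This has the same linear operator as Step 1, with homogeneous Neumann data and zero initial data. The source contains products such as $\rho^{B,1}u^{B,2}_y$, $y^2\rho^{B,1}_y\overline{u^{I,0}_{xx}}$ and $(\rho^{B,1}_y)^2$. Since weights of every order $l$ are available from Step 1, the polynomial factors $y$ and $y^2$ are harmless. The needed regularity of the source follows from the algebra property of $H^1_y$ together with the regularity of $\overline{u^{I,2}}$, which comes from its explicit formula.

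\textbf{Main obstacle.} The hardest step is derivative counting. The source for $\rho^{B,2}$ involves $u^{B,2}_y$, which contains $\rho^{B,1}_{yy}$, and the coefficient $\overline{u^{I,0}_{xx}}$ has limited time regularity, so one must check that no more than $H^6_y$ for $\rho^{B,1}$ is required. Several features must be checked together: the time smoothness of traces such as $\overline{\rho^{I,0}_x}\in H^3_T$, the compatibility at $t=0$ for $k=2$, and parabolic smoothing that gains one derivative in the $H^k_T$ parts. My plan is to carry out the $k$-th time-derivative energy estimate and track exactly which norms of the source appear. If the count falls short, I would fall back on the explicit heat-kernel representation to gain the missing derivative.
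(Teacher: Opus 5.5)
Your overall route is the standard one: lift the Neumann datum, run weighted energy estimates for $\partial_t^k\phi$, trade $\partial_t$ for $\partial_y^2$ through the equation, read $u^{B,2},u^{B,3}$ off the explicit formulas, and treat $\rho^{B,2}$ as the same linear problem with a source. The paper gives no proof of its own and only points to earlier works where this is done. One step, however, fails as written: the claim that the compatibility conditions $\partial_t^k\phi(\cdot,0)=0$, $k\le 2$, follow from $g(0)=0$ and the time regularity of the data.

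With your lifting, $\partial_t\phi(\cdot,0)=F(\cdot,0)=-g'(0)\chi$, so $\partial_y\partial_t\phi(0,0)=-g'(0)$. Once $g(0)=g'(0)=0$, one also gets $\partial_t^2\phi(\cdot,0)=-g''(0)\chi$. Neither $g(0)=0$ nor the smoothness of $g$ forces $g'(0)=0$, and this condition is genuinely necessary for the stated classes. Consider the model problem $\rho_t=\rho_{yy}$ on $y>0$, $\rho(\cdot,0)=0$, $\rho_y(0,t)=t$. Then $v=\rho_t$ solves the heat equation with $v(\cdot,0)=0$ and $v_y(0,t)=1$. By scaling, $v=\sqrt{t}\,h(y/\sqrt{t})$ with $h\not\equiv 0$, so $\|\partial_y^3 v(t)\|_{L^2_y}=t^{-3/4}\|h'''\|_{L^2}$. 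Hence $\partial_t\rho\notin L^\infty_T H^3_y$, and likewise $\rho\notin L^\infty_T H^5_y$. The time-differentiated estimates that would give the $k=1,2$ classes therefore cannot be justified from the formal initial values $\partial_t^k\phi(\cdot,0)$.

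The missing ingredient is the hypothesis $(u^{I,0}_t,u^{I,0}_{tt})(x,0)|_{x=0,1}=(0,0)$ of Proposition \ref{exi-ns}, which your plan never uses.
\begin{itemize}
\item By the continuity equation, $\rho^{I,0}_{xt}=-(\rho^{I,0}u^{I,0})_{xx}$. At $(0,0)$ this reduces to $-\rho_0(0)\,\partial_x^2u_0(0)$, because $u^{I,0}(0,t)=0$ and $\partial_x\rho_0(0)=0$.
\item The momentum equation at $(0,0)$ gives $\partial_x^2u_0(0)=\rho_0(0)\,u^{I,0}_t(0,0)+\gamma\rho_0^{\gamma-1}(0)\,\partial_x\rho_0(0)=0$. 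Hence $g'(0)=0$.
\item Differentiating once more in $t$ and evaluating the $t$-differentiated momentum equation at $(0,0)$ with $u^{I,0}_{tt}(0,0)=0$ gives $u^{I,0}_{xxt}(0,0)=0$. This yields $g''(0)=0$.
\item The same computation works at $x=1$.
\end{itemize}
So your claim $\partial_t^k\phi(\cdot,0)=0$ for $k\le 2$ is true, but for this reason. Strictly, only $g'(0)=0$ is needed for the stated classes. For $\rho^{B,2}$ no extra condition arises, since its source vanishes identically at $t=0$.

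Once this is repaired, your derivative count closes without the heat-kernel fallback. The $k$-th time derivative of the $\rho^{B,2}$ source uses exactly $\langle y\rangle^l\rho^{B,1}\in H^k_TH^{6-2k}_y\cap W^{k,\infty}_TH^{5-2k}_y$ and $\overline{u^{I,0}_{xx}}\in H^2_T$, all of which are available.
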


\begin{Remark}
             The proof of Proposition \ref{B-b-Pro} is similar to that in \cite{WWZ2023, WZ2024, WYZZ2021}. We omit it for brevity.
\end{Remark}

\subsection{Main result}\label{section1.3}
\begin{Theorem}\label{following}
In addition to the conditions of Propositions \ref{exi-ns}-\ref{B-b-Pro},
we assume that %$(\rho_0^\epsilon,u_0^\epsilon)$ and $(\rho_0^{I,0},u_0^{I,0})$ satisfy
\begin{equation}\label{initial-value}
\|\rho^{\epsilon}_0-\rho^{I,0}_0\|_{H^2(I)}^2 +\|u^\epsilon_0 -u^{I,0}_0\|_{H^1(I)}^2\le  C_0\epsilon^m
\end{equation}
holds for $m>2$ and $C_0>0$.

For any time $T\in(0,+\infty)$, there exists a positive constant $\epsilon_1$ determined by (\ref{ep1}) such that
if $\epsilon \in(0,\epsilon_1)$, then the following estimates
\begin{equation}\label{main-result1}
\left\{
\begin{aligned}
&\sup_{t\in[0,T]}\|u^\epsilon-u^{I,0}\|_{L^\infty(I)}\le C \epsilon^{\frac{7}{8}} ,\\
&\sup_{t\in[0,T]}\|\rho^\epsilon-\rho^{I,0}\|_{L^\infty(I)}\le C \sqrt\epsilon ,\\
&\sup_{t\in[0,T]}\|\rho^\epsilon_x-\rho^{I,0}_x-\sqrt{\epsilon}(\rho^{B,1}+\rho^{b,1})_x\|_{L^\infty(I)}\le C\sqrt\epsilon,\\
\end{aligned}\right.
\end{equation}
hold on $[0,T]$, where $(\rho^\epsilon,u^\epsilon)$, $(\rho^{I,0},u^{I,0})$, and $\rho^{B,1}$, $\rho^{b,1}$ are the solutions to (\ref{equ-epsilon}), (\ref{equ-NS}), (\ref{IBb-3}), and (\ref{Ib-3}), respectively.
\end{Theorem}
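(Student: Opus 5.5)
We construct an approximate solution from the profiles of Section \ref{section 1.1} and control the error by energy estimates closed via a continuity argument. Concretely, set
\begin{equation*}
\rho^a=\rho^{I,0}+\sqrt\epsilon(\rho^{B,1}+\rho^{b,1})+\epsilon(\rho^{I,2}+\rho^{B,2}+\rho^{b,2}),\qquad u^a=u^{I,0}+\epsilon(u^{I,2}+u^{B,2}+u^{b,2})+\epsilon^{3/2}(u^{B,3}+u^{b,3}).
\end{equation*}
Here $(\rho^{I,2},u^{I,2})$ solves the linearized Navier--Stokes system about $(\rho^{I,0},u^{I,0})$, with the boundary values $\overline{u^{I,2}},\widetilde{u^{I,2}}$ given above, plus the forcing $\rho^{I,0}\rho^{I,0}_{xxx}$. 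Since the boundary layers are evaluated on the whole of $[0,1]$, their tails at the opposite endpoint violate the boundary conditions. We therefore add small correctors, for example cutoff-localized polynomials in $x$. These correctors fix $\rho^a_x|_{x=0,1}=0$ and $u^a|_{x=0,1}=0$, and because the profiles decay like $\langle y\rangle^{-l}$ (Proposition \ref{B-b-Pro}) they cost only $O(\epsilon^N)$.

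Next we compute the remainders. Writing $(\phi,\psi)=(\rho^\epsilon-\rho^a,u^\epsilon-u^a)$, the equations (\ref{IBb-3})--(\ref{rho-B2-inibou}) are exactly those that cancel the terms of order $\epsilon^{j/2}$, $j\le 2$, after Taylor expanding the traces at $x=0,1$. Hence the residuals $R_1,R_2$ satisfy $\|R_i\|_{L^2}\lesssim \epsilon^{5/4}$, where the $\epsilon^{1/4}$ gain comes from $\|f(x/\sqrt\epsilon)\|_{L^2}=\epsilon^{1/4}\|f\|_{L^2_y}$. The pair $(\phi,\psi)$ then solves
\begin{align*}
&\phi_t+(\rho^\epsilon\psi+\phi u^a)_x=R_1,\\
&\rho^\epsilon(\psi_t+u^\epsilon\psi_x)+\big((\rho^\epsilon)^\gamma-(\rho^a)^\gamma\big)_x-\psi_{xx}-\epsilon\rho^\epsilon\phi_{xxx}=R_2+\cdots,
\end{align*}
with $\phi_x=\psi=0$ on the boundary and initial size $\epsilon^{m/2}$.

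For the a priori estimates we assume on $[0,T^*]$ that $\|\phi\|_{L^\infty}+\|\psi\|_{L^\infty}\le\epsilon^{1/2}$. This keeps $\rho^\epsilon$ bounded above and below uniformly, and the bound is later improved to $C\epsilon^{7/8}$. We then proceed in four steps.
\begin{enumerate}[(i)]
\item A basic $L^2$ energy estimate: test the momentum equation by $\psi$ and the continuity equation by $P'$-weighted $\phi$ plus $\epsilon\phi_{xx}$. This gives $\|\phi\|^2+\epsilon\|\phi_x\|^2+\|\psi\|^2+\int\|\psi_x\|^2\le C\epsilon^{5/2}$. The dangerous terms are those with $u^a_x$ and $\rho^a_x$, which are $O(1)$ and carry the layer derivatives $\rho^{B,1}_y$. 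We handle them with Hardy-type inequalities such as $\|\psi/x\|\le\|\psi_x\|$ and $y\langle y\rangle^{-l}$ decay.
\item An effective-velocity estimate (BD-type) giving $\int\|\phi_x\|^2$ dissipation.
\item A higher-order estimate: differentiate in $x$ and test with $\psi_{xx}$ and $\epsilon\phi_{xxx}$, obtaining $\epsilon\|\phi_{xx}\|^2+\|\psi_x\|^2+\int(\|\psi_{xx}\|^2+\epsilon^2\|\phi_{xxx}\|^2)$ up to a factor $\epsilon^{-1}$ of the lower-order bounds. This third-derivative dissipation, and the interior/boundary commutators that come with it, is where we expect the main obstacle. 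Boundary terms such as $\epsilon\rho\phi_{xx}\psi_x|_{x=0,1}$ arise, and $\rho^a_{xx}\sim\epsilon^{-1/2}$ inside the layer. We would first rewrite $\epsilon\rho\phi_{xxx}$ as $\epsilon(\rho\phi_{xx})_x-\epsilon\rho_x\phi_{xx}$, use $\phi_{xxx}$ from the differentiated mass equation ($\epsilon\phi_{xxx}$ absorbed via $\psi_{xx}$), and exploit $\phi_x=0$ at the boundary to kill the worst trace.
\item $L^\infty$ bounds by Gagliardo--Nirenberg: $\|\psi\|_\infty\le\|\psi\|^{1/2}\|\psi_x\|^{1/2}$ gives $\epsilon^{7/8}$ once $\|\psi\|\lesssim\epsilon^{5/4}$ and $\|\psi_x\|\lesssim\epsilon^{1/2}$. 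Similarly $\|\phi\|_\infty\lesssim\epsilon^{1/2}$ and $\|\phi_x\|_\infty\le\|\phi_x\|^{1/2}\|\phi_{xx}\|^{1/2}\lesssim\epsilon^{1/2}$.
\end{enumerate}

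We close the argument by Gronwall on $[0,T]$, choosing $\epsilon_1$ small enough that $C\epsilon^{3/8}<1$. This strictly improves the bootstrap hypothesis, so local existence (Proposition \ref{exi-nsk}) extends the solution to $[0,T]$. Finally, (\ref{main-result1}) follows from the bounds on $(\phi,\psi)$ together with the fact that the higher profile terms are $O(\sqrt\epsilon)$ in $L^\infty$, with $O(\sqrt\epsilon)$ derivative contributions after accounting for the $\epsilon^{-1/2}$ per $y$-derivative. The exponent $m>2$ ensures the initial error is below $\epsilon^{5/2}$ in the weighted energy.
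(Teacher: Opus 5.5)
Your final rate bookkeeping does not cover the stated hypothesis $m>2$. The assertion that $m>2$ puts the initial error below $\epsilon^{5/2}$ is false. Indeed, $\|\psi(\cdot,0)\|_{L^2}^2=\|u_0^\epsilon-u_0\|_{L^2}^2$ is only bounded by $C_0\epsilon^m$. Since step (i) is one Gronwall estimate for $\|\phi\|^2+\epsilon\|\phi_x\|^2+\|\psi\|^2$, it gives $\sup_t(\|\psi\|^2+\epsilon\|\phi_x\|^2)\lesssim\epsilon^{\min\{m,5/2\}}$. The bound on $\sup_t\|\psi\|^2$ cannot be better than this, because $\|\psi(\cdot,0)\|^2$ may itself be of order $\epsilon^m$.

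For $2<m<5/2$, your interpolation with $\|\psi_x\|\lesssim\epsilon^{1/2}$ then yields only $\|\psi\|_{L^\infty}\lesssim\epsilon^{(m+1)/4}$, which is strictly weaker than $\epsilon^{7/8}$. Even with $\|\psi_x\|^2\lesssim\epsilon^{-1}\epsilon^{\min\{m,5/2\}}$, as your step (iii) suggests, you would need $m\ge 9/4$. The third estimate fails for the same reason. Your only sup-in-time control of $\|\phi_x\|$ is the term $\epsilon\|\phi_x\|^2$ of step (i), which gives $\|\phi_x\|\lesssim\epsilon^{(m-1)/2}$ rather than the $\epsilon^{3/4}$ you need.

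The structural problem is that your $H^1$ level is slaved to the $L^2$ level with an $\epsilon^{-1}$ loss. All of the rate is therefore loaded onto an $L^2$ bound that the data do not provide.

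Separately, the $\epsilon^{5/4}$ residual is not justified as written. Adding $\epsilon\rho^{I,2}$ creates the Neumann defect $\rho^a_x|_{x=0}=\epsilon\overline{\rho^{I,2}_x}$ (up to tails), which is not a decaying tail. A polynomial corrector on the $O(1)$ scale leaves an $O(\epsilon)$ residual in $L^2$, so you would need a layer-scale corrector, in effect a term $\epsilon^{3/2}\rho^{B,3}$.

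What closes the argument in the paper is a self-contained $H^1$-level estimate of size $\epsilon^{3/2}$. In the variables $\Phi^\epsilon=\phi/\sqrt\epsilon$, $U^\epsilon=\psi/\sqrt\epsilon$ it proves $\sup_t(\|U^\epsilon_x\|^2+\|\Phi^\epsilon_x\|^2+\epsilon\|\Phi^\epsilon_{xx}\|^2)+\int_0^T(\|U^\epsilon_{xx}\|^2+\epsilon^2\|\Phi^\epsilon_{xxx}\|^2)\,dt\le B_2\sqrt\epsilon$. Relative to the $L^2$-level dissipation $\int_0^T\|U^\epsilon_x\|^2dt\le B_1\epsilon$ it loses only a factor $\epsilon^{-1/2}$, coming from $\|\rho^\epsilon_{xx}\|_{L^2}^2\lesssim\epsilon^{-1/2}+\epsilon\|\Phi^\epsilon_{xx}\|_{L^2}^2$.

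It combines four tests:
\begin{enumerate}
\item the momentum error equation against $U^\epsilon_{xx}$;
\item the $x$-differentiated mass equation against $\epsilon\Phi^\epsilon_{xxx}$;
\item the $x$-differentiated mass equation against $\Phi^\epsilon_x$, which gives the sup bound on $\|\Phi^\epsilon_x\|$ you are missing;
\item decisively, the momentum equation against $\epsilon\Phi^\epsilon_{xxx}$. Here $\int\epsilon(\rho^\epsilon U^\epsilon)_t\Phi^\epsilon_{xxx}$ is converted through the $t$-differentiated mass equation into $\epsilon\frac{d}{dt}\int\Phi^\epsilon_{xx}\Phi^\epsilon_t+\epsilon\|\Phi^\epsilon_{xt}\|^2+\cdots$, and positivity comes from the perfect square $\frac{1}{2\rho^\epsilon}(U^\epsilon_{xx}+\epsilon\rho^\epsilon\Phi^\epsilon_{xxx})^2$.
\end{enumerate}
This third-derivative dissipation is what absorbs $\epsilon\int\rho^\epsilon_xU^\epsilon_x\Phi^\epsilon_{xxx}$ and $\epsilon\int\rho^\epsilon_{xx}U^\epsilon\Phi^\epsilon_{xxx}$. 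It is exactly the mechanism your step (iii) leaves open.

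With it, no interior correction is needed. Leaving $\epsilon\rho^{I,0}\rho^{I,0}_{xxx}$ in the forcing gives only $\|\psi\|\lesssim\epsilon$, valid for $m\ge2$. Yet this suffices: $\|\psi\|^{1/2}\|\psi_x\|^{1/2}\lesssim\epsilon^{1/2}\epsilon^{3/8}=\epsilon^{7/8}$, and $\|\phi_x\|_{L^\infty}\lesssim(\|\phi_x\|\,\|\phi_{xx}\|)^{1/2}\lesssim(\epsilon^{3/4}\epsilon^{1/4})^{1/2}=\epsilon^{1/2}$.
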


\begin{Remark}
Theorem \ref{following} provides a rigorous derivation of the isentropic compressible Navier-Stokes system from the isentropic compressible NSK system in a bounded interval. Since time $T$ is arbitrary, the convergence holds globally in time.
\end{Remark}
\begin{Remark}
We have derived optimal convergence rates for the density and its derivative in the presence of the boundary layer. The convergence rate for velocity can be improved to $O(\epsilon)$ by including the higher-order profiles $(\rho^{I,2},u^{I,2},\rho^{B,3},\rho^{b,3})$ with more suitable initial assumptions on the initial and boundary conditions.
\end{Remark}

\subsection{Error system}\label{section1.4}
~~~~Decompose the solution $(\rho^\epsilon, u^\epsilon)$ as follows:
\begin{equation}\label{zhankai}
\left\{
\begin{aligned}
&\rho^\epsilon=\rho^{I,0} +\sqrt{\epsilon}(\rho^{B,1}+\rho^{b,1})+{\epsilon}(\rho^{B,2}+\rho^{b,2})+L^\epsilon +\sqrt{\epsilon} \Phi^\epsilon,\\[2mm]
&u^\epsilon=u^{I,0} +\epsilon(u^{B,2}+u^{b,2})+\epsilon\sqrt{\epsilon}(u^{B,3}+u^{b,3})+M^\epsilon +\sqrt{\epsilon} U^\epsilon,
\end{aligned}\right.
\end{equation}
where
$$\begin{aligned}
L^\epsilon =&%-\rho_z^{b,1}(-\frac{1}{\sqrt{\epsilon}},t)-\sqrt{\epsilon}\rho_z^{b,2}(-\frac{1}{\sqrt{\epsilon}},t)
-\frac{x^2}{2}[\rho_y^{B,1}(\frac{1}{\sqrt{\epsilon}},t)+\sqrt{\epsilon}\rho_y^{B,2}(\frac{1}{\sqrt{\epsilon}},t)]-(x-\frac{x^2}{2})[\rho_z^{b,1}(-\frac{1}{\sqrt{\epsilon}},t)+\sqrt{\epsilon}\rho_z^{b,2}(-\frac{1}{\sqrt{\epsilon}},t)],\\
\end{aligned}
$$
$$\begin{aligned}
 M^\epsilon=&-\epsilon (1-x) u^{B,2}(0,t)-\epsilon(1-x)u^{b,2}(-\frac{1}{\sqrt{\epsilon}},t)-\epsilon x u^{B,2}(\frac{1}{\sqrt{\epsilon}},t)-\epsilon x u^{b,2}(0,t)\\
&-\epsilon \sqrt{\epsilon} (1-x) u^{B,3}(0,t)-\epsilon \sqrt{\epsilon}(1-x)u^{b,3}(-\frac{1}{\sqrt{\epsilon}},t)-\epsilon \sqrt{\epsilon}xu^{B,3}(\frac{1}{\sqrt{\epsilon}},t)-\epsilon \sqrt{\epsilon} x u^{b,3}(0,t).
\end{aligned}
$$
Thus, we get
$$
L^\epsilon_{xxx}=0, ~M^\epsilon_{xx}=0
$$
and
\begin{equation}\label{wucha-boun-con}
(\Phi^\epsilon_x ,U^\epsilon)|_{x=0,1}=(0,0).
\end{equation}

Putting (\ref{zhankai}) into (\ref{equ-epsilon}), and using (\ref{equ-NS}),  (\ref{IBb-3}), (\ref{IBb-2}),  (\ref{IBb-4}) and (\ref{IBb-5}), we get the error system of error functions $(\Phi^\epsilon,U^\epsilon)$ as follows:

\begin{equation}\label{zhankai-equ}
\left\{
	\begin{aligned}
&\Phi_t^\epsilon+(\rho^\epsilon U^\epsilon)_x=-h_1 -h_2-h_3-h_4-h_5,\\
&\rho^{\epsilon} U^\epsilon_t+\rho^\epsilon u^\epsilon U^\epsilon_x- U^\epsilon_{xx}-\epsilon\rho^\epsilon\Phi^{\epsilon}_{xxx}
=W,\\
&(\Phi^\epsilon ,U^\epsilon)|_{t=0}=(\frac{\rho_0^\epsilon -\rho_0}{\sqrt \epsilon}, \frac{u_0^\epsilon-u_0}{\sqrt \epsilon}),\\
&(\Phi^\epsilon_x ,U^\epsilon)|_{x=0,1}=(0,0),\\
\end{aligned}\right.
\end{equation}
where $h_1 = h_{1,1}+ h_{1,2}$ and $W=-\Theta-P(\rho^\epsilon, \rho^{I,0}) +\Omega+\alpha$ with
$$
\begin{aligned}
h_{1,1}=&  \frac{1}{\sqrt{\epsilon}}(u^{I,0}-\overline{u^{I,0}})\rho^{B,1}_y -y\overline{u^{I,0}_x}\rho^{B,1}_y + (\rho^{I,0}-\overline{\rho^{I,0}})u^{B,2}_y + \rho^{B,1} (u^{I,0}_x-\overline{u^{I,0}_x})   + \rho^{B,2}_y (u^{I,0}-\overline{u^{I,0}}) ,\\
h_{1,2}=&   \frac{1}{\sqrt{\epsilon}}(u^{I,0}-\widetilde{u^{I,0}})\rho^{b,1}_z-z\widetilde{u^{I,0}_x}\rho^{b,1}_z  + (\rho^{I,0}-\widetilde{\rho^{I,0}})u^{b,2}_z + \rho^{b,1} (u^{I,0}_x-\widetilde{u^{I,0}_x})  + \rho^{b,2}_z (u^{I,0}-\widetilde{u^{I,0}});\\
\end{aligned}
$$
$$\begin{aligned}
 h_2=&\frac{1}{\sqrt{\epsilon}}L^\epsilon_t  +u^{I,0}_x(\frac{1}{\sqrt{\epsilon}}L^\epsilon + \Phi^\epsilon) +u^{I,0}(\frac{1}{\sqrt{\epsilon}}L^\epsilon_x + \Phi^\epsilon_x)+\frac{1}{\sqrt{\epsilon}}(M^\epsilon \rho^\epsilon)_x\\
&+[(u^{B,2}_y+u^{b,2}_z)+\sqrt{\epsilon}(u^{B,3}_y+u^{b,3}_z)]\sqrt{\epsilon} \Phi^\epsilon+[\sqrt{\epsilon}(u^{B,2}+u^{b,2})+{\epsilon}(u^{B,3}+u^{b,3})]\sqrt{\epsilon} \Phi^\epsilon_x;\\
 h_3=&\sqrt{\epsilon}\rho^{B,2}_t  +u^{I,0}_x\sqrt{\epsilon}\rho^{B,2}+\sqrt{\epsilon}u^{B,3}_y\rho^{I,0} +\sqrt{\epsilon}u^{B,2}\rho^{I,0}_x+{\epsilon}u^{B,3}\rho^{I,0}_x;\\
h_4=&\sqrt{\epsilon}\rho^{b,2}_t  +u^{I,0}_x\sqrt{\epsilon}\rho^{b,2}+\sqrt{\epsilon}u^{b,3}_z\rho^{I,0} +\sqrt{\epsilon}u^{b,2}\rho^{I,0}_x+{\epsilon}u^{b,3}\rho^{I,0}_x;\\
h_5=&[(u^{B,2}_y+u^{b,2}_z)+\sqrt{\epsilon}(u^{B,3}_y+u^{b,3}_z)][\sqrt{\epsilon}(\rho^{B,1}+\rho^{b,1}) +{\epsilon}(\rho^{B,2}+\rho^{b,2})+L^\epsilon ]\\
&+[\sqrt{\epsilon}(u^{B,2}+u^{b,2})+{\epsilon}(u^{B,3}+u^{b,3})][(\rho^{B,1}_y+\rho^{b,1}_z) +\sqrt{\epsilon}(\rho^{B,2}_y+\rho^{b,2}_z)+L^\epsilon_x ];\\
\end{aligned}
$$
{$ \Theta:= \sum_{i=1}^{5} \Theta_i$} with
$$
\begin{aligned}
\Theta_1=&  (\frac{1}{\sqrt{\epsilon}}L^\epsilon +\Phi^\epsilon) u^{I,0}_t + \rho^\epsilon\frac{1}{\sqrt{\epsilon}}M^\epsilon_t+ \rho^{I,0} u^{I,0} \frac{1}{\sqrt{\epsilon}}M^\epsilon_x +\rho^{I,0} (\frac{1}{\sqrt{\epsilon}}M^\epsilon+U^\epsilon)u_x^{I,0}+ (\frac{1}{\sqrt{\epsilon}}L^\epsilon +\Phi^\epsilon)u^{I,0} u_x^{I,0},\\
\Theta_2=& \sqrt{\epsilon}\rho^{B,2} u^{I,0}_t + \rho^\epsilon(\sqrt{\epsilon}u^{B,2}_t+\epsilon u^{B,3}_t)+ \rho^{I,0} u^{I,0} \sqrt\epsilon u^{B,3}_y +\rho^{I,0} (\sqrt{\epsilon} u^{B,2}+\epsilon u^{B,3})u_x^{I,0}+ {\sqrt{\epsilon}}\rho^{B,2}u^{I,0} u_x^{I,0},\\
\Theta_3=& \sqrt{\epsilon}\rho^{b,2} u^{I,0}_t + \rho^\epsilon(\sqrt{\epsilon}u^{b,2}_t+\epsilon u^{b,3}_t)+ \rho^{I,0} u^{I,0} \sqrt\epsilon u^{b,3}_z +\rho^{I,0} (\sqrt{\epsilon} u^{b,2}+\epsilon u^{b,3})u_x^{I,0}+ {\sqrt{\epsilon}}\rho^{b,2}u^{I,0} u_x^{I,0},\\
\Theta_4=&\rho^{I,0}[\sqrt{\epsilon}(u^{B,2}+u^{b,2})+\epsilon(u^{B,3}+u^{b,3})+\frac{1}{\sqrt{\epsilon}}M^\epsilon + U^\epsilon] [M^\epsilon + \epsilon(u^{B,2}+u^{b,2})+\epsilon\sqrt{\epsilon}(u^{B,3}+u^{b,3})]_x\\
&+[(\rho^{B,1}+\rho^{b,1})+\sqrt{\epsilon}(\rho^{B,2}+\rho^{b,2})+\frac{1}{\sqrt{\epsilon}}L^\epsilon + \Phi^\epsilon]u^{I,0} [M^\epsilon + \epsilon(u^{B,2}+u^{b,2})+\epsilon\sqrt{\epsilon}(u^{B,3}+u^{b,3})]_x,\\
\Theta_5=&[(\rho^{B,1}+\rho^{b,1})+\sqrt{\epsilon}(\rho^{B,2}+\rho^{b,2})+\frac{1}{\sqrt{\epsilon}}L^\epsilon + \Phi^\epsilon][\epsilon(u^{B,2}+u^{b,2})+\epsilon\sqrt{\epsilon}(u^{B,3}+u^{b,3})\\
&+M^\epsilon +\sqrt{\epsilon} U^\epsilon] [u^{I,0}+M^\epsilon + \epsilon(u^{B,2}+u^{b,2})+\epsilon\sqrt{\epsilon}(u^{B,3}+u^{b,3})]_x;\\
\end{aligned}
$$
$$
\begin{aligned}
P(\rho^{\epsilon},\rho^{I,0}):=&\frac{1}{\sqrt{\epsilon}}[\gamma(\rho^\epsilon)^{\gamma-1}\rho^\epsilon_x-\gamma(\rho^{I,0})^{\gamma-1}\rho^{I,0}_x-\gamma (\rho^{I,0})^{\gamma-1}( \rho^{B,1}_y+\rho^{b,1}_z)\\
&-\gamma (\rho^{I,0})^{\gamma-1}\sqrt{\epsilon}(\rho^{B,2}_y+\rho^{b,2}_z)-\gamma (\gamma-1) (\rho^{I,0})^{\gamma-2} \sqrt{\epsilon}(\rho^{B,1}+\rho^{b,1}) \rho^{I,0}_x\\
&-\gamma (\gamma-1) (\rho^{I,0})^{\gamma-2}\sqrt{\epsilon}\rho^{B,1}\rho^{B,1}_y -\gamma (\gamma-1) (\rho^{I,0})^{\gamma-2}\sqrt{\epsilon}\rho^{b,1}\rho^{b,1}_z
];\\
\end{aligned}
$$
$ \Omega:= \sum_{i=1}^4 \Omega_i$ with
 $$\begin{aligned}
\Omega_1=&\sqrt{\epsilon} \rho^{I,0} \rho^{I,0}_{xxx},\\
\Omega_2=&\epsilon (\rho^{B,1}+\rho^{b,1})[\rho^{I,0}+{\epsilon}(\rho^{B,2}+\rho^{b,2})]_{xxx},\\
\Omega_3=&\sqrt{\epsilon }[{\epsilon}(\rho^{B,2}+\rho^{b,2})+L^\epsilon +\sqrt{\epsilon} \Phi^\epsilon] [\rho^{I,0}+\sqrt{\epsilon}(\rho^{B,1}+\rho^{b,1})+{\epsilon}(\rho^{B,2}+\rho^{b,2})]_{xxx},\\
\Omega_4=& (\rho^{b,1}\rho^{B,1}_{yyy}+ \rho^{B,1}\rho^{b,1}_{zzz});
\end{aligned}
$$
and $\alpha:= \frac{1}{\sqrt\epsilon}(\alpha_1 +\alpha_2)$ with
$$
\begin{aligned}
\alpha_1=&-\{\sqrt{\epsilon}\rho^{B,1}(u^{I,0}_t-\overline{u^{I,0}_t})+ \sqrt{\epsilon}u^{B,2}_y(\rho^{I,0}u^{I,0}- \overline{\rho^{I,0}}\overline{u^{I,0}})+ \sqrt{\epsilon}\rho^{B,1}(u^{I,0}u^{I,0}_x-\overline{u^{I,0}}\overline{u^{I,0}_x })\\
&+\gamma [(\rho^{I,0})^{\gamma-1}- (\overline{\rho^{I,0}})^{\gamma-1}]\sqrt{\epsilon}\rho^{B,2}_y+\gamma (\gamma-1) [(\rho^{I,0})^{\gamma-2}\rho^{I,0}_x- (\overline{\rho^{I,0}})^{\gamma-2} \overline{\rho^{I,0}_x}] \sqrt{\epsilon}\rho^{B,1} \\
&+\gamma (\gamma-1) [(\rho^{I,0})^{\gamma-2}- (\overline{\rho^{I,0}})^{\gamma-2}] \sqrt{\epsilon}\rho^{B,1}\rho^{B,1}_y-\gamma (\gamma-1)y  (\overline{\rho^{I,0}})^{\gamma-2} \overline{\rho^{I,0}_x} \sqrt{\epsilon}\rho^{B,1}_y\\
& +\gamma [(\rho^{I,0})^{\gamma-1} - (\overline{\rho^{I,0}})^{\gamma-1}] \rho^{B,1}_y\}+\sqrt{\epsilon}(\rho^{I,0}- \overline{\rho^{I,0}}) \rho^{B,2}_{yyy} -y\sqrt{\epsilon}\overline{\rho^{I,0}_x}\rho^{B,1}_{yyy}+(\rho^{I,0}- \overline{\rho^{I,0}})\rho^{B,1}_{yyy},\\
\alpha_2=&-\{\sqrt{\epsilon}\rho^{b,1}(u^{I,0}_t-\widetilde{u^{I,0}_t}) +\sqrt{\epsilon}u^{b,2}_z(\rho^{I,0}u^{I,0}- \widetilde{\rho^{I,0}}\widetilde{u^{I,0}}) +\sqrt{\epsilon} \rho^{b,1}(u^{I,0}u^{I,0}_x-\widetilde{u^{I,0}} \widetilde{u^{I,0}_x})\\
&+\gamma[(\rho^{I,0})^{\gamma-1}-(\widetilde{\rho^{I,0}})^{\gamma-1}]\sqrt{\epsilon}\rho^{b,2}_z+\gamma (\gamma-1) [(\rho^{I,0})^{\gamma-2}\rho^{I,0}_x- (\widetilde{\rho^{I,0}})^{\gamma-2} \widetilde{\rho^{I,0}_x}]  \sqrt{\epsilon}\rho^{b,1}\\
&+\gamma (\gamma-1) [(\rho^{I,0})^{\gamma-2}-(\widetilde{\rho^{I,0}})^{\gamma-2}] \sqrt{\epsilon}\rho^{b,1}\rho^{b,1}_z-\gamma (\gamma-1) z (\widetilde{\rho^{I,0}})^{\gamma-2} \widetilde{\rho^{I,0}_x}\sqrt{\epsilon}\rho^{b,1}_z \\
& +\gamma [(\rho^{I,0})^{\gamma-1}-(\widetilde{\rho^{I,0}})^{\gamma-1}]\rho^{b,1}_z\}+\sqrt{\epsilon}(\rho^{I,0}- \widetilde{\rho^{I,0}})\rho^{b,2}_{zzz}-\sqrt{\epsilon}z\widetilde{\rho^{I,0}_x} \rho^{b,1}_{zzz} + (\rho^{I,0}- \widetilde{\rho^{I,0}})\rho^{b,1}_{zzz}.\\
\end{aligned}
$$

\subsection{Difficulties and innovations}\label{section1.5}
~~~~In this work, we aim to investigate vanishing capillary limit of system (\ref{equ-epsilon}) in a bounded interval. The main challenge focuses on the boundary effect that occurs in the limit of the density gradient, which prevents the convergence between the derivative of solution of the original system and that of the target system. Our main strategy is to derive some new correctors handling difficulties due to boundary layer. More precisely, our goal is to derive convergence rates of density, spatial derivative of density and velocity in \(L^\infty\) norm. The key step is to  derive \(L^\infty\) estimates of the error functions \((\Phi^\epsilon,\Phi_x^\epsilon,U^\epsilon)\). By interpolation inequalities, it suffices to establish \(L^2\) estimates of \((\Phi^\epsilon,\Phi_x^\epsilon,U^\epsilon)\) and its derivative.

One of the main difficulties lies in the $L^2$ estimate of
$(\Phi_x^\epsilon, \sqrt\epsilon\,\Phi_{xx}^\epsilon).$
Indeed, when deriving the energy differential inequality
\[
\frac{\mathrm{d}}{\mathrm{d}t}\Big(\|\Phi^\epsilon_x\|_{L^2}^2
+ \|\sqrt{\rho^\epsilon}\,U^\epsilon_x\|_{L^2}^2
+ \|\sqrt\epsilon\,\Phi^\epsilon_{xx}\|_{L^2}^2\Big)
+ \|U^\epsilon_{xx}\|_{L^2}^2 \le C_R ,
\]
 the right-hand side $C_R$ contains the terms
$
\int_I \epsilon\rho^\epsilon_x U^\epsilon_x \Phi^\epsilon_{xxx}\,\mathrm{d}x
+ \int_I \epsilon\rho^\epsilon_{xx} U^\epsilon \Phi^\epsilon_{xxx}\,\mathrm{d}x$ (see $G_2$ and $G_3$  in Lemma \ref{Phi-U-H^1-2})
that cannot be absorbed by the dissipation on the left-hand side. Motivated by \cite{H2020}, we derive a dissipative estimate of $\epsilon\rho^\epsilon\Phi^\epsilon_{xxx}$ in $L^2_{xt}$ to handle such a difficulty. In contrast, the estimate of \(\|(\Phi^\epsilon, U^\epsilon, \sqrt\epsilon\,\Phi^\epsilon_x)\|_{L^\infty_T L^2(I)}\) can be closed %employs the energy inequality
%\[
%\frac{\mathrm{d}}{\mathrm{d}t}\|(\Phi^\epsilon, U^\epsilon, \sqrt\epsilon\,\Phi^\epsilon_x)\|_{L^2(I)}^2
%+ \|U^\epsilon_x\|_{L^2(I)}^2 \le C_r,
%\]
%and the term \(C_r\) can be controlled Gronwall's inequality
 without requiring extra dissipative estimate of \(\Phi_{xx}^\epsilon\).

\section{Preliminaries}\label{section2}

\begin{Lemma}\label{some-inequ}
(1) For any $m\in \mathbb{N}$, it holds that
\begin{equation}\label{L2-B-b}
\left\{
\begin{aligned}
& \int_0^1 |f^{b,m}(\frac{x-1}{\sqrt{\epsilon}})|^2\mathrm{d}x\le \sqrt{\epsilon} \|f^{b,m}\|_{L_z^2}^2,\,\,
\int_0^1 |f^{B,m}(\frac{x}{\sqrt{\epsilon}})|^2\mathrm{d}x\le \sqrt{\epsilon} \|f^{B,m}\|_{L_y^2}^2,\\
&%\int_0^1 |h^{b,m}(\frac{x-1}{\sqrt{\epsilon}})|^2\mathrm{d}x \le
  \|h^{b,m}(\frac{\cdot-1}{\sqrt{\epsilon}})\|_{L^\infty(I)}^2 \le C \|h^{b,m}\|_{L^\infty_z}^2,\,\,
%\int_0^1 |h^{B,m}(\frac{x}{\sqrt{\epsilon}})|^2\mathrm{d}x\le
 \|h^{B,m}(\frac{\cdot}{\sqrt{\epsilon}})\|_{L^\infty(I)}^2\le C \|h^{B,m}\|_{L^\infty_y}^2,\\
&\int_0^1 |g_x^{b,m}(\frac{x-1}{\sqrt{\epsilon}},t)|^2\mathrm{d}x\le \frac{1}{\sqrt{\epsilon} }\|g_z^{b,m}\|_{L_z^2}^2,\,\,
\int_0^1 |g_x^{B,m}(\frac{x}{\sqrt{\epsilon}},t)|^2\mathrm{d}x\le \frac{1}{\sqrt{\epsilon} }\|g_y^{B,m}\|_{L_y^2}^2,
\end{aligned}\right.
\end{equation} for $f^{b,m},f^{B,m}\in L^2(I)$, $h^{b,m},h^{B,m}\in  L^\infty (I)$ and $g^{b,m},g^{B,m}\in H^1(I)$.

(2) Under the conditions of Propositions \ref{exi-ns}, \ref{exi-nsk}, and \ref{B-b-Pro}, there holds
\begin{equation}\label{rho-epsilon-phi-epsilon}
\begin{aligned}
&\|\rho^\epsilon_x(\cdot, t)-\sqrt\epsilon\Phi^{\epsilon}_x\|_{L^2(I)}^2 \le C
\end{aligned}
\end{equation}
for any $t\in[0,T_\epsilon]\cap[0,T]$.
\end{Lemma}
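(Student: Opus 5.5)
Part (1) follows from the change of variables $x=\sqrt\epsilon\, y$ (respectively $x=1+\sqrt\epsilon\, z$), and part (2) follows by inserting the decomposition (\ref{zhankai}) and applying part (1) term by term.

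\textbf{Part (1).} Here $f^{B,m}$, $h^{B,m}$, $g^{B,m}$ are understood as functions of $y\in[0,+\infty)$, and $f^{b,m}$, $h^{b,m}$, $g^{b,m}$ as functions of $z\in(-\infty,0]$.
\begin{itemize}
\item For the first estimate, substitute $y=x/\sqrt\epsilon$, so $\mathrm{d}x=\sqrt\epsilon\,\mathrm{d}y$ and $y$ ranges over $[0,1/\sqrt\epsilon]\subset[0,+\infty)$. Hence
$$\int_0^1|f^{B,m}(x/\sqrt\epsilon)|^2\,\mathrm{d}x=\sqrt\epsilon\int_0^{1/\sqrt\epsilon}|f^{B,m}(y)|^2\,\mathrm{d}y\le\sqrt\epsilon\,\|f^{B,m}\|_{L^2_y}^2 .$$
\item The $L^\infty$ bound holds with $C=1$, because $x\mapsto x/\sqrt\epsilon$ maps $I$ into the half line, so the supremum over $I$ is a supremum over a subset.
\item For the derivative estimate, use the chain rule $\partial_x[g^{B,m}(x/\sqrt\epsilon)]=\epsilon^{-1/2}g^{B,m}_y(x/\sqrt\epsilon)$. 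Squaring gives a factor $\epsilon^{-1}$, and the same substitution contributes $\sqrt\epsilon$. Together they give the factor $\epsilon^{-1/2}$.
\item The $z$-versions are identical, with $z=(x-1)/\sqrt\epsilon$ ranging over $[-1/\sqrt\epsilon,0]$.
\end{itemize}

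\textbf{Part (2).} Differentiating (\ref{zhankai}) in $x$ gives
$$\rho^\epsilon_x-\sqrt\epsilon\Phi^\epsilon_x=\rho^{I,0}_x+\sqrt\epsilon(\rho^{B,1}+\rho^{b,1})_x+\epsilon(\rho^{B,2}+\rho^{b,2})_x+L^\epsilon_x .$$
I would bound each piece in $L^2(I)$ as follows.
\begin{itemize}
\item $\rho^{I,0}_x$ is bounded uniformly on $[0,T]$ by Proposition \ref{exi-ns}.
\item By the third inequality in (\ref{L2-B-b}),
$$\|\sqrt\epsilon\,\rho^{B,1}_x\|_{L^2(I)}^2\le\epsilon\cdot\epsilon^{-1/2}\|\rho^{B,1}_y\|_{L^2_y}^2=\sqrt\epsilon\,\|\rho^{B,1}_y\|_{L^2_y}^2 .$$
The right-hand side is bounded uniformly in $t\in[0,T]$ by Proposition \ref{B-b-Pro} (take $k=0$, $l=0$). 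The term with $\rho^{b,1}$ is handled the same way.
\item Similarly $\|\epsilon\,\rho^{B,2}_x\|_{L^2(I)}^2\le\epsilon^{3/2}\|\rho^{B,2}_y\|_{L^2_y}^2$, and likewise for $\rho^{b,2}$.
\item $L^\epsilon_x$ is a polynomial in $x$ of degree one. Its coefficients involve $\rho^{B,i}_y(1/\sqrt\epsilon,t)$ and $\rho^{b,i}_z(-1/\sqrt\epsilon,t)$, $i=1,2$. These are bounded by $\|\rho^{B,i}\|_{H^2_y}$ and $\|\rho^{b,i}\|_{H^2_z}$ via the one-dimensional Sobolev embedding $H^1\hookrightarrow L^\infty$. (In fact, using the weights $\langle y\rangle^l$ they are $O(\epsilon^{l/2})$.) Hence $\|L^\epsilon_x\|_{L^2(I)}\le C$.
\end{itemize}
Summing these bounds gives (\ref{rho-epsilon-phi-epsilon}) for all $t\in[0,T_\epsilon]\cap[0,T]$, where the decomposition, and hence $\Phi^\epsilon$, is defined.

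\textbf{Main obstacle.} There is no real obstacle; the only point requiring care is bookkeeping of the powers of $\epsilon$. In particular, each $x$-derivative of a boundary-layer profile costs $\epsilon^{-1/2}$, and this must be compensated by the explicit prefactors $\sqrt\epsilon$ and $\epsilon$ together with the $\epsilon^{1/4}$ gain of the $L^2$ change of variables. The constant $C$ then depends only on $T$ and the norms from Propositions \ref{exi-ns} and \ref{B-b-Pro}, and is independent of $\epsilon$.
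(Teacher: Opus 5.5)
Your proposal is correct and follows the paper's argument. Part (1) is the substitution $x=\sqrt{\epsilon}\,y$ (resp.\ $x=1+\sqrt{\epsilon}\,z$), with the chain rule supplying the factor $\epsilon^{-1/2}$ in the derivative bound; part (2) differentiates the decomposition (\ref{zhankai}) and bounds each term using part (1), Propositions \ref{exi-ns} and \ref{B-b-Pro}, and the boundedness of $L^\epsilon_x$, which your embedding argument simply makes explicit (it also follows from Lemma \ref{L-M-well-pose}).
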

\begin{proof}
Since $f^{b,m},f^{B,m}\in L^2(I)$, $h^{b,m},h^{B,m}\in  L^\infty (I)$ and $g^{b,m},g^{B,m}\in H^1(I)$, it is easy to obtain the estimates in (1).
We will prove (2) as follows. Using (\ref{L2-B-b}), (\ref{zhankai}) and Propositions \ref{exi-ns}, \ref{exi-nsk}, \ref{B-b-Pro}, we get
\begin{equation}\label{rho-epsilon-phi-epsilon}
\begin{aligned}
&\|\rho^\epsilon_x(\cdot, t)-\sqrt\epsilon\Phi^{\epsilon}_x\|_{L^2(I)}^2\\=&\|\rho^{I,0}_x +\sqrt{\epsilon}(\rho^{B,1}+\rho^{b,1})_x+{\epsilon}(\rho^{B,2}+\rho^{b,2})_x+L^\epsilon_x\|_{L^2(I)}^2 \\
\le & C(\|\rho^{I,0}_x\|_{L^2(I)}^2 +\sqrt{\epsilon}\|\rho^{B,1}_{y}\|_{L_y^2}^2+\sqrt{\epsilon}\|\rho^{b,1}_{z}\|_{L_z^2}^2 +\epsilon\sqrt{\epsilon}\|\rho^{B,2}_{y}\|_{L_y^2}^2+\epsilon\sqrt{\epsilon}\|\rho^{b,2}_{z}\|_{L_z^2}^2+\|L^\epsilon_x\|_{L^2(I)}^2)\\ \le& C
\end{aligned}
\end{equation}
for any $t\in[0,T_\epsilon]\cap[0,T]$.
\end{proof}

\begin{Lemma}\label{L-M-well-pose}
Under the conditions of Propositions \ref{B-b-Pro}, there holds that
\begin{equation}\label{L-M-esti}
\left\{
\begin{aligned}
&\|L^\epsilon\|_{L_T^\infty H^2}+\|L^\epsilon_t\|_{L_T^\infty H^1}+\|L^\epsilon_{tt}\|_{L_T^2 L^2} \le C \epsilon^{\frac{l}{2}}, ~\mathrm{for~ any} ~l\in {\mathbb{Z}}^+,\\
&\|M^\epsilon\|_{L_T^\infty H^1} +\|M^\epsilon_t\|_{L_T^\infty H^1(I)} \le C \epsilon.
\end{aligned}\right.
\end{equation}

\end{Lemma}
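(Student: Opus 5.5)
The plan is to read off both estimates from the explicit formulas for $L^\epsilon$ and $M^\epsilon$. Each is a product of a fixed polynomial in $x$ with degree at most $2$ (the factors $x^2/2$, $x-x^2/2$, $1-x$, $x$) and time-dependent coefficients given by traces of boundary layer profiles at specific points. Since $x\in I=(0,1)$, every $H^k(I)$ norm of the spatial factor is a constant independent of $\epsilon$. So
\[
\|L^\epsilon\|_{H^2}\le C\big(|\rho_y^{B,1}(\epsilon^{-1/2},t)|+\sqrt\epsilon|\rho_y^{B,2}(\epsilon^{-1/2},t)|+|\rho_z^{b,1}(-\epsilon^{-1/2},t)|+\sqrt\epsilon|\rho_z^{b,2}(-\epsilon^{-1/2},t)|\big),
\]
and likewise for $L^\epsilon_t$ and $L^\epsilon_{tt}$, with the time derivatives falling on the traces. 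The task therefore reduces to bounding traces of the profiles and their time derivatives.

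\textbf{Bound for $L^\epsilon$.} The profiles are evaluated far from their own boundary, at $|y|=|z|=\epsilon^{-1/2}$, so I will use the weights in Proposition \ref{B-b-Pro}. For $f=\langle y\rangle^{l'}\rho^{B,i}_y$, the one-dimensional Sobolev embedding $H^1_y\hookrightarrow L^\infty_y$ gives
\[
|\rho_y^{B,i}(y,t)|\le \langle y\rangle^{-l'}\|\langle y\rangle^{l'}\rho^{B,i}\|_{H^2_y}.
\]
With $k=0$ this is bounded in $L^\infty_T$ because $\langle y\rangle^{l'}\rho^{B,i}\in W^{0,\infty}_TH^5_y$. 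At $y=\epsilon^{-1/2}$ we have $\langle y\rangle^{-l'}\le\epsilon^{l'/2}$. Choosing $l'=l$ gives the $O(\epsilon^{l/2})$ bound for the first term. The same argument applies to $\rho^{b,i}_z$ at $z=-\epsilon^{-1/2}$.

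\textbf{Bound for $L^\epsilon_t$.} Here I use the $k=1$ regularity, $\langle y\rangle^{l}\rho^{B,i}\in W^{1,\infty}_TH^3_y$. This gives $\langle y\rangle^{l}\rho^{B,i}_{yt}\in L^\infty_TL^\infty_y$ through $H^2_y\hookrightarrow W^{1,\infty}_y$.

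\textbf{Bound for $L^\epsilon_{tt}$.} Here I use $k=2$, $\langle y\rangle^l\rho^{B,i}\in H^2_TH^2_y$. Then $\partial_t^2\rho^{B,i}_y$, weighted, lies in $L^2_TH^1_y\subset L^2_TL^\infty_y$. This is exactly why only $L^2_T$ is claimed for $L^\epsilon_{tt}$.

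\textbf{Bound for $M^\epsilon$.} The terms evaluated at the far ends, $u^{b,2}(-\epsilon^{-1/2},t)$ and $u^{B,2}(\epsilon^{-1/2},t)$, are even super-polynomially small by the same weighted argument. The terms evaluated at the near boundaries, $u^{B,2}(0,t)$, $u^{b,2}(0,t)$, $u^{B,3}(0,t)$ and $u^{b,3}(0,t)$, are only $O(1)$, being traces of $u^{B,j}\in W^{k,\infty}_TH^{4-2k}_y$. For $k=0,1$ this means $u^{B,j},u^{B,j}_t\in L^\infty_TH^1_y\subset L^\infty_TL^\infty_y$. Multiplied by the prefactor $\epsilon$ or $\epsilon^{3/2}$, with the $H^1(I)$ norms of $1-x$ and $x$ constant, this yields $\|M^\epsilon\|_{L^\infty_TH^1}+\|M^\epsilon_t\|_{L^\infty_TH^1}\le C\epsilon$.

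\textbf{Main obstacle.} The only delicate point is bookkeeping: checking that the needed number of spatial derivatives plus the $H^1_y\hookrightarrow L^\infty$ embedding fits within the regularity indices $5-2k$ (for $\rho$) and $4-2k$ (for $u$) of Proposition \ref{B-b-Pro} at each $k$. The tightest case is $k=2$ for $L^\epsilon_{tt}$, which needs $\rho^{B,i}_{ytt}$ pointwise in $y$ and $L^2$ in $t$; this is supplied by $H^2_TH^2_y$.
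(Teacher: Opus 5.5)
Your proposal is correct and is essentially the paper's argument: the paper also reduces everything to the traces $\rho^{B,i}_y(\epsilon^{-1/2},t)$ and $\rho^{b,i}_z(-\epsilon^{-1/2},t)$ (and their first and second time derivatives), and bounds them by $\epsilon^{l/2}\|\langle y\rangle^{l}\rho^{B,i}_y\|_{H^1_y}$ using $\langle y\rangle\ge\epsilon^{-1/2}$ and $H^1_y\hookrightarrow L^\infty_y$, with only $L^2_T$ for the second time derivative. It obtains the $O(\epsilon)$ bound for $M^\epsilon$ from the $O(1)$ traces at $y=0$, $z=0$ multiplied by the prefactors $\epsilon$, $\epsilon^{3/2}$. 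One cosmetic point: your pointwise bound $|\rho^{B,i}_y(y,t)|\le \langle y\rangle^{-l}\|\langle y\rangle^{l}\rho^{B,i}\|_{H^2_y}$ needs a constant $C_l$, which comes from commuting $\partial_y$ with the weight $\langle y\rangle^{l}$.
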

\begin{proof}
Firstly,
from
$$\begin{aligned}
&\|\rho^{B,i}_y(\frac{1}{\sqrt{\epsilon}},t)\|_{L^\infty_T}=\epsilon^{\frac{l}{2}}\|\epsilon^{-\frac{l}{2}}\rho^{B,i}_y(\frac{1}{\sqrt{\epsilon}},t)\|_{L^\infty_T} \\
\le &\epsilon^{\frac{l}{2}}\|\langle y \rangle^{l}\rho^{B,i}_y(y,t)\|_{L^\infty_T L^\infty_y} \le \epsilon^{\frac{l}{2}}\|\langle y \rangle^{l}\rho^{B,i}_y(y,t)\|_{L^\infty_T H^1_y} \le C \epsilon^{\frac{l}{2}},\\[2mm]
&\|\rho^{B,i}_{yt}(\frac{1}{\sqrt{\epsilon}},t)\|_{L^\infty_T}\le \epsilon^{\frac{l}{2}}\|\langle y \rangle^{l}\rho^{B,i}_{yt}(y,t)\|_{L^\infty_T H^1_y} \le C \epsilon^{\frac{l}{2}},\\[2mm]
&\|\rho^{B,i}_{ytt}(\frac{1}{\sqrt{\epsilon}},t)\|_{L^2_T} \le \epsilon^{\frac{l}{2}}\|\langle y \rangle^{l}\rho^{B,i}_{ytt}(y,t)\|_{L^2_T L^\infty_y} \le \epsilon^{\frac{l}{2}}\|\langle y \rangle^{l}\rho^{B,i}_{ytt}(y,t)\|_{L^2_T H^1_y} \le C \epsilon^{\frac{l}{2}},\\
\end{aligned}
$$
and
$$
\|\rho^{b,i}_z(-\frac{1}{\sqrt{\epsilon}},t)\|_{L^\infty_T} +\|\rho^{b,i}_{zt}(-\frac{1}{\sqrt{\epsilon}},t)\|_{L^\infty_T} + \|\rho^{b,i}_{ztt}(-\frac{1}{\sqrt{\epsilon}},t)\|_{L^2_T}\le C \epsilon^{\frac{l}{2}},
$$
for any $i=0,1,2$,
(\ref{L-M-esti})$_1$ holds.

Next, %because of the regularity of $u^{B,2}, ~u^{B,3},~u^{b,2},~u^{b,3}$ in Proposition \ref{B-b-Pro} and because of the structure of $M^\epsilon$,
from
$$
\|(\epsilon (1-x) u^{B,2}(0,t),\epsilon x u^{b,2}(0,t))\|_{L^\infty_T} \le C \epsilon ,
$$
 (\ref{L-M-esti})$_2$ holds.
\end{proof}

\section{Proof of Theorem \ref{following}}\label{section3}
%~~~~In this section, the general constant $C$ will be independent of $\frac{1}{\epsilon}$ and will only be positive related to time $T$ being defined in Theorem \ref{following}.

Propositions \ref{exi-ns} and \ref{B-b-Pro} imply that systems (\ref{equ-NS}), (\ref{IBb-3})-(\ref{rho-B2-inibou}) have unique global solution $$(\rho^{I,0},u^{I,0},\rho^{B,1},\rho^{b,1},\rho^{B,2},\rho^{b,2},u^{B,2},u^{b,2},u^{B,3},u^{b,3}).$$ Proposition \ref{exi-nsk} implies that for any fixed $\epsilon$, there exists a unique local solution $(\rho^\epsilon, u^\epsilon)$ to system (\ref{equ-epsilon}). Thus, system (\ref{zhankai-equ}) admits a local solution $(\Phi^\epsilon,U^\epsilon)$.

For any fixed $\epsilon$, suppose that $T^\epsilon>0$ is the maximum existence time of the system (\ref{zhankai-equ}) with solution $(\Phi^\epsilon,U^\epsilon)$. %Then, for any positive constant $T$ given in Theorem \ref{following}, if $T< T^\epsilon$, we can do a priori estimate of system (\ref{zhankai-equ}) in the time inteval $[0,T]$. Thus, we assume that $T^\epsilon\le T$.

To prove Theorem \ref{following}, it is necessary to prove the following result.
\begin{Theorem}\label{assumption}
%We decompose $(\rho^\epsilon, u^\epsilon)$ as (\ref{zhankai}). Let the conditions of $(\rho^{I,0}_0,u^{I,0}_0,\rho^\epsilon_0,u^\epsilon_0)$ be given in Theorem \ref{following}.

Under the conditions of Theorem \ref{following}, there exists a positive constant $\epsilon_0$ depending only on $T$ and the initial data, such that if
\begin{equation}\label{ini-val-phi-1}
\sup_{t\in[0,T_1]}\|\Phi^\epsilon(\cdot,t)\|_{L^\infty(I)} \le 1
\end{equation}
holds for $T_1\in(0,T]\cap(0,T^\epsilon)$ and $\epsilon \in(0,\epsilon_0)$ where $T$ is given in Theorem \ref{following}, there hold
\begin{equation}\label{1234}
\left\{
\begin{aligned}
&\sup_{t\in[0,T_1]}(\|U^\epsilon\|_{ L^2(I)}^2+\|\Phi^\epsilon\|_{ L^2(I)}^2+\epsilon\|\Phi^\epsilon_x\|_{ L^2(I)}^2)+\|U_x^\epsilon\|_{L_{T_1}^2 L^2}^2\le B_1 {\epsilon},\\
&\sup_{t\in[0,T_1]}(\|U_x^\epsilon\|_{ L^2(I)}^2+\|\Phi_x^\epsilon\|_{ L^2(I)}^2+\epsilon\|\Phi_{xx}^\epsilon\|_{ L^2(I)}^2)+\epsilon^2\|\Phi_{xxx}^\epsilon\|_{L_{T_1}^2 L^2}^2+\|U_{xx}^\epsilon\|_{L_{T_1}^2 L^2}^2\le B_2 {\sqrt{\epsilon}},\\
&\sup_{t\in[0,T_1]}\|\Phi^\epsilon(\cdot,t)\|_{L^\infty(I)} \le \frac{1}{2},
\end{aligned}\right.
\end{equation}
where $B_1, B_2$ are positive constants positively correlated with  $T$ and depend on $(\rho_0^\epsilon,u_0^\epsilon, \rho_0^{I,0},u_0^{I,0})$.
\end{Theorem}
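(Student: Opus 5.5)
Under the a priori assumption (\ref{ini-val-phi-1}) I will first record that $\rho^\epsilon$ is bounded above and below uniformly in $\epsilon$ on $[0,T_1]$. The decomposition (\ref{zhankai}) gives $\rho^\epsilon-\rho^{I,0}=O(\sqrt\epsilon)$ in $L^\infty$: the profiles are bounded in $L^\infty_y$ by Proposition \ref{B-b-Pro} and Lemma \ref{some-inequ}(1), $L^\epsilon$ is small by Lemma \ref{L-M-well-pose}, and $|\sqrt\epsilon\Phi^\epsilon|\le\sqrt\epsilon$. Together with $1/M\le\rho^{I,0}\le M$ this yields $1/(2M)\le\rho^\epsilon\le 2M$ for $\epsilon$ small. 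The velocity $u^\epsilon$ is handled the same way, up to a term $\sqrt\epsilon U^\epsilon$; that term will be controlled by the $H^1$ bounds being proved, through a continuity argument in $t$. In parallel I will bound the forcing terms $h_i$, $\Theta_i$, $P$, $\Omega_i$, $\alpha$ in $L^2(I)$. The rule is that a boundary-layer function contributes a factor $\epsilon^{1/4}$ in $L^2$ and $\epsilon^{-1/4}$ per $x$-derivative. The singular combinations such as $\epsilon^{-1/2}(u^{I,0}-\overline{u^{I,0}})\rho^{B,1}_y-y\overline{u^{I,0}_x}\rho^{B,1}_y$ are Taylor remainders, $O(\sqrt\epsilon\,y^2\rho^{B,1}_y)$, and the weights $\langle y\rangle^l$ of Proposition \ref{B-b-Pro} absorb the powers of $y$. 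The worst sources should be $\alpha$ and $\Omega_4$: $\Omega_4$ is a product of $B$ and $b$ layers and is exponentially small, while $\Omega_1=O(\sqrt\epsilon)$. Where the source of the $\Phi$-equation is too singular in $L^2$, I will keep it in weak form and pair it with test functions instead.

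\textbf{Step 1 (zeroth order).} Multiply the $U$-equation by $U^\epsilon$ and the $\Phi$-equation by $\kappa(\rho^\epsilon,\rho^{I,0})\Phi^\epsilon$, where the weight is chosen so that $P(\rho^\epsilon,\rho^{I,0})$, after linearization as $\gamma(\rho^{I,0})^{\gamma-1}\Phi^\epsilon_x$ plus lower order, cancels against $\int\rho^\epsilon U^\epsilon_x\Phi^\epsilon$. For the capillary term, integrating by parts with $\Phi_x=U=0$ on the boundary gives $-\epsilon\int\rho^\epsilon\Phi^\epsilon_{xxx}U^\epsilon=\epsilon\int(\rho^\epsilon U^\epsilon)_x\Phi^\epsilon_{xx}$. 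Using the continuity equation, $(\rho^\epsilon U^\epsilon)_x=-\Phi^\epsilon_t-\sum h_i$, so this becomes $\frac{d}{dt}\frac{\epsilon}{2}\|\Phi^\epsilon_x\|^2$ plus $\epsilon\int h\Phi^\epsilon_{xx}$; in the latter I move one derivative back onto $h$. Then Gronwall's inequality gives (\ref{1234})$_1$ with rate $\epsilon$: the sources are $O(\sqrt\epsilon)$ in $L^2_{xt}$ after the cancellations, and the initial data are $O(\epsilon^{m-1})$ by (\ref{initial-value}) with $m>2$.

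\textbf{Step 2 (first order, main obstacle).} Differentiate the $\Phi$-equation in $x$ and test with $\Phi^\epsilon_x$, test the $U$-equation with $-U^\epsilon_{xx}/\rho^\epsilon$ (or with $U_t$), and use the same capillary/continuity pairing one order higher to produce $\frac{d}{dt}\epsilon\|\Phi^\epsilon_{xx}\|^2$. Remainders like $\int\epsilon\rho^\epsilon_xU^\epsilon_x\Phi^\epsilon_{xxx}$ and $\int\epsilon\rho^\epsilon_{xx}U^\epsilon\Phi^\epsilon_{xxx}$ are left over. Since $\rho^\epsilon_{xx}$ contains a boundary-layer term of size $\epsilon^{-1/2}$, these cannot be absorbed by $\|U_{xx}\|^2$. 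To handle them, I will take the $U$-equation itself as an identity for $\epsilon\rho^\epsilon\Phi^\epsilon_{xxx}$:
\begin{equation*}
\epsilon\rho^\epsilon\Phi^\epsilon_{xxx}=\rho^\epsilon U^\epsilon_t+\rho^\epsilon u^\epsilon U^\epsilon_x-U^\epsilon_{xx}-W .
\end{equation*}
Squaring and integrating gives $\epsilon^2\|\Phi_{xxx}\|^2_{L^2_{xt}}\lesssim\|U_t\|^2+\|U_{xx}\|^2+\|W\|^2$. Alternatively, I test this identity with $\epsilon\Phi_{xxx}$ and move the time derivative onto $\Phi$ through the continuity equation, as in \cite{H2020}. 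The result is a dissipation $\epsilon^2\|\Phi_{xxx}\|^2$ at the cost of $\|U_{xx}\|^2$ and source terms. Adding a small multiple $\delta$ of this to the energy inequality, I bound $\epsilon\rho_x\Phi_{xxx}U_x$ by $\delta\epsilon^2\|\Phi_{xxx}\|^2+C\|\rho_x\|_{L^\infty}^2\|U_x\|^2$. Here $\|\rho^\epsilon_x\|_{L^\infty}\le C+\sqrt\epsilon\|\Phi_x\|_{L^\infty}$, by Lemma \ref{some-inequ}(2) and its $L^\infty$ analogue. For $\epsilon\rho_{xx}U\Phi_{xxx}$, the large part $\sqrt\epsilon\rho^{B,1}_x\sim\epsilon^{-1/2}\rho^{B,1}_{yy}$ times $U$ is handled with $|U(x)|\le x^{1/2}\|U_x\|$ (boundary condition $U=0$) and the Hardy-type weight $y$; this leaves $\epsilon^{1/4}$-losses only. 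I expect exactly this balance to give the rate $\sqrt\epsilon$ rather than $\epsilon$ in (\ref{1234})$_2$, and it is the step I consider most delicate. Gronwall's inequality then closes (\ref{1234})$_2$, with the constant $B_2$ depending on $B_1$.

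\textbf{Step 3 (closing).} By Gagliardo--Nirenberg, $\|\Phi^\epsilon\|_{L^\infty}^2\le C\|\Phi\|\,\|\Phi\|_{H^1}\le C\sqrt{B_1\epsilon}\,(B_2\sqrt\epsilon)^{1/2}$, which is $\le\frac14$ once $\epsilon<\epsilon_0(B_1,B_2)$. This gives (\ref{1234})$_3$ and completes the bootstrap.
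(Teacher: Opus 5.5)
Your plan is essentially the paper's proof: the same bounds on $\rho^\epsilon$, the same zeroth-order energy in which $-\epsilon\int_I\rho^\epsilon\Phi^\epsilon_{xxx}U^\epsilon\,\mathrm{d}x$ becomes $\frac{\mathrm{d}}{\mathrm{d}t}\frac{\epsilon}{2}\|\Phi^\epsilon_x\|_{L^2(I)}^2$ through the continuity equation, the same first-order energy whose leftover terms $\epsilon\int_I\rho^\epsilon_xU^\epsilon_x\Phi^\epsilon_{xxx}\,\mathrm{d}x$ and $\epsilon\int_I\rho^\epsilon_{xx}U^\epsilon\Phi^\epsilon_{xxx}\,\mathrm{d}x$ are absorbed by the extra dissipation $\epsilon^2\|\sqrt{\rho^\epsilon}\Phi^\epsilon_{xxx}\|_{L^2_{T_1}L^2}^2$ (obtained by testing the momentum equation with $\epsilon\Phi^\epsilon_{xxx}$ and rewriting $\epsilon\int_I(\rho^\epsilon U^\epsilon)_t\Phi^\epsilon_{xxx}\,\mathrm{d}x$ via the time-differentiated continuity equation), and the same interpolation to close the bootstrap. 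Your deviations are harmless: the symmetrizing weight $\kappa$ in Step 1 is unnecessary, since the paper simply absorbs $\int_I\rho^\epsilon U^\epsilon_x\Phi^\epsilon\,\mathrm{d}x$ and the integrated-by-parts pressure term into $\|U^\epsilon_x\|_{L^2(I)}^2$, which avoids needing $L^\infty$ control of $\rho^\epsilon_t$ and $\rho^\epsilon_x$ at the zeroth-order level; and your Hardy-type treatment of $\rho^\epsilon_{xx}U^\epsilon$ is sharper than the paper's $\|\rho^\epsilon_{xx}\|_{L^2(I)}^2\le C\epsilon^{-1/2}+C\epsilon\|\Phi^\epsilon_{xx}\|_{L^2(I)}^2$, though the $\sqrt{\epsilon}$ rate is in any case already forced by source terms such as $\|(h_1)_x\|_{L^2(I)}^2\le C\sqrt{\epsilon}$ rather than by that balance.
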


\begin{Corollary}\label{following-4}
Under the conditions of Theorem \ref{following}, there exists a positive constant $\epsilon_1\le\epsilon_0$ depending only on $T$ and the initial data, such that if $\epsilon\le\epsilon_1$, then
\begin{equation}
\left\{
\begin{aligned}
&\|U^\epsilon\|_{L_{T}^\infty L^2}^2+\|\Phi^\epsilon\|_{L_{T}^\infty L^2}^2+\epsilon\|\Phi^\epsilon_x\|_{L_{T}^\infty L^2}^2+\|U_x^\epsilon\|_{L_{T}^2 L^2}^2\le B_1 {\epsilon},\\
&\|U_x^\epsilon\|_{L_{T}^\infty L^2}^2+\|\Phi_x^\epsilon\|_{L_{T}^\infty L^2}^2+\epsilon\|\Phi_{xx}^\epsilon\|_{L_{T}^\infty L^2}^2+\epsilon^2\|\Phi_{xxx}^\epsilon\|_{L_{T}^2 L^2}^2+\|U_{xx}^\epsilon\|_{L_{T}^2 L^2}^2\le B_2 {\sqrt{\epsilon}},\\
&\|\Phi^\epsilon\|_{L_{T}^\infty L^\infty} \le \frac{1}{2}\\
\end{aligned}\right.
\end{equation}
hold.
\end{Corollary}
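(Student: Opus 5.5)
This is a continuity (bootstrap) argument built on Theorem \ref{assumption}. Set
\[
T_* := \sup\Big\{ T_1\in(0,T]\cap(0,T^\epsilon) : \sup_{t\in[0,T_1]}\|\Phi^\epsilon(\cdot,t)\|_{L^\infty(I)}\le 1\Big\}.
\]

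\textbf{Step 1: $T_*>0$.} By (\ref{initial-value}),
\[
\|\Phi^\epsilon(\cdot,0)\|_{L^\infty}\le C\epsilon^{-1/2}\|\rho_0^\epsilon-\rho_0\|_{H^1}\le C\sqrt{C_0}\,\epsilon^{\frac{m-1}{2}},
\]
which is less than $\frac12$ for $\epsilon$ small, since $m>2$. Proposition \ref{exi-nsk} gives $\rho^\epsilon\in C([0,T_\epsilon];H^2)$, and the profiles in (\ref{zhankai}) are continuous in time with values in $H^1$. Hence $\Phi^\epsilon\in C([0,T^\epsilon);L^\infty)$, and the a priori hypothesis (\ref{ini-val-phi-1}) holds on a short interval.

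\textbf{Step 2: the bootstrap.} On any $[0,T_1]$ with $T_1<T_*$, Theorem \ref{assumption} yields (\ref{1234}), in particular $\|\Phi^\epsilon\|_{L^\infty}\le\frac12$. By continuity this strict improvement cannot coexist with $T_*<\min\{T,T^\epsilon\}$: just past $T_*$ the bound $\le1$ would still hold. So $T_*=\min\{T,T^\epsilon\}$, and the estimates (\ref{1234}) hold uniformly on $[0,T_1]$ for every $T_1<T_*$ with constants $B_1,B_2$ independent of $T_1$.

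\textbf{Step 3: $T^\epsilon>T$ (the main obstacle).} We rule out $T^\epsilon\le T$, which is the delicate point. Assume $T^\epsilon\le T$. Then (\ref{1234}), (\ref{zhankai}), Lemma \ref{L-M-well-pose} and Proposition \ref{B-b-Pro} give uniform-in-time bounds on $[0,T^\epsilon)$ for $\|\rho^\epsilon\|_{H^2}$ and $\|u^\epsilon\|_{H^1}$. The $H^2$ part uses
\[
\sqrt\epsilon\,\|\Phi^\epsilon_{xx}\|_{L^2}\le C\epsilon^{-1/2}\epsilon^{1/4},
\]
which is finite for fixed $\epsilon$; the blow-up in $\epsilon$ is harmless because only finiteness matters at this step. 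Next, $\|\Phi^\epsilon\|_{L^\infty}\le\frac12$ together with $\rho^{I,0}\ge 1/M$ and
\[
\sqrt\epsilon\,\|\rho^{B,1}+\rho^{b,1}\|_{L^\infty}+\epsilon\,\|\rho^{B,2}+\rho^{b,2}\|_{L^\infty}+\|L^\epsilon\|_{L^\infty}\le C\sqrt\epsilon
\]
gives
\[
\rho^\epsilon\ge \frac1M - C\sqrt\epsilon-\frac{\sqrt\epsilon}{2}\ge \frac1{2M}
\]
for $\epsilon\le\epsilon_1$; this is where $\epsilon_1$ is fixed, as in (\ref{ep1}). Applying Proposition \ref{exi-nsk} at a time close to $T^\epsilon$, with these uniform $H^2\times H^1_0$ bounds and the lower bound on density, extends the solution by a uniform time. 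This contradicts the maximality of $T^\epsilon$. We must check that the local existence time in Proposition \ref{exi-nsk} depends only on $\|\rho_0^\epsilon\|_{H^2}$, $\|u_0^\epsilon\|_{H^1}$, $\inf\rho_0^\epsilon$ and $\epsilon$; this is standard for such results, as in \cite{H2020}. The compatibility condition $\rho^\epsilon_x|_{x=0,1}=0$ is propagated by the solution itself.

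\textbf{Conclusion.} Steps 2 and 3 give $T<T^\epsilon$, so Theorem \ref{assumption} applies with $T_1=T$. This gives all three estimates of the corollary on $[0,T]$.
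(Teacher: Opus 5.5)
Your proposal is correct and follows the paper's own continuity argument: the initial smallness of $\|\Phi^\epsilon(\cdot,0)\|_{L^\infty}$, repeated application of Theorem \ref{assumption} to improve the a priori bound from $1$ to $\frac12$, and a contradiction with the maximality of $T^\epsilon$ if $T^\epsilon\le T$. Your Step 3 spells out the continuation step that the paper only asserts via ``continuity of the solution'', using uniform $H^2\times H^1_0$ bounds, the density lower bound, and a local existence time that depends only on these norms. One small correction: the density lower bound already follows from Lemma \ref{clm-rho-bdd} with $\epsilon\le\epsilon_0$, so $\epsilon_1$ in (\ref{ep1}) is actually fixed by the initial smallness condition in your Step 1, not by the density bound.
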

\begin{Remark}
$\epsilon_0$ and $\epsilon_1$ are determined by (\ref{ep0-4}) and (\ref{ep1}), respectively.
\end{Remark}

\subsection{Uniform estimates for error system}
In the following estimates we only apply the case $l=4$ in (\ref{L-M-esti}), i.e., $$\|L^\epsilon\|_{L_T^\infty H^2}+\|L^\epsilon_t\|_{L_T^\infty H^1}+\|L^\epsilon_{tt}\|_{L_T^2 L^2} \le C \epsilon^2,$$ which suffices for our purpose.
\begin{Lemma}\label{clm-rho-bdd}
Under the conditions of Theorem \ref{assumption}, there holds that
$$
\frac{1}{2M}\le \rho^\epsilon \le 2 M , ~(x,t) \in I\times [0,T_1],
$$
where $M>1$ is defined in Proposition \ref{exi-ns}.
\end{Lemma}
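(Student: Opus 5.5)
We start from the decomposition (\ref{zhankai}) and write
\[
\rho^\epsilon-\rho^{I,0}=\sqrt{\epsilon}(\rho^{B,1}+\rho^{b,1})+\epsilon(\rho^{B,2}+\rho^{b,2})+L^\epsilon+\sqrt{\epsilon}\,\Phi^\epsilon .
\]
The plan is to show that every term on the right is bounded in $L^\infty(I\times[0,T_1])$ by $C\sqrt{\epsilon}$. Then we choose $\epsilon_0$ so small that $C\sqrt{\epsilon_0}\le \frac{1}{2M}$. Since Proposition \ref{exi-ns} gives $\frac1M\le\rho^{I,0}\le M$, this yields
\[
\frac{1}{2M}\le \frac1M-\frac{1}{2M}\le\rho^\epsilon\le M+\frac{1}{2M}\le 2M ,
\]
where the last inequality uses $M>1$.

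The error term is handled by the a priori assumption (\ref{ini-val-phi-1}), which gives $\|\sqrt\epsilon\,\Phi^\epsilon\|_{L^\infty}\le\sqrt\epsilon$ on $[0,T_1]$.

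For the boundary layer terms we use the second line of (\ref{L2-B-b}) in Lemma \ref{some-inequ}. It gives
\[
\|\rho^{B,i}(\tfrac{\cdot}{\sqrt\epsilon},t)\|_{L^\infty(I)}\le C\|\rho^{B,i}\|_{L^\infty_y}\le C\|\rho^{B,i}\|_{H^1_y},
\]
and the analogous bound for $\rho^{b,i}$, for $i=1,2$. The last step is the one-dimensional Sobolev embedding on the half line. The right-hand sides are bounded uniformly in $t\in[0,T]$ by Proposition \ref{B-b-Pro}, taking $l=0$ and $k=0$ there, which gives $\rho^{B,i}\in L^\infty_T H^5_y$. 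Hence $\sqrt\epsilon(\rho^{B,1}+\rho^{b,1})+\epsilon(\rho^{B,2}+\rho^{b,2})=O(\sqrt\epsilon)$ uniformly.

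For the corrector we use Lemma \ref{L-M-well-pose} with $l=4$, together with $H^1(I)\hookrightarrow L^\infty(I)$. This gives $\|L^\epsilon\|_{L^\infty_TL^\infty}\le C\|L^\epsilon\|_{L^\infty_TH^2}\le C\epsilon^2$.

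Collecting the three bounds, $\|\rho^\epsilon-\rho^{I,0}\|_{L^\infty(I\times[0,T_1])}\le C_*\sqrt\epsilon$ with $C_*$ independent of $\epsilon$. We then impose $\epsilon_0\le (2MC_*)^{-2}$ as one of the smallness conditions defining $\epsilon_0$.

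There is no serious obstacle. The only point requiring care is that $C_*$ must depend only on $T$ and the data, and not on $T_1$ or on $\epsilon$. This holds because all profile bounds are taken on $[0,T]\supset[0,T_1]$, and the $\Phi^\epsilon$ contribution is controlled purely by the bootstrap hypothesis (\ref{ini-val-phi-1}).
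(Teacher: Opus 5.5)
Your proposal is correct and follows the paper's proof essentially step for step. You bound each term of $\rho^\epsilon-\rho^{I,0}$ in $L^\infty$ by $C\sqrt\epsilon$, using Proposition \ref{B-b-Pro}, Lemma \ref{L-M-well-pose} and the bootstrap hypothesis (\ref{ini-val-phi-1}), and then impose a smallness condition on $\epsilon_0$ of the same type as (\ref{ep0-1}) to get $\frac{1}{2M}\le\rho^\epsilon\le M+\frac{1}{2M}\le 2M$.
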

\begin{proof}
Using (\ref{zhankai}) and $\frac{1}{ M}\le\rho^{I,0}\le M$ in Proposition \ref{exi-ns}, it holds that
$$\begin{aligned}
&\frac{1}{ M} + \sqrt{\epsilon}(\rho^{B,1}+\rho^{b,1})+{\epsilon}(\rho^{B,2}+\rho^{b,2})+L^\epsilon +\sqrt{\epsilon} \Phi^\epsilon \\
\le &\rho^\epsilon \le  M +\sqrt{\epsilon}(\rho^{B,1}+\rho^{b,1})+{\epsilon}(\rho^{B,2}+\rho^{b,2})+L^\epsilon +\sqrt{\epsilon} \Phi^\epsilon.
\end{aligned}
$$

Using  Proposition \ref{B-b-Pro} and (\ref{ini-val-phi-1}), we have
$$\begin{aligned}
&\frac{1}{ M} + \sqrt{\epsilon}(\rho^{B,1}+\rho^{b,1})+{\epsilon}(\rho^{B,2}+\rho^{b,2})+L^\epsilon +\sqrt{\epsilon} \Phi^\epsilon \\
\ge &\frac{1}{ M} - \sqrt{\epsilon}(\|\rho^{B,1}\|_{L^\infty_y}+\|\rho^{b,1}\|_{L^\infty_z})-{\epsilon}(\|\rho^{B,2}\|_{L^\infty_y}+\|\rho^{b,2}\|_{L^\infty_z})-\|L^\epsilon\|_{L^\infty(I)} -\sqrt{\epsilon} \|\Phi^\epsilon\|_{L^\infty(I)}\\
\ge &\frac{1}{M} -\sqrt\epsilon C-\sqrt \epsilon,
\end{aligned}
$$
 for any $t\in[0,T_1]$.
Choosing
\begin{equation}\label{ep0-1}
\epsilon_0 \le \frac{1}{2^2M^2( C+1)^2},
\end{equation}
and letting $\epsilon\le\epsilon_0$, we have
\begin{equation*}
0<\sqrt\epsilon \le \sqrt{\epsilon_0}\le \frac{1}{2M( C+1)}.
\end{equation*}
Hence, we obtain
$$
\rho^\epsilon \ge \frac{1}{2M}>0,
$$
and
$$\begin{aligned}
\rho^\epsilon \le
&{ M} + \sqrt{\epsilon}(\rho^{B,1}+\rho^{b,1})+{\epsilon}(\rho^{B,2}+\rho^{b,2})+L^\epsilon +\sqrt{\epsilon} \Phi^\epsilon \\
\le &{M} +\sqrt{\epsilon}(\|\rho^{B,1}\|_{L^\infty_y}+\|\rho^{b,1}\|_{L^\infty_z})+{\epsilon}(\|\rho^{B,2}\|_{L^\infty_y}+\|\rho^{b,2}\|_{L^\infty_z})+\|L^\epsilon\|_{L^\infty(I)} +\sqrt{\epsilon} \|\Phi^\epsilon\|_{L^\infty(I)}\\
\le &{ M} +\sqrt\epsilon  C(T_1)+\sqrt \epsilon \le { M} +\frac{1}{2 M}\le 2M.
\end{aligned}
$$
Thus, the proof of Lemma \ref{clm-rho-bdd} is complete.
\end{proof}

To prove Theorem \ref{assumption}, we need to estimate some terms in (\ref{zhankai-equ}).
\begin{Lemma}\label{clm:h}
Under the conditions of Theorem \ref{assumption}, there holds that
\begin{equation}\label{h-alpha-bata}
\left\{
\begin{aligned}
&\|h_1\|_{L^2(I)}^2 \le C\epsilon \sqrt{\epsilon},~\|(h_1)_x\|_{L^2(I)}^2 \le C\sqrt{\epsilon}, ~\|( h_1)_t \|_{L^2(I)}^2
\le  C \epsilon \sqrt{\epsilon},~\|\alpha\|_{L^2(I)}^2\le  C\epsilon \sqrt{\epsilon},\\[2mm]
&
|U^\epsilon(x,t)| \le \|U^\epsilon_x\|_{L^2(I)} |\sqrt \epsilon y|^{\frac{1}{2}}, ~|U^\epsilon(x,t)| \le \|U^\epsilon_x\|_{L^2(I)} |\sqrt \epsilon z|^{\frac{1}{2}}.\\
\end{aligned}\right.
\end{equation}
\end{Lemma}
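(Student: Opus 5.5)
The plan is to treat each estimate in (\ref{h-alpha-bata}) separately. Every term in $h_1$ and $\alpha$ has the same structure: a difference of an interior quantity and its boundary trace, for instance $u^{I,0}-\overline{u^{I,0}}$, $\rho^{I,0}-\overline{\rho^{I,0}}$ or $(\rho^{I,0})^{\gamma-1}-(\overline{\rho^{I,0}})^{\gamma-1}$, multiplied by a boundary layer profile. Taylor's formula turns each such difference into a factor of $x=\sqrt\epsilon y$ (or $x-1=\sqrt\epsilon z$). The resulting polynomial weights in $y$ are then absorbed by the weighted regularity $\langle y\rangle^l(\cdot)\in W^{k,\infty}_TH^{m}_y$ from Proposition \ref{B-b-Pro}. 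Finally I pass from $L^2(I)$ to $L^2_y$ by the change of variables in (\ref{L2-B-b}), which contributes a factor $\sqrt\epsilon$ for squared norms. Throughout, $\rho^{I,0},u^{I,0}$ are smooth by Proposition \ref{exi-ns}, and $u^{I,0}$ vanishes on the boundary.

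\textbf{Estimate of $h_1$.} The dangerous term in $h_{1,1}$ is $\frac{1}{\sqrt\epsilon}(u^{I,0}-\overline{u^{I,0}})\rho^{B,1}_y-y\overline{u^{I,0}_x}\rho^{B,1}_y$. By second-order Taylor expansion,
\[u^{I,0}-\overline{u^{I,0}}=x\overline{u^{I,0}_x}+\tfrac{x^2}{2}u^{I,0}_{xx}(\theta x,t).\]
After dividing by $\sqrt\epsilon$, the first-order part cancels exactly against $y\overline{u^{I,0}_x}\rho^{B,1}_y$. What remains is bounded by $\sqrt\epsilon\, y^2\|u^{I,0}_{xx}\|_{L^\infty}|\rho^{B,1}_y|$. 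The other terms are $O(\sqrt\epsilon\, y)$ times profiles; for example $(\rho^{I,0}-\overline{\rho^{I,0}})u^{B,2}_y$ is bounded by $\sqrt\epsilon\, y\,|u^{B,2}_y|$. Squaring and integrating gives
\[\|h_{1,1}\|_{L^2}^2\le C\epsilon\cdot\sqrt\epsilon\,\|\langle y\rangle^2(\rho^{B,1}_y,u^{B,2}_y,\rho^{B,1},\rho^{B,2}_y)\|_{L^2_y}^2\le C\epsilon\sqrt\epsilon,\]
and $h_{1,2}$ is handled symmetrically in $z$. For $(h_1)_x$, differentiating a profile costs $\epsilon^{-1/2}$, since $\partial_x=\epsilon^{-1/2}\partial_y$. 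Each term is then of size $\sqrt\epsilon\,y^k$ times a profile, or $O(1)$ times a profile; the latter occurs, for instance, when $\partial_x$ hits the Taylor factor. So the squared norm is at most $C\cdot\sqrt\epsilon$. For $(h_1)_t$, the time derivative only falls on smooth coefficients, on traces, or on profiles. This keeps the structure intact, with the needed $\rho^{B,1}_{yt}$ and similar terms controlled by the $k=1$ case of Proposition \ref{B-b-Pro}, so the bound $C\epsilon\sqrt\epsilon$ persists.

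\textbf{Estimate of $\alpha$.} Here $\alpha=\epsilon^{-1/2}(\alpha_1+\alpha_2)$, so I need $\alpha_1$ to be $O(\epsilon)$ pointwise, up to weights. Terms already carrying $\sqrt\epsilon$ and a difference factor give $\sqrt\epsilon\cdot\sqrt\epsilon y$, which is fine. The delicate terms have no explicit $\sqrt\epsilon$:
\[\gamma[(\rho^{I,0})^{\gamma-1}-(\overline{\rho^{I,0}})^{\gamma-1}]\rho^{B,1}_y-\gamma(\gamma-1)y(\overline{\rho^{I,0}})^{\gamma-2}\overline{\rho^{I,0}_x}\sqrt\epsilon\rho^{B,1}_y,\]
and $(\rho^{I,0}-\overline{\rho^{I,0}})\rho^{B,1}_{yyy}-y\sqrt\epsilon\,\overline{\rho^{I,0}_x}\rho^{B,1}_{yyy}$. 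In each pair, the first-order Taylor term $x\cdot(\text{derivative at }0)=\sqrt\epsilon y(\cdot)$ cancels exactly against the second member of the pair, leaving $O(\epsilon y^2)$ times a profile. This is the main bookkeeping obstacle: I must check that the sign conventions in $\alpha_1$ make these cancellations exact. The profile $\rho^{B,1}_{yyy}$ lies in $L^\infty_TH^2_y$ with any weight, because $\rho^{B,1}\in W^{0,\infty}_TH^5_y$. Hence $\|\alpha\|_{L^2}^2\le \epsilon^{-1}\cdot\epsilon^2\cdot\sqrt\epsilon\, C=C\epsilon\sqrt\epsilon$.

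\textbf{Pointwise bound on $U^\epsilon$.} This is immediate from $U^\epsilon(0,t)=0$ in (\ref{wucha-boun-con}) and Cauchy--Schwarz:
\[|U^\epsilon(x,t)|=\Big|\int_0^xU^\epsilon_x\Big|\le x^{1/2}\|U^\epsilon_x\|_{L^2}=|\sqrt\epsilon y|^{1/2}\|U^\epsilon_x\|_{L^2}.\]
The bound in $z$ follows in the same way by integrating from $x=1$.
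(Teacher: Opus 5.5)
Your proposal is correct and follows the paper's proof. Both arguments rest on the same steps:
\begin{itemize}
\item a Taylor expansion of the interior coefficients about $x=0$ (resp.\ $x=1$), whose first-order term cancels the explicit $y\overline{u^{I,0}_x}\rho^{B,1}_y$, $\gamma(\gamma-1)y(\overline{\rho^{I,0}})^{\gamma-2}\overline{\rho^{I,0}_x}\sqrt{\epsilon}\rho^{B,1}_y$ and $y\sqrt{\epsilon}\,\overline{\rho^{I,0}_x}\rho^{B,1}_{yyy}$ terms;
\item the weighted profile regularity of Proposition~\ref{B-b-Pro} combined with the change of variables (\ref{L2-B-b});
\item Cauchy--Schwarz together with $U^\epsilon|_{x=0,1}=0$ for the pointwise bound.
\end{itemize}
The sign check you flagged does hold: in each pair the two members carry opposite signs in $\alpha_1$ (the pressure pair inside the braces, the $\rho^{B,1}_{yyy}$ pair outside), so the cancellations are exact.
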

\begin{proof}%[Proof of Claim \ref{clm:h}]
%This proof will use Propositions \ref{exi-ns} and \ref{B-b-Pro}.

Firstly, we define $\xi =\xi(x,t)$
%and $\zeta=\zeta(x,t)$ means
as the gengral function when we use Taylor expansion to solve the problem, which will be different from line to line.

Using  (\ref{L2-B-b}), Propositions \ref{exi-ns} and \ref{B-b-Pro}, %mean value Theorem
  and Taylor expansion
$$\begin{aligned}
u^{I,0}(x,t)-\overline{u^{I,0}} -\sqrt{\epsilon} y\overline{u^{I,0}_x}= \frac{\epsilon y^2}{2} {u^{I,0}_{\xi\xi}(\xi,t)},
\end{aligned}
$$
we have
$$\begin{aligned}
 &\| h_{1,1}\|_{L^2(I)}^2\\
=&\| [\frac{1}{\sqrt{\epsilon}}(u^{I,0}-\overline{u^{I,0}})-y\overline{u^{I,0}_x}]\rho^{B,1}_y+ (\rho^{I,0}-\overline{\rho^{I,0}})u^{B,2}_y  + \rho^{B,1} (u^{I,0}_x-\overline{u^{I,0}_x})  + \rho^{B,2}_y (u^{I,0}-\overline{u^{I,0}}) \|_{L^2(I)}^2\\
= & \| \frac{\sqrt\epsilon y^2}{2} {u^{I,0}_{xx}(\xi,t)}  \rho^{B,1}_y + y \rho^{I,0}_x (\xi, t) \sqrt{\epsilon}u^{B,2}_y + y u^{I,0}_{xx} (\xi,t)\sqrt{\epsilon}\rho^{B,1}   + y u^{I,0}_x (\xi,t)\sqrt{\epsilon}\rho^{B,2}_y \|_{L^2(I)}^2\\
\le & C\|u^{I,0}_{xx}\|_{L^\infty(I)}^2 \epsilon \sqrt{\epsilon} \| \langle y \rangle^2 \rho^{B,1}_y \|_{L_y^2}^2+ C\|\rho^{I,0}_{x}\|_{L^\infty(I)}^2 \epsilon \sqrt{\epsilon} \| \langle y \rangle u^{B,2}_y \|_{L_y^2}^2 +C\|u^{I,0}_{xx}\|_{L^\infty(I)}^2 \epsilon \sqrt{\epsilon}  \| \langle y \rangle \rho^{B,1} \|_{L_y^2}^2\\
&+ C\|u^{I,0}_{x}\|_{L^\infty(I)}^2 \epsilon \sqrt{\epsilon} \| \langle y \rangle \rho^{B,2}_y \|_{L_y^2}^2\\
\le & C\epsilon \sqrt{\epsilon}.
\end{aligned}
$$
Similarly, it holds that
$$\begin{aligned}
 &\|h_{1,2}\|_{L^2(I)}^2
\le  C\epsilon \sqrt{\epsilon}.
\end{aligned}
$$

Secondly, we use  (\ref{L2-B-b}),  Propositions \ref{exi-ns} and \ref{B-b-Pro}, %mean value Theorem
  and Taylor expansion to get
$$\begin{aligned}
 &\|\partial_x h_{1,1}\|_{L^2(I)}^2\\
=&\| \frac{1}{\sqrt{\epsilon}}[\frac{1}{\sqrt{\epsilon}}(u^{I,0}-\overline{u^{I,0}})-y\overline{u^{I,0}_x}]\rho^{B,1}_{yy}+\frac{1}{\sqrt{\epsilon}}(u^{I,0}_x-\overline{u^{I,0}_x})\rho^{B,1}_y +\frac{1}{\sqrt{\epsilon}} (\rho^{I,0}-\overline{\rho^{I,0}})u^{B,2}_{yy}+ \rho^{I,0}_x u^{B,2}_y\\
& +\frac{1}{\sqrt{\epsilon}} \rho^{B,1}_y (u^{I,0}_x-\overline{u^{I,0}_x}) + \rho^{B,1} u^{I,0}_{xx}+\frac{1}{\sqrt{\epsilon}} \rho^{B,2}_{yy} (u^{I,0}-\overline{u^{I,0}}) + \rho^{B,2}_y u^{I,0}_{x}\|_{L^2(I)}^2\\
\le & C\|u^{I,0}_{xx}\|_{L^\infty(I)}^2  \sqrt{\epsilon}( \| \langle y \rangle^2 \rho^{B,1}_{yy} \|_{L_y^2}^2+\| \langle y \rangle \rho^{B,1}_y \|_{L_y^2}^2)+C\|u^{I,0}_{xx}\|_{L^\infty(I)}^2  \sqrt{\epsilon}  \|  \rho^{B,1} \|_{L_y^2}^2\\
&+ C\|\rho^{I,0}_{x}\|_{L^\infty(I)}^2  \sqrt{\epsilon} (\| \langle y \rangle u^{B,2}_{yy} \|_{L_y^2}^2+\| u^{B,2}_{y} \|_{L_y^2}^2)+C\|u^{I,0}_{x}\|_{L^\infty(I)}^2  \sqrt{\epsilon} \| \langle y \rangle \rho^{B,2}_{yy} \|_{L_y^2}^2\\
&+C\|u^{I,0}_{x}\|_{L^\infty(I)}^2  \sqrt{\epsilon}  \|  \rho^{B,2}_y \|_{L_y^2}^2\\
\le &C \sqrt{\epsilon}.
\end{aligned}
$$
Similarly, there holds
$$\begin{aligned}
 &\|\partial_x h_{1,2}\|_{L^2(I)}^2
\le  C \sqrt{\epsilon}.
\end{aligned}
$$

Thirdly, using  (\ref{L2-B-b}),  Propositions \ref{exi-ns} and \ref{B-b-Pro}, %mean value Theorem
  and Taylor expansion, we get
$$\begin{aligned}
 &\|\partial_t h_{1,1} \|_{L^2(I)}^2\\
=&\| [\frac{1}{\sqrt{\epsilon}}(u^{I,0}-\overline{u^{I,0}})-y\overline{u^{I,0}_x}]\rho^{B,1}_{yt}+[\frac{1}{\sqrt{\epsilon}}(u^{I,0}_t-\overline{u^{I,0}_t})-y\overline{u^{I,0}_{xt}}]\rho^{B,1}_y
+ (\rho^{I,0}-\overline{\rho^{I,0}})u^{B,2}_{yt} \\
&+ (\rho^{I,0}_t-\overline{\rho^{I,0}_t})u^{B,2}_y + \rho^{B,1}_t (u^{I,0}_x-\overline{u^{I,0}_x})  + \rho^{B,1} (u^{I,0}_{xt}-\overline{u^{I,0}_{xt}})  + \rho^{B,2}_{yt} (u^{I,0}-\overline{u^{I,0}}) \\
&+ \rho^{B,2}_y (u^{I,0}_t-\overline{u^{I,0}_t}) \|_{L^2(I)}^2\\
\le & C\|u^{I,0}_{xx}\|_{L^\infty(I)}^2 \epsilon \sqrt{\epsilon} \| \langle y \rangle^2 \rho^{B,1}_{yt} \|_{L_y^2}^2+C\|u^{I,0}_{xxt}\|_{L^\infty(I)}^2 \epsilon \sqrt{\epsilon} \| \langle y \rangle^2 \rho^{B,1}_{y} \|_{L_y^2}^2\\
&+ C\|\rho^{I,0}_{x}\|_{L^\infty(I)}^2 \epsilon \sqrt{\epsilon} \| \langle y \rangle u^{B,2}_{yt} \|_{L_y^2}^2+ C\|\rho^{I,0}_{xt}\|_{L^\infty(I)}^2 \epsilon \sqrt{\epsilon} \| \langle y \rangle u^{B,2}_y \|_{L_y^2}^2\\
&+C\|u^{I,0}_{xx}\|_{L^\infty(I)}^2 \epsilon \sqrt{\epsilon}  \| \langle y \rangle\rho^{B,1}_t \|_{L_y^2}^2 +C\|u^{I,0}_{xxt}\|_{L^\infty(I)}^2 \epsilon \sqrt{\epsilon}  \| \langle y \rangle \rho^{B,1} \|_{L_y^2}^2 \\
&+ C\|u^{I,0}_{x}\|_{L^\infty(I)}^2 \epsilon \sqrt{\epsilon} \| \langle y \rangle \rho^{B,2}_{yt} \|_{L_y^2}^2+ C\|u^{I,0}_{xt}\|_{L^\infty(I)}^2 \epsilon \sqrt{\epsilon} \| \langle y \rangle \rho^{B,2}_y \|_{L_y^2}^2\\
\le & C\epsilon \sqrt{\epsilon} .
\end{aligned}
$$
Similarly, we have
$$\begin{aligned}
 &\|\partial_t h_{1,2}\|_{L^2(I)}^2
\le  C \epsilon\sqrt{\epsilon}.
\end{aligned}
$$

Because of $h_1=h_{1,1}+h_{1,2}$, above estimates imply (\ref{h-alpha-bata})$_{1,2,3}$.

Moreover,  (\ref{L2-B-b}), Propositions \ref{exi-ns} and \ref{B-b-Pro}, and Tayolr expansion
% and mean value Theorem
 also imply that
$$\begin{aligned}
 &\frac{1}{\epsilon}\|\alpha_1\|_{L^2(I)}^2\\
\le & C\|u^{I,0}_{xt}\|_{L^\infty(I)}^2 \epsilon \sqrt{\epsilon} \| \langle y \rangle \rho^{B,1} \|_{L_y^2}^2+ C(\|\rho^{I,0}_{x} u^{I,0}\|_{L^\infty(I)}^2+ \|\rho^{I,0} u^{I,0}_x\|_{L^\infty(I)}^2 )\epsilon \sqrt{\epsilon} \| \langle y \rangle u^{B,2}_y \|_{L_y^2}^2\\
&+ C(\|u^{I,0}_{xx} u^{I,0}\|_{L^\infty(I)}^2+ \| u^{I,0}_x\|_{L^\infty(I)}^4 )\epsilon \sqrt{\epsilon}  \| \langle y \rangle \rho^{B,1} \|_{L_y^2}^2+ C\|(\rho^{I,0})^{\gamma-2}\rho^{I,0}_{x}\|_{L^\infty(I)}^2 \epsilon \sqrt{\epsilon}\| \langle y \rangle \rho^{B,2}_y \|_{L_y^2}^2\\
& +C\|\partial_x^2[(\rho^{I,0})^{\gamma-1}]\|_{L^\infty(I)}^2 \epsilon \sqrt{\epsilon} \| \langle y \rangle \rho^{B,1} \|_{L_y^2}^2+ C\|(\rho^{I,0})^{\gamma-3}\rho^{I,0}_{x}\|_{L^\infty(I)}^2 \epsilon \sqrt{\epsilon}\| \rho^{B,1} \|_{L_y^\infty}^2\| \langle y \rangle \rho^{B,1}_y \|_{L_y^2}^2\\
&+C\|\partial_x^2[(\rho^{I,0})^{\gamma-1}]\|_{L^\infty(I)}^2 \epsilon \sqrt{\epsilon}\| \langle y \rangle^2 \rho^{B,1}_y \|_{L_y^2}^2\\
&+C\|\rho^{I,0}_{x}\|_{L^\infty(I)}^2 \epsilon \sqrt{\epsilon} \| \langle y \rangle \rho^{B,2}_{yyy} \|_{L_y^2}^2+C\|\rho^{I,0}_{xx}\|_{L^\infty(I)}^2 \epsilon \sqrt{\epsilon} \| \langle y \rangle^2 \rho^{B,1}_{yyy} \|_{L_y^2}^2\\
\le & C\epsilon \sqrt{\epsilon},
\end{aligned}
$$
Similarly, we have
$$\begin{aligned}
 &\frac{1}{\epsilon}\|\alpha_2 \|_{L^2(I)}^2
\le  C\epsilon \sqrt{\epsilon}.
\end{aligned}
$$

Because of $\alpha=\frac{1}{\sqrt\epsilon}(\alpha_1+\alpha_2)$, we get (\ref{h-alpha-bata})$_{4}$.

Finally, from $U^\epsilon|_{x=0,1}=0$, using H$\rm{\ddot{o}}$lder's inequality, it holds that
$$\begin{aligned}
|U^\epsilon(x,t)|= |\int_0^x U^\epsilon_a(a,t)\mathrm{d}a | \le \|U^\epsilon_x\|_{L^2(I)} |\sqrt \epsilon y|^{\frac{1}{2}},\\
|U^\epsilon(x,t)|= |\int_x^1 U^\epsilon_a(a,t)\mathrm{d}a | \le \|U^\epsilon_x\|_{L^2(I)} |\sqrt \epsilon z|^{\frac{1}{2}}.\\
\end{aligned}
$$
\end{proof}
\begin{Lemma}\label{clm:no-impor}
Under the conditions of Theorem \ref{assumption}, there holds that
\begin{equation}\label{some-h}
\left\{
\begin{aligned}
&\|h_3\|_{L^2(I)}^2 +\|h_4\|_{L^2(I)}^2 +\|h_5\|_{L^2(I)}^2  \le C\epsilon\sqrt{\epsilon},\\
&\|(h_3)_x\|_{L^2(I)}^2 +\|(h_4)_x\|_{L^2(I)}^2 +\|(h_5)_x\|_{L^2(I)}^2  \le C\sqrt{\epsilon}.\\
\end{aligned}\right.
\end{equation}
\end{Lemma}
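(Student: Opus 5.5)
The plan is to estimate $h_3,h_4,h_5$ directly from their explicit formulas. No Taylor expansion is needed here, unlike for $h_1$ and $\alpha$ in Lemma \ref{clm:h}, because every term already carries at least one explicit factor $\sqrt\epsilon$. The tools are the scaling inequalities (\ref{L2-B-b}), the bounds on $(\rho^{I,0},u^{I,0})$ in Proposition \ref{exi-ns}, the weighted regularity of the profiles in Proposition \ref{B-b-Pro}, and the smallness of $L^\epsilon$ from Lemma \ref{L-M-well-pose} with $l=4$. The basic bookkeeping rule is:
\begin{itemize}
\item a profile $f(x/\sqrt\epsilon,t)$ has $L^2(I)$-norm squared at most $\sqrt\epsilon\|f\|_{L^2_y}^2$;
\item each $x$-derivative landing on a profile produces a factor $\epsilon^{-1/2}$ and trades $f$ for $f_y$; in $L^2$ this gives $\|\partial_x f(\cdot/\sqrt\epsilon)\|_{L^2(I)}^2\le \epsilon^{-1/2}\|f_y\|_{L^2_y}^2$.
\end{itemize}

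\textbf{Estimate of $h_3$ and $h_4$.} Write $h_3=\sqrt\epsilon\,[\rho^{B,2}_t+u^{I,0}_x\rho^{B,2}+u^{B,3}_y\rho^{I,0}+u^{B,2}\rho^{I,0}_x+\sqrt\epsilon u^{B,3}\rho^{I,0}_x]$.
\begin{itemize}
\item The interior coefficients $u^{I,0}_x,\rho^{I,0},\rho^{I,0}_x$ are bounded in $L^\infty(I)$ uniformly on $[0,T]$ by Proposition \ref{exi-ns} and Sobolev embedding.
\item Applying (\ref{L2-B-b}) termwise gives
$$\|h_3\|_{L^2(I)}^2\le C\epsilon\sqrt\epsilon\big(\|\rho^{B,2}_t\|_{L^2_y}^2+\|\rho^{B,2}\|_{L^2_y}^2+\|u^{B,3}_y\|_{L^2_y}^2+\|u^{B,2}\|_{L^2_y}^2+\|u^{B,3}\|_{L^2_y}^2\big)\le C\epsilon\sqrt\epsilon .$$
The last inequality uses Proposition \ref{B-b-Pro} with $k=0,1$ in the $W^{k,\infty}_T$ scale.
\item For $(h_3)_x$, the product rule gives two kinds of terms. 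Those where $\partial_x$ hits a profile are bounded by $C\epsilon\cdot\epsilon^{-1/2}\|(\rho^{B,2}_{yt},\rho^{B,2}_y,u^{B,3}_{yy},u^{B,2}_y,u^{B,3}_y)\|_{L^2_y}^2\le C\sqrt\epsilon$. Those where $\partial_x$ hits $u^{I,0}_x,\rho^{I,0},\rho^{I,0}_x$ are bounded by $C\epsilon\sqrt\epsilon$.
\item The required regularity is available: $\rho^{B,2}_t\in W^{1,\infty}_TH^3_y$ and $u^{B,3}\in L^\infty_TH^4_y$, so $\rho^{B,2}_{yt},u^{B,3}_{yy}\in L^\infty_TL^2_y$.
\item $h_4$ is treated identically in the variable $z$.
\end{itemize}

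\textbf{Estimate of $h_5$.} $h_5$ is a sum of products of two factors.
\begin{itemize}
\item The first factor is built from $u^{B,2}_y,u^{b,2}_z,\sqrt\epsilon u^{B,3}_y,\dots$ and is bounded in $L^\infty$ and in $L^2_y$. The second factor is $\sqrt\epsilon(\rho^{B,1}+\rho^{b,1})+\epsilon(\rho^{B,2}+\rho^{b,2})+L^\epsilon$.
\item For each product I put the profile factor carrying the explicit $\sqrt\epsilon$ in $L^\infty_y$ (via $H^1_y\subset L^\infty_y$ and the second line of (\ref{L2-B-b})) and the other profile in $L^2(I)$ via (\ref{L2-B-b}). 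This gives $\epsilon\cdot\sqrt\epsilon$.
\item Cross products of a $B$-profile with a $b$-profile are handled the same way; they are in fact even smaller.
\item Terms containing $L^\epsilon$ or $L^\epsilon_x$ are bounded by $\|L^\epsilon\|_{L^\infty_TH^2}\le C\epsilon^2$ times an $L^2$-bounded profile.
\item The second group, $[\sqrt\epsilon(u^{B,2}+u^{b,2})+\epsilon(u^{B,3}+u^{b,3})][\rho^{B,1}_y+\rho^{b,1}_z+\sqrt\epsilon(\rho^{B,2}_y+\rho^{b,2}_z)+L^\epsilon_x]$, is symmetric: the $\sqrt\epsilon$ sits on the velocity factor.
\item Altogether $\|h_5\|_{L^2(I)}^2\le C\epsilon\sqrt\epsilon$.
\end{itemize}

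\textbf{Derivative of $h_5$.} Differentiating $h_5$ in $x$ puts at most one derivative on one profile factor. That costs $\epsilon^{-1/2}$ in amplitude, hence $\epsilon^{-1}$ in the squared norm. The estimate becomes $\epsilon^{-1}\cdot\epsilon\sqrt\epsilon=\sqrt\epsilon$, using $\|\partial_x f(\cdot/\sqrt\epsilon)\|_{L^\infty(I)}\le C\epsilon^{-1/2}\|f_y\|_{H^1_y}$ for whichever factor is placed in $L^\infty$.

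There is no genuine analytical obstacle here. The main point of care is bookkeeping: in every product the explicit $\sqrt\epsilon$ must be kept, and the derivative count must not exceed the regularity in Proposition \ref{B-b-Pro}. The tightest cases are $u^{B,3}_{yy}$ and $\rho^{B,2}_{yt}$, and both are covered since $k\le1$ suffices in every instance.
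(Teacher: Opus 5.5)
Your proposal is correct and follows essentially the paper's argument: termwise H\"older estimates with one factor in $L^2(I)$ and the other in $L^\infty(I)$, using the scaling inequalities (\ref{L2-B-b}), Propositions \ref{exi-ns} and \ref{B-b-Pro}, and Lemma \ref{L-M-well-pose}, where each $x$-derivative falling on a boundary-layer profile costs a factor $\epsilon^{-1/2}$. One small slip: it is $\rho^{B,2}$, not $\rho^{B,2}_t$, that lies in $W^{1,\infty}_TH^3_y$, and this is exactly what gives $\rho^{B,2}_{yt}\in L^\infty_TL^2_y$.
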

\begin{proof}
%In our proof, we will use Propositions \ref{exi-ns}, \ref{B-b-Pro},  (\ref{L2-B-b}) in Claim \ref{some-inequ} and Remark \ref{U-B3-L-infty}.

Firstly, using H$\rm{\ddot{o}}$lder's inequality, Propositions \ref{exi-ns} and \ref{B-b-Pro}, and  (\ref{L2-B-b}) in Lemma \ref{some-inequ}, % {\color{red}and Remark \ref{U-B3-L-infty}},
it holds that
$$
\begin{aligned}
\|h_3\|_{L^2(I)}^2=&\|\sqrt{\epsilon}\rho^{B,2}_t  +u^{I,0}_x\sqrt{\epsilon}\rho^{B,2}+\sqrt{\epsilon}u^{B,3}_y\rho^{I,0} +\sqrt{\epsilon}u^{B,2}\rho^{I,0}_x+{\epsilon}u^{B,3}\rho^{I,0}_x\|_{L^2(I)}^2\\
\le & C\|\sqrt{\epsilon}\rho^{B,2}_t \|_{L^2(I)}^2 + C\epsilon \|u^{I,0}_x\|_{L^\infty(I)}^2\|\rho^{B,2}\|_{L^2(I)}^2 + C\epsilon \|u^{B,3}_y\|_{L^2(I)}^2\|\rho^{I,0} \|_{L^\infty(I)}^2 \\
&+ C\epsilon \|u^{B,2}\|_{L^2(I)}^2\|\rho^{I,0}_x \|_{L^\infty(I)}^2+ C\epsilon^2 \|u^{B,3}\|_{L^2(I)}^2\|\rho^{I,0}_x \|_{L^\infty(I)}^2\\
\le & C\epsilon\sqrt\epsilon+C\epsilon\sqrt\epsilon
+C\epsilon\sqrt\epsilon+C\epsilon\sqrt\epsilon+C\epsilon^2\sqrt\epsilon \le C\epsilon\sqrt\epsilon.
\end{aligned}
$$
Similarly, we have $ \|h_4\|_{L^2(I)}^2 \le C \epsilon \sqrt\epsilon$.

Define $h_5:=h_{5,1} +h_{5,2}$
as
$$
\begin{aligned}
h_{5,1}=&[(u^{B,2}_y+u^{b,2}_z)+\sqrt{\epsilon}(u^{B,3}_y+u^{b,3}_z)][\sqrt{\epsilon}(\rho^{B,1}+\rho^{b,1}) +{\epsilon}(\rho^{B,2}+\rho^{b,2})+L^\epsilon ],\\
h_{5,2}=&[\sqrt{\epsilon}(u^{B,2}+u^{b,2})+{\epsilon}(u^{B,3}+u^{b,3})][(\rho^{B,1}_y+\rho^{b,1}_z) +\sqrt{\epsilon}(\rho^{B,2}_y+\rho^{b,2}_z)+L^\epsilon_x ].
\end{aligned}
$$
Then, using H$\rm{\ddot{o}}$lder's inequality, Propositions \ref{exi-ns} and \ref{B-b-Pro}, and (\ref{L2-B-b}) in Lemma \ref{some-inequ}, % and Remark \ref{U-B3-L-infty},
we get
$$
\begin{aligned}
\|h_{5,1}\|_{L^2(I)}^2 \le &\|[(u^{B,2}_y+u^{b,2}_z)+\sqrt{\epsilon}(u^{B,3}_y+u^{b,3}_z)]\|_{L^2(I)}^2 \|[\sqrt{\epsilon}(\rho^{B,1}+\rho^{b,1}) +{\epsilon}(\rho^{B,2}+\rho^{b,2})+L^\epsilon ]\|_{L^\infty(I)}^2 \\
\le &C\epsilon \sqrt\epsilon,
\end{aligned}
$$
and
$$
\begin{aligned}
\|h_{5,2}\|_{L^2(I)}^2 \le &C\|[\sqrt{\epsilon}(u^{B,2}+u^{b,2})+{\epsilon}(u^{B,3}+u^{b,3})]\|_{L^2(I)}^2 \|[(\rho^{B,1}_y+\rho^{b,1}_z) +\sqrt{\epsilon}(\rho^{B,2}_y+\rho^{b,2}_z)+L^\epsilon_x ]\|_{L^\infty(I)}^2 \\
\le &C\epsilon \sqrt\epsilon.
\end{aligned}
$$
In conclusion, we have
$$\|h_3\|_{L^2(I)}^2 +\|h_4\|_{L^2(I)}^2 +\|h_5\|_{L^2(I)}^2  \le C\epsilon\sqrt{\epsilon}.$$

Secondly, using H$\rm{\ddot{o}}$lder's inequality, Propositions \ref{exi-ns} and \ref{B-b-Pro}, and (\ref{L2-B-b}) in Lemma \ref{some-inequ}, %and Remark \ref{U-B3-L-infty},
 it holds that
$$
\begin{aligned}
\|(h_3)_x\|_{L^2(I)}^2 =&\|\sqrt{\epsilon}\rho^{B,2}_{xt}  +u^{I,0}_{xx}\sqrt{\epsilon}\rho^{B,2}+u^{B,3}_{yy}\rho^{I,0} +u^{B,2}_y\rho^{I,0}_x+\sqrt{\epsilon}u^{B,3}_y\rho^{I,0}_x+u^{I,0}_x\rho^{B,2}_y+\sqrt{\epsilon}u^{B,3}_y\rho^{I,0}_x\\
& +\sqrt{\epsilon}u^{B,2}\rho^{I,0}_{xx}+{\epsilon}u^{B,3}\rho^{I,0}_{xx}\|_{L^2(I)}^2\\
\le & C \|\sqrt{\epsilon}\rho^{B,2}_{xt}\|_{L^2(I)}^2+C \|u^{I,0}_{xx}\|_{L^\infty(I)}^2 \|\sqrt{\epsilon}\rho^{B,2}\|_{L^2(I)}^2+C \|u^{B,3}_{yy}\|_{L^2(I)}^2\|\rho^{I,0}\|_{L^\infty(I)}^2\\
&+C\|u^{B,2}_y\|_{L^2(I)}^2\|\rho^{I,0}_x\|_{L^\infty(I)}^2+C\|\sqrt{\epsilon}u^{B,3}_y\|_{L^2(I)}^2\|\rho^{I,0}_x\|_{L^\infty(I)}^2+C\|u^{I,0}_x\|_{L^\infty(I)}^2\|\rho^{B,2}_y\|_{L^2(I)}^2\\
&+C\|\sqrt{\epsilon}u^{B,3}_y \|_{L^2(I)}^2\|\rho^{I,0}_x\|_{L^\infty(I)}^2\!+\! C\|\sqrt{\epsilon}u^{B,2}\|_{L^\infty(I)}^2\|\rho^{I,0}_{xx}\|_{L^2(I)}^2\!+\!C\|{\epsilon}u^{B,3}\|_{L^\infty(I)}^2\|\rho^{I,0}_{xx}\|_{L^2(I)}^2\\
\le & C\epsilon\sqrt\epsilon+C\epsilon\sqrt\epsilon+ C\sqrt\epsilon+ C\sqrt\epsilon+C\epsilon\sqrt\epsilon+ C\sqrt\epsilon+C\epsilon\sqrt\epsilon+C\epsilon+C\epsilon^2 \le C\sqrt\epsilon.
\end{aligned}
$$
Similarly, we have
$$
\|(h_4)_x\|_{L^2(I)}^2 \le C\sqrt\epsilon.
$$
We define $(h_5)_x:= \sum_{i=1}^3 h_{5,i}^x$ as
$$
\begin{aligned}
h_{5,1}^x=&[(u^{B,2}_{yy}+u^{b,2}_{zz})+\sqrt{\epsilon}(u^{B,3}_{yy}+u^{b,3}_{zz})][(\rho^{B,1}+\rho^{b,1}) +\sqrt{\epsilon}(\rho^{B,2}+\rho^{b,2})+\frac{1}{\sqrt{\epsilon}}L^\epsilon ],\\
h_{5,2}^x=&2[(u^{B,2}_{y}+u^{b,2}_{z})+\sqrt{\epsilon}(u^{B,3}_{y}+u^{b,3}_{z})][(\rho^{B,1}_y+\rho^{b,1}_z) +\sqrt{\epsilon}(\rho^{B,2}_y+\rho^{b,2}_z)+L^\epsilon_x ],\\
h_{5,3}^x=&[(u^{B,2}+u^{b,2})+\sqrt{\epsilon}(u^{B,3}+u^{b,3})][(\rho^{B,1}_{yy}+\rho^{b,1}_{zz}) +\sqrt{\epsilon}(\rho^{B,2}_{yy}+\rho^{b,2}_{zz})+\sqrt{\epsilon}L^\epsilon_{xx} ],\\
\end{aligned}
$$
then using H$\rm{\ddot{o}}$lder's inequality, Propositions \ref{exi-ns} and \ref{B-b-Pro}, and (\ref{L2-B-b}) in Lemma \ref{some-inequ},
 we get
$$\begin{aligned}
&\|(h_5)_x\|_{L^2(I)}^2 \le  C\sum_{i=1}^3\|h_{5,i}^x\|_{L^2(I)}^2\\
\le & C\|(u^{B,2}_{yy}+u^{b,2}_{zz})+\sqrt{\epsilon}(u^{B,3}_{yy}+u^{b,3}_{zz})\|_{L^2(I)}^2\|(\rho^{B,1}+\rho^{b,1}) +\sqrt{\epsilon}(\rho^{B,2}+\rho^{b,2})+\frac{1}{\sqrt{\epsilon}}L^\epsilon\|_{L^\infty(I)}^2\\
&+C\|(u^{B,2}_{y}+u^{b,2}_{z})+\sqrt{\epsilon}(u^{B,3}_{y}+u^{b,3}_{z})\|_{L^2(I)}^2\|(\rho^{B,1}_y+\rho^{b,1}_z) +\sqrt{\epsilon}(\rho^{B,2}_y+\rho^{b,2}_z)+L^\epsilon_x \|_{L^\infty(I)}^2\\
&+C\|(u^{B,2}+u^{b,2})+\sqrt{\epsilon}(u^{B,3}+u^{b,3})\|_{L^\infty(I)}^2\|(\rho^{B,1}_{yy}+\rho^{b,1}_{zz}) +\sqrt{\epsilon}(\rho^{B,2}_{yy}+\rho^{b,2}_{zz})+\sqrt{\epsilon}L^\epsilon_{xx}\|_{L^2(I)}^2 \\
\le  &C\sqrt\epsilon+C\sqrt\epsilon+C\sqrt\epsilon\le C\sqrt\epsilon.
\end{aligned}
$$

In conclusion, there holds
$$
\|(h_3)_x\|_{L^2(I)}^2 +\|(h_4)_x\|_{L^2(I)}^2 +\|(h_5)_x\|_{L^2(I)}^2  \le C\sqrt{\epsilon}.
$$
\end{proof}

\begin{Lemma}\label{clm:important}
Under the conditions of Theorem \ref{assumption}, there holds that
\begin{equation}\label{W-L2}
\begin{aligned}
\|W\|_{L^2(I)}^2 \le &C\|U^\epsilon\|_{L^2(I)}^2+C\|\Phi^\epsilon\|_{L^2(I)}^2 +C(\sqrt\epsilon +\|\Phi^\epsilon\|_{L^2(I)}^2)(1
%+\epsilon\|U^\epsilon_x\|_{L^2(I)}^2
 +\epsilon \|\Phi^\epsilon_{xx}\|_{L^2(I)}^2 )\\
& +C\sqrt \epsilon +C \|\Phi^\epsilon_x\|_{L^2(I)}^2,
\end{aligned}
\end{equation} and that
\begin{equation}\label{widetilde-W-L2}
\|W_1\|_{L^2(I)}^2 \le C\epsilon+C\|U^\epsilon\|_{L^2(I)}^2+C\|\Phi^\epsilon\|_{L^2(I)}^2,
\end{equation} where $W$ is given by (\ref{zhankai-equ})$_2$ and
\begin{equation}\label{widetilde-W}
\begin{aligned}
W_1:=W+ P(\rho^\epsilon,\rho^{I,0})-\Omega_4.
\end{aligned}
\end{equation}

\end{Lemma}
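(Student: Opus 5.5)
We bound $W=-\Theta-P(\rho^\epsilon,\rho^{I,0})+\Omega+\alpha$ term by term in $L^2(I)$. The tools are Lemma \ref{some-inequ}(1), Propositions \ref{exi-ns} and \ref{B-b-Pro}, Lemma \ref{L-M-well-pose} with $l=4$, Lemma \ref{clm-rho-bdd} (so $\rho^\epsilon\in[\frac1{2M},2M]$), the bound $\|\alpha\|_{L^2}^2\le C\epsilon\sqrt\epsilon$ from Lemma \ref{clm:h}, and the a priori bound $\|\Phi^\epsilon\|_{L^\infty}\le1$ from (\ref{ini-val-phi-1}). Since $W_1=-\Theta+\Omega_1+\Omega_2+\Omega_3+\alpha$, we first prove (\ref{widetilde-W-L2}). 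Then (\ref{W-L2}) follows by adding the bounds for $P$ and $\Omega_4$.

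\emph{Estimate of $\Theta$.} In $\Theta_1$, each term is either $O(\epsilon^{3/2})$ in $L^\infty$ (those containing $\epsilon^{-1/2}L^\epsilon$, $\epsilon^{-1/2}M^\epsilon$ and its derivatives, using $\|M^\epsilon\|_{H^1}+\|M^\epsilon_t\|_{H^1}\le C\epsilon$), or equals a bounded interior coefficient times $\Phi^\epsilon$ or $U^\epsilon$. This gives $\|\Theta_1\|_{L^2}^2\le C\epsilon+C\|(U^\epsilon,\Phi^\epsilon)\|_{L^2}^2$. In $\Theta_2$ and $\Theta_3$, every boundary-layer factor carries at least $\sqrt\epsilon$ and is measured in $L^2(I)$ via (\ref{L2-B-b}), which yields an extra $\epsilon^{1/4}$. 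So these are $\le C\epsilon^{3/2}$. In $\Theta_4$ and $\Theta_5$, the derivative $[\epsilon u^{B,2}+\epsilon^{3/2}u^{B,3}]_x=\sqrt\epsilon u^{B,2}_y+\epsilon u^{B,3}_y$ is $O(\sqrt\epsilon)$ in $L^\infty$ and $O(\epsilon^{3/4})$ in $L^2$. It multiplies bounded quantities or $U^\epsilon,\Phi^\epsilon$, and $\|\Phi^\epsilon\|_{L^\infty}\le1$ controls the product with $\Phi^\epsilon$. In $\Theta_5$ the factor $\sqrt\epsilon U^\epsilon$ is paired with $(\rho^{B,1}+\dots+\Phi^\epsilon)$, which is bounded in $L^\infty$, and with $u^{I,0}_x+\dots$, also bounded. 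Hence $\|\Theta\|_{L^2}^2\le C\epsilon+C\|(U^\epsilon,\Phi^\epsilon)\|_{L^2}^2$.

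\emph{Estimate of $\Omega_1$--$\Omega_3$.} $\Omega_1$ is $O(\epsilon)$ in $L^2$ squared. $\Omega_2$ is $\epsilon(\rho^{B,1}+\rho^{b,1})$ times $[\rho^{I,0}_{xxx}+\epsilon^{-1/2}\rho^{B,2}_{yyy}+\dots]$, so it is $\le C\epsilon$. In $\Omega_3$, third $x$-derivatives of the layers bring $\epsilon^{-3/2}$. The combination $\sqrt\epsilon(\epsilon\rho^{B,2}+L^\epsilon)\cdot\epsilon^{-1}\rho^{B,1}_{yyy}$ is $O(\sqrt\epsilon)$ in $L^\infty$ times an $L^2$ norm of order $\epsilon^{1/4}$, so it is fine. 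The piece $\epsilon\Phi^\epsilon\cdot\epsilon^{-1}\rho^{B,1}_{yyy}=\Phi^\epsilon\rho^{B,1}_{yyy}$ is bounded by $C\|\Phi^\epsilon\|_{L^2}$. These bounds establish (\ref{widetilde-W-L2}).

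\emph{Estimate of $P$ and $\Omega_4$ (main obstacle).} For $\Omega_4=\rho^{b,1}\rho^{B,1}_{yyy}+\rho^{B,1}\rho^{b,1}_{zzz}$, the two layers live at opposite ends. We bound $|\rho^{b,1}(z)|\le C\langle z\rangle^{-l}$ and use $|z|\ge\frac1{2\sqrt\epsilon}$ on $x\le\frac12$ (and symmetrically for the other term). This gives an $O(\epsilon^{l/2})$ bound, well within $C\sqrt\epsilon$.

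For $P$, write $\rho^\epsilon=\rho^{I,0}+R+\sqrt\epsilon\Phi^\epsilon$ and expand $\gamma(\rho^\epsilon)^{\gamma-1}\rho^\epsilon_x$ by Taylor's formula to second order around $\rho^{I,0}$. By construction, the explicit subtracted terms cancel the $O(1)$ and $O(\sqrt\epsilon)$ parts coming from the layers. What remains splits into four groups:
\begin{itemize}
\item $\gamma(\rho^\epsilon)^{\gamma-1}\Phi^\epsilon_x$, giving $C\|\Phi^\epsilon_x\|_{L^2}^2$;
\item $\Phi^\epsilon$ times bounded coefficients, giving $C\|\Phi^\epsilon\|_{L^2}^2$;
\item cross terms of the form $\Phi^\epsilon\cdot\rho^\epsilon_x$-type layers;
\item cubic layer remainders, which are $O(\sqrt\epsilon)$ after applying (\ref{L2-B-b}).
\end{itemize}
The delicate cross term is $\Phi^\epsilon(\rho^\epsilon_x-\sqrt\epsilon\Phi^\epsilon_x)$ together with $\sqrt\epsilon\Phi^\epsilon\Phi^\epsilon_x$. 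We estimate it by Gagliardo–Nirenberg: $\|\Phi^\epsilon\rho^{B,1}_y\|_{L^2}\le\|\Phi^\epsilon\|_{L^\infty}\|\rho^{B,1}_y\|_{L^2(I)}$ with $\|\Phi^\epsilon\|_{L^\infty}^2\le C\|\Phi^\epsilon\|_{L^2}^2+C\|\Phi^\epsilon\|_{L^2}\|\Phi^\epsilon_x\|_{L^2}$. Where an interpolation through $\Phi^\epsilon_{xx}$ is needed, it produces the factor $(\sqrt\epsilon+\|\Phi^\epsilon\|_{L^2}^2)(1+\epsilon\|\Phi^\epsilon_{xx}\|_{L^2}^2)$ in (\ref{W-L2}). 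The main difficulty is the bookkeeping of exact cancellations in this Taylor expansion.
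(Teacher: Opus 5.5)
Your proposal is correct, and for $W_1$ (the $\Theta_i$, $\Omega_1$--$\Omega_3$ and $\alpha$) it follows the paper's argument: H\"older term by term with (\ref{L2-B-b}), Lemmas \ref{L-M-well-pose} and \ref{clm-rho-bdd}, and $\|\Phi^\epsilon\|_{L^\infty}\le 1$. There is one slip. The $\Theta_1$ terms containing $\epsilon^{-1/2}M^\epsilon$, $\epsilon^{-1/2}M^\epsilon_x$ and $\epsilon^{-1/2}M^\epsilon_t$ are $O(\epsilon^{1/2})$, not $O(\epsilon^{3/2})$, because $M^\epsilon=O(\epsilon)$. They are precisely the source of the $C\epsilon$ in (\ref{widetilde-W-L2}), so your conclusion still stands.

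You diverge from the paper on $P$, and there you do more work than the statement requires. Since (\ref{W-L2}) only asks for $C\sqrt\epsilon$, the paper cancels nothing beyond order $\epsilon^{-1/2}$. A single mean-value step writes $P$ as $\gamma(\gamma-1)[\theta\rho^\epsilon+(1-\theta)\rho^{I,0}]^{\gamma-2}\,\epsilon^{-1/2}(\rho^\epsilon-\rho^{I,0})\,\rho^\epsilon_x+\gamma(\rho^{I,0})^{\gamma-1}(\epsilon^{-1/2}L^\epsilon_x+\Phi^\epsilon_x)$, minus three explicit terms of the form $\gamma(\gamma-1)(\rho^{I,0})^{\gamma-2}(\cdots)$; the $\rho^{B,2}_y,\rho^{b,2}_z$ terms cancel. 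Every remaining bounded, layer-localized term is then $O(\epsilon^{1/4})$ in $L^2(I)$ by (\ref{L2-B-b}). The first product is bounded by $\|\rho^\epsilon_x\|_{L^\infty}^2\|\epsilon^{-1/2}(\rho^\epsilon-\rho^{I,0})\|_{L^2}^2\le C(1+\epsilon\|\Phi^\epsilon_{xx}\|_{L^2}^2)(\sqrt\epsilon+\|\Phi^\epsilon\|_{L^2}^2)$, using $\|\Phi^\epsilon_x\|_{L^\infty}\le C\|\Phi^\epsilon_{xx}\|_{L^2}$. That is where the product term in (\ref{W-L2}) comes from.

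So your second-order Taylor expansion and exact-cancellation bookkeeping are unnecessary. Your claim that the subtracted terms cancel all $O(1)$ layer contributions is also not literally true: the cross products $\rho^{B,1}\rho^{b,1}_z+\rho^{b,1}\rho^{B,1}_y$ are not subtracted in $P$. They are harmless for the same $L^2$ reason, or by your separation argument.

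Your route does have one advantage. Splitting $\rho^\epsilon_x=(\rho^\epsilon_x-\sqrt\epsilon\Phi^\epsilon_x)+\sqrt\epsilon\Phi^\epsilon_x$ and using $\|\Phi^\epsilon\|_{L^\infty}\le 1$ gives $\|\sqrt\epsilon\,\Phi^\epsilon\Phi^\epsilon_x\|_{L^2}^2\le\epsilon\|\Phi^\epsilon_x\|_{L^2}^2$. You therefore obtain a slightly stronger bound with no $\epsilon\|\Phi^\epsilon_{xx}\|_{L^2}^2$ factor, and your vague appeal to ``interpolation through $\Phi^\epsilon_{xx}$'' can simply be dropped.

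For $\Omega_4$, the paper only uses $\|\rho^{b,1}\|_{L^\infty}^2\|\rho^{B,1}_{yyy}\|_{L^2(I)}^2\le C\sqrt\epsilon$. Your decay argument is sharper than needed, and on $x\ge\frac12$ it requires the weighted decay of $\rho^{B,1}_{yyy}$, not of $\rho^{b,1}$.
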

\begin{proof}
From Propositions \ref{exi-ns} and \ref{B-b-Pro},  Lemmas \ref{L-M-well-pose} and \ref{clm-rho-bdd}, %来自Pro Lemma \ref{L-M-well-pose}} 变成Lemma \ref{L-M-well-pose}}
% Remark \ref{U-B3-L-infty},
 (\ref{L2-B-b}), and H$\rm{\ddot{o}}$lder's inequality, we get estimates of $\|\Theta_1\|_{L^2(I)}^2$ and $\|\Theta_2\|_{L^2(I)}^2$ as follows:
$$
\begin{aligned}
 \|\Theta_1\|_{L^2(I)}^2=& \|(\frac{1}{\sqrt{\epsilon}}L^\epsilon +\Phi^\epsilon) u^{I,0}_t + \rho^\epsilon\frac{1}{\sqrt{\epsilon}}M^\epsilon_t+ \rho^{I,0} u^{I,0} \frac{1}{\sqrt{\epsilon}}M^\epsilon_x +\rho^{I,0} (\frac{1}{\sqrt{\epsilon}}M^\epsilon+U^\epsilon)u_x^{I,0}\\
&+ (\frac{1}{\sqrt{\epsilon}}L^\epsilon +\Phi^\epsilon)u^{I,0} u_x^{I,0}\|_{L^2(I)}^2\\
\le & C \|\frac{1}{\sqrt{\epsilon}}L^\epsilon +\Phi^\epsilon\|_{L^2(I)}^2 \|u^{I,0}_t\|_{L^\infty(I)}^2 + C \|\rho^\epsilon\|_{L^\infty(I)}^2 \|\frac{1}{\sqrt{\epsilon}}M^\epsilon_t\|_{L^2(I)}^2 \\
&+ C \| \rho^{I,0} u^{I,0}\|_{L^\infty(I)}^2 \|\frac{1}{\sqrt{\epsilon}}M^\epsilon_x \|_{L^2(I)}^2 + C \|\rho^{I,0}\|_{L^\infty(I)}^2  \| \frac{1}{\sqrt{\epsilon}}M^\epsilon+U^\epsilon\|_{L^2(I)}^2 \|u_x^{I,0}\|_{L^\infty(I)}^2\\
&+  C   \|\frac{1}{\sqrt{\epsilon}}L^\epsilon +\Phi^\epsilon\|_{L^2(I)}^2 \|u^{I,0} u_x^{I,0}\|_{L^\infty(I)}^2\\
\le & C(\epsilon^2+\|\Phi^\epsilon\|_{L^2(I)}^2) + C\epsilon+C\epsilon+ C(\epsilon+\|U^\epsilon\|_{L^2(I)}^2) +C(\epsilon^2+\|\Phi^\epsilon\|_{L^2(I)}^2)\\
\le & C\|\Phi^\epsilon\|_{L^2(I)}^2+C\epsilon+ C\|U^\epsilon\|_{L^2(I)}^2, \\
\end{aligned}
$$
$$
\begin{aligned}
 \|\Theta_2\|_{L^2(I)}^2=& \|\sqrt{\epsilon}\rho^{B,2} u^{I,0}_t + \rho^\epsilon(\sqrt{\epsilon}u^{B,2}_t+\epsilon u^{B,3}_t)+ \rho^{I,0} u^{I,0} \sqrt\epsilon u^{B,3}_y +\rho^{I,0} (\sqrt{\epsilon} u^{B,2}+\epsilon u^{B,3})u_x^{I,0}\\
&+ {\sqrt{\epsilon}}\rho^{B,2}u^{I,0} u_x^{I,0}\|_{L^2(I)}^2\\
\le & C \|\sqrt{\epsilon}\rho^{B,2}\|_{L^2(I)}^2 \|u^{I,0}_t\|_{L^\infty(I)}^2 + C \|\rho^\epsilon\|_{L^\infty(I)}^2 \|\sqrt{\epsilon}u^{B,2}_t+\epsilon u^{B,3}_t\|_{L^2(I)}^2 \\
&+ C \| \rho^{I,0} u^{I,0}\|_{L^\infty(I)}^2 \|\sqrt\epsilon u^{B,3}_y \|_{L^2(I)}^2 + C \|\rho^{I,0}\|_{L^\infty(I)}^2  \| \sqrt{\epsilon} u^{B,2}+\epsilon u^{B,3}\|_{L^2(I)}^2 \|u_x^{I,0}\|_{L^\infty(I)}^2\\
&+  C   \| {\sqrt{\epsilon}}\rho^{B,2}\|_{L^2(I)}^2 \|u^{I,0} u_x^{I,0}\|_{L^\infty(I)}^2\\
\le & C\epsilon\sqrt\epsilon +C\epsilon\sqrt\epsilon +C\epsilon\sqrt\epsilon+C\epsilon\sqrt\epsilon +C\epsilon\sqrt\epsilon
\le  C\epsilon\sqrt\epsilon .\\
\end{aligned}
$$

Similar to the estimate of $ \|\Theta_2\|_{L^2(I)}^2$, we have
$$
\|\Theta_3\|_{L^2(I)}^2 \le C\epsilon \sqrt\epsilon.
$$

Moreover, using Propositions \ref{exi-ns} and \ref{B-b-Pro}, Lemma \ref{L-M-well-pose}, %Remark \ref{U-B3-L-infty},
(\ref{L2-B-b}), and H$\rm{\ddot{o}}$lder's inequality, we have
$$
\begin{aligned}
\|\Theta_4\|_{L^2(I)}^2=&\|\rho^{I,0}[\sqrt{\epsilon}(u^{B,2}+u^{b,2})+\epsilon(u^{B,3}+u^{b,3})+\frac{1}{\sqrt{\epsilon}}M^\epsilon + U^\epsilon] [M^\epsilon + \epsilon(u^{B,2}+u^{b,2})\\
&+\epsilon\sqrt{\epsilon}(u^{B,3}+u^{b,3})]_x
+[(\rho^{B,1}+\rho^{b,1})+\sqrt{\epsilon}(\rho^{B,2}+\rho^{b,2})+\frac{1}{\sqrt{\epsilon}}L^\epsilon + \Phi^\epsilon]u^{I,0} [M^\epsilon \\
&+ \epsilon(u^{B,2}+u^{b,2})+\epsilon\sqrt{\epsilon}(u^{B,3}+u^{b,3})]_x\|_{L^2(I)}^2\\
\le & C \|\rho^{I,0}\|_{L^\infty(I)}^2 \|\sqrt{\epsilon}(u^{B,2}+u^{b,2})+\epsilon(u^{B,3}+u^{b,3})+\frac{1}{\sqrt{\epsilon}}M^\epsilon + U^\epsilon\|_{L^2(I)}^2 \|[M^\epsilon\\
& + \epsilon(u^{B,2}+u^{b,2}) +\epsilon\sqrt{\epsilon}(u^{B,3}+u^{b,3})]_x\|_{L^\infty(I)}^2\\
&+C \|(\rho^{B,1}+\rho^{b,1})+\sqrt{\epsilon}(\rho^{B,2}+\rho^{b,2})+\frac{1}{\sqrt{\epsilon}}L^\epsilon + \Phi^\epsilon\|_{L^2(I)}^2 \|u^{I,0}\|_{L^\infty(I)}^2 \|[M^\epsilon\\
&+ \epsilon(u^{B,2}+u^{b,2})+\epsilon\sqrt{\epsilon}(u^{B,3}+u^{b,3})]_x\|_{L^\infty(I)}^2\\
\le &C(\epsilon +\|U^\epsilon\|_{L^2(I)}^2)\epsilon +C(\sqrt\epsilon +\|\Phi^\epsilon\|_{L^2(I)}^2)\epsilon \le C\epsilon (\|\Phi^\epsilon\|_{L^2(I)}^2+\|U^\epsilon\|_{L^2(I)}^2+\sqrt\epsilon).
\end{aligned}
$$

Using (\ref{ini-val-phi-1}), Propositions \ref{exi-ns} and \ref{B-b-Pro}, Lemma \ref{L-M-well-pose}, %Remark \ref{U-B3-L-infty},
 (\ref{L2-B-b}), and H$\rm{\ddot{o}}$lder's inequality, we get
$$
\begin{aligned}
\|\Theta_5\|_{L^2(I)}^2=&\|
 [(\rho^{B,1}+\rho^{b,1})+\sqrt{\epsilon}(\rho^{B,2}+\rho^{b,2})+\frac{1}{\sqrt{\epsilon}}L^\epsilon + \Phi^\epsilon][\epsilon(u^{B,2}+u^{b,2})+\epsilon\sqrt{\epsilon}(u^{B,3}+u^{b,3})\\
&+M^\epsilon +\sqrt{\epsilon} U^\epsilon] [u^{I,0}+M^\epsilon + \epsilon(u^{B,2}+u^{b,2})+\epsilon\sqrt{\epsilon}(u^{B,3}+u^{b,3})]_x\|_{L^2(I)}^2\\
\le & C \|(\rho^{B,1}+\rho^{b,1})+\sqrt{\epsilon}(\rho^{B,2}+\rho^{b,2})+\frac{1}{\sqrt{\epsilon}}L^\epsilon + \Phi^\epsilon\|_{L^\infty(I)}^2 \|\epsilon(u^{B,2}+u^{b,2})\\
&+\epsilon\sqrt{\epsilon}(u^{B,3}+u^{b,3})+M^\epsilon +\sqrt{\epsilon} U^\epsilon\|_{L^2(I)}^2 \|[u^{I,0}+M^\epsilon + \epsilon(u^{B,2}+u^{b,2})\\
&+\epsilon\sqrt{\epsilon}(u^{B,3}+u^{b,3})]_x\|_{L^\infty(I)}^2\\
\le &C(\epsilon^2+\epsilon\|U^\epsilon\|_{L^2(I)}^2 ).
\end{aligned}
$$

Using (\ref{zhankai})$_1$, mean value Theorem, and Lemma \ref{clm-rho-bdd}, it holds that
$$\begin{aligned}
P(\rho^{\epsilon},\rho^{I,0})
=& \frac{1}{\sqrt{\epsilon}}\{[\gamma (\rho^\epsilon)^{\gamma-1}-\gamma(\rho^{I,0})^{\gamma-1}]\rho^\epsilon_x + \gamma(\rho^{I,0})^{\gamma-1}[\sqrt{\epsilon}(\rho^{B,2}_y+\rho^{b,2}_z)+L^\epsilon_x+\sqrt{\epsilon}\Phi^{\epsilon}_x]\\
&-\gamma (\rho^{I,0})^{\gamma-1}\sqrt{\epsilon}(\rho^{B,2}_y+\rho^{b,2}_z)-\gamma (\gamma-1) (\rho^{I,0})^{\gamma-2} \sqrt{\epsilon}(\rho^{B,1}+\rho^{b,1}) \rho^{I,0}_x\\
&-\gamma (\gamma-1) (\rho^{I,0})^{\gamma-2}\sqrt{\epsilon}\rho^{B,1}\rho^{B,1}_y -\gamma (\gamma-1) (\rho^{I,0})^{\gamma-2}\sqrt{\epsilon}\rho^{b,1}\rho^{b,1}_z\}\\
=& \gamma(\gamma-1) [\theta\rho^\epsilon+(1-\theta)\rho^{I,0}]^{\gamma-2}[(\rho^{B,1}+\rho^{b,1})+\sqrt{\epsilon}(\rho^{B,2}+\rho^{b,2})+\frac{1}{\sqrt{\epsilon}}L^\epsilon + \Phi^\epsilon]\rho^\epsilon_x\\
& + \gamma(\rho^{I,0})^{\gamma-1}[(\rho^{B,2}_y+\rho^{b,2}_z)+\frac{1}{\sqrt{\epsilon}}L^\epsilon_x+\Phi^{\epsilon}_x]\\
&-\gamma (\rho^{I,0})^{\gamma-1}(\rho^{B,2}_y+\rho^{b,2}_z)-\gamma (\gamma-1) (\rho^{I,0})^{\gamma-2} (\rho^{B,1}+\rho^{b,1}) \rho^{I,0}_x\\
&-\gamma (\gamma-1) (\rho^{I,0})^{\gamma-2}\rho^{B,1}\rho^{B,1}_y -\gamma (\gamma-1) (\rho^{I,0})^{\gamma-2}\rho^{b,1}\rho^{b,1}_z.
\\
\end{aligned}
$$
Thus, using Lemmas \ref{L-M-well-pose} and \ref{clm:h}, Propositions \ref{exi-ns} and \ref{B-b-Pro}, (\ref{L2-B-b}), and Poincar\'e's inequality $\|\Phi^\epsilon_x\|_{L^\infty(I)} \le C \|\Phi^\epsilon_{xx}\|_{L^2(I)} $, we get
$$\begin{aligned}
&\|P(\rho^{\epsilon},\rho^{I,0})\|_{L^2(I)}^2 + \|\alpha\|_{L^2(I)}^2\\
\le & C (\|\rho^{\epsilon}_x-\sqrt \epsilon \Phi^\epsilon_x\|_{L^\infty(I)}^2 + \epsilon \|\Phi^\epsilon_{xx}\|_{L^2(I)}^2)(\sqrt\epsilon +\|\Phi^\epsilon\|_{L^2(I)}^2) +C( \sqrt{\epsilon} +\|\Phi^\epsilon_x\|_{L^2(I)}^2)+C \epsilon\sqrt{\epsilon} \\
\le & C\sqrt \epsilon +C(1+ \epsilon \|\Phi^\epsilon_{xx}\|_{L^2(I)}^2) (\sqrt\epsilon +\|\Phi^\epsilon\|_{L^2(I)}^2) +C \|\Phi^\epsilon_x\|_{L^2(I)}^2.
\end{aligned}
$$

Finally, using Propositions \ref{exi-ns} and \ref{B-b-Pro}, Lemma \ref{L-M-well-pose}, (\ref{L2-B-b}), and H$\rm{\ddot{o}}$lder's inequality, we have the following estimates:
$$
\begin{aligned}
\|\Omega_1\|_{L^2(I)}^2 \le &C\|\sqrt{\epsilon} \rho^{I,0} \|_{L^\infty(I)}^2 \|\rho^{I,0}_{xxx}\|_{L^2(I)}^2 \le C\epsilon,\\
\|\Omega_2\|_{L^2(I)}^2 \le &C\|\epsilon (\rho^{B,1}+\rho^{b,1})\|_{L^\infty(I)}^2 \|[\rho^{I,0}+{\epsilon}(\rho^{B,2}+\rho^{b,2})]_{xxx}\|_{L^2(I)}^2 \le C\epsilon^2(1+\frac{1}{\sqrt\epsilon}) \le C\epsilon \sqrt\epsilon,\\
\|\Omega_3\|_{L^2(I)}^2 \le & C\|\sqrt{\epsilon }[{\epsilon}(\rho^{B,2}+\rho^{b,2})+L^\epsilon +\sqrt{\epsilon} \Phi^\epsilon]\|_{L^2(I)}^2 \|[\rho^{I,0}+\sqrt{\epsilon}(\rho^{B,1}+\rho^{b,1})+{\epsilon}(\rho^{B,2}+\rho^{b,2})]_{xxx}\|_{L^\infty(I)}^2\\
\le & C \epsilon(\epsilon^2\sqrt\epsilon + \epsilon \|\Phi^\epsilon\|_{L^2(I)}^2)(1+\frac{1}{\epsilon^2}) \le C(\epsilon\sqrt\epsilon +\|\Phi^\epsilon\|_{L^2(I)}^2), \\
\end{aligned}
$$
and
$$\begin{aligned}
\|\Omega_4\|_{L^2(I)}^2 \le C\|\rho^{b,1}\|_{L^\infty(I)}^2 \|\rho^{B,1}_{yyy}\|_{L^2(I)}^2+C \| \rho^{B,1}\|_{L^\infty(I)}^2 \|\rho^{b,1}_{zzz}\|_{L^2(I)}^2\le C\sqrt\epsilon.\\
\end{aligned}
$$

Then, above estimates imply (\ref{W-L2}) and (\ref{widetilde-W-L2}).
\end{proof}

\subsection{Proof of Theorem \ref{assumption}}
~~~~Proof of Theorem \ref{assumption} will be obtained by Corollaries \ref{Phi-U-L2-all}, \ref{Phi-U-H^1-all} and \ref{Phi-Linfty-shoulian}. %We will prove the following Lemmas  under Propositions \ref{exi-ns}, \ref{exi-nsk} and \ref{B-b-Pro}.
\begin{Lemma}\label{Phi-U-L2}
Under the conditions of Theorem \ref{assumption}, there holds that
\begin{equation}\label{small-phi-L2}
\begin{aligned}
\frac{1}{2} \frac{\mathrm{d}}{\mathrm{d}t} \int_I (\Phi^\epsilon)^2\mathrm{d}x\le &\frac{1}{4} \int_I (U^\epsilon_x)^2 \mathrm{d}x + C \| \Phi^\epsilon\|_{L^2(I)}^2+C\epsilon.
\end{aligned}
\end{equation}
\end{Lemma}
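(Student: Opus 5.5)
We multiply the first equation of (\ref{zhankai-equ}) by $\Phi^\epsilon$ and integrate over $I$, which gives
\[
\frac12\frac{\mathrm d}{\mathrm dt}\int_I(\Phi^\epsilon)^2\,\mathrm dx=-\int_I(\rho^\epsilon U^\epsilon)_x\Phi^\epsilon\,\mathrm dx-\sum_{i=1}^5\int_I h_i\Phi^\epsilon\,\mathrm dx .
\]
For the transport term we expand $(\rho^\epsilon U^\epsilon)_x=\rho^\epsilon U^\epsilon_x+\rho^\epsilon_xU^\epsilon$. The first piece is handled by Lemma \ref{clm-rho-bdd} ($\rho^\epsilon\le 2M$) and Young's inequality: $|\int_I\rho^\epsilon U^\epsilon_x\Phi^\epsilon|\le \frac18\|U^\epsilon_x\|_{L^2}^2+C\|\Phi^\epsilon\|_{L^2}^2$. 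An integration by parts could be used instead, since $U^\epsilon|_{x=0,1}=0$, but the direct bound is simpler.

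The delicate piece is $\int_I\rho^\epsilon_xU^\epsilon\Phi^\epsilon$, because $\rho^\epsilon_x$ is not uniformly bounded in $L^\infty$: it contains the boundary layer parts $\rho^{B,1}_y+\rho^{b,1}_z=O(1)$ and $\sqrt\epsilon\Phi^\epsilon_x$. We split $\rho^\epsilon_x=(\rho^\epsilon_x-\sqrt\epsilon\Phi^\epsilon_x)+\sqrt\epsilon\Phi^\epsilon_x$. The first part equals $\rho^{I,0}_x+(\rho^{B,1}_y+\rho^{b,1}_z)+\sqrt\epsilon(\rho^{B,2}_y+\rho^{b,2}_z)+L^\epsilon_x$. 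This is bounded in $L^\infty(I)$ by Propositions \ref{exi-ns}, \ref{B-b-Pro}, the $L^\infty$ part of (\ref{L2-B-b}), and Lemma \ref{L-M-well-pose}. It therefore contributes at most $C\|U^\epsilon\|_{L^\infty}\|\Phi^\epsilon\|_{L^2}\le \frac18\|U^\epsilon_x\|_{L^2}^2+C\|\Phi^\epsilon\|_{L^2}^2$, using $\|U^\epsilon\|_{L^\infty}\le\|U^\epsilon_x\|_{L^2}$ from $U^\epsilon|_{x=0}=0$. For the second part we write $\sqrt\epsilon\Phi^\epsilon_x\Phi^\epsilon=\frac{\sqrt\epsilon}{2}((\Phi^\epsilon)^2)_x$ and integrate by parts. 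The boundary terms vanish because $U^\epsilon=0$ at $x=0,1$, leaving $-\frac{\sqrt\epsilon}{2}\int_I U^\epsilon_x(\Phi^\epsilon)^2$. By (\ref{ini-val-phi-1}) this is at most $C\sqrt\epsilon\|U^\epsilon_x\|_{L^2}\|\Phi^\epsilon\|_{L^2}$, which is absorbed in the same way.

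For the source terms we use Cauchy--Schwarz, $|\int_I h_i\Phi^\epsilon|\le \frac12\|h_i\|_{L^2}^2+\frac12\|\Phi^\epsilon\|_{L^2}^2$. Lemma \ref{clm:h} gives $\|h_1\|_{L^2}^2\le C\epsilon\sqrt\epsilon$, and Lemma \ref{clm:no-impor} gives $\|h_3\|_{L^2}^2+\|h_4\|_{L^2}^2+\|h_5\|_{L^2}^2\le C\epsilon\sqrt\epsilon$, all of which are $\le C\epsilon$.

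The main obstacle is $h_2$, since it contains $\Phi^\epsilon$, $\Phi^\epsilon_x$ and $\frac{1}{\sqrt\epsilon}(M^\epsilon\rho^\epsilon)_x$, and we have no uniform $L^2$ bound on $\Phi^\epsilon_x$ at this level. We treat its pieces one by one.

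\textbf{$L^\epsilon$ terms.} $\frac{1}{\sqrt\epsilon}L^\epsilon_t$, $\frac{1}{\sqrt\epsilon}u^{I,0}_xL^\epsilon$ and $\frac{1}{\sqrt\epsilon}u^{I,0}L^\epsilon_x$ are $O(\epsilon^{3/2})$ in $L^2$ by Lemma \ref{L-M-well-pose} with $l=4$.

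\textbf{$\Phi^\epsilon$ terms.} $u^{I,0}_x\Phi^\epsilon$ and $\sqrt\epsilon(u^{B,2}_y+\dots)\Phi^\epsilon$ give $C\|\Phi^\epsilon\|_{L^2}^2$. For $u^{I,0}\Phi^\epsilon_x\Phi^\epsilon$ we integrate by parts to get $-\frac12\int_I u^{I,0}_x(\Phi^\epsilon)^2$; the boundary terms vanish because $u^{I,0}=0$ at $x=0,1$. The term $\sqrt\epsilon[\sqrt\epsilon(u^{B,2}+u^{b,2})+\epsilon(u^{B,3}+u^{b,3})]\Phi^\epsilon_x\Phi^\epsilon$ is treated the same way. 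Its coefficient $v$ does not vanish at the boundary, so we add $M^\epsilon/\sqrt\epsilon$ to it, which makes it vanish there and makes $v_x$ bounded. If that is awkward, we simply bound the boundary term by $C\epsilon\|\Phi^\epsilon\|_{L^\infty}^2\le C\epsilon$ using (\ref{ini-val-phi-1}).

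\textbf{$M^\epsilon$ term.} We bound $\frac{1}{\sqrt\epsilon}(M^\epsilon\rho^\epsilon)_x$ by expanding it as $\frac{1}{\sqrt\epsilon}M^\epsilon_x\rho^\epsilon+\frac{1}{\sqrt\epsilon}M^\epsilon\rho^\epsilon_x$. The first is $O(\sqrt\epsilon)$ in $L^\infty$. The second is bounded by $C\sqrt\epsilon\|\rho^\epsilon_x\|_{L^2}\le C\sqrt\epsilon(1+\sqrt\epsilon\|\Phi^\epsilon_x\|_{L^2})$ via (\ref{rho-epsilon-phi-epsilon}). Alternatively, integrating this term by parts against $\Phi^\epsilon$ leaves only $\sqrt\epsilon\int_I M\rho^\epsilon\Phi^\epsilon_x$-type contributions of size $C\epsilon\|\Phi^\epsilon\|_{L^\infty}$, which is again $\le C\epsilon$ by (\ref{ini-val-phi-1}).

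Collecting all terms with Young's inequality yields (\ref{small-phi-L2}).
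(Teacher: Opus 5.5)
\textbf{Gap: the $M^\epsilon$-term.} Everything else in your proposal is sound and follows the paper: the transport term via $\rho^\epsilon_x=(\rho^\epsilon_x-\sqrt\epsilon\Phi^\epsilon_x)+\sqrt\epsilon\Phi^\epsilon_x$ with an integration by parts, the bounds on $h_1,h_3,h_4,h_5$, and the $L^\epsilon$, $\Phi^\epsilon$ and $u^{I,0}\Phi^\epsilon_x$ pieces of $h_2$. Your fallback for $w\Phi^\epsilon_x\Phi^\epsilon$, with $w=\epsilon(u^{B,2}+u^{b,2})+\epsilon\sqrt\epsilon(u^{B,3}+u^{b,3})$, is also valid, since the boundary term is $\le C\epsilon\|\Phi^\epsilon\|_{L^\infty}^2$.

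The failing step is $\frac{1}{\sqrt\epsilon}M^\epsilon\rho^\epsilon_x$.
\begin{itemize}
\item Your main bound $C\sqrt\epsilon\|\rho^\epsilon_x\|_{L^2}\le C\sqrt\epsilon(1+\sqrt\epsilon\|\Phi^\epsilon_x\|_{L^2})$ leaves, after pairing with $\Phi^\epsilon$, a term $C\epsilon\|\Phi^\epsilon_x\|_{L^2}\|\Phi^\epsilon\|_{L^2}$. Nothing on the right of (\ref{small-phi-L2}) controls $\Phi^\epsilon_x$, and Theorem \ref{assumption} only assumes $\|\Phi^\epsilon\|_{L^\infty}\le1$. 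So you obtain an extra $C\epsilon^2\|\Phi^\epsilon_x\|_{L^2}^2$. That term would be harmless in the Gronwall step of Corollary \ref{Phi-U-L2-all}, but it means the lemma as stated is not proved.
\item Your alternative is miscomputed. Integrating $-\int_I\frac{1}{\sqrt\epsilon}(M^\epsilon\rho^\epsilon)_x\Phi^\epsilon$ by parts gives $\frac{1}{\sqrt\epsilon}\int_I M^\epsilon\rho^\epsilon\Phi^\epsilon_x$. This still contains $\Phi^\epsilon_x$ with an $O(\sqrt\epsilon)$ coefficient and is not controlled by $\|\Phi^\epsilon\|_{L^\infty}$.
\item The boundary term $\frac{1}{\sqrt\epsilon}[M^\epsilon\rho^\epsilon\Phi^\epsilon]_0^1$ is only $O(\sqrt\epsilon)$, because $M^\epsilon(0,t)\approx-\epsilon u^{B,2}(0,t)\ne0$. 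A $C\sqrt\epsilon$ on the right would degrade the $B_1\epsilon$ rate.
\item Your remark about completing the coefficient with $M^\epsilon/\sqrt\epsilon$ is the right idea. However, it is only legitimate if that piece is removed from the $M^\epsilon$-term, which you instead estimate in full, so as written it double counts.
\end{itemize}

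\textbf{The fix.} Split $\frac{1}{\sqrt\epsilon}M^\epsilon\rho^\epsilon_x=\frac{1}{\sqrt\epsilon}M^\epsilon(\rho^\epsilon_x-\sqrt\epsilon\Phi^\epsilon_x)+M^\epsilon\Phi^\epsilon_x$. The first piece is $O(\sqrt\epsilon)$ in $L^2$ by (\ref{rho-epsilon-phi-epsilon}), or in $L^\infty$ by your own observation, so it contributes $C\epsilon+C\|\Phi^\epsilon\|_{L^2}^2$.

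The paper then collects all $\Phi^\epsilon_x$-terms of $h_2$. Their coefficient is $u^{I,0}+M^\epsilon+\epsilon(u^{B,2}+u^{b,2})+\epsilon\sqrt\epsilon(u^{B,3}+u^{b,3})=u^\epsilon-\sqrt\epsilon U^\epsilon$, which vanishes at $x=0,1$. A single integration by parts therefore yields $\frac12\int_I(\Phi^\epsilon)^2\times(\text{bounded})$ with no boundary term.

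Alternatively, you can treat $M^\epsilon\Phi^\epsilon_x\Phi^\epsilon$ alone with your fallback. Its integral equals $-\frac12\int_I M^\epsilon_x(\Phi^\epsilon)^2+\frac12[M^\epsilon(\Phi^\epsilon)^2]_0^1$, which is at most $C\|\Phi^\epsilon\|_{L^2}^2+C\epsilon$ by (\ref{ini-val-phi-1}). With this correction your argument closes and is essentially the paper's proof.
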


\begin{proof}

Firstly, multiplying (\ref{zhankai-equ})$_1$ by $\Phi^\epsilon$ and integrating the result over $I$, we have
\begin{equation}\label{3_1-L2-1}
\begin{aligned}
&\frac{1}{2} \frac{\mathrm{d}}{\mathrm{d}t} \int_I (\Phi^\epsilon)^2\mathrm{d}x=-\int_I (\rho^\epsilon U^\epsilon)_x\Phi^\epsilon\mathrm{d}x-\int_I \Phi^\epsilon\sum_{i=1}^5 h_i\mathrm{d}x.\\
\end{aligned}
\end{equation}
 Using integration by parts, Lemma \ref{clm-rho-bdd},
(\ref{rho-epsilon-phi-epsilon}), (\ref{ini-val-phi-1}), and Poincar\'e's inequality $\|U^\epsilon\|_{L^\infty(I)} \le C \|U^\epsilon_x\|_{L^2(I)}$, we have
\begin{equation}\label{3_1-L2-2}
\begin{aligned}
& -\int_I (\rho^\epsilon_x U^\epsilon +\rho^\epsilon U^\epsilon_x)\Phi^\epsilon\mathrm{d}x=  -\int_I [(\rho^\epsilon_x-\sqrt\epsilon \Phi^\epsilon_{x}) U^\epsilon  +\sqrt\epsilon \Phi^\epsilon_{x} U^\epsilon+\rho^\epsilon U^\epsilon_x]\Phi^\epsilon\mathrm{d}x\\
= & \int_I [-(\rho^\epsilon_x-\sqrt\epsilon \Phi^\epsilon_{x}) U^\epsilon  +\frac{1}{2}\sqrt\epsilon \Phi^\epsilon U^\epsilon_{x} -\rho^\epsilon U^\epsilon_x]\Phi^\epsilon\mathrm{d}x\\
\le  &\|\rho^\epsilon_x-\sqrt\epsilon \Phi^\epsilon_{x}\|_{L^2(I)}\|U^\epsilon\|_{L^\infty(I)}\|\Phi^\epsilon\|_{L^2(I)}+C \sqrt\epsilon \|\Phi^\epsilon\|_{L^\infty(I)}\|U_x^\epsilon\|_{L^2(I)}\|\Phi^\epsilon\|_{L^2(I)}\\
&+C\|\rho^\epsilon\|_{L^\infty(I)}\|U_x^\epsilon\|_{L^2(I)}\|\Phi^\epsilon\|_{L^2(I)}\\
\le & \frac{1}{4} \int_I (U^\epsilon_x)^2 \mathrm{d}x + C \| \Phi^\epsilon\|_{L^2(I)}^2.
\end{aligned}
\end{equation}

%From $(u^\epsilon,u^{I,0},U^\epsilon)|_{x=0,1}=0$ and (\ref{zhankai})$_2$,
Recalling that
$$
[u^{I,0}+\epsilon(u^{B,2}+u^{b,2})+\epsilon\sqrt{\epsilon}(u^{B,3}+u^{b,3})+M^\epsilon]|_{x=0,1}=0,
$$
we have
$$
\begin{aligned}
&-\int_I \Phi^\epsilon \{u^{I,0} +M^\epsilon   +[\sqrt{\epsilon}(u^{B,2}+u^{b,2})+{\epsilon}(u^{B,3}+u^{b,3})]\sqrt{\epsilon} \}\Phi^\epsilon_x\mathrm{d}x\\
=& \int_I \frac{1}{2}(\Phi^\epsilon)^2 \{u^{I,0}_x +M^\epsilon_x   +\sqrt{\epsilon}[(u^{B,2}_y+u^{b,2}_z)+{\epsilon}(u^{B,3}_y+u^{b,3}_z)] \}\mathrm{d}x,
\end{aligned}
$$
which yields
$$
\begin{aligned}
&-\int_I \Phi^\epsilon  h_2\mathrm{d}x \\
=&-\int_I \Phi^\epsilon[\frac{1}{\sqrt{\epsilon}}L^\epsilon_t+u_{x}^{I,0}\frac{1}{\sqrt{\epsilon}}L^\epsilon+u^{I,0}\frac{1}{\sqrt{\epsilon}}L^\epsilon_x+\frac{1}{\sqrt{\epsilon}}M^\epsilon_x \rho^\epsilon+\frac{1}{\sqrt{\epsilon}}M^\epsilon (\rho^\epsilon_x-\sqrt\epsilon\Phi^\epsilon_x)] \mathrm{d}x \\
&+\frac{1}{2}\int_I (\Phi^\epsilon)^2\{M^\epsilon_x-[u^{I,0}_x +\sqrt{\epsilon}(u^{B,2}_y+u^{b,2}_z)+{\epsilon}(u^{B,3}_y+u^{b,3}_z)] \}\mathrm{d}x\\
:=& -\int_I \Phi^\epsilon h_{2,1} \mathrm{d}x + \frac{1}{2}\int_I (\Phi^\epsilon)^2 h_{2,2} \mathrm{d}x.
\end{aligned}
$$
Using Propositions   \ref{exi-ns}, \ref{exi-nsk} and \ref{B-b-Pro}, (\ref{rho-epsilon-phi-epsilon}), H$\rm{\ddot{o}}$lder's inequality, Young's inequality, and Lemmas \ref{some-inequ}  and \ref{L-M-well-pose}, we get
\begin{equation}\label{3_1-L2-3}
\begin{aligned}
&-\int_I \Phi^\epsilon  h_2\mathrm{d}x = -\int_I \Phi^\epsilon h_{2,1} \mathrm{d}x + \frac{1}{2}\int_I (\Phi^\epsilon)^2 h_{2,2} \mathrm{d}x\\
\le & C\|h_{2,1}\|_{L^2(I)}^2+C(1+\|h_{2,2}\|_{L^\infty(I)})\|\Phi^\epsilon\|_{L^2(I)}^2 \le C\epsilon +C\|\Phi^\epsilon\|_{L^2(I)}^2.
\end{aligned}
\end{equation}

Using Lemmas \ref{clm:h} and \ref{clm:no-impor}, H$\rm{\ddot{o}}$lder's inequality, and Young's inequality, we have
\begin{equation}\label{3_1-L2-4}
\begin{aligned}
\int_I \Phi^\epsilon (h_1+h_3+h_4+h_5) \mathrm{d}x
\le  C\|\Phi^{\epsilon}\|_{L^2(I)}^2 + C (\sum_{i=1,3,4,5} \|h_i\|_{L^2(I)}^2)
\le  C\|\Phi^{\epsilon}\|_{L^2(I)}^2+ C\epsilon\sqrt\epsilon.
\end{aligned}
\end{equation}

In conclusion, putting (\ref{3_1-L2-2})-(\ref{3_1-L2-4}) into (\ref{3_1-L2-1}), we get (\ref{small-phi-L2}).
\end{proof}

\begin{Lemma}\label{Phi-U-L2-U}
Under the conditions of Theorem \ref{assumption}, there holds that
 \begin{equation}\label{L2-Esti-2}
\begin{aligned}
&\frac{1}{2}\frac{\mathrm{d}}{\mathrm{d}t} \int_I \rho^{\epsilon} (U^\epsilon)^2\mathrm{d}x +\int_I (U^\epsilon_x)^2 \mathrm{d}x-\int_I \epsilon \rho^\epsilon \Phi^\epsilon_{xxx} U^\epsilon\mathrm{d}x\\
\le & C\epsilon+C\|\sqrt{\rho^{\epsilon}}U^\epsilon\|_{L^2(I)}^2+C\|\Phi^\epsilon\|_{L^2(I)}^2+\frac{1}{4}\|U^{\epsilon}_x\|_{L^2(I)}^2.
\end{aligned}
\end{equation}
\end{Lemma}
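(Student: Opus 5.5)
We obtain (\ref{L2-Esti-2}) by testing the momentum equation of the error system with $U^\epsilon$. Multiplying (\ref{zhankai-equ})$_2$ by $U^\epsilon$ and integrating over $I$ gives
\begin{equation*}
\int_I \rho^\epsilon U^\epsilon_t U^\epsilon\,\mathrm{d}x+\int_I \rho^\epsilon u^\epsilon U^\epsilon_x U^\epsilon\,\mathrm{d}x-\int_I U^\epsilon_{xx}U^\epsilon\,\mathrm{d}x-\int_I\epsilon\rho^\epsilon\Phi^\epsilon_{xxx}U^\epsilon\,\mathrm{d}x=\int_I W U^\epsilon\,\mathrm{d}x .
\end{equation*}
The viscous term becomes $\|U^\epsilon_x\|_{L^2(I)}^2$ after integration by parts, since $U^\epsilon|_{x=0,1}=0$. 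The capillary term is kept on the left-hand side exactly as in (\ref{L2-Esti-2}). It will later be combined with the $\Phi^\epsilon$ and $\sqrt\epsilon\Phi^\epsilon_x$ estimates.

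For the time-derivative and convection terms we use the continuity equation $\rho^\epsilon_t+(\rho^\epsilon u^\epsilon)_x=0$ from (\ref{equ-epsilon}). Together with $u^\epsilon|_{x=0,1}=0$, the standard identity gives
\begin{equation*}
\int_I \rho^\epsilon U^\epsilon_t U^\epsilon\,\mathrm{d}x+\int_I\rho^\epsilon u^\epsilon U^\epsilon_xU^\epsilon\,\mathrm{d}x=\frac12\frac{\mathrm{d}}{\mathrm{d}t}\int_I\rho^\epsilon (U^\epsilon)^2\,\mathrm{d}x .
\end{equation*}
Here $\frac12\int_I \rho^\epsilon_t (U^\epsilon)^2\,\mathrm{d}x$ cancels exactly with $\frac12\int_I \rho^\epsilon u^\epsilon ((U^\epsilon)^2)_x\,\mathrm{d}x$ after integrating by parts. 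This step needs no smallness.

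The remaining task, and the main obstacle, is the source term $\int_I WU^\epsilon\,\mathrm{d}x$. Applying (\ref{W-L2}) directly is not enough: it contains $\|\Phi^\epsilon_x\|_{L^2(I)}^2$ and $\epsilon\|\Phi^\epsilon_{xx}\|_{L^2(I)}^2$, which do not appear in (\ref{L2-Esti-2}). So we split $W=W_1-P(\rho^\epsilon,\rho^{I,0})+\Omega_4$ as in (\ref{widetilde-W}).

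\textbf{The $W_1$ part.} By Young's inequality and (\ref{widetilde-W-L2}),
\begin{equation*}
\int_I W_1U^\epsilon\,\mathrm{d}x\le C\|W_1\|_{L^2(I)}^2+C\|U^\epsilon\|_{L^2(I)}^2\le C\epsilon+C\|U^\epsilon\|_{L^2(I)}^2+C\|\Phi^\epsilon\|_{L^2(I)}^2 .
\end{equation*}
By Lemma \ref{clm-rho-bdd}, $\|U^\epsilon\|_{L^2(I)}^2\le 2M\|\sqrt{\rho^\epsilon}U^\epsilon\|_{L^2(I)}^2$.

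\textbf{The $\Omega_4$ part.} $\Omega_4$ only has $\|\Omega_4\|_{L^2(I)}^2\le C\sqrt\epsilon$, which is too weak, so we use its localization. Since $\rho^{b,1}(z)$ is evaluated at $z=(x-1)/\sqrt\epsilon$ while $\rho^{B,1}_{yyy}$ lives near $x=0$, the weights give $\|\Omega_4\|_{L^2(I)}\le C\epsilon^{l}$ for any $l$. This uses $\sup_{x\le 1/2}|\rho^{b,1}|\le C\epsilon^{l/2}\|\langle z\rangle^l\rho^{b,1}\|_{L^\infty_z}$ and the symmetric bound near $x=1$. Hence $\int_I\Omega_4U^\epsilon\,\mathrm{d}x\le C\epsilon+C\|U^\epsilon\|_{L^2(I)}^2$.

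\textbf{The pressure part.} We use the mean-value expansion of $P(\rho^\epsilon,\rho^{I,0})$ from the proof of Lemma \ref{clm:important}. Every piece is $L^2$-bounded by $C\sqrt\epsilon$ or by $C(\|\Phi^\epsilon\|_{L^2(I)}+\epsilon^{3/2})$, except two: $\gamma(\rho^{I,0})^{\gamma-1}\Phi^\epsilon_x$, and the term $\gamma(\gamma-1)[\cdots]^{\gamma-2}\Phi^\epsilon\rho^\epsilon_x$ with $\rho^\epsilon_x=(\rho^\epsilon_x-\sqrt\epsilon\Phi^\epsilon_x)+\sqrt\epsilon\Phi^\epsilon_x$.

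For the $\Phi^\epsilon_x$ term we integrate by parts, using $U^\epsilon|_{x=0,1}=0$:
\begin{equation*}
\int_I\gamma(\rho^{I,0})^{\gamma-1}\Phi^\epsilon_xU^\epsilon\,\mathrm{d}x=-\int_I\Phi^\epsilon\big(\gamma(\rho^{I,0})^{\gamma-1}U^\epsilon\big)_x\,\mathrm{d}x\le\frac18\|U^\epsilon_x\|_{L^2(I)}^2+C\|\Phi^\epsilon\|_{L^2(I)}^2+C\|U^\epsilon\|_{L^2(I)}^2 .
\end{equation*}
For the $\Phi^\epsilon\sqrt\epsilon\Phi^\epsilon_x U^\epsilon$ piece we write $\Phi^\epsilon\Phi^\epsilon_x=\frac12((\Phi^\epsilon)^2)_x$ and integrate by parts. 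Then (\ref{ini-val-phi-1}) gives the bound $C\sqrt\epsilon\|\Phi^\epsilon\|_{L^2(I)}\|(\text{coef}\cdot U^\epsilon)_x\|_{L^2(I)}$.

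The derivative of the coefficient $[\theta\rho^\epsilon+(1-\theta)\rho^{I,0}]^{\gamma-2}$ causes trouble, because it involves $\rho^\epsilon_x$ again. To avoid it, we first rewrite $(\rho^\epsilon)^{\gamma-1}-(\rho^{I,0})^{\gamma-1}$ via an explicit primitive, treating $\gamma[(\rho^\epsilon)^{\gamma-1}-(\rho^{I,0})^{\gamma-1}]\rho^\epsilon_x$ as the $x$-derivative of $(\rho^\epsilon)^\gamma-(\rho^{I,0})^\gamma-\gamma(\rho^{I,0})^{\gamma-1}(\rho^\epsilon-\rho^{I,0})$ plus lower-order terms in $\rho^{I,0}_x$. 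Integrating by parts then moves the derivative onto $U^\epsilon$, and the resulting quantity is $O(|\rho^\epsilon-\rho^{I,0}|)$. After dividing by $\sqrt\epsilon$, this is bounded by $C(|\rho^{B,1}|+|\rho^{b,1}|+|\Phi^\epsilon|+\dots)$ in $L^2$.

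The piece with $\rho^\epsilon_x-\sqrt\epsilon\Phi^\epsilon_x$ is controlled by (\ref{rho-epsilon-phi-epsilon}) and $\|U^\epsilon\|_{L^\infty(I)}\le C\|U^\epsilon_x\|_{L^2(I)}$. The boundary-layer factors $\rho^{B,1}$ contribute $C\sqrt\epsilon$ in $L^2$ squared, by (\ref{L2-B-b}).

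Collecting all contributions and absorbing $\frac18+\frac18$ of $\|U^\epsilon_x\|_{L^2(I)}^2$ yields (\ref{L2-Esti-2}).
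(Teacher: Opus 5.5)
\textbf{Verdict: genuine gap.} Your pressure estimate only gives $C\sqrt\epsilon$ on the right-hand side, not the $C\epsilon$ claimed in (\ref{L2-Esti-2}).

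The rest of the proposal is fine: the time-derivative and convection identity, the $W_1$ part, the $\Omega_4$ part, and the integration by parts for $\gamma(\rho^{I,0})^{\gamma-1}\Phi^\epsilon_x$. Your separation-of-layers bound for $\Omega_4$ is valid. It differs from the paper, which uses $|U^\epsilon(x)|\le\|U^\epsilon_x\|_{L^2(I)}\sqrt{x}$.

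\textbf{Where the rate is lost.} Several pieces of $P(\rho^\epsilon,\rho^{I,0})$ carry an $O(1)$ boundary-layer factor. Examples are $\gamma(\gamma-1)(\rho^{I,0})^{\gamma-2}(\rho^{B,1}+\rho^{b,1})\rho^{I,0}_x$, $\gamma(\gamma-1)(\rho^{I,0})^{\gamma-2}(\rho^{B,1}\rho^{B,1}_y+\rho^{b,1}\rho^{b,1}_z)$, and the $(\rho^{B,1}+\rho^{b,1})\rho^\epsilon_x$ part of the mean-value term. By (\ref{L2-B-b}) their $L^2$ norms are in general only of order $\epsilon^{1/4}$; this is why Lemma \ref{clm:important} only gives $\|P\|^2_{L^2(I)}\lesssim\sqrt\epsilon+\dots$. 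So ``$L^2$-bounded by $C\sqrt\epsilon$'' is false if it means the norm. If it means the squared norm, then Young's inequality against $U^\epsilon$ leaves $C\sqrt\epsilon$ on the right.

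The primitive step loses the same factor. You bound $F:=(\rho^\epsilon)^\gamma-(\rho^{I,0})^\gamma-\gamma(\rho^{I,0})^{\gamma-1}(\rho^\epsilon-\rho^{I,0})$ only by $O(|\rho^\epsilon-\rho^{I,0}|)$. Your last paragraph then says the boundary-layer factors contribute $C\sqrt\epsilon$ in $L^2$ squared, i.e. a $C\sqrt\epsilon$ forcing term. This matters: with such forcing, the Gronwall argument in Corollary \ref{Phi-U-L2-all} would give $B_1\sqrt\epsilon$ instead of $B_1\epsilon$.

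\textbf{What is missing.} You need the structure that makes these $O(\epsilon^{1/4})$ terms cancel or gain a power. The paper writes the whole bracket of $P$ as an exact $x$-derivative plus small remainders, $-\int_I PU^\epsilon\,\mathrm{d}x=J_{2,1}+J_{2,2}+J_{2,3}$, with
\[
J_{2,1}=\int_I\frac{1}{\sqrt\epsilon}\Big[(\rho^\epsilon)^\gamma-(\rho^{I,0})^\gamma-\gamma(\rho^{I,0})^{\gamma-1}\big(\sqrt\epsilon(\rho^{B,1}+\rho^{b,1})+\epsilon(\rho^{B,2}+\rho^{b,2})\big)\Big]U^\epsilon_x\,\mathrm{d}x.
\]
\begin{itemize}
\item Because the linear boundary-layer terms sit inside the primitive, the term $\gamma(\gamma-1)(\rho^{I,0})^{\gamma-2}\rho^{I,0}_x\sqrt\epsilon(\rho^{B,1}+\rho^{b,1})$ cancels exactly.
\item Applying the mean value theorem twice, the integrand of $J_{2,1}$ is $O(|\rho^\epsilon-\rho^{I,0}|\,|\rho^{B,1}+\rho^{b,1}|)+O(|\Phi^\epsilon|)+\dots$. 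Its squared $L^2$ norm is at most $C\epsilon\sqrt\epsilon+C\|\Phi^\epsilon\|^2_{L^2(I)}$.
\item The quadratic terms are written as $\sqrt\epsilon\rho^{B,1}\rho^{B,1}_y=\frac{\epsilon}{2}((\rho^{B,1})^2)_x$ and integrated by parts; this is $J_{2,2}$, and it is $O(\epsilon^{3/4})$ in $L^2$.
\item The $\rho^{B,2}$ remainder $J_{2,3}$ is also $O(\epsilon^{3/4})$ in $L^2$.
\end{itemize}

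\textbf{How to repair your own route.}
\begin{itemize}
\item Use that $F$ is quadratic in $\rho^\epsilon-\rho^{I,0}$. Then $\epsilon^{-1/2}|F|\le C|\rho^\epsilon-\rho^{I,0}|$, hence $\|\epsilon^{-1/2}F\|^2_{L^2(I)}\le C\epsilon\sqrt\epsilon+C\epsilon\|\Phi^\epsilon\|^2_{L^2(I)}$.
\item Check that the $\rho^{I,0}_x$-remainder of your primitive cancels the $(\rho^{B,1}+\rho^{b,1})\rho^{I,0}_x$ term of $P$.
\item For the boundary-layer terms paired with $U^\epsilon$ rather than $U^\epsilon_x$, such as $\rho^{B,1}\rho^{B,1}_y$, either integrate by parts as above or use the bound $|U^\epsilon(x)|\le\|U^\epsilon_x\|_{L^2(I)}\sqrt{x}$ from Lemma \ref{clm:h}. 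The latter gives $\int_I|f(x/\sqrt\epsilon)|\,|U^\epsilon|\,\mathrm{d}x\le C\epsilon^{3/4}\|U^\epsilon_x\|_{L^2(I)}$, and similarly with $\sqrt{1-x}$ near $x=1$.
\end{itemize}
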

\begin{proof}
 Multiplying (\ref{zhankai-equ})$_2$ by $U^\epsilon$ and integrating the result over $I$, using (\ref{widetilde-W}), we get
\begin{equation}\label{forme}
\begin{aligned}
&\frac{1}{2}\frac{\mathrm{d}}{\mathrm{d}t} \int_I \rho^{\epsilon} (U^\epsilon)^2\mathrm{d}x +\int_I (U^\epsilon_x)^2 \mathrm{d}x-\int_I \epsilon \rho^\epsilon \Phi^\epsilon_{xxx} U^\epsilon \mathrm{d}x\\
=& \int_I W_1U^\epsilon\mathrm{d}x-\int_IP(\rho^\epsilon,\rho^{I,0})U^\epsilon \mathrm{d}x+\int_I\Omega_4 U^\epsilon \mathrm{d}x :=  \sum_{k=1}^{3} J_{k} .
\end{aligned}
\end{equation}

Fristly, using Lemma \ref{clm-rho-bdd}, the inequality $$\|U^\epsilon\|_{L^2(I)} \le C\|(\sqrt{\rho^{\epsilon}})^{-1}\|_{L^2(I)}\|\sqrt{\rho^{\epsilon}}U^\epsilon\|_{L^2(I)} \le C \|\sqrt{\rho^{\epsilon}}U^\epsilon\|_{L^2(I)}$$
holds.

Then, from Lemma \ref{clm:important}, we get
$$\begin{aligned}
 J_1=\int_I W_1U^\epsilon\mathrm{d}x\le C\| W_1\|_{L^2(I)}^2 +C \|\sqrt{\rho^{\epsilon}}U^\epsilon\|_{L^2(I)}^2\le  C\epsilon+C\|\sqrt{\rho^{\epsilon}}U^\epsilon\|_{L^2(I)}^2+C\|\Phi^\epsilon\|_{L^2(I)}^2.\\
\end{aligned}
$$

Here, we use mean value Theorem to deal with $J_{2}$. Similar to the proof of Lemma \ref{clm:h}, we define $\xi(x,t) ,\theta(x,t)\in (0,1)$ when we use mean value Theorem. Now, using integration by parts on $J_{2}$, we get
$$\begin{aligned}
J_{2}=&-\int_I P(\rho^\epsilon, \rho^{I,0}) U^\epsilon\mathrm{d}x\\
=&-\int_I\frac{1}{\sqrt{\epsilon}}[\gamma(\rho^\epsilon)^{\gamma-1}\rho^\epsilon_x-\gamma(\rho^{I,0})^{\gamma-1}\rho^{I,0}_x-\gamma (\rho^{I,0})^{\gamma-1}( \rho^{B,1}_y+\rho^{b,1}_z)\\
&-\gamma (\rho^{I,0})^{\gamma-1}\sqrt{\epsilon}(\rho^{B,2}_y+\rho^{b,2}_z)-\gamma (\gamma-1) (\rho^{I,0})^{\gamma-2} \sqrt{\epsilon}(\rho^{B,1}+\rho^{b,1}) \rho^{I,0}_x\\
&-\gamma (\gamma-1) (\rho^{I,0})^{\gamma-2} \sqrt{\epsilon}\rho^{B,1} \rho^{B,1}_y -\gamma (\gamma-1) (\rho^{I,0})^{\gamma-2} \sqrt{\epsilon}\rho^{b,1}\rho^{b,1}_z] U^\epsilon\mathrm{d}x\\
= & J_{2,1} + J_{2,2}+J_{2,3},
\end{aligned}
$$
where
$$\begin{aligned}
J_{2,1}:=&\int_I\frac{1}{\sqrt{\epsilon}}[(\rho^\epsilon)^{\gamma}-(\rho^{I,0})^{\gamma}-\gamma (\rho^{I,0})^{\gamma-1}\sqrt{\epsilon}( \rho^{B,1}+\rho^{b,1})-\gamma (\rho^{I,0})^{\gamma-1} {\epsilon}(\rho^{B,2}+\rho^{b,2})]U^\epsilon_x\mathrm{d}x,\\
J_{2,2}:=&-\int_I\frac{1}{\sqrt{\epsilon}}\gamma (\gamma-1)(\rho^{I,0})^{\gamma-2}  \frac{\epsilon}{2}[(\rho^{B,1})^2+(\rho^{b,1})^2]  U^\epsilon_x\mathrm{d}x\\
&-\int_I\frac{1}{\sqrt{\epsilon}}\gamma (\gamma-1)(\gamma-2) (\rho^{I,0})^{\gamma-3} \rho^{I,0}_x \frac{\epsilon}{2}[(\rho^{B,1})^2+(\rho^{b,1})^2]  U^\epsilon\mathrm{d}x,\\
J_{2,3}:=&-\int_I\frac{1}{\sqrt{\epsilon}}\gamma(\gamma-1) (\rho^{I,0})^{\gamma-2} \rho^{I,0}_x{\epsilon}(\rho^{B,2}+\rho^{b,2})U^\epsilon\mathrm{d}x.\\
\end{aligned}
$$

For $J_{2,1}$, we use mean value Theorem twice, the boundness of $\rho^\epsilon$(see Lemma \ref{clm-rho-bdd}) and $\rho^{I,0}$(see Proposition \ref{exi-ns}), Propositions   \ref{exi-ns} and \ref{B-b-Pro}, Lemma \ref{L-M-well-pose}, (\ref{L2-B-b}), and
$$\begin{aligned}
\|\rho^{\epsilon}-\rho^{I,0}\|_{L^2(I)}^2 =&\|\sqrt{\epsilon}(\rho^{B,1}+\rho^{b,1})+{\epsilon}(\rho^{B,2}+\rho^{b,2})+L^\epsilon +\sqrt{\epsilon} \Phi^\epsilon\|_{L^2(I)}^2\\
\le & C\epsilon \sqrt\epsilon + C\epsilon\|\Phi^\epsilon\|_{L^2(I)}^2
\end{aligned}
$$
 to get the estimate as follows:
$$\begin{aligned}
J_{2,1}
=&\int_I\frac{1}{\sqrt{\epsilon}}\gamma[\theta\rho^\epsilon+(1-\theta)\rho^{I,0}]^{\gamma-1}[\sqrt{\epsilon}(\rho^{B,1}+\rho^{b,1})+{\epsilon}(\rho^{B,2}+\rho^{b,2})+L^\epsilon +\sqrt{\epsilon} \Phi^\epsilon]U^\epsilon_x\mathrm{d}x\\
&-\int_I\frac{1}{\sqrt{\epsilon}}\gamma (\rho^{I,0})^{\gamma-1}\sqrt{\epsilon}( \rho^{B,1}+\rho^{b,1})U^\epsilon_x\mathrm{d}x-\int_I\frac{1}{\sqrt{\epsilon}}[\gamma(\rho^{I,0})^{\gamma-1} {\epsilon}(\rho^{B,2}+\rho^{b,2})]U^\epsilon_x\mathrm{d}x\\
=&\int_I\frac{1}{\sqrt{\epsilon}}\gamma(\gamma-1)[\xi\theta\rho^\epsilon+(1-\xi\theta)\rho^{I,0}]^{\gamma-2}(\theta\rho^\epsilon-\theta\rho^{I,0})[\sqrt{\epsilon}(\rho^{B,1}+\rho^{b,1})+{\epsilon}(\rho^{B,2}+\rho^{b,2})]U^\epsilon_x\mathrm{d}x\\
&+\int_I\frac{1}{\sqrt{\epsilon}}\gamma[\theta\rho^\epsilon+(1-\theta)\rho^{I,0}]^{\gamma-1}(L^\epsilon +\sqrt{\epsilon} \Phi^\epsilon)U^\epsilon_x\mathrm{d}x\\
\le & \frac{1}{12} \int_I (U_x^\epsilon)^2 \mathrm{d}x + C [(\|\rho^{B,1}\|_{L_y^\infty}^2 +\|\rho^{b,1}\|_{L_z^\infty}^2)+\epsilon(\|\rho^{B,2}\|_{L_y^\infty}^2 +\|\rho^{b,2}\|_{L_z^\infty}^2)]\|\rho^{\epsilon}-\rho^{I,0}\|_{L^2(I)}^2\\
&+C\|\Phi^{\epsilon}\|_{L^2(I)}^2+\frac{C}{\epsilon}\|L^{\epsilon}\|_{L^2(I)}^2\\
 \le & \frac{1}{12} \int_I (U_x^\epsilon)^2 \mathrm{d}x +C\|\Phi^{\epsilon}\|_{L^2(I)}^2+C\epsilon\sqrt\epsilon.\\
\end{aligned}
$$
Now we estimate $J_{2,2}$ and $J_{2,3}$. Using Propositions   \ref{exi-ns} and \ref{B-b-Pro}, (\ref{L2-B-b}), H$\rm{\ddot{o}}$lder's inequality, and Young's inequality, we get
$$\begin{aligned}
J_{2,2} \le & C\frac{1}{\sqrt\epsilon}\|(\rho^{I,0})^{\gamma-2} \|_{L^\infty(I)}\|\frac{\epsilon}{2}[(\rho^{B,1})^2+(\rho^{b,1})^2] \|_{L^2(I)}\|U^\epsilon_x\|_{L^2(I)}\\
& +C\frac{1}{\sqrt{\epsilon}}\| (\rho^{I,0})^{\gamma-3} \rho^{I,0}_x \|_{L^\infty(I)}\|\frac{\epsilon}{2}[(\rho^{B,1})^2+(\rho^{b,1})^2] \|_{L^2(I)}\|U^\epsilon\|_{L^2(I)}\\
\le & (\frac{1}{12}\|U^{\epsilon}_x\|_{L^2(I)}^2+C\epsilon\sqrt\epsilon)+(C\epsilon\sqrt\epsilon+C\|\sqrt{\rho^{\epsilon}}U^\epsilon\|_{L^2(I)}^2) \le \frac{1}{12}\|U^{\epsilon}_x\|_{L^2(I)}^2+C\epsilon\sqrt\epsilon +C\|\sqrt{\rho^{\epsilon}}U^\epsilon\|_{L^2(I)}^2\\
\end{aligned}
$$
and
$$\begin{aligned}
J_{2,3} \le & C\frac{1}{\sqrt\epsilon}\|(\rho^{I,0})^{\gamma-2} \rho^{I,0}_x\|_{L^\infty(I)}\|{\epsilon}(\rho^{B,2}+\rho^{b,2})\|_{L^2(I)}\|U^\epsilon\|_{L^2(I)}
\le  C\epsilon\sqrt\epsilon+C\|\sqrt{\rho^{\epsilon}}U^\epsilon\|_{L^2(I)}^2.\\
\end{aligned}
$$

Thus, we have
$$\begin{aligned}
J_2=&J_{2,1} + J_{2,2}+J_{2,3}
\le  \frac{1}{6}\|U^{\epsilon}_x\|_{L^2(I)}^2 +C\|\Phi^{\epsilon}\|_{L^2(I)}^2+C\epsilon\sqrt\epsilon+C\|\sqrt{\rho^{\epsilon}}U^\epsilon\|_{L^2(I)}^2.
\end{aligned}
$$

From Proposition \ref{B-b-Pro} and Lemma \ref{clm:h},  we get
$$
\begin{aligned}
J_{3}=&\int_I\Omega_4 U^\epsilon\mathrm{d}x=\int_I(\rho^{b,1}\rho^{B,1}_{yyy}+ \rho^{B,1}\rho^{b,1}_{zzz} )U^\epsilon\mathrm{d}x\\
\le & \int_I|\rho^{b,1}\rho^{B,1}_{yyy}| \left|\int_0^x U_a^\epsilon(a,t)\mathrm{d}a\right|\mathrm{d}x + \int_I|\rho^{B,1}\rho^{b,1}_{zzz}| \left|\int_x^1 U_a^\epsilon(a,t)\mathrm{d}a\right|\mathrm{d}x\\
\le &\frac{1}{12} \int_I (U^\epsilon_x)^2
 + C{\epsilon} \| \rho^{b,1}\|_{L_y^2}^2 \| \langle y \rangle\rho^{B,1}_{yyy}\|_{L_y^2}^2 + C{\epsilon} \| \rho^{B,1}\|_{L_z^2}^2 \| \langle z \rangle\rho^{b,1}_{zzz}\|_{L_z^2}^2\\
\le &\frac{1}{12} \int_I (U^\epsilon_x)^2 +C\epsilon.
\end{aligned}
$$
Thus, substituting the estimates of $J_{1}, ~J_2$ and $J_3$ into (\ref{forme}), we get the result (\ref{L2-Esti-2}).
\end{proof}
\begin{Lemma}\label{Phi-phix-L2}
Under the conditions of Theorem \ref{assumption}, there holds that
\begin{equation}\label{phi-epsilon-x-L2-esti}
\begin{aligned}
&\frac{1}{2}\frac{\mathrm{d}}{\mathrm{d}t} \int_I \epsilon(\Phi^\epsilon_x)^2\mathrm{d}x+\int_I \epsilon \rho^\epsilon \Phi^\epsilon_{xxx} U^\epsilon \mathrm{d}x
\le   C \epsilon\|\Phi^\epsilon_x\|_{L^2(I)}^2+ C\epsilon +C\epsilon\|\Phi^\epsilon\|_{L^2(I)}^2.
\end{aligned}
\end{equation}
\end{Lemma}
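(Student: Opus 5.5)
We differentiate the mass equation (\ref{zhankai-equ})$_1$ in $x$, multiply by $\epsilon\Phi^\epsilon_x$, and integrate over $I$. The estimate is designed so that the capillary cross term produced here cancels the term $-\int_I\epsilon\rho^\epsilon\Phi^\epsilon_{xxx}U^\epsilon\,\mathrm{d}x$ appearing in Lemma \ref{Phi-U-L2-U}.

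Since $\Phi^\epsilon_x|_{x=0,1}=0$, it is cleaner to multiply (\ref{zhankai-equ})$_1$ by $-\epsilon\Phi^\epsilon_{xx}$ and integrate by parts in the time derivative term. This gives
\[
\frac{1}{2}\frac{\mathrm{d}}{\mathrm{d}t}\int_I\epsilon(\Phi^\epsilon_x)^2\mathrm{d}x=\int_I\epsilon(\rho^\epsilon U^\epsilon)_x\Phi^\epsilon_{xx}\mathrm{d}x+\int_I\epsilon\Phi^\epsilon_{xx}\sum_{i=1}^5h_i\,\mathrm{d}x .
\]
In the first integral we integrate by parts once more. The boundary terms vanish because $U^\epsilon|_{x=0,1}=0$ and $\Phi^\epsilon_x|_{x=0,1}=0$, so $\rho^\epsilon U^\epsilon\Phi^\epsilon_{xx}$ vanishes at the endpoints. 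Writing $(\rho^\epsilon U^\epsilon)_x\Phi^\epsilon_{xx}=(\rho^\epsilon U^\epsilon\Phi^\epsilon_{xx})_x-\rho^\epsilon U^\epsilon\Phi^\epsilon_{xxx}$, we get
\[
\int_I\epsilon(\rho^\epsilon U^\epsilon)_x\Phi^\epsilon_{xx}\mathrm{d}x=-\int_I\epsilon\rho^\epsilon U^\epsilon\Phi^\epsilon_{xxx}\mathrm{d}x,
\]
which is exactly the term moved to the left side of (\ref{phi-epsilon-x-L2-esti}).

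It remains to bound $\int_I\epsilon\Phi^\epsilon_{xx}h_i\,\mathrm{d}x$. For $h_1,h_3,h_4,h_5$ we integrate by parts back, using $\Phi^\epsilon_x|_{x=0,1}=0$, to get $-\int_I\epsilon\Phi^\epsilon_x(h_i)_x\,\mathrm{d}x$. This is bounded by $C\epsilon\|\Phi^\epsilon_x\|_{L^2}^2+C\epsilon\|(h_i)_x\|_{L^2}^2\le C\epsilon\|\Phi^\epsilon_x\|_{L^2}^2+C\epsilon^{3/2}$ by Lemmas \ref{clm:h} and \ref{clm:no-impor}.

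For $h_2$ we likewise convert to $-\int_I\epsilon\Phi^\epsilon_x(h_2)_x\,\mathrm{d}x$. The pieces involving $L^\epsilon$ and $M^\epsilon$ are bounded by $C\epsilon\|\Phi^\epsilon_x\|_{L^2}^2+C\epsilon$ via Lemma \ref{L-M-well-pose}; in the term $\frac{1}{\sqrt\epsilon}(M^\epsilon\rho^\epsilon)_{xx}$, the factor $\rho^\epsilon_{xx}$ is handled using $M^\epsilon=O(\epsilon)$ and the splitting $\rho^\epsilon_x=(\rho^\epsilon_x-\sqrt\epsilon\Phi^\epsilon_x)+\sqrt\epsilon\Phi^\epsilon_x$ together with (\ref{rho-epsilon-phi-epsilon}). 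The pieces of the form $a\Phi^\epsilon+b\Phi^\epsilon_x$, where the coefficient $b=u^{I,0}+\sqrt\epsilon[\sqrt\epsilon(u^{B,2}+u^{b,2})+\epsilon(u^{B,3}+u^{b,3})]$ vanishes at $x=0,1$, are treated as transport terms. The top-order contribution is
\[
-\int_I\epsilon\Phi^\epsilon_x\,b\,\Phi^\epsilon_{xx}\,\mathrm{d}x=\frac{\epsilon}{2}\int_I b_x(\Phi^\epsilon_x)^2\mathrm{d}x\le C\epsilon\|\Phi^\epsilon_x\|_{L^2}^2,
\]
since $\|b_x\|_{L^\infty}\le C$: the boundary layer derivatives carry factors $\epsilon\cdot\epsilon^{-1/2}$, so they are bounded. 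The terms with $\Phi^\epsilon$ give $C\epsilon(\|\Phi^\epsilon\|_{L^2}^2+\|\Phi^\epsilon_x\|_{L^2}^2)$. Their coefficients, such as $u^{I,0}_{xx}$ and $\sqrt\epsilon\,\partial_x(u^{B,2}_y)\sim u^{B,2}_{yy}$, are bounded in $L^\infty$ by Proposition \ref{B-b-Pro}.

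Collecting these bounds yields (\ref{phi-epsilon-x-L2-esti}). The main obstacle is the bookkeeping in $h_2$: we must make sure no $\Phi^\epsilon_{xx}$ survives without being integrated away, and that every boundary-layer coefficient stays $O(1)$ in $L^\infty$ after differentiation (each $x$-derivative costs $\epsilon^{-1/2}$, offset by the prefactors $\sqrt\epsilon$ or $\epsilon$).
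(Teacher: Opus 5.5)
Your proposal is correct and follows the paper's proof essentially step for step: the same identity from pairing the mass equation with $\epsilon\Phi^\epsilon_{xx}$ and using $U^\epsilon|_{x=0,1}=\Phi^\epsilon_x|_{x=0,1}=0$, the same integration by parts reducing $h_1,h_3,h_4,h_5$ to Lemmas \ref{clm:h} and \ref{clm:no-impor}, and the same transport-type treatment of $(h_2)_x$. Two small corrections: (i) your $b$ does not vanish at $x=0,1$ unless $M^\epsilon$ is added to it (that is exactly what $M^\epsilon$ is for), but this is harmless because $\Phi^\epsilon_x$ vanishes there anyway; (ii) the piece $M^\epsilon\Phi^\epsilon_{xx}$ hidden in $\frac{1}{\sqrt\epsilon}M^\epsilon\rho^\epsilon_{xx}$ must be integrated by parts like the other transport terms, since $M^\epsilon=O(\epsilon)$ alone leaves a factor $\|\Phi^\epsilon_{xx}\|_{L^2}$ that is absent from the right-hand side, while the remainder $\frac{1}{\sqrt\epsilon}M^\epsilon(\rho^\epsilon-\sqrt\epsilon\Phi^\epsilon)_{xx}$ needs the bound $\|(\rho^\epsilon-\sqrt\epsilon\Phi^\epsilon)_{xx}\|_{L^2}^2\le C\epsilon^{-1/2}$ from the ansatz rather than (\ref{rho-epsilon-phi-epsilon}), which only controls first derivatives.
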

\begin{proof}
Using (\ref{zhankai-equ})$_{1}$ and (\ref{zhankai-equ})$_{4}$, we get
\begin{equation}\label{L2-3_2-4}
\begin{aligned}
&\int_I \epsilon \rho^\epsilon \Phi^\epsilon_{xxx} U^\epsilon\mathrm{d}x=- \int_I \epsilon ( \rho^\epsilon U^\epsilon)_x \Phi^\epsilon_{xx} \mathrm{d}x
= \int_I \epsilon( \Phi_t^\epsilon+ h_1+h_2+h_3+h_4+h_5) \Phi^\epsilon_{xx} \mathrm{d}x \\
=& -\frac{1}{2}\frac{\mathrm{d}}{\mathrm{d}t}\int_I \epsilon (\Phi^\epsilon_x)^2\mathrm{d}x+\int_I \epsilon h_2 \Phi^\epsilon_{xx} \mathrm{d}x -\int_I \epsilon (h_1+h_3 +h_4 +h_5)_x \Phi^\epsilon_{x} \mathrm{d}x.\\
\end{aligned}
\end{equation}

Using
$$
\begin{aligned}
&\int_I  \epsilon [u^{I,0} + \sqrt\epsilon M+{\epsilon}(u^{B,2}+u^{b,2})+{\epsilon}\sqrt\epsilon(u^{B,3}+u^{b,3})] \Phi^{\epsilon}_{xx} \Phi^{\epsilon}_{x}  \mathrm{d}x\\
= &- \int_I  \epsilon [u^{I,0}_x + \sqrt\epsilon M_x+\sqrt{\epsilon}(u^{B,2}_y+u^{b,2}_z)+{\epsilon}(u^{B,3}_y+u^{b,3}_z)] \frac{1}{2}(\Phi^{\epsilon}_{x})^2  \mathrm{d}x
\end{aligned}
$$
and integration by parts on $\int_I \epsilon h_2 \Phi^\epsilon_{xx} \mathrm{d}x$, we get
\begin{equation}\label{L2-3_2-1}
\begin{aligned}
&- \int_I \epsilon (h_2)_x \Phi^\epsilon_{x} \mathrm{d}x\\
=& -\int_I \epsilon \{\frac{1}{\sqrt{\epsilon}}L^\epsilon_{xt}  +u^{I,0}_{xx}(\frac{1}{\sqrt{\epsilon}}L^\epsilon + \Phi^\epsilon) + \frac{3}{2}u^{I,0}_{x} \Phi^\epsilon_x + 2u^{I,0}_{x}\frac{1}{\sqrt{\epsilon}}L^\epsilon_x +u^{I,0}\frac{1}{\sqrt{\epsilon}}L^\epsilon_{xx}\\
&+\frac{1}{\sqrt{\epsilon}}[M^\epsilon (\rho^\epsilon-\sqrt\epsilon\Phi^\epsilon)_{xx}+\frac{3}{2}M^\epsilon_x \sqrt\epsilon\Phi^\epsilon_{x}+2M^\epsilon_x (\rho^\epsilon_{x}-\sqrt\epsilon\Phi^\epsilon_x)]\\
&+[\frac{1}{\sqrt\epsilon}(u^{B,2}_{yy}+u^{b,2}_{zz})+(u^{B,3}_{yy}+u^{b,3}_{zz})]\sqrt{\epsilon} \Phi^\epsilon+\frac{3}{2}[(u^{B,2}_y+u^{b,2}_z)+\sqrt{\epsilon}(u^{B,3}_y+u^{b,3}_z)]\sqrt{\epsilon} \Phi^\epsilon_x\}\Phi^\epsilon_{x} \mathrm{d}x\\
:=& h_{2,1}^x - \int_I \epsilon \frac{3}{2}[ u^{I,0}_{x}
+M^\epsilon_x +\sqrt{\epsilon}(u^{B,2}_y+u^{b,2}_z)+{\epsilon}(u^{B,3}_y+u^{b,3}_z)] (\Phi^\epsilon_{x})^2 \mathrm{d}x.\\
\le & h_{2,1}^x + C \| u^{I,0}_{x}
+M^\epsilon_x +\sqrt{\epsilon}(u^{B,2}_y+u^{b,2}_z)+{\epsilon}(u^{B,3}_y+u^{b,3}_z)\|_{L^\infty(I)} \epsilon\|\Phi^\epsilon_x\|_{L^2(I)}^2\\
 \le & h_{2,1}^x + C \epsilon\|\Phi^\epsilon_x\|_{L^2(I)}^2.
\end{aligned}
\end{equation}

Using $\|(\rho^\epsilon-\sqrt\epsilon\Phi^\epsilon)_{xx}\|_{L^2(I)}^2 \le C(1 +\frac{1}{\sqrt\epsilon})$, Propositions \ref{exi-ns} and \ref{B-b-Pro},  Lemma \ref{L-M-well-pose}, %Remark \ref{U-B3-L-infty},
 (\ref{L2-B-b}), (\ref{rho-epsilon-phi-epsilon}), H$\rm{\ddot{o}}$lder's inequality, and Young's inequality, it holds that
\begin{equation}\label{L2-3_2-2}
\begin{aligned}
h_{2,1}^x=&-\int_I \{L^\epsilon_{xt}  +u^{I,0}_{xx}(L^\epsilon + \sqrt\epsilon\Phi^\epsilon) + 2u^{I,0}_{x}L^\epsilon_x+u^{I,0}L^\epsilon_{xx}+M^\epsilon (\rho^\epsilon-\sqrt\epsilon\Phi^\epsilon)_{xx}\\
&+2M^\epsilon_x (\rho^\epsilon_{x}-\sqrt\epsilon\Phi^\epsilon_x)+[(u^{B,2}_{yy}+u^{b,2}_{zz})+\sqrt\epsilon(u^{B,3}_{yy}+u^{b,3}_{zz})]\sqrt{\epsilon} \Phi^\epsilon\}\sqrt\epsilon\Phi^\epsilon_{x} \mathrm{d}x\\
\le  &C\epsilon\|\Phi^\epsilon_{x}\|_{L^2(I)}^2 + C\|L^\epsilon_{xt}\|_{L^2(I)}^2+C\|u^{I,0}_{xx}\|_{L^\infty(I)}^2\|L^\epsilon + \sqrt\epsilon\Phi^\epsilon \|_{L^2(I)}^2\\
&+C\|u^{I,0}_{x}\|_{L^\infty(I)}^2\|L^\epsilon_x\|_{L^2(I)}^2+C\|u^{I,0}\|_{L^\infty(I)}^2\|L^\epsilon_{xx}\|_{L^2(I)}^2\\
&+C\|M^\epsilon \|_{L^\infty(I)}^2\|(\rho^\epsilon-\sqrt\epsilon\Phi^\epsilon)_{xx}\|_{L^2(I)}^2+C\|M^\epsilon_x \|_{L^\infty(I)}^2\|\rho^\epsilon_{x}-\sqrt\epsilon\Phi^\epsilon_x\|_{L^2(I)}^2\\
&+C\|(u^{B,2}_{yy}+u^{b,2}_{zz})+\sqrt\epsilon(u^{B,3}_{yy}+u^{b,3}_{zz})]|_{L^\infty(I)}^2\|\sqrt{\epsilon} \Phi^\epsilon\|_{L^2(I)}^2\\
\le &C(\epsilon +\epsilon\|\Phi^\epsilon\|_{L^2(I)}^2+\epsilon\|\Phi^\epsilon_{x}\|_{L^2(I)}^2)+C\epsilon\|\Phi^\epsilon_{x}\|_{L^2(I)}^2 \le C(\epsilon +\epsilon\|\Phi^\epsilon\|_{L^2(I)}^2+\epsilon\|\Phi^\epsilon_{x}\|_{L^2(I)}^2).
\end{aligned}
\end{equation}

Using Lemmas \ref{clm:h} and \ref{clm:no-impor}, we get
\begin{equation}\label{L2-3_2-3}
-\int_I \epsilon (h_1 +h_3 +h_4 +h_5)_x \Phi^\epsilon_{x} \mathrm{d}x \le C\epsilon\|\Phi^\epsilon_{x}\|_{L^2(I)}^2 + C\epsilon \sum_{i\neq 2}\|(h_i)_x\|_{L^2(I)}^2 \le C\epsilon\|\Phi^\epsilon_{x}\|_{L^2(I)}^2 + C\epsilon \sqrt\epsilon.
\end{equation}

Hence, putting (\ref{L2-3_2-1})-(\ref{L2-3_2-3}) into (\ref{L2-3_2-4}), we get (\ref{phi-epsilon-x-L2-esti}).
\end{proof}

\begin{Corollary}\label{Phi-U-L2-all}
Under the conditions of Theorem \ref{assumption}, there holds that
\begin{equation}\label{Ux-Phix-L2L22-1}
\sup_{t\in[0,T_1]}(\|U^\epsilon\|_{ L^2(I)}^2+\|\Phi^\epsilon\|_{L^2(I)}^2+\epsilon\|\Phi^\epsilon_{x}\|_{L^2(I)}^2) +\|U_{x}^\epsilon\|_{L_{T_1}^2 L^2}^2\le  B_1(T_1)\epsilon \le  B_1(T)\epsilon,
\end{equation}
where $B_1(s)=C(1+s)+C  s(1+s) [1+s\exp\{Cs\}]$ for any $s\in[0,T]$.
\end{Corollary}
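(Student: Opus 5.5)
The plan is to add the three differential inequalities of Lemmas \ref{Phi-U-L2}, \ref{Phi-U-L2-U} and \ref{Phi-phix-L2}. The point is that the cross term $\int_I \epsilon\rho^\epsilon\Phi^\epsilon_{xxx}U^\epsilon\,\mathrm{d}x$ enters (\ref{L2-Esti-2}) with a minus sign and (\ref{phi-epsilon-x-L2-esti}) with a plus sign, so it cancels exactly. Setting
$$
E(t):=\frac12\int_I\Big[(\Phi^\epsilon)^2+\rho^\epsilon(U^\epsilon)^2+\epsilon(\Phi^\epsilon_x)^2\Big]\mathrm{d}x ,
$$
the sum gives
$$
\frac{\mathrm{d}}{\mathrm{d}t}E(t)+\frac12\|U^\epsilon_x\|_{L^2(I)}^2\le C\epsilon+C\|\Phi^\epsilon\|_{L^2(I)}^2+C\|\sqrt{\rho^\epsilon}U^\epsilon\|_{L^2(I)}^2+C\epsilon\|\Phi^\epsilon_x\|_{L^2(I)}^2\le C\epsilon+CE(t),
$$
after absorbing the two $\frac14\|U^\epsilon_x\|_{L^2}^2$ terms into the dissipation. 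Lemma \ref{clm-rho-bdd} makes $E$ equivalent to $\|U^\epsilon\|_{L^2}^2+\|\Phi^\epsilon\|_{L^2}^2+\epsilon\|\Phi^\epsilon_x\|_{L^2}^2$ with constants depending only on $M$. Note that the hypothesis (\ref{ini-val-phi-1}) is already used inside those lemmas, so it needs no further treatment here.

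Next I control the initial energy. By (\ref{zhankai-equ})$_3$ and (\ref{initial-value}),
$$
\|\Phi^\epsilon(0)\|_{L^2}^2+\|U^\epsilon(0)\|_{L^2}^2+\epsilon\|\Phi^\epsilon_x(0)\|_{L^2}^2\le \epsilon^{-1}C_0\epsilon^m(1+\epsilon)\le C\epsilon ,
$$
because $m>2$. One bookkeeping point needs checking: (\ref{zhankai}) at $t=0$ has vanishing boundary-layer profiles, since $\rho^{B,i}(y,0)=0$. However, $u^{B,2},u^{B,3}$ and $L^\epsilon,M^\epsilon$ at $t=0$ must also be small. They vanish because $\rho^{B,1}(\cdot,0)=\rho^{B,2}(\cdot,0)=0$ makes $u^{B,2}(\cdot,0)=u^{B,3}(\cdot,0)=0$ via (\ref{IBb-2}) and (\ref{rho-B2-inibou}), and $L^\epsilon,M^\epsilon$ are built from these. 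Hence $\Phi^\epsilon(0),U^\epsilon(0)$ are exactly the expressions in (\ref{zhankai-equ})$_3$.

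Gronwall on $[0,t]$ then gives $E(t)\le (E(0)+C\epsilon t)e^{Ct}\le C\epsilon(1+t)e^{Ct}$. Integrating the differential inequality once more in time and reinserting this bound for $E$ yields
$$
\|U^\epsilon_x\|_{L^2_{T_1}L^2}^2\le C E(0)+C\epsilon T_1+CT_1\sup_{[0,T_1]}E\le C\epsilon(1+T_1)+CT_1(1+T_1)\epsilon\,e^{CT_1}.
$$
This is bounded by $B_1(T_1)\epsilon$ with $B_1(s)=C(1+s)+Cs(1+s)[1+se^{Cs}]$, after enlarging $C$ if necessary. Monotonicity of $B_1$ in $s$ gives $B_1(T_1)\le B_1(T)$.

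I expect the only delicate step to be verifying the exact cancellation of the $\epsilon\rho^\epsilon\Phi^\epsilon_{xxx}U^\epsilon$ term, including its signs, together with the initial-data bookkeeping. Everything else is a routine Gronwall argument.
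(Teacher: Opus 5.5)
Your proposal is correct and follows the paper's proof. The paper likewise sums Lemmas \ref{Phi-U-L2}, \ref{Phi-U-L2-U} and \ref{Phi-phix-L2}, so that the $\epsilon\rho^\epsilon\Phi^\epsilon_{xxx}U^\epsilon$ terms cancel, bounds the initial energy by $C\epsilon^{m-1}\le C\epsilon$, and applies Gronwall, then integrates once more in time to control $\|U^\epsilon_x\|_{L^2_{T_1}L^2}^2$; your check that the boundary-layer profiles, $L^\epsilon$ and $M^\epsilon$ vanish at $t=0$ is a detail the paper leaves implicit.
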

\begin{proof}
In conclusion, together with Lemmas \ref{Phi-U-L2}, \ref{Phi-U-L2-U} and \ref{Phi-phix-L2}, we obtain
\begin{equation}\label{U-L2-half}
\begin{aligned}
&\frac{1}{2}\frac{\mathrm{d}}{\mathrm{d}t} \int_I [\rho^{\epsilon} (U^\epsilon)^2+(\Phi^\epsilon)^2+\epsilon(\Phi^\epsilon_x)^2] \mathrm{d}x +\int_I (U^\epsilon_x)^2 \mathrm{d}x\\
\le &\frac{1}{2} \int_I (U^\epsilon_x)^2\mathrm{d}x +C\|\sqrt{\rho^{\epsilon}}U^\epsilon\|_{L^2(I)}^2 +C\| \Phi^\epsilon\|_{L^2(I)}^2+C{\epsilon}\| \Phi^\epsilon_x\|_{L^2(I)}^2 +C\epsilon.
\end{aligned}
\end{equation}

Using (\ref{equ-epsilon}), (\ref{equ-NS}) (\ref{IBb-3})-(\ref{initial-value}), and definition of $L^\epsilon$, $M^\epsilon$, we get the estimates of initial value of system (\ref{zhankai-equ}) as follows:
\begin{equation}\begin{aligned}\label{ini-val-phi-11}
&\|\Phi^\epsilon(\cdot,0) \|_{L^2(I)}^2+\|U^\epsilon(\cdot,0)\|_{L^2(I)}^2 +\epsilon\|\Phi^\epsilon_x(\cdot,0) \|_{L^2(I)}^2\\
=&\frac{1}{\epsilon}\|\rho^{\epsilon}_0-\rho^{I,0}_0\|_{L^2(I)}^2 +\frac{1}{\epsilon}\|u^\epsilon_0 -u^{I,0}_0\|_{L^2(I)}^2+\|(\rho^{\epsilon}_0-\rho^{I,0}_0)_x\|_{L^2(I)}^2\le C\epsilon^{m-1},
\end{aligned}
 \end{equation}for any $m> 2$.

Thus, using Gronwall's inequality, it holds that
$$\begin{aligned}
&\sup_{t\in[0,T_1]} \int_I [\rho^{\epsilon} (U^\epsilon)^2+(\Phi^\epsilon)^2+\epsilon(\Phi^\epsilon_x)^2] \mathrm{d}x\\
 \le &(\|\Phi^\epsilon(\cdot,0) \|_{L^2(I)}^2 +\epsilon\|\Phi^\epsilon_x(\cdot,0) \|_{L^2(I)}^2+\|\rho^{I,0}_0\|_{L^\infty(I)} \|U^\epsilon(\cdot,0)\|_{L^2(I)}^2 + CT_1\epsilon)\\
& +CT_1 (\|\Phi^\epsilon(\cdot,0) \|_{L^2(I)}^2 +\epsilon\|\Phi^\epsilon_x(\cdot,0) \|_{L^2(I)}^2+\|\rho^{I,0}_0\|_{L^\infty(I)} \|U^\epsilon(\cdot,0)\|_{L^2(I)}^2 + CT_1 \epsilon)\exp\{CT_1\}\\
\le & C \epsilon (1+T_1) [1+T_1\exp\{CT_1\}],
\end{aligned}
$$
which implies that
\begin{equation}\label{Gronwall-L2}
\begin{aligned}
&\sup_{t\in[0,T_1]} \int_I [\rho^{\epsilon} (U^\epsilon)^2+(\Phi^\epsilon)^2+\epsilon(\Phi^\epsilon_x)^2] \mathrm{d}x +\int_0^{T_1}\int_I (U^\epsilon_x)^2 \mathrm{d}x \mathrm{d}t\\
 \le &(\|\Phi^\epsilon(\cdot,0) \|_{L^2(I)}^2 +\epsilon\|\Phi^\epsilon_x(\cdot,0) \|_{L^2(I)}^2+\|\rho^{I,0}_0\|_{L^\infty(I)} \|U^\epsilon(\cdot,0)\|_{L^2(I)}^2 + CT_1 \epsilon)\\
& +C \epsilon T_1(1+T_1) [1+T_1\exp\{CT_1\}]\\
\le & C(1+T_1)\epsilon+C \epsilon T_1(1+T_1) [1+T_1\exp\{CT_1\}]:=B_1(T_1)\epsilon\\
\le & B_1(T) \epsilon.
\end{aligned}
\end{equation}

Thus, we get (\ref{Ux-Phix-L2L22-1}).
\end{proof}

Corollary \ref{Phi-U-L2-all} immediately yields the following result.
\begin{Corollary}\label{clm:some-impor} Under the conditions of Corollary \ref{Phi-U-L2-all}, there holds that
\begin{equation}\label{some-rho-1}
\left\{
\begin{aligned}
&\|\rho^\epsilon_x\|_{L^2(I)}^2 \le C,~ \| (\rho^\epsilon-\sqrt\epsilon\Phi^\epsilon)_{xx}\|_{L^2(I)}^2  \le \frac{C}{\sqrt\epsilon}, \\
& \| \rho^\epsilon_{xx}\|_{L^2(I)}^2  \le \frac{C}{\sqrt\epsilon} + C\epsilon \| \Phi^\epsilon_{xx}\|_{L^2(I)}^2,\\
&\|\Phi^\epsilon\|_{L^\infty(I)}^2 \le C\|\Phi^\epsilon\|_{L^2(I)}^2 + C \|\Phi^\epsilon\|_{L^2(I)}\|\Phi^\epsilon_x\|_{L^2(I)} \le C\sqrt\epsilon,\\
&\|\rho^\epsilon_{t}\|_{L^2(I)}^2 \le C +C\epsilon \|\Phi^\epsilon_t\|_{L^2(I)}^2 ,\\
& \| \rho^\epsilon_{xt}\|_{L^2(I)}^2  \le C +C\epsilon \|\Phi^\epsilon_{xt}\|_{L^2(I)}^2,
\end{aligned}
\right.
\end{equation}
and that
\begin{equation}\label{some-h-2}
\left\{
\begin{aligned}
&\|(h_2)_x-u^{I,0}\Phi^\epsilon_{xx}-2u_x^{I,0} \Phi^\epsilon_x\|_{L^2(I)}^2 \le
 C\sqrt\epsilon + C \epsilon^2 \|\Phi^\epsilon_{xx}\|_{L^2(I)}^2,\\
& \|\Phi^\epsilon_t\|_{L^2(I)}^2 %\le   C \epsilon%+\frac{C_2}{\epsilon} \|L^\epsilon_{t}\|_{L^2(I)}^2
 %+ C \|\Phi^\epsilon_x\|_{L^2(I)}^2+C\|\sqrt{\rho^\epsilon}U^\epsilon_x\|_{L^2(I)}^2
 \le C +C\|\sqrt{\rho^\epsilon}U^\epsilon_x\|_{L^2(I)}^2,\\
& \|\Phi^\epsilon_{xt}\|_{L^2(I)}^2 \le   C\sqrt\epsilon + C(1+\epsilon\|U^\epsilon_x\|_{L^2(I)}^2) \|\Phi^\epsilon_{xx}\|_{L^2(I)}^2 + \frac{C}{\sqrt\epsilon}\|U^\epsilon_x\|_{L^2(I)}^2
+ C  \|U^\epsilon_{xx}\|_{L^2(I)}^2,\\
&\|(h_{2})_t\|_{L^2(I)}^2 \le \frac{C}{\epsilon} \|L^\epsilon_{tt}\|_{L^2(I)}^2 + C
+ C \|\Phi^\epsilon_{t}\|_{L^2(I)}^2+C \|\Phi^\epsilon_{xt}\|_{L^2(I)}^2 +C\epsilon^2 \sqrt\epsilon\|\Phi^\epsilon_{xx}\|_{L^2(I)}^2,\\
&\|(h_3)_t\|_{L^2(I)}^2 \le {C}{\epsilon}\sqrt\epsilon \|\rho^{B,2}_{tt}\|_{L^2_y}^2  +C\epsilon ,\\
&\|(h_4)_t\|_{L^2(I)}^2 \le {C}{\epsilon}\sqrt\epsilon \|\rho^{b,2}_{tt}\|_{L^2_z}^2  +C\epsilon  ,\\
&\|(h_{5})_t\|_{L^2(I)}^2
 \le C \epsilon \sqrt\epsilon.\\
\end{aligned}\right.
\end{equation}
\end{Corollary}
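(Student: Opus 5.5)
The plan is to go through the listed bounds one at a time. Each is either a direct consequence of the decomposition (\ref{zhankai}), or comes from reading off the error equation (\ref{zhankai-equ})$_1$ and then using Corollary \ref{Phi-U-L2-all}. The common tools are Propositions \ref{exi-ns} and \ref{B-b-Pro}, the scaling rules (\ref{L2-B-b}), Lemma \ref{L-M-well-pose} with $l=4$, Lemma \ref{clm-rho-bdd}, and Lemma \ref{some-inequ}(2).

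\emph{Bounds (\ref{some-rho-1}).} Differentiating (\ref{zhankai})$_1$ twice in $x$, each boundary-layer derivative costs a factor $\epsilon^{-1/2}$, and (\ref{L2-B-b}) gives back a factor $\epsilon^{1/2}$. Hence $\|\sqrt\epsilon\rho^{B,1}_{xx}\|_{L^2}^2\le \epsilon\cdot\epsilon^{-2}\cdot\sqrt\epsilon\,\|\rho^{B,1}_{yy}\|_{L^2_y}^2=C\epsilon^{-1/2}$. The $\rho^{B,2}$ terms and $L^\epsilon$ are of lower order. This yields $\|(\rho^\epsilon-\sqrt\epsilon\Phi^\epsilon)_{xx}\|_{L^2}^2\le C\epsilon^{-1/2}$, and adding $\sqrt\epsilon\Phi^\epsilon_{xx}$ gives the bound on $\rho^\epsilon_{xx}$. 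For $\|\rho^\epsilon_x\|_{L^2}^2\le C$, combine (\ref{rho-epsilon-phi-epsilon}) with $\epsilon\|\Phi^\epsilon_x\|^2\le B_1\epsilon$ from Corollary \ref{Phi-U-L2-all}.

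For the $L^\infty$ bound on $\Phi^\epsilon$, use the one-dimensional Gagliardo--Nirenberg inequality $\|f\|_\infty^2\le C\|f\|_2^2+C\|f\|_2\|f_x\|_2$. Here $\|\Phi^\epsilon\|_2\le C\sqrt\epsilon$ and $\|\Phi^\epsilon_x\|_2\le C$, both from Corollary \ref{Phi-U-L2-all}, so the right-hand side is $\le C\sqrt\epsilon$. The bounds on $\rho^\epsilon_t$ and $\rho^\epsilon_{xt}$ come from differentiating (\ref{zhankai})$_1$ in $t$. The profiles have $W^{1,\infty}_TH^3_y$ regularity, $\rho^{B,1}_{xt}$ is weighted by $\sqrt\epsilon$, and so $\|\sqrt\epsilon\rho^{B,1}_{xt}\|^2\le C\sqrt\epsilon$; the $L^\epsilon_t$ terms are controlled by Lemma \ref{L-M-well-pose}.

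\emph{Bounds (\ref{some-h-2}).} For $(h_2)_x$, expand the derivative of $h_2$. The only terms not multiplied by a small coefficient are $u^{I,0}\Phi^\epsilon_{xx}$ and $2u^{I,0}_x\Phi^\epsilon_x$ (the latter collects $u^{I,0}_x\Phi^\epsilon_x$ from two places), and these are subtracted in the statement. Among the rest, the term $\epsilon^{-1/2}(M^\epsilon\rho^\epsilon)_{xx}$ contains $\epsilon^{-1/2}M^\epsilon\sqrt\epsilon\Phi^\epsilon_{xx}=O(\epsilon)\Phi^\epsilon_{xx}$, which produces the $C\epsilon^2\|\Phi^\epsilon_{xx}\|^2$ term. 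The remaining $\epsilon^{-1/2}M^\epsilon(\rho^\epsilon-\sqrt\epsilon\Phi^\epsilon)_{xx}$ is of size $\epsilon\cdot\epsilon^{-1/2}\cdot\epsilon^{-1/4}$, hence $O(\sqrt\epsilon)$ after squaring. The boundary-layer coefficient terms $[\sqrt\epsilon u^{B,2}]\sqrt\epsilon\Phi^\epsilon_{xx}$ are likewise $O(\epsilon)\Phi^\epsilon_{xx}$. The terms involving $u^{B,2}_{yy}\Phi^\epsilon$ are bounded using $\|\Phi^\epsilon\|_\infty^2\le C\sqrt\epsilon$.

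For $\Phi^\epsilon_t$, solve (\ref{zhankai-equ})$_1$ for it. Write $(\rho^\epsilon U^\epsilon)_x=\rho^\epsilon_xU^\epsilon+\rho^\epsilon U^\epsilon_x$ and use $\|U^\epsilon\|_\infty\le C\|U^\epsilon_x\|_2$ together with $\|\rho^\epsilon_x\|_2\le C$. The sources are bounded by Lemmas \ref{clm:h} and \ref{clm:no-impor}, and $h_2$ by $C+C\|\Phi^\epsilon_x\|^2$. The $x$-derivative of the same equation gives $\Phi^\epsilon_{xt}$. The hard term there is $\rho^\epsilon_{xx}U^\epsilon$, which I would bound by $\|\rho^\epsilon_{xx}\|_2^2\|U^\epsilon_x\|_2^2\le (C\epsilon^{-1/2}+C\epsilon\|\Phi^\epsilon_{xx}\|^2)\|U^\epsilon_x\|^2$; this is exactly the shape of the stated bound. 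The remaining pieces $2\rho^\epsilon_xU^\epsilon_x$, $\rho^\epsilon U^\epsilon_{xx}$, $(h_2)_x$ (using the previous item) and $(h_i)_x$ (Lemmas \ref{clm:h} and \ref{clm:no-impor}) are routine.

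The $t$-derivatives of $h_2,\dots,h_5$ are handled term by term in the same way, using the $H^1_T$ and $W^{1,\infty}_T$ profile regularity in Proposition \ref{B-b-Pro} and $\|L^\epsilon_{tt}\|_{L^2}$ kept explicit. In $(h_2)_t$, the term $\epsilon^{-1/2}(M^\epsilon\rho^\epsilon)_{xt}$ involves $\rho^\epsilon_t$ and $\rho^\epsilon_{xt}$, which are estimated by the previous items.

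The main obstacle is the bookkeeping in the $\Phi^\epsilon_{xt}$ and $(h_2)_t$ estimates. There I have to track precisely which powers of $\epsilon$ multiply $\Phi^\epsilon_{xx}$, so that the bounds have exactly the structure later absorbed by the dissipation $\epsilon^2\|\Phi^\epsilon_{xxx}\|^2$ and $\|U^\epsilon_{xx}\|^2$. In particular, no bare $\epsilon^{-1/2}\|\Phi^\epsilon_{xx}\|^2$ may appear. I would first isolate every occurrence of $\Phi^\epsilon_{xx}$ and check its coefficient before estimating anything else.
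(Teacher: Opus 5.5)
Your proposal is correct and follows essentially the same route as the paper. The paper obtains (\ref{some-rho-1}) directly from the decomposition (\ref{zhankai}) using (\ref{L2-B-b}) and Corollary \ref{Phi-U-L2-all}. It splits $(h_2)_x$ into $L^\epsilon$-, $M^\epsilon$- and boundary-layer-coefficient pieces, treating $\epsilon^{-1/2}M^\epsilon(\rho^\epsilon-\sqrt\epsilon\Phi^\epsilon)_{xx}$ and $M^\epsilon\Phi^\epsilon_{xx}$ exactly as you describe. It reads $\Phi^\epsilon_t$ and $\Phi^\epsilon_{xt}$ off (\ref{zhankai-equ})$_1$, bounding $\rho^\epsilon_{xx}U^\epsilon$ by $\|\rho^\epsilon_{xx}\|_{L^2(I)}^2\|U^\epsilon_x\|_{L^2(I)}^2$.

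One detail should be written out. In the $\Phi^\epsilon_{xt}$ bound, the piece $2u^{I,0}_x\Phi^\epsilon_x$ of $(h_2)_x$ must be absorbed using $\|\Phi^\epsilon_x\|_{L^2(I)}\le\|\Phi^\epsilon_{xx}\|_{L^2(I)}$, which holds since $\Phi^\epsilon_x|_{x=0,1}=0$. Otherwise you only have $\|\Phi^\epsilon_x\|_{L^2(I)}^2\le C$ from Corollary \ref{Phi-U-L2-all}, which leaves an $O(1)$ constant instead of the stated $C\sqrt\epsilon$.
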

\begin{proof}
Firstly, using Propositions \ref{exi-ns} and \ref{B-b-Pro}, Lemma \ref{L-M-well-pose}, (\ref{L2-B-b}), Corollary \ref{Phi-U-L2-all}, and Sobolev's inequality, we get (\ref{some-rho-1}) after the direct estimates.

Secondly, we estimate (\ref{some-h-2})$_1$.

Redefine $(h_2)_x-u^{I,0}\Phi^\epsilon_{xx}-2u_x^{I,0} \Phi^\epsilon_x:= h_{2,1}' + h_{2,2}' + h_{2,3}'$
 as
$$
\begin{aligned}
h_{2,1}'=&\frac{1}{\sqrt{\epsilon}}L^\epsilon_{xt}  +u^{I,0}_{xx}(\frac{1}{\sqrt{\epsilon}}L^\epsilon + \Phi^\epsilon) +2u^{I,0}_x\frac{1}{\sqrt{\epsilon}}L^\epsilon_x +u^{I,0}\frac{1}{\sqrt{\epsilon}}L^\epsilon_{xx}, \\
h_{2,2}'=&\frac{2}{\sqrt{\epsilon}}M^\epsilon_x \rho^\epsilon_x+\frac{1}{\sqrt{\epsilon}}M^\epsilon (\rho^\epsilon-\sqrt\epsilon\Phi^\epsilon)_{xx} + \frac{1}{\sqrt{\epsilon}}M^\epsilon \sqrt\epsilon \Phi^\epsilon_{xx},\\
h_{2,3}'=&[(u^{B,2}_{yy}+u^{b,2}_{zz})+\sqrt{\epsilon}(u^{B,3}_{yy}+u^{b,3}_{zz})] \Phi^\epsilon+[2(u^{B,2}_y+u^{b,2}_z)+2\sqrt{\epsilon}(u^{B,3}_y+u^{b,3}_z)]\sqrt{\epsilon} \Phi^\epsilon_x\\
&+[\sqrt{\epsilon}(u^{B,2}+u^{b,2})+{\epsilon}(u^{B,3}+u^{b,3})]\sqrt{\epsilon} \Phi^\epsilon_{xx}.
\end{aligned}
$$
Then, using Propositions \ref{exi-ns} and \ref{B-b-Pro},  Lemma \ref{L-M-well-pose},
Corollary \ref{Phi-U-L2-all}, (\ref{L2-B-b}), (\ref{some-rho-1}), and H$\rm{\ddot{o}}$lder's inequality, we get
$$
\begin{aligned}
\|h_{2,1}'\|_{L^2(I)}^2 \le & C\|\frac{1}{\sqrt{\epsilon}}L^\epsilon_{xt} \|_{L^2(I)}^2+ C\|u^{I,0}_{xx}\|_{L^\infty(I)}^2 \|\frac{1}{\sqrt{\epsilon}}L^\epsilon + \Phi^\epsilon\|_{L^2(I)}^2+ C\|u^{I,0}_x\|_{L^\infty(I)}^2 \|\frac{1}{\sqrt{\epsilon}}L^\epsilon_x \|_{L^2(I)}^2\\
&+ C\|u^{I,0}\|_{L^\infty(I)}^2 \|\frac{1}{\sqrt{\epsilon}}L^\epsilon_{xx}\|_{L^2(I)}^2\\
\le & C\epsilon + C \epsilon + C \epsilon + C\epsilon \le C\epsilon,
\end{aligned}
$$
$$
\begin{aligned}
\|h_{2,2}'\|_{L^2(I)}^2 \le & C\|\frac{2}{\sqrt{\epsilon}}M^\epsilon_x \|_{L^\infty(I)}^2 \|\rho^\epsilon_x\|_{L^2(I)}^2+ C\|\frac{1}{\sqrt{\epsilon}}M^\epsilon\|_{L^\infty(I)}^2 \| (\rho^\epsilon-\sqrt\epsilon\Phi^\epsilon)_{xx} \|_{L^2(I)}^2\\
&+ C\|\frac{1}{\sqrt{\epsilon}}M^\epsilon \|_{L^\infty(I)}^2 \|\sqrt\epsilon \Phi^\epsilon_{xx}\|_{L^2(I)}^2\\
\le & C\epsilon+ C\sqrt \epsilon + C\epsilon^2 \|\Phi^\epsilon_{xx}\|_{L^2(I)}^2 \le C\sqrt \epsilon + C\epsilon^2 \|\Phi^\epsilon_{xx}\|_{L^2(I)}^2,
\end{aligned}
$$
and
$$
\begin{aligned}
\|h_{2,3}'\|_{L^2(I)}^2 \le & C\|(u^{B,2}_{yy}+u^{b,2}_{zz})+\sqrt{\epsilon}(u^{B,3}_{yy}+u^{b,3}_{zz})\|_{L^2(I)}^2 \|\Phi^\epsilon\|_{L^\infty(I)}^2\\
&+ C\|2(u^{B,2}_y+u^{b,2}_z)+2\sqrt{\epsilon}(u^{B,3}_y+u^{b,3}_z)\|_{L^\infty(I)}^2 \|\sqrt{\epsilon} \Phi^\epsilon_x\|_{L^2(I)}^2\\
&+ C\|[\sqrt{\epsilon}(u^{B,2}+u^{b,2})+{\epsilon}(u^{B,3}+u^{b,3})]\|_{L^\infty(I)}^2 \|\sqrt{\epsilon} \Phi^\epsilon_{xx}\|_{L^2(I)}^2\\
\le & C\epsilon+C\epsilon + C \epsilon^2 \|\Phi^\epsilon_{xx}\|_{L^2(I)}^2 \le C\epsilon + C \epsilon^2 \|\Phi^\epsilon_{xx}\|_{L^2(I)}^2,
\end{aligned}
$$
which imply that
$$
\|(h_2)_x-u^{I,0}\Phi^\epsilon_{xx}-2u_x^{I,0} \Phi^\epsilon_x\|_{L^2(I)}^2 \le C \sum_{i=1}^3\|h_{2,i}'\|_{L^2(I)}^2 \le C\sqrt\epsilon + C \epsilon^2 \|\Phi^\epsilon_{xx}\|_{L^2(I)}^2.
$$

Thirdly, we estimate (\ref{some-h-2})$_{5,6}$.

Using Propositions \ref{exi-ns} and \ref{B-b-Pro},
Corollary \ref{Phi-U-L2-all},  Lemmas \ref{L-M-well-pose} and \ref{clm-rho-bdd}, (\ref{L2-B-b}), (\ref{some-rho-1}), and H$\rm{\ddot{o}}$lder's inequality, it holds that
$$
\begin{aligned}
\|h_2\|_{L^2(I)}^2= &\|\frac{1}{\sqrt{\epsilon}}L^\epsilon_t  +u^{I,0}_x(\frac{1}{\sqrt{\epsilon}}L^\epsilon + \Phi^\epsilon) +u^{I,0}(\frac{1}{\sqrt{\epsilon}}L^\epsilon_x + \Phi^\epsilon_x)+\frac{1}{\sqrt{\epsilon}}M^\epsilon_x \rho^\epsilon+\frac{1}{\sqrt{\epsilon}}M^\epsilon \rho^\epsilon_x\\
&+[(u^{B,2}_y+u^{b,2}_z)+\sqrt{\epsilon}(u^{B,3}_y+u^{b,3}_z)]\sqrt{\epsilon} \Phi^\epsilon+[\sqrt{\epsilon}(u^{B,2}+u^{b,2})+{\epsilon}(u^{B,3}+u^{b,3})]\sqrt{\epsilon} \Phi^\epsilon_x\|_{L^2(I)}^2\\
\le & C\|\frac{1}{\sqrt{\epsilon}}L^\epsilon_t\|_{L^2(I)}^2+C\|u^{I,0}_x\|_{L^\infty(I)}^2\|\frac{1}{\sqrt{\epsilon}}L^\epsilon + \Phi^\epsilon\|_{L^2(I)}^2+C\|u^{I,0}\|_{L^\infty(I)}^2\|\frac{1}{\sqrt{\epsilon}}L^\epsilon_x + \Phi^\epsilon_x\|_{L^2(I)}^2\\
& +C\|\frac{1}{\sqrt{\epsilon}}M^\epsilon_x \|_{L^2(I)}^2\|\rho^\epsilon\|_{L^\infty(I)}^2+C\|\frac{1}{\sqrt{\epsilon}}M^\epsilon \|_{L^\infty(I)}^2\|\rho^\epsilon_x\|_{L^2(I)}^2\\
&+C\|(u^{B,2}_y+u^{b,2}_z)+\sqrt{\epsilon}(u^{B,3}_y+u^{b,3}_z)\|_{L^\infty(I)}^2\|\sqrt{\epsilon} \Phi^\epsilon\|_{L^2(I)}^2\\
&+C\|\sqrt{\epsilon}(u^{B,2}+u^{b,2})+{\epsilon}(u^{B,3}+u^{b,3})\|_{L^\infty(I)}^2\|\sqrt{\epsilon} \Phi^\epsilon_x\|_{L^2(I)}^2\\
\le & {C}{\epsilon} +C\epsilon + C(\epsilon + \|\Phi^\epsilon_x\|_{L^2(I)}^2) + C\epsilon +C\epsilon+ C\epsilon^2 + C\epsilon^2 \le C(\epsilon + \|\Phi^\epsilon_x\|_{L^2(I)}^2).
\end{aligned}
$$
Thus, using (\ref{zhankai-equ})$_1$,  Poincar\'e's inequality, (\ref{some-rho-1})$_{1,2}$, (\ref{some-h-2})$_1$, Lemmas \ref{clm-rho-bdd}, \ref{clm:h} and \ref{clm:no-impor}, and above estimate, we get
$$\begin{aligned}
\|\Phi^\epsilon_t\|_{L^2(I)}^2 \le & C\sum_{i=1}^5\|h_{i}\|_{L^2(I)}^2 + C\|\rho^\epsilon_x U^\epsilon + \rho^\epsilon U^\epsilon_x\|_{L^2(I)}^2\\
\le & C \epsilon + C \|\Phi^\epsilon_x\|_{L^2(I)}^2 + C(\|\rho^\epsilon_x\|_{L^2(I)}^2+\|\rho^\epsilon\|_{L^\infty(I)}^2)\| \sqrt{\rho^\epsilon}U^\epsilon_x\|_{L^2(I)}^2\\
\le & C  + C\| \sqrt{\rho^\epsilon}U^\epsilon_x\|_{L^2(I)}^2,
\end{aligned}
$$
and
$$\begin{aligned}
\|\Phi^\epsilon_{xt}\|_{L^2(I)}^2 \le & C\sum_{i=1}^5\|(h_{i})_x\|_{L^2(I)}^2 + C\|(\rho^\epsilon-\sqrt\epsilon\Phi^\epsilon)_{xx} U^\epsilon +\sqrt\epsilon\Phi^\epsilon_{xx} U^\epsilon + 2\rho^\epsilon_x U^\epsilon_x+ \rho^\epsilon U^\epsilon_{xx}\|_{L^2(I)}^2\\
\le & C\sqrt\epsilon + C \|\Phi^\epsilon_{xx}\|_{L^2(I)}^2 + \frac{C}{\sqrt\epsilon}\|U^\epsilon_x\|_{L^2(I)}^2 + C\epsilon \|\Phi^\epsilon_{xx}\|_{L^2(I)}^2\|U^\epsilon_x\|_{L^2(I)}^2\\
& +(\frac{C}{\sqrt\epsilon} + C\epsilon \| \Phi^\epsilon_{xx}\|_{L^2(I)}^2)\|U^\epsilon_x\|_{L^2(I)}^2+ C\|\rho^\epsilon\|_{L^\infty(I)}^2 \|U^\epsilon_{xx}\|_{L^2(I)}^2\\
\le &  C\sqrt\epsilon + C(1+\epsilon\|U^\epsilon_x\|_{L^2(I)}^2) \|\Phi^\epsilon_{xx}\|_{L^2(I)}^2 + \frac{C}{\sqrt\epsilon}\|U^\epsilon_x\|_{L^2(I)}^2 + C   \|U^\epsilon_{xx}\|_{L^2(I)}^2.
\end{aligned}
$$

Fourthly, we estimate (\ref{some-h-2})$_{2,3,4,7}$ by using Propositions \ref{exi-ns} and \ref{B-b-Pro}, Lemma \ref{L-M-well-pose}, %Remark \ref{U-B3-L-infty}
 and (\ref{L2-B-b}).

 Redefine $(h_2)_t:= h_{2,1}^t + h_{2,2}^t + h_{2,3}^t+h_{2,4}^t$ as
$$
\begin{aligned}
h_{2,1}^t = &\frac{1}{\sqrt{\epsilon}}L^\epsilon_{tt}  +u^{I,0}_{xt}(\frac{1}{\sqrt{\epsilon}}L^\epsilon + \Phi^\epsilon) +u^{I,0}_{x}(\frac{1}{\sqrt{\epsilon}}L^\epsilon_t + \Phi^\epsilon_t) +u^{I,0}_t(\frac{1}{\sqrt{\epsilon}}L^\epsilon_x + \Phi^\epsilon_x) +u^{I,0}(\frac{1}{\sqrt{\epsilon}}L^\epsilon_{xt} + \Phi^\epsilon_{xt}),\\
h_{2,2}^t =&\frac{1}{\sqrt{\epsilon}}(M^\epsilon_t \rho^\epsilon_x+M^\epsilon \rho^\epsilon_{xt} +M^\epsilon_{xt} \rho^\epsilon+M^\epsilon_x \rho^\epsilon_{t} ),\\
h_{2,3}^t =&[(u^{B,2}_{yt}+u^{b,2}_{zt})+\sqrt{\epsilon}(u^{B,3}_{yt}+u^{b,3}_{zt})]\sqrt{\epsilon} \Phi^\epsilon +[(u^{B,2}_y+u^{b,2}_z)+\sqrt{\epsilon}(u^{B,3}_y+u^{b,3}_z)]\sqrt{\epsilon} \Phi^\epsilon_t,\\
h_{2,4}^t =&[\sqrt{\epsilon}(u^{B,2}_t+u^{b,2}_t)+{\epsilon}(u^{B,3}_t+u^{b,3}_t)]\sqrt{\epsilon} \Phi^\epsilon_x +[\sqrt{\epsilon}(u^{B,2}+u^{b,2})+{\epsilon}(u^{B,3}+u^{b,3})]\sqrt{\epsilon} \Phi^\epsilon_{xt}.\\
\end{aligned}
$$
Then, using  Poincar\'e's inequality, H$\rm{\ddot{o}}$lder's inequality, and  (\ref{some-rho-1}),
we get
$$\begin{aligned}
\|h_{2,1}^t\|_{L^2(I)}^2 \le &C\|\frac{1}{\sqrt{\epsilon}}L^\epsilon_{tt}\|_{L^2(I)}^2 + C\|u^{I,0}_{xt}\|_{L^2(I)}^2\|\frac{1}{\sqrt{\epsilon}}L^\epsilon + \Phi^\epsilon\|_{L^\infty(I)}^2+ C\|u^{I,0}_{x}\|_{L^\infty(I)}^2\|(\frac{1}{\sqrt{\epsilon}}L^\epsilon_t + \Phi^\epsilon_t)\|_{L^2(I)}^2\\
&+ C\|u^{I,0}_t \|_{L^\infty(I)}^2\|\frac{1}{\sqrt{\epsilon}}L^\epsilon_x + \Phi^\epsilon_x\|_{L^2(I)}^2+ C\|u^{I,0}\|_{L^\infty(I)}^2\|\frac{1}{\sqrt{\epsilon}}L^\epsilon_{xt} + \Phi^\epsilon_{xt}\|_{L^2(I)}^2\\
\le & \frac{C}{\epsilon} \|L^\epsilon_{tt}\|_{L^2(I)}^2 + C \sqrt\epsilon + C(\epsilon+\|\Phi^\epsilon_t\|_{L^2(I)}^2)+C + C(\epsilon +\|\Phi^\epsilon_{xt}\|_{L^2(I)}^2)\\
\le & \frac{C}{\epsilon} \|L^\epsilon_{tt}\|_{L^2(I)}^2  +C +C \|\Phi^\epsilon_t\|_{L^2(I)}^2 +C \|\Phi^\epsilon_{xt}\|_{L^2(I)}^2 ,
\end{aligned}
$$
$$\begin{aligned}
\|h_{2,2}^t\|_{L^2(I)}^2 \le & \frac{C}{{\epsilon}}\|M^{\epsilon}_{t}\|_{L^\infty(I)}^2\|\rho^\epsilon_x\|_{L^2(I)}^2+ \frac{C}{{\epsilon}}\|M^{\epsilon}\|_{L^\infty(I)}^2\|\rho^\epsilon_{xt}\|_{L^2(I)}^2+ \frac{C}{{\epsilon}}\|M^{\epsilon}_{xt} \|_{L^2(I)}^2\|\rho^\epsilon\|_{L^\infty(I)}^2\\
&+ \frac{C}{{\epsilon}}\|M^{\epsilon}_x\|_{L^\infty(I)}^2\|\rho^\epsilon_t\|_{L^2(I)}^2\\
\le & C\epsilon+ C(\epsilon+\epsilon^2\|\Phi^\epsilon_{xt}\|_{L^2(I)}^2)+C\epsilon+C\epsilon(1 +\epsilon \|\Phi^\epsilon_t\|_{L^2(I)}^2)\\
\le & C\epsilon+ C\epsilon^2(\|\Phi^\epsilon_{xt}\|_{L^2(I)}^2+\|\Phi^\epsilon_t\|_{L^2(I)}^2),
\end{aligned}
$$
$$\begin{aligned}
\|h_{2,3}^t\|_{L^2(I)}^2 \le & C\|(u^{B,2}_{yt}+u^{b,2}_{zt})+\sqrt{\epsilon}(u^{B,3}_{yt}+u^{b,3}_{zt})\|_{L^2(I)}^2 \|\sqrt{\epsilon} \Phi^\epsilon\|_{L^\infty(I)}^2 \\
&+C\|(u^{B,2}_y+u^{b,2}_z)+\sqrt{\epsilon}(u^{B,3}_y+u^{b,3}_z)\|_{L^\infty(I)}^2 \|\sqrt{\epsilon} \Phi^\epsilon_t\|_{L^2(I)}^2 \\
\le & C\epsilon^2+ C\epsilon\|\Phi^\epsilon_t\|_{L^2(I)}^2,
\end{aligned}
$$
and
$$\begin{aligned}
\|h_{2,4}^t\|_{L^2(I)}^2 \le & C\|\sqrt{\epsilon}(u^{B,2}_t+u^{b,2}_t)+{\epsilon}(u^{B,3}_t+u^{b,3}_t)\|_{L^2(I)}^2 \|\sqrt{\epsilon} \Phi^\epsilon_x\|_{L^\infty(I)}^2 \\
&+C\| \sqrt{\epsilon}(u^{B,2}+u^{b,2})+{\epsilon}(u^{B,3}+u^{b,3})\|_{L^\infty(I)}^2 \|\sqrt{\epsilon} \Phi^\epsilon_{xt}\|_{L^2(I)}^2 \\
\le & C\epsilon^2 \sqrt\epsilon\|\Phi^\epsilon_{xx}\|_{L^2(I)}^2+C\epsilon^2\|\Phi^\epsilon_{xt}\|_{L^2(I)}^2.
\end{aligned}
$$
Thus, we get
$$\begin{aligned}
&\|(h_{2})_t\|_{L^2(I)}^2 \le C \sum_{i=1}^4\|h_{2,i}^t\|_{L^2(I)}^2
 \le  \frac{C}{\epsilon} \|L^\epsilon_{tt}\|_{L^2(I)}^2 + C+ C \|\Phi^\epsilon_{t}\|_{L^2(I)}^2+C \|\Phi^\epsilon_{xt}\|_{L^2(I)}^2+C\epsilon^2 \sqrt\epsilon\|\Phi^\epsilon_{xx}\|_{L^2(I)}^2.
\end{aligned}
$$

Moreover, using H$\rm{\ddot{o}}$lder's inequality, we have
$$\begin{aligned}
\|(h_{3})_t\|_{L^2(I)}^2  =&\|\sqrt{\epsilon}\rho^{B,2}_{tt}  +u^{I,0}_{xt}\sqrt{\epsilon}\rho^{B,2} +u^{I,0}_x\sqrt{\epsilon}\rho^{B,2}_t+\sqrt{\epsilon}u^{B,3}_{yt}\rho^{I,0} +\sqrt{\epsilon}u^{B,3}_y\rho^{I,0}_t\\
&+\sqrt{\epsilon}u^{B,2}_t\rho^{I,0}_x+\sqrt{\epsilon}u^{B,2}\rho^{I,0}_{xt}+{\epsilon}u^{B,3}_t\rho^{I,0}_x +{\epsilon}u^{B,3}\rho^{I,0}_{xt}\|_{L^2(I)}^2\\
\le & C\|\sqrt{\epsilon}\rho^{B,2}_{tt}\|_{L^2(I)}^2+C\|u^{I,0}_{xt}\|_{L^2(I)}^2\|\sqrt{\epsilon}\rho^{B,2} \|_{L^\infty(I)}^2+C\|u^{I,0}_x\|_{L^\infty(I)}^2\|\sqrt{\epsilon}\rho^{B,2}_t\|_{L^2(I)}^2\\
&+C\|\sqrt{\epsilon}u^{B,3}_{yt}\|_{L^2(I)}^2\|\rho^{I,0} \|_{L^\infty(I)}^2+C\|\sqrt{\epsilon}u^{B,3}_y\|_{L^\infty(I)}^2\|\rho^{I,0}_t\|_{L^2(I)}^2\\
&+C\|\sqrt{\epsilon}u^{B,2}_t\|_{L^2(I)}^2\|\rho^{I,0}_x\|_{L^\infty(I)}^2+C\|\sqrt{\epsilon}u^{B,2}\|_{L^\infty(I)}^2\|\rho^{I,0}_{xt}\|_{L^2(I)}^2\\
&+C\|{\epsilon}u^{B,3}_t\|_{L^2(I)}^2\|\rho^{I,0}_x \|_{L^\infty(I)}^2+C\|{\epsilon}u^{B,3}\|_{L^\infty(I)}^2\|\rho^{I,0}_{xt}\|_{L^2(I)}^2\\
\le & {C}{\epsilon}\sqrt\epsilon \|\rho^{B,2}_{tt}\|_{L^2_y(I)}^2 +C\epsilon+C\epsilon \sqrt\epsilon +C\epsilon \sqrt\epsilon + C\epsilon + C\epsilon \sqrt\epsilon + C\epsilon + C\epsilon^2\sqrt\epsilon + C\epsilon^2.\\
%\le  & {C}{\epsilon}\sqrt\epsilon \|\rho^{B,2}_{tt}\|_{L^2_y(I)}^2 +C\epsilon.
\end{aligned}
$$
Hence, we get
$$
\|(h_3)_t\|_{L^2(I)}^2 \le {C}{\epsilon}\sqrt\epsilon \|\rho^{B,2}_{tt}\|_{L^2_y}^2  +C\epsilon .
$$
Similarly, we have
$$
\|(h_4)_t\|_{L^2(I)}^2 \le {C}{\epsilon}\sqrt\epsilon \|\rho^{b,2}_{tt}\|_{L^2_z}^2  +C\epsilon  .
$$

Redefine $(h_5)_t =h_{5,1}^t+ h_{5,2}^t+ h_{5,3}^t+h_{5,4}^t$ as
$$
\begin{aligned}
h_{5,1}^t=&[(u^{B,2}_{yt}+u^{b,2}_{zt})+\sqrt{\epsilon}(u^{B,3}_{yt}+u^{b,3}_{zt})][\sqrt{\epsilon}(\rho^{B,1}+\rho^{b,1}) +{\epsilon}(\rho^{B,2}+\rho^{b,2})+L^\epsilon ],\\
h_{5,2}^t=&[\sqrt{\epsilon}(u^{B,2}_t+u^{b,2}_t)+{\epsilon}(u^{B,3}_t+u^{b,3}_t)][(\rho^{B,1}_y+\rho^{b,1}_z) +\sqrt{\epsilon}(\rho^{B,2}_y+\rho^{b,2}_z)+L^\epsilon_x ],\\
h_{5,3}^t=&[(u^{B,2}_y+u^{b,2}_z)+\sqrt{\epsilon}(u^{B,3}_y+u^{b,3}_z)][\sqrt{\epsilon}(\rho^{B,1}_t+\rho^{b,1}_t) +{\epsilon}(\rho^{B,2}_t+\rho^{b,2}_t)+L^\epsilon_t ],\\
h_{5,4}^t=&[\sqrt{\epsilon}(u^{B,2}+u^{b,2})+{\epsilon}(u^{B,3}+u^{b,3})][(\rho^{B,1}_{yt}+\rho^{b,1}_{zt}) +\sqrt{\epsilon}(\rho^{B,2}_{yt}+\rho^{b,2}_{zt})+L^\epsilon_{xt} ].\\
\end{aligned}
$$
Then, using H$\rm{\ddot{o}}$lder's inequality, it holds that
$$\begin{aligned}
&\|(h_{5})_t\|_{L^2(I)}^2 \le C \sum_{i=1}^4\|h_{5,i}^t\|_{L^2(I)}^2\\
\le & C\|(u^{B,2}_{yt}+u^{b,2}_{zt})+\sqrt{\epsilon}(u^{B,3}_{yt}+u^{b,3}_{zt})\|_{L^2(I)}^2\|\sqrt{\epsilon}(\rho^{B,1}+\rho^{b,1}) +{\epsilon}(\rho^{B,2}+\rho^{b,2})+L^\epsilon\|_{L^\infty(I)}^2\\
 &+ C\|\sqrt{\epsilon}(u^{B,2}_t+u^{b,2}_t)+{\epsilon}(u^{B,3}_t+u^{b,3}_t)\|_{L^2(I)}^2\|(\rho^{B,1}_y+\rho^{b,1}_z) +\sqrt{\epsilon}(\rho^{B,2}_y+\rho^{b,2}_z)+L^\epsilon_x\|_{L^\infty(I)}^2\\
&+C\|(u^{B,2}_y+u^{b,2}_z)+\sqrt{\epsilon}(u^{B,3}_y+u^{b,3}_z)\|_{L^\infty(I)}^2\|\sqrt{\epsilon}(\rho^{B,1}_t+\rho^{b,1}_t) +{\epsilon}(\rho^{B,2}_t+\rho^{b,2}_t)+L^\epsilon_t\|_{L^2(I)}^2\\
&+C\|\sqrt{\epsilon}(u^{B,2}+u^{b,2})+{\epsilon}(u^{B,3}+u^{b,3})\|_{L^\infty(I)}^2\|(\rho^{B,1}_{yt}+\rho^{b,1}_{zt}) +\sqrt{\epsilon}(\rho^{B,2}_{yt}+\rho^{b,2}_{zt})+L^\epsilon_{xt}\|_{L^2(I)}^2\\
 \le &C \epsilon\sqrt\epsilon+C \epsilon\sqrt\epsilon+C \epsilon\sqrt\epsilon+C \epsilon\sqrt\epsilon\le C \epsilon\sqrt\epsilon.
\end{aligned}
$$
\end{proof}

\begin{Lemma}\label{Phi-U-H^1-1} Under the conditions of Theorem \ref{assumption}, there holds that
\begin{equation}\label{4-5}
\begin{aligned}
&\frac{1}{2}\frac{\mathrm{d}}{\mathrm{d}t} \int_I \rho^\epsilon (U^\epsilon_x)^2 \mathrm{d}x +\int_I (U^\epsilon_{xx})^2 \mathrm{d}x+\int_I \epsilon\rho^\epsilon \Phi^\epsilon_{xxx} U^\epsilon_{xx}\mathrm{d}x+\int_I \rho^{\epsilon}_x U^\epsilon_{x}\epsilon \Phi^\epsilon_{xxx}\mathrm{d}x\\
\le &\frac{3}{2(12M +4)}\int_I ( U^\epsilon_{xx})^2\mathrm{d}x +C\|W\|_{L^2(I)}^2+C(1+ \|u^\epsilon_x\|_{L^2(I)}^2)\|U^\epsilon_x\|_{L^2(I)}^2\\
&+C\|U^\epsilon_x\|_{L^2(I)}^2[\frac{1}{\sqrt \epsilon}+{\epsilon} \|\Phi^\epsilon_{xx}\|_{L^2(I)}^2],\\
\end{aligned}
\end{equation}
where
%\begin{equation}\label{m-for-what}
%m=12M +4
%\end{equation}
% with
 ${M}$ is defined in Proposition \ref{exi-ns}.
\end{Lemma}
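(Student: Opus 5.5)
We will obtain (\ref{4-5}) by the standard $H^1$ energy estimate for the momentum equation: differentiate (\ref{zhankai-equ})$_2$ in $x$, or equivalently multiply (\ref{zhankai-equ})$_2$ by $-U^\epsilon_{xx}$ and integrate over $I$. The second route is more convenient because $U^\epsilon|_{x=0,1}=0$ gives $U^\epsilon_t|_{x=0,1}=0$, so the boundary terms vanish when we integrate by parts in the time-derivative term.

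Writing $-\int_I\rho^\epsilon U^\epsilon_tU^\epsilon_{xx}\,\mathrm{d}x=\int_I\rho^\epsilon U^\epsilon_{xt}U^\epsilon_x\,\mathrm{d}x+\int_I\rho^\epsilon_xU^\epsilon_tU^\epsilon_x\,\mathrm{d}x$, we get
\[
\frac12\frac{\mathrm d}{\mathrm dt}\int_I\rho^\epsilon(U^\epsilon_x)^2\,\mathrm{d}x-\frac12\int_I\rho^\epsilon_t(U^\epsilon_x)^2\,\mathrm{d}x+\int_I\rho^\epsilon_xU^\epsilon_tU^\epsilon_x\,\mathrm{d}x .
\]
The term $\rho^\epsilon_xU^\epsilon_t$ is awkward. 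The better choice is therefore to differentiate the equation in $x$ first and then multiply by $U^\epsilon_x$. This produces $\rho^\epsilon U^\epsilon_{xt}+\rho^\epsilon_xU^\epsilon_t$, and $U^\epsilon_t$ can then be replaced from the equation itself:
\[
\rho^\epsilon U^\epsilon_t=-\rho^\epsilon u^\epsilon U^\epsilon_x+U^\epsilon_{xx}+\epsilon\rho^\epsilon\Phi^\epsilon_{xxx}+W .
\]
After dividing by $\rho^\epsilon$, the capillary part of $\rho^\epsilon_x U^\epsilon_t U^\epsilon_x$ is precisely $\int_I\rho^\epsilon_xU^\epsilon_x\epsilon\Phi^\epsilon_{xxx}\,\mathrm{d}x$, which we keep on the left-hand side as in the statement. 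Likewise, $\int_I\epsilon(\rho^\epsilon\Phi^\epsilon_{xxx})_x U^\epsilon_x\,\mathrm{d}x$ integrated by parts (with $U^\epsilon_x$ unconstrained at the boundary) is handled by reverting to the multiply-by-$-U^\epsilon_{xx}$ form, which directly gives the term $\int_I\epsilon\rho^\epsilon\Phi^\epsilon_{xxx}U^\epsilon_{xx}\,\mathrm{d}x$. In practice we multiply (\ref{zhankai-equ})$_2$ by $-U^\epsilon_{xx}$, and in the term $\int_I\rho^\epsilon_xU^\epsilon_tU^\epsilon_x\,\mathrm{d}x$ we substitute $U^\epsilon_t$ from the equation.

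The remaining terms are then estimated one by one.
\begin{itemize}
\item $\frac12\int_I\rho^\epsilon_t(U^\epsilon_x)^2\,\mathrm{d}x=-\frac12\int_I(\rho^\epsilon u^\epsilon)_x(U^\epsilon_x)^2\,\mathrm{d}x$. We bound it by $\|\rho^\epsilon_x u^\epsilon+\rho^\epsilon u^\epsilon_x\|_{L^2}\|U^\epsilon_x\|_{L^4}^2$ and use Gagliardo--Nirenberg in the form $\|U^\epsilon_x\|_{L^\infty}^2\le C\|U^\epsilon_x\|_{L^2}^2+C\|U^\epsilon_x\|_{L^2}\|U^\epsilon_{xx}\|_{L^2}$.
\item The convection term $\int_I\rho^\epsilon u^\epsilon U^\epsilon_xU^\epsilon_{xx}\,\mathrm{d}x$ is bounded by $\|u^\epsilon\|_{L^\infty}\le C\|u^\epsilon_x\|_{L^2}$ together with Young's inequality.
\item The $W$-term is bounded by $\eta\|U^\epsilon_{xx}\|^2+C\|W\|^2$.
\item From the substituted expression, $\int_I\frac{\rho^\epsilon_x}{\rho^\epsilon}(U^\epsilon_{xx}-\rho^\epsilon u^\epsilon U^\epsilon_x+W)U^\epsilon_x\,\mathrm{d}x$ is controlled by $\|\rho^\epsilon_x\|_{L^2}\|U^\epsilon_x\|_{L^\infty}(\cdots)$.
\end{itemize}
For the last group we use Lemma \ref{clm-rho-bdd} and Corollary \ref{clm:some-impor} in the form $\|\rho^\epsilon_x\|_{L^2}\le C$, or, where $\rho^\epsilon_{xx}$ appears, $\|\rho^\epsilon_{xx}\|^2\le C\epsilon^{-1/2}+C\epsilon\|\Phi^\epsilon_{xx}\|^2$. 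This is how the factor $[\epsilon^{-1/2}+\epsilon\|\Phi^\epsilon_{xx}\|^2]\|U^\epsilon_x\|^2$ on the right-hand side arises.

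The main obstacle is bookkeeping the Young constants so that the total coefficient of $\|U^\epsilon_{xx}\|^2$ on the right is exactly $\frac{3}{2(12M+4)}$. This coefficient is small relative to $M$, since $1/\rho^\epsilon\le 2M$ multiplies several terms. We will allot $\frac{1}{2(12M+4)}$ to each of three groups: the $W$ term, the convection term, and the $\rho_t$ and $\rho_x/\rho$ interpolation terms. Any term coupling $U^\epsilon_{xx}$ with $U^\epsilon_x$ through $\rho^\epsilon_x$ is handled via interpolation so that only $\|U^\epsilon_x\|^2$ times bounded or $\epsilon^{-1/2}$ factors remain.
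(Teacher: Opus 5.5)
Your proposal is correct and is essentially the paper's argument. Like the paper, you multiply (\ref{zhankai-equ})$_2$ by $-U^\epsilon_{xx}$, integrate by parts in the $U^\epsilon_t$ term using $U^\epsilon_t|_{x=0,1}=0$, substitute $U^\epsilon_t$ from the equation into $\int_I\rho^\epsilon_xU^\epsilon_tU^\epsilon_x\,\mathrm{d}x$, and keep the capillary piece on the left. The differences are minor: the paper notes that the convection, $\rho^\epsilon_t$ and substituted $-\rho^\epsilon u^\epsilon U^\epsilon_x$ contributions reduce to the single term $-\int_I\rho^\epsilon u^\epsilon_x(U^\epsilon_x)^2\,\mathrm{d}x$, and it bounds the $\rho^\epsilon_x/\rho^\epsilon$ terms via $\|\rho^\epsilon_x\|_{L^\infty}\le C\|\rho^\epsilon_{xx}\|_{L^2}$, which is where the factor $\epsilon^{-1/2}+\epsilon\|\Phi^\epsilon_{xx}\|_{L^2}^2$ comes from; your separate estimates using $\|\rho^\epsilon_x\|_{L^2}\le C$ and Gagliardo--Nirenberg also fit within (\ref{4-5}).
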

\begin{proof}
Firstly, multiplying (\ref{zhankai-equ})$_2$ by $U^\epsilon_{xx}$ and integrating the result over $I$, there holds
$$
\begin{aligned}
& \int_I (-\rho^{\epsilon} U^\epsilon_t-  \rho^\epsilon u^\epsilon U^\epsilon_x )U^\epsilon_{xx} \mathrm{d}x+ \int_I (U^\epsilon_{xx})^2 \mathrm{d}x
+\int_I \epsilon\rho^\epsilon \Phi^\epsilon_{xxx} U^\epsilon_{xx}\mathrm{d}x= -\int_I W U^\epsilon_{xx}\mathrm{d}x.\\
\end{aligned}
$$

From
$$\begin{aligned}
&\int_I (-\rho^{\epsilon} U^\epsilon_t-  \rho^\epsilon u^\epsilon U^\epsilon_x )U^\epsilon_{xx} \mathrm{d}x= \int_I (\rho^{\epsilon}_x U^\epsilon_t+\rho^{\epsilon} U^\epsilon_{xt})U^\epsilon_{x}\mathrm{d}x-  \frac{1}{2}\int_I\rho^\epsilon u^\epsilon \partial_x (U^\epsilon_x )^2 \mathrm{d}x\\
=&  \int_I \rho^{\epsilon}_x U^\epsilon_t U^\epsilon_{x}\mathrm{d}x + \frac{1}{2}\frac{\mathrm{d}}{\mathrm{d}t} \int_I \rho^\epsilon(U^\epsilon_{x})^2- \frac{1}{2} \int_I \rho^\epsilon_t(U^\epsilon_{x})^2 \mathrm{d}x+ \frac{1}{2}\int_I(\rho^\epsilon u^\epsilon)_x  (U^\epsilon_x )^2 \mathrm{d}x\\
=& \int_I \rho^{\epsilon}_x U^\epsilon_t U^\epsilon_{x}\mathrm{d}x + \frac{1}{2}\frac{\mathrm{d}}{\mathrm{d}t} \int_I \rho^\epsilon(U^\epsilon_{x})^2+\int_I(\rho^\epsilon u^\epsilon)_x  (U^\epsilon_x )^2 \mathrm{d}x\\
=  &\frac{1}{2}\frac{\mathrm{d}}{\mathrm{d}t} \int_I \rho^\epsilon(U^\epsilon_{x})^2+\int_I(\rho^\epsilon u^\epsilon)_x  (U^\epsilon_x )^2 \mathrm{d}x +\int_I \rho^{\epsilon}_x U^\epsilon_{x}(-u^\epsilon U^\epsilon_x+\frac{U^\epsilon_{xx}}{\rho^{\epsilon}}+\frac{W}{\rho^\epsilon}+\epsilon \Phi^\epsilon_{xxx})\mathrm{d}x,
\end{aligned}
$$
 we get
$$\begin{aligned}
&\frac{1}{2}\frac{\mathrm{d}}{\mathrm{d}t} \int_I \rho^\epsilon (U^\epsilon_x)^2 \mathrm{d}x +\int_I (U^\epsilon_{xx})^2 \mathrm{d}x+\int_I \epsilon\rho^\epsilon \Phi^\epsilon_{xxx} U^\epsilon_{xx}\mathrm{d}x+\int_I \rho^{\epsilon}_x U^\epsilon_{x}\epsilon \Phi^\epsilon_{xxx}\mathrm{d}x\\
=&- \int_I W U^\epsilon_{xx}\mathrm{d}x-\int_I(\rho^\epsilon u^\epsilon)_x  (U^\epsilon_x )^2 \mathrm{d}x -\int_I \rho^{\epsilon}_x U^\epsilon_{x}(-u^\epsilon U^\epsilon_x+\frac{U^\epsilon_{xx}}{\rho^{\epsilon}}+\frac{W}{\rho^\epsilon})\mathrm{d}x\\
=&- \int_I W U^\epsilon_{xx}\mathrm{d}x -\int_I\rho^\epsilon u^\epsilon_x  U^\epsilon_x  U^\epsilon_x\mathrm{d}x - \int_I \rho^{\epsilon}_x U^\epsilon_{x}\frac{U^\epsilon_{xx}}{\rho^{\epsilon}}\mathrm{d}x - \int_I \rho^{\epsilon}_x U^\epsilon_{x}\frac{W}{\rho^{\epsilon}}\mathrm{d}x \\
:= & H_1 +H_2 +H_3 +H_4.
\end{aligned}
$$

Now we estimate $H_1$, $H_2$, $H_3$ and $H_4$. Using Lemma \ref{clm-rho-bdd}, (\ref{some-rho-1}), H$\rm{\ddot{o}}$lder's inequlity, Young's inequality, Sobolev's inequality $\|U^\epsilon_x\|_{L^\infty(I)} \le C \|U^\epsilon_x\|_{H^1(I)}$, and Poincar\'e's inequality $\|\rho^{\epsilon}_x\|_{L^\infty(I)} \le C\|\rho^{\epsilon}_{xx}\|_{L^2(I)}$, there holds
$$\begin{aligned}
H_1 \le &\frac{1}{2(12M +4)}\| U^\epsilon_{xx}\|_{L^2(I)}^2 +C\|W\|_{L^2(I)}^2,\\
H_2 \le &C \|u^\epsilon_x\|_{L^2(I)}^2\|U^\epsilon_x\|_{L^2(I)}^2 + (C \|U^\epsilon_x\|_{L^2(I)}^2+\frac{1}{2(12M +4)}\|U^\epsilon_{xx}\|_{L^2(I)}^2),\\
H_3 \le & (\frac{C}{\sqrt\epsilon} + C\epsilon \|\Phi^\epsilon_{xx}\|_{ L^2(I)}^2)\|U^\epsilon_x\|_{L^2(I)}^2+\frac{1}{2(12M +4)}\| U^\epsilon_{xx}\|_{L^2(I)}^2,\\
H_4 \le & C(\frac{1}{\sqrt\epsilon} + \epsilon \|\Phi^\epsilon_{xx}\|_{ L^2(I)}^2)\|U^\epsilon_x\|_{L^2(I)}^2 +C\|W\|_{L^2(I)}^2.
\end{aligned}
$$

Then, we get (\ref{4-5}).
\end{proof}

\begin{Lemma}\label{Phi-U-H^1-2} Under the conditions of Theorem \ref{assumption}, there holds that
\begin{equation}\label{4-6}
\begin{aligned}
&\frac{1}{2}\frac{\mathrm{d}}{\mathrm{d}t} \int_I \epsilon(\Phi^\epsilon_{xx})^2 \mathrm{d}x -\int_I \epsilon \rho^\epsilon\Phi^\epsilon_{xxx} U^\epsilon_{xx}\mathrm{d}x- \int_I \epsilon\rho^{\epsilon}_x U^\epsilon_{x}  \Phi^\epsilon_{xxx}\mathrm{d}x\\
\le &\frac{3}{2(12M +4)} \epsilon^2 \|\sqrt {\rho^\epsilon}\Phi^\epsilon_{xxx}\|_{L^2(I)}^2 +C(\frac{1}{\sqrt\epsilon} + \epsilon \| \Phi^\epsilon_{xx}\|_{L^2(I)}^2)\|U^\epsilon_x\|_{L^2(I)}^2  + C\sqrt\epsilon \\
&+C \|\Phi^\epsilon_{x}\|_{L^2(I)}^2+ C \epsilon \|\Phi^\epsilon_{xx}\|_{L^2(I)}^2,
\end{aligned}
\end{equation}
where
 ${M}$ is defined in Proposition \ref{exi-ns}.
\end{Lemma}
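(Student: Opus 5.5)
Differentiate (\ref{zhankai-equ})$_1$ twice in $x$, multiply by $\epsilon\Phi^\epsilon_{xx}$ and integrate over $I$. Then
\[
\frac12\frac{\mathrm d}{\mathrm dt}\int_I\epsilon(\Phi^\epsilon_{xx})^2\,\mathrm dx=-\int_I\epsilon(\rho^\epsilon U^\epsilon)_{xxx}\Phi^\epsilon_{xx}\,\mathrm dx-\int_I\epsilon\Big(\sum_{i=1}^5h_i\Big)_{xx}\Phi^\epsilon_{xx}\,\mathrm dx.
\]
We treat the transport term by integrating by parts once. The boundary term $\epsilon(\rho^\epsilon U^\epsilon)_{xx}\Phi^\epsilon_{xx}|_{x=0,1}$ must vanish, or be controlled. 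Since $U^\epsilon|_{x=0,1}=0$ and $\Phi^\epsilon_x|_{x=0,1}=0$, the relation $\Phi^\epsilon_{xt}=-(\rho^\epsilon U^\epsilon)_{xx}-\sum_i(h_i)_x$ at the boundary gives $(\rho^\epsilon U^\epsilon)_{xx}=-\sum_i (h_i)_x$ there. We therefore write $\int_I\epsilon(\rho^\epsilon U^\epsilon)_{xxx}\Phi^\epsilon_{xx}\,\mathrm dx=-\int_I\epsilon(\rho^\epsilon U^\epsilon)_{xx}\Phi^\epsilon_{xxx}\,\mathrm dx+\text{boundary}$. To avoid boundary terms altogether, the cleaner route is to test the $x$-derivative of (\ref{zhankai-equ})$_1$ against $-\epsilon\Phi^\epsilon_{xxx}$, which has $\Phi^\epsilon_{xt}\Phi^\epsilon_{xxx}$ integrating to $\frac12\frac{\mathrm d}{\mathrm dt}\int_I(\Phi^\epsilon_{xx})^2$ because $\Phi^\epsilon_{xt}|_{x=0,1}=0$. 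This produces
\[
\frac12\frac{\mathrm d}{\mathrm dt}\int_I\epsilon(\Phi^\epsilon_{xx})^2\,\mathrm dx=\int_I\epsilon\big(\rho^\epsilon U^\epsilon_{xx}+2\rho^\epsilon_xU^\epsilon_x+\rho^\epsilon_{xx}U^\epsilon\big)\Phi^\epsilon_{xxx}\,\mathrm dx+\int_I\epsilon\Big(\sum_i h_i\Big)_x\Phi^\epsilon_{xxx}\,\mathrm dx .
\]
The first piece, $\epsilon\rho^\epsilon U^\epsilon_{xx}\Phi^\epsilon_{xxx}$, is exactly the term moved to the left in (\ref{4-6}), and it cancels against Lemma \ref{Phi-U-H^1-1}. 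One copy of $\epsilon\rho^\epsilon_xU^\epsilon_x\Phi^\epsilon_{xxx}$ is also moved to the left.

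The remaining right-hand side terms are $G_2=\int_I\epsilon\rho^\epsilon_xU^\epsilon_x\Phi^\epsilon_{xxx}\,\mathrm dx$ and $G_3=\int_I\epsilon\rho^\epsilon_{xx}U^\epsilon\Phi^\epsilon_{xxx}\,\mathrm dx$, together with the $h_i$ terms. These are absorbed into $\frac{3}{2(12M+4)}\epsilon^2\|\sqrt{\rho^\epsilon}\Phi^\epsilon_{xxx}\|^2_{L^2(I)}$ by Young's inequality, using $\rho^\epsilon\ge\frac1{2M}$ from Lemma \ref{clm-rho-bdd}.
\begin{itemize}
\item For $G_2$ we use $\|\rho^\epsilon_x\|_{L^\infty(I)}^2\le C\|\rho^\epsilon_{xx}\|^2_{L^2(I)}+C\le \frac{C}{\sqrt\epsilon}+C\epsilon\|\Phi^\epsilon_{xx}\|^2_{L^2(I)}$ from (\ref{some-rho-1}). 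This yields $C(\frac1{\sqrt\epsilon}+\epsilon\|\Phi^\epsilon_{xx}\|^2_{L^2(I)})\|U^\epsilon_x\|^2_{L^2(I)}$.
\item For $G_3$ we use $\|U^\epsilon\|_{L^\infty(I)}\le C\|U^\epsilon_x\|_{L^2(I)}$ and the same bound on $\|\rho^\epsilon_{xx}\|_{L^2(I)}^2$. This gives the same type of term.
\end{itemize}

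For the source terms, $h_1,h_3,h_4,h_5$ are handled directly by Lemmas \ref{clm:h} and \ref{clm:no-impor}: $\epsilon|(h_i)_x\Phi^\epsilon_{xxx}|\le\delta\epsilon^2(\Phi^\epsilon_{xxx})^2+C\|(h_i)_x\|^2_{L^2(I)}\le\cdots+C\sqrt\epsilon$.

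For $h_2$ we split $(h_2)_x=\big[(h_2)_x-u^{I,0}\Phi^\epsilon_{xx}-2u^{I,0}_x\Phi^\epsilon_x\big]+u^{I,0}\Phi^\epsilon_{xx}+2u^{I,0}_x\Phi^\epsilon_x$.
\begin{itemize}
\item The bracket is bounded by (\ref{some-h-2})$_1$ and contributes $C\sqrt\epsilon+C\epsilon^2\|\Phi^\epsilon_{xx}\|^2_{L^2(I)}$ after Young's inequality.
\item The term $2u^{I,0}_x\Phi^\epsilon_x$ contributes $C\|\Phi^\epsilon_x\|^2_{L^2(I)}$.
\item The term $\int_I\epsilon u^{I,0}\Phi^\epsilon_{xx}\Phi^\epsilon_{xxx}\,\mathrm dx=-\frac12\int_I\epsilon u^{I,0}_x(\Phi^\epsilon_{xx})^2\,\mathrm dx$ is integrated by parts, using $u^{I,0}|_{x=0,1}=0$. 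It gives $C\epsilon\|\Phi^\epsilon_{xx}\|^2_{L^2(I)}$.
\end{itemize}
This matches the right-hand side of (\ref{4-6}).

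The main obstacle is the boundary bookkeeping in the first step. One has to check that $\Phi^\epsilon_{xt}$ vanishes on the boundary, which follows from $\Phi^\epsilon_x|_{x=0,1}=0$ for all $t$. One also has to verify that the $h_2$ transport part $u^{I,0}\Phi^\epsilon_{xx}$ integrates by parts without boundary contributions. If regularity is insufficient to justify this directly, we would argue with a mollified or Galerkin approximation. A secondary concern is keeping the coefficients of $\epsilon^2\|\Phi^\epsilon_{xxx}\|^2_{L^2(I)}$ summing to at most $\frac{3}{2(12M+4)}$. We ensure this by choosing each Young weight equal to $\frac{1}{2(12M+4)}$ times $\frac1{2M}$ relative to $\rho^\epsilon$.
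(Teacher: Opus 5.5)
Your proposal is correct. Once you switch to the ``cleaner route'' of testing the $x$-differentiated mass equation (\ref{Phixt-1}) against $\epsilon\Phi^\epsilon_{xxx}$ (using $\Phi^\epsilon_{xt}|_{x=0,1}=0$), it is essentially the paper's proof: the same split into $G_2+G_3$ controlled via (\ref{some-rho-1}), the source terms controlled via Lemmas \ref{clm:h}, \ref{clm:no-impor} and (\ref{some-h-2})$_1$, and the same integration by parts of $\epsilon u^{I,0}\Phi^\epsilon_{xx}\Phi^\epsilon_{xxx}$.

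One bookkeeping correction concerns the constant $\frac{3}{2(12M+4)}$. The paper gets it by applying Young's inequality exactly three times: once to $G_2+G_3$ jointly, once to all $(h_i)_x$-terms together with the $h_2$-bracket, and once to $2\epsilon u^{I,0}_x\Phi^\epsilon_x\Phi^\epsilon_{xxx}$. If you instead apply Young term by term with weight $\frac{1}{2(12M+4)}$ each, as your final paragraph suggests, the total exceeds $\frac{3}{2(12M+4)}$. So either group the terms as the paper does or shrink each weight accordingly.
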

\begin{proof}
Differentiating (\ref{zhankai-equ})$_1$ with respect to $x$, we have
\begin{equation}\label{Phixt-1}
\begin{aligned}
&\Phi_{xt}^\epsilon+\rho^\epsilon_{xx} U^\epsilon+2\rho^\epsilon_x U^\epsilon_x+\rho^\epsilon U^\epsilon_{xx}=-\sum_{i=1}^5 (h_i)_x.\\
\end{aligned}
\end{equation}
 Then, multiplying (\ref{Phixt-1}) by $ \epsilon\Phi^\epsilon_{xxx}$ and integrating the result over $I$, we get
\begin{equation}\label{G-for-five}
\begin{aligned}
&-\int_I \epsilon \rho^\epsilon\Phi^\epsilon_{xxx} U^\epsilon_{xx}\mathrm{d}x- \int_I \epsilon\rho^{\epsilon}_x U^\epsilon_{x}  \Phi^\epsilon_{xxx}\mathrm{d}x\\
:=& \int_I \epsilon \Phi_{xt}^\epsilon  \Phi^\epsilon_{xxx}\mathrm{d}x + \int_I \epsilon\rho^{\epsilon}_x U^\epsilon_{x}  \Phi^\epsilon_{xxx}\mathrm{d}x +\int_I \epsilon\rho^{\epsilon}_{xx} U^\epsilon \Phi^\epsilon_{xxx}\mathrm{d}x\\
&+\int_I \epsilon(\sum_{i=1}^5 (h_i)_x-u^{I,0}\Phi^\epsilon_{xx}-2 u^{I,0}_x\Phi^\epsilon_{x}) \Phi^\epsilon_{xxx}\mathrm{d}x\\
&+ \int_I  \epsilon  (u^{I,0}\Phi^\epsilon_{xx}+2 u^{I,0}_x\Phi^\epsilon_{x})\Phi^\epsilon_{xxx}\mathrm{d}x:=\sum_{i=1}^5 G_i.\\
\end{aligned}
\end{equation}

We estimate $\sum_{i=1}^5 G_i$ terms by terms.
$$
\begin{aligned}
G_1=\int_I \epsilon \Phi_{xt}^\epsilon  \Phi^\epsilon_{xxx}\mathrm{d}x=- \int_I \epsilon \Phi_{xxt}^\epsilon  \Phi^\epsilon_{xx}\mathrm{d}x =-\frac{\epsilon}{2}\frac{\mathrm{d}}{\mathrm{d}t} \int_I (\Phi^\epsilon_{xx})^2 \mathrm{d}x.
\end{aligned}
$$
The estimate of $G_2,~G_3,~G_4$ and $G_5$ will use H$\rm{\ddot{o}}$lder's inequality and  Young's inequality.

Using Poincar\'e's inequality and (\ref{some-rho-1})$_2$, we get

$$
\begin{aligned}
&G_2+G_3= \int_I \epsilon\rho^{\epsilon}_x U^\epsilon_{x}  \Phi^\epsilon_{xxx}\mathrm{d}x+\int_I \epsilon\rho^{\epsilon}_{xx} U^\epsilon \Phi^\epsilon_{xxx}\mathrm{d}x\\
\le & C\epsilon \|\rho^\epsilon_x\|_{L^\infty(I)} \|U^\epsilon_x\|_{L^2(I)} \|\Phi^\epsilon_{xxx}\|_{L^2(I)} + C\epsilon \|\rho^\epsilon_{xx}\|_{L^2(I)} \|U^\epsilon\|_{L^\infty(I)} \|\Phi^\epsilon_{xxx}\|_{L^2(I)} \\
\le &  C\epsilon \|\rho^\epsilon_{xx}\|_{L^2(I)} \|U^\epsilon_x\|_{L^2(I)} \|\Phi^\epsilon_{xxx}\|_{L^2(I)}\\
\le & \frac{1}{2(12M +4)} \epsilon^2 \|\sqrt {\rho^\epsilon}\Phi^\epsilon_{xxx}\|_{L^2(I)}^2 + (\frac{C}{\sqrt\epsilon} + C\epsilon \| \Phi^\epsilon_{xx}\|_{L^2(I)}^2) \|U^\epsilon_x\|_{L^2(I)}^2.
\end{aligned}
$$

Using Lemmas \ref{clm:h} and \ref{clm:no-impor}, and Corollary \ref{clm:some-impor},    we get
$$
\begin{aligned}
G_4=&\int_I \epsilon(\sum_{i=1}^5 (h_i)_x-u^{I,0}\Phi^\epsilon_{xx}-2 u^{I,0}_x\Phi^\epsilon_{x}) \Phi^\epsilon_{xxx}\mathrm{d}x\\
\le & \frac{1}{2(12M +4)} \epsilon^2 \|\sqrt {\rho^\epsilon}\Phi^\epsilon_{xxx}\|_{L^2(I)}^2 + C\sum_{i=1,i\neq 2}^5\| (h_i)_x\|_{L^2(I)}^2 + C \| (h_2)_x-u^{I,0}\Phi^\epsilon_{xx}-2 u^{I,0}_x\Phi^\epsilon_{x}\|_{L^2(I)}^2 \\
\le & \frac{1}{2(12M +4)} \epsilon^2 \|\sqrt {\rho^\epsilon}\Phi^\epsilon_{xxx}\|_{L^2(I)}^2+ C\sqrt\epsilon + C \epsilon^2 \|\Phi^\epsilon_{xx}\|_{L^2(I)}^2.
\end{aligned}
$$

Integrating $G_5$ by parts, we get
$$
\begin{aligned}
G_5=&  \int_I  \epsilon  (u^{I,0}\Phi^\epsilon_{xx}+2 u^{I,0}_x\Phi^\epsilon_{x})\Phi^\epsilon_{xxx}\mathrm{d}x=2 \int_I  \epsilon  u^{I,0}_x\Phi^\epsilon_{x}\Phi^\epsilon_{xxx}\mathrm{d}x-\frac{1}{2}\int_I  \epsilon  u^{I,0}_x(\Phi^\epsilon_{xx})^2\mathrm{d}x\\
\le &  \frac{1}{2(12M +4)} \epsilon^2 \|\sqrt {\rho^\epsilon}\Phi^\epsilon_{xxx}\|_{L^2(I)}^2+C  \|u_x^{I,0}\|_{L^\infty(I)}^2 \|\Phi^\epsilon_{x}\|_{L^2(I)}^2+C \epsilon \|u_x^{I,0}\|_{L^\infty(I)} \|\Phi^\epsilon_{xx}\|_{L^2(I)}^2\\
\le &  \frac{1}{2(12M +4)} \epsilon^2 \|\sqrt {\rho^\epsilon}\Phi^\epsilon_{xxx}\|_{L^2(I)}^2+C \|\Phi^\epsilon_{x}\|_{L^2(I)}^2+C \epsilon  \|\Phi^\epsilon_{xx}\|_{L^2(I)}^2.\\
\end{aligned}
$$

In conclusion, substituting the estimates of $G_{i}$ for $i=1,2,3,4,5$ into (\ref{G-for-five}), we have (\ref{4-6}).
\end{proof}

It is necessary to estimate $ \epsilon^2\|\sqrt{\rho^{\epsilon}}\Phi_{xxx}^\epsilon\|_{ L^2(I)}^2$ because of $G_2$ and $G_3$.

\begin{Lemma}\label{Phi-U-H^1-3}Under the conditions of Theorem \ref{assumption}, there holds that
\begin{equation}\label{4-7}
\begin{aligned}
&\int_I\epsilon U^\epsilon_{xx} \Phi^\epsilon_{xxx}\mathrm{d}x+\int_I\epsilon^2 \rho^\epsilon(\Phi^\epsilon_{xxx})^2\mathrm{d}x {-\epsilon\frac{{\mathrm{d}}}{\mathrm{d}t} \int_I\Phi^\epsilon_{xx}\Phi^\epsilon_{t}\mathrm{d}x}\\
\le &\frac{1}{12M +4}\int_I\epsilon^2 \rho^\epsilon(\Phi^\epsilon_{xxx})^2\mathrm{d}x+C\epsilon  \|U^\epsilon_{xx}\|_{L^2(I)}^2 + C \epsilon + C \|W\|_{L^2(I)}^2\\
&+ C(1+\epsilon\|U^\epsilon_x\|_{L^2(I)}^2) \epsilon\|\Phi^\epsilon_{xx}\|_{L^2(I)}^2 +C (\| u^\epsilon_x\|_{L^2(I)}^2 + 1+\|U^\epsilon_x\|_{L^2(I)}^2) \|U^\epsilon_x \|_{L^2(I)}^2+{C} \|L^\epsilon_{tt}\|_{L^2(I)}^2\\
&
+ {C}{\epsilon}^2\sqrt\epsilon (\|\rho^{B,2}_{tt}\|_{L^2_y}^2+\|\rho^{b,2}_{tt}\|_{L^2_z}^2),  \\
\end{aligned}
\end{equation}
where
 ${M}$ is defined in Proposition \ref{exi-ns}.
\end{Lemma}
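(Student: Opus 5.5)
The idea is to test the momentum equation (\ref{zhankai-equ})$_2$ against $\epsilon\Phi^\epsilon_{xxx}$, not against a velocity quantity. This produces the dissipative term $\epsilon^2\rho^\epsilon(\Phi^\epsilon_{xxx})^2$ directly from the capillary term $-\epsilon\rho^\epsilon\Phi^\epsilon_{xxx}$. Concretely, I would write (\ref{zhankai-equ})$_2$ as
$$-\epsilon\rho^\epsilon\Phi^\epsilon_{xxx}=W-\rho^\epsilon U^\epsilon_t-\rho^\epsilon u^\epsilon U^\epsilon_x+U^\epsilon_{xx},$$
multiply by $-\epsilon\Phi^\epsilon_{xxx}$, and integrate over $I$. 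This gives
$$\int_I\epsilon U^\epsilon_{xx}\Phi^\epsilon_{xxx}\,\mathrm{d}x+\int_I\epsilon^2\rho^\epsilon(\Phi^\epsilon_{xxx})^2\,\mathrm{d}x=\int_I\epsilon(\rho^\epsilon U^\epsilon_t+\rho^\epsilon u^\epsilon U^\epsilon_x-W)\Phi^\epsilon_{xxx}\,\mathrm{d}x .$$
The terms $\epsilon W\Phi^\epsilon_{xxx}$ and $\epsilon\rho^\epsilon u^\epsilon U^\epsilon_x\Phi^\epsilon_{xxx}$ are handled by Young's inequality with weight $\frac{1}{3(12M+4)}$ against $\epsilon^2\rho^\epsilon(\Phi^\epsilon_{xxx})^2$, using Lemma \ref{clm-rho-bdd}. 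This yields $C\|W\|_{L^2(I)}^2$ and $C\|u^\epsilon\|_{L^\infty(I)}^2\|U^\epsilon_x\|_{L^2(I)}^2$. Since $\|u^\epsilon\|_{L^\infty}\le C\|u^\epsilon_x\|_{L^2}$, the latter is bounded by $C\|u^\epsilon_x\|_{L^2(I)}^2\|U^\epsilon_x\|_{L^2(I)}^2$.

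The delicate term is $\int_I\epsilon\rho^\epsilon U^\epsilon_t\Phi^\epsilon_{xxx}\,\mathrm{d}x$, because no dissipation of $U^\epsilon_t$ is available. I would remove the time derivative from $U^\epsilon$ by using the continuity equation in the form $\rho^\epsilon U^\epsilon_t=\partial_t(\rho^\epsilon U^\epsilon)-\rho^\epsilon_tU^\epsilon$, together with
$$(\rho^\epsilon U^\epsilon)_x=-\Phi^\epsilon_t-\sum_{i=1}^5h_i .$$
The first part I integrate by parts in $x$. The boundary terms vanish since $U^\epsilon|_{x=0,1}=0$, hence $U^\epsilon_t|_{x=0,1}=0$. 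This gives $-\int_I\epsilon(\rho^\epsilon U^\epsilon)_{xt}\Phi^\epsilon_{xx}\,\mathrm{d}x=\int_I\epsilon(\Phi^\epsilon_{t}+\sum h_i)_t\Phi^\epsilon_{xx}\,\mathrm{d}x$. For the $\Phi^\epsilon_{tt}$ piece I write
$$\epsilon\int_I\Phi^\epsilon_{tt}\Phi^\epsilon_{xx}\,\mathrm{d}x=\epsilon\frac{\mathrm{d}}{\mathrm{d}t}\int_I\Phi^\epsilon_t\Phi^\epsilon_{xx}\,\mathrm{d}x-\epsilon\int_I\Phi^\epsilon_t\Phi^\epsilon_{xxt}\,\mathrm{d}x .$$
Since $\Phi^\epsilon_{xt}|_{x=0,1}=0$, the last integral equals $\epsilon\|\Phi^\epsilon_{xt}\|_{L^2(I)}^2$. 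The exact derivative is moved to the left-hand side, which produces the term $-\epsilon\frac{\mathrm{d}}{\mathrm{d}t}\int_I\Phi^\epsilon_{xx}\Phi^\epsilon_t\,\mathrm{d}x$ in (\ref{4-7}). The term $\epsilon\|\Phi^\epsilon_{xt}\|^2$ is then bounded by (\ref{some-h-2})$_3$, giving $C\epsilon\|U^\epsilon_{xx}\|^2+C(1+\epsilon\|U^\epsilon_x\|^2)\epsilon\|\Phi^\epsilon_{xx}\|^2+C\sqrt\epsilon\|U^\epsilon_x\|^2+C\epsilon$. The pieces $\epsilon\int(h_i)_t\Phi^\epsilon_{xx}$ are bounded by $C\epsilon\|(h_i)_t\|^2+C\epsilon\|\Phi^\epsilon_{xx}\|^2$, using Lemma \ref{clm:h} and (\ref{some-h-2})$_{4\text{--}7}$. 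This produces the terms $C\|L^\epsilon_{tt}\|^2$ and $C\epsilon^2\sqrt\epsilon(\|\rho^{B,2}_{tt}\|^2_{L^2_y}+\|\rho^{b,2}_{tt}\|^2_{L^2_z})$. Inside $(h_2)_t$ the quantities $\Phi^\epsilon_t$ and $\Phi^\epsilon_{xt}$ reappear; they are controlled again by (\ref{some-h-2})$_{2,3}$ with an extra factor $\epsilon$.

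The remaining piece is $-\int_I\epsilon\rho^\epsilon_tU^\epsilon\Phi^\epsilon_{xxx}\,\mathrm{d}x$. I bound it by Young against the dissipation, giving $C\|\rho^\epsilon_t\|_{L^2}^2\|U^\epsilon\|_{L^\infty}^2$. Using (\ref{some-rho-1})$_5$ and (\ref{some-h-2})$_2$, this is at most $C(1+\|U^\epsilon_x\|^2)\|U^\epsilon_x\|^2$, which matches the quadratic term on the right of (\ref{4-7}).

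The main obstacle is this $U^\epsilon_t$ term. The conversion through the continuity equation has to be done so that every produced term contains either $\epsilon\Phi^\epsilon_{xt}$ or $\epsilon(h_i)_t$, both of which carry enough powers of $\epsilon$. If the route through $\partial_t(\rho^\epsilon U^\epsilon)$ leaves an unbalanced boundary term, my fallback is to substitute $U^\epsilon_t$ directly from the momentum equation instead. In that case the term $\epsilon\rho^\epsilon\Phi^\epsilon_{xxx}$ would return with coefficient $\epsilon$ and would be absorbed for $\epsilon$ small. Finally, I sum up, taking the three Young weights to total $\frac{1}{12M+4}$.
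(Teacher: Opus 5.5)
Your proposal is correct and is essentially the paper's proof. You multiply (\ref{zhankai-equ})$_2$ by $\epsilon\Phi^\epsilon_{xxx}$ and trade $\rho^\epsilon U^\epsilon_t$ for $(\rho^\epsilon U^\epsilon)_t$. Integrating by parts and using the time-differentiated error continuity equation then produces $\epsilon\frac{\mathrm{d}}{\mathrm{d}t}\int_I\Phi^\epsilon_{xx}\Phi^\epsilon_t\,\mathrm{d}x+\epsilon\|\Phi^\epsilon_{xt}\|_{L^2(I)}^2+\epsilon\int_I\sum_i(h_i)_t\Phi^\epsilon_{xx}\,\mathrm{d}x$, and you close with (\ref{some-h-2}). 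The only difference is bookkeeping: the paper groups your two leftover terms $\rho^\epsilon u^\epsilon U^\epsilon_x-\rho^\epsilon_tU^\epsilon$ as $(\rho^\epsilon u^\epsilon U^\epsilon)_x$ and uses two Young weights $\frac{1}{2(12M+4)}$ instead of your three.

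Your fallback is never needed, because the boundary terms vanish exactly as you showed. It would also be circular: substituting $U^\epsilon_t$ from the momentum equation just reproduces the identity you started from.
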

\begin{proof}

Multiplying (\ref{zhankai-equ})$_2$ by $\epsilon \Phi^\epsilon_{xxx}$ and integrating the result over $I$ with respect to $x$, we get
\begin{equation}\label{zhankai-equ-s}
	\begin{aligned}
&\int_I\epsilon U^\epsilon_{xx} \Phi^\epsilon_{xxx}\mathrm{d}x+\int_I\epsilon^2 \rho^\epsilon(\Phi^\epsilon_{xxx})^2\mathrm{d}x\\
=&\int_I\epsilon (\rho^{\epsilon} U^\epsilon)_t\Phi^\epsilon_{xxx}\mathrm{d}x+ \int_I\epsilon (\rho^\epsilon u^\epsilon U^\epsilon)_x\Phi^\epsilon_{xxx}\mathrm{d}x-\int_I\epsilon W \Phi^\epsilon_{xxx}\mathrm{d}x:=K_1+K_2+K_3.\\
\end{aligned}
\end{equation}

Now we estimate $K_1$, $K_2$ and $K_3$. Differentiating (\ref{zhankai-equ})$_1$ with respect to $t$, it holds that
\begin{equation}\label{wucha-1-t}
	\begin{aligned}
&\Phi_{tt}^\epsilon+(\rho^\epsilon U^\epsilon)_{xt}+\sum_{i=1}^5 (h_i)_t=0.
\end{aligned}
\end{equation}

From (\ref{wucha-1-t}), integrating $K_1$ by parts, we get
$$
\begin{aligned}
K_1=&\int_I\epsilon (\rho^{\epsilon} U^\epsilon)_t\Phi^\epsilon_{xxx}\mathrm{d}x=-\int_I\epsilon (\rho^{\epsilon} U^\epsilon)_{xt}\Phi^\epsilon_{xx}\mathrm{d}x
=\int_I\epsilon\Phi_{tt}^\epsilon\Phi^\epsilon_{xx}\mathrm{d}x+\int_I\epsilon [\sum_{i=1}^5 (h_i)_t]\Phi^\epsilon_{xx}\mathrm{d}x\\
= &{\epsilon\frac{{\mathrm{d}}}{\mathrm{d}t} \int_I\Phi^\epsilon_{xx} \Phi^\epsilon_{t}\mathrm{d}x}+\epsilon \int_I(\Phi^\epsilon_{xt})^2\mathrm{d}x + \int_I\epsilon [\sum_{i=1}^5 (h_i)_t]\Phi^\epsilon_{xx}\mathrm{d}x.
\end{aligned}
$$
Using (\ref{h-alpha-bata}), (\ref{some-h-2}), and Young's inequality, we have
$$
\begin{aligned}
&\epsilon \int_I(\Phi^\epsilon_{xt})^2\mathrm{d}x + \int_I\epsilon [\sum_{i=1}^5 (h_i)_t]\Phi^\epsilon_{xx}\mathrm{d}x\\
\le & \epsilon \|\Phi^\epsilon_{xt}\|_{L^2(I)}^2 + C\epsilon\sum_{i=1}^5\| (h_i)_t\|_{L^2(I)}^2 + C \epsilon \|\Phi^\epsilon_{xx}\|_{L^2(I)}^2\\
\le & (C +1) \epsilon \|\Phi^\epsilon_{xt}\|_{L^2(I)}^2 + C \epsilon \|\Phi^\epsilon_{xx}\|_{L^2(I)}^2+{C} \|L^\epsilon_{tt}\|_{L^2(I)}^2 + C \epsilon  + C\epsilon \|\Phi^\epsilon_{t}\|_{L^2(I)}^2 \\
&+ {C}{\epsilon}^2\sqrt\epsilon (\|\rho^{B,2}_{tt}\|_{L^2_y}^2+\|\rho^{b,2}_{tt}\|_{L^2_z}^2) \\
\le & C \epsilon   \|U^\epsilon_{xx}\|_{L^2(I)}^2 + C \epsilon + C(1+\epsilon\|U^\epsilon_x\|_{L^2(I)}^2) \epsilon\|\Phi^\epsilon_{xx}\|_{L^2(I)}^2 \\
&+ C{\sqrt\epsilon}\|\sqrt{\rho^\epsilon}U^\epsilon_x\|_{L^2(I)}^2+{C} \|L^\epsilon_{tt}\|_{L^2(I)}^2  + {C}{\epsilon}^2\sqrt\epsilon (\|\rho^{B,2}_{tt}\|_{L^2_y(I)}^2+\|\rho^{b,2}_{tt}\|_{L^2_z(I)}^2).  \\
\end{aligned}
$$

For $K_2$, using (\ref{equ-epsilon})$_1$, (\ref{some-h-2}), and Young's inequality, we have
$$
\begin{aligned}
K_{2}=& \int_I\epsilon (\rho^\epsilon u^\epsilon U^\epsilon)_x\Phi^\epsilon_{xxx}\mathrm{d}x\\
\le &\frac{1}{2(12M +4)}\int_I\epsilon^2 \rho^\epsilon(\Phi^\epsilon_{xxx})^2\mathrm{d}x +C\|(\rho^\epsilon)^{-1}\|_{L^\infty(I)}\|  \rho^\epsilon u^\epsilon U^\epsilon_x -\rho^\epsilon_t U^\epsilon\|_{L^2(I)}^2\\
\le &\frac{1}{2(12M +4)}\int_I\epsilon^2 \rho^\epsilon(\Phi^\epsilon_{xxx})^2\mathrm{d}x +C(\| u^\epsilon_x\|_{L^2(I)}^2+\| \rho^\epsilon_t -\sqrt\epsilon\Phi^\epsilon_t\|_{L^2(I)}^2 +\epsilon\|\Phi^\epsilon_t\|_{L^2(I)}^2) \|U^\epsilon_x \|_{L^2(I)}^2 \\
\le &\frac{1}{2(12M +4)}\int_I\epsilon^2 \rho^\epsilon(\Phi^\epsilon_{xxx})^2\mathrm{d}x +C (\| u^\epsilon_x\|_{L^2(I)}^2 + 1+\|U^\epsilon_x\|_{L^2(I)}^2) \|U^\epsilon_x \|_{L^2(I)}^2,
\end{aligned}
$$
and
$$
\begin{aligned}
K_{3}=& -\int_I\epsilon W \Phi^\epsilon_{xxx}\mathrm{d}x
 \le   \frac{1}{2(12M +4)}\int_I\epsilon^2 \rho^\epsilon(\Phi^\epsilon_{xxx})^2\mathrm{d}x+ C \|W\|_{L^2(I)}^2.
\end{aligned}
$$

Hence, putting estimates of $K_1,K_2$ and $K_3$ into (\ref{zhankai-equ-s}), we get (\ref{4-7}).
\end{proof}

\begin{Lemma}\label{Phi-U-H^1-4} Under the conditions of Theorem \ref{assumption}, there holds that
\begin{equation}\label{4-8}
\begin{aligned}
&\frac{1}{2}\frac{{\mathrm{d}}}{\mathrm{d}t} \int_I(\Phi^\epsilon_{x})^2\mathrm{d}x\\
\le &\frac{1}{2(12M+4)}\int_I ( U^\epsilon_{xx})^2\mathrm{d}x+C \|\Phi^\epsilon_{x}\|_{L^2(I)}^2 + (\frac{C}{\sqrt\epsilon} + C\epsilon \| \Phi^\epsilon_{xx}\|_{L^2(I)}^2) \|U^\epsilon_x\|_{L^2(I)}^2\\
& + C\sqrt\epsilon + C \epsilon^2 \|\Phi^\epsilon_{xx}\|_{L^2(I)}^2,
\end{aligned}
\end{equation}
where
 ${M}$ is defined in Proposition \ref{exi-ns}.
\end{Lemma}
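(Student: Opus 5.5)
We obtain (\ref{4-8}) from the $x$-differentiated mass equation (\ref{Phixt-1}), tested against $\Phi^\epsilon_x$ and integrated over $I$. The term $\int_I\Phi^\epsilon_{xt}\Phi^\epsilon_x\,\mathrm{d}x$ gives $\frac12\frac{\mathrm d}{\mathrm dt}\|\Phi^\epsilon_x\|_{L^2(I)}^2$. It then remains to bound
$$
-\int_I(\rho^\epsilon_{xx}U^\epsilon+2\rho^\epsilon_xU^\epsilon_x+\rho^\epsilon U^\epsilon_{xx})\Phi^\epsilon_x\,\mathrm{d}x-\int_I\sum_{i=1}^5(h_i)_x\Phi^\epsilon_x\,\mathrm{d}x .
$$
No boundary terms occur, because we never integrate by parts here.

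\textbf{Transport-type terms.} First, $\int_I\rho^\epsilon U^\epsilon_{xx}\Phi^\epsilon_x$ is bounded by $\frac{1}{2(12M+4)}\|U^\epsilon_{xx}\|_{L^2(I)}^2+C\|\Phi^\epsilon_x\|_{L^2(I)}^2$, using Lemma \ref{clm-rho-bdd} and Young's inequality. For the other two terms we use $\|U^\epsilon\|_{L^\infty(I)}\le C\|U^\epsilon_x\|_{L^2(I)}$ and $\|\rho^\epsilon_x\|_{L^\infty(I)}\le C\|\rho^\epsilon_{xx}\|_{L^2(I)}$, together with (\ref{some-rho-1}). This gives
$$
C\|\rho^\epsilon_{xx}\|_{L^2(I)}\|U^\epsilon_x\|_{L^2(I)}\|\Phi^\epsilon_x\|_{L^2(I)}\le C\|\Phi^\epsilon_x\|_{L^2(I)}^2+\Big(\frac{C}{\sqrt\epsilon}+C\epsilon\|\Phi^\epsilon_{xx}\|_{L^2(I)}^2\Big)\|U^\epsilon_x\|_{L^2(I)}^2 ,
$$
which is exactly the mixed coefficient appearing in (\ref{4-8}).

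\textbf{Forcing terms.} For $i=1,3,4,5$, Lemmas \ref{clm:h} and \ref{clm:no-impor} give $\|(h_i)_x\|_{L^2(I)}^2\le C\sqrt\epsilon$. Hence these contribute at most $C\sqrt\epsilon+C\|\Phi^\epsilon_x\|_{L^2(I)}^2$. For $h_2$ we split
$$
(h_2)_x=\big[(h_2)_x-u^{I,0}\Phi^\epsilon_{xx}-2u^{I,0}_x\Phi^\epsilon_x\big]+u^{I,0}\Phi^\epsilon_{xx}+2u^{I,0}_x\Phi^\epsilon_x .
$$
The bracket is controlled by (\ref{some-h-2})$_1$, which yields $C\sqrt\epsilon+C\epsilon^2\|\Phi^\epsilon_{xx}\|_{L^2(I)}^2+C\|\Phi^\epsilon_x\|_{L^2(I)}^2$. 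The term $2u^{I,0}_x\Phi^\epsilon_x$ is directly $\le C\|\Phi^\epsilon_x\|_{L^2(I)}^2$.

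\textbf{Main obstacle.} The delicate term is $\int_I u^{I,0}\Phi^\epsilon_{xx}\Phi^\epsilon_x$. A direct Young bound would produce $\|\Phi^\epsilon_{xx}\|_{L^2(I)}^2$ with no factor of $\epsilon$, and this cannot be closed. Instead we write it as $\frac12\int_I u^{I,0}\partial_x(\Phi^\epsilon_x)^2$ and integrate by parts. The boundary terms vanish since $u^{I,0}|_{x=0,1}=0$ (also $\Phi^\epsilon_x|_{x=0,1}=0$). What remains is $-\frac12\int_I u^{I,0}_x(\Phi^\epsilon_x)^2\le C\|\Phi^\epsilon_x\|_{L^2(I)}^2$, by Proposition \ref{exi-ns}. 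Collecting all the bounds gives (\ref{4-8}).
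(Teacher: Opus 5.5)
Your proposal is correct and follows essentially the same route as the paper. It tests (\ref{Phixt-1}) against $\Phi^\epsilon_x$ and bounds $-\int_I(\rho^\epsilon U^\epsilon)_{xx}\Phi^\epsilon_x\,\mathrm{d}x$ as in the $G_2+G_3$ estimate. It controls the rest of $\sum_i(h_i)_x$ via (\ref{some-h-2})$_1$ and Lemmas \ref{clm:h}, \ref{clm:no-impor}, and removes $\int_I u^{I,0}\Phi^\epsilon_{xx}\Phi^\epsilon_x\,\mathrm{d}x$ by integration by parts, using that $\Phi^\epsilon_x$ vanishes at $x=0,1$. The paper instead merges that term with the $2u^{I,0}_x\Phi^\epsilon_x$ term to get $-\frac32\int_I u^{I,0}_x(\Phi^\epsilon_x)^2\,\mathrm{d}x$, which is the same thing. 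Your opening remark that no integration by parts is used is contradicted by your later treatment of the $u^{I,0}$ term, but this is harmless.
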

\begin{proof}
Multipying (\ref{Phixt-1}) by $\Phi_x^\epsilon$ and integrating the result over $I$, it holds that
\begin{equation}\label{sigma-for}
\begin{aligned}
&\frac{1}{2}\frac{{\mathrm{d}}}{\mathrm{d}t} \int_I(\Phi^\epsilon_{x})^2\mathrm{d}x\\
=&-\int_I(\rho^\epsilon U^\epsilon)_{xx}\Phi^\epsilon_{x}\mathrm{d}x-\int_I(\sum_{i=1}^5 (h_i)_x-u^{I,0}\Phi^\epsilon_{xx}-2 u^{I,0}_x\Phi^\epsilon_{x}) \Phi^\epsilon_{x}\mathrm{d}x\\
& -\int_I(u^{I,0}\Phi^\epsilon_{xx}+2 u^{I,0}_x\Phi^\epsilon_{x}) \Phi^\epsilon_{x}\mathrm{d}x\\
:=&\Gamma_{1}+\Gamma_{2}+\Gamma_{3}.
\end{aligned}
\end{equation}

Similar to the estimates of $G_2+ G_3$ and $G_4$ in the proof of Lemma \ref{Phi-U-H^1-2}, we have
$$
\begin{aligned}
&\Gamma_1= -\int_I (2\rho^{\epsilon}_x U^\epsilon_{x}+\rho^{\epsilon}_{xx} U^\epsilon)  \Phi^\epsilon_{x}\mathrm{d}x-\int_I \rho^{\epsilon}U^\epsilon_{xx}  \Phi^\epsilon_{x}\mathrm{d}x\\
\le & \frac{1}{2(12M+4)}\int_I ( U^\epsilon_{xx})^2\mathrm{d}x+C \|\Phi^\epsilon_{x}\|_{L^2(I)}^2 + (\frac{C}{\sqrt\epsilon} + C\epsilon \| \Phi^\epsilon_{xx}\|_{L^2(I)}^2) \|U^\epsilon_x\|_{L^2(I)}^2,
\end{aligned}
$$
and
$$
\begin{aligned}
\Gamma_2=&-\int_I(\sum_{i=1}^5 (h_i)_x-u^{I,0}\Phi^\epsilon_{xx}-2 u^{I,0}_x\Phi^\epsilon_{x}) \Phi^\epsilon_{x}\mathrm{d}x
\le  C \|\Phi^\epsilon_{x}\|_{L^2(I)}^2 + C\sqrt\epsilon + C \epsilon^2 \|\Phi^\epsilon_{xx}\|_{L^2(I)}^2.
\end{aligned}
$$

Integrating $\Gamma_3$ by parts, we have
$$
\begin{aligned}
&\Gamma_3=- \int_I(u^{I,0}\Phi^\epsilon_{xx}+2 u^{I,0}_x\Phi^\epsilon_{x}) \Phi^\epsilon_{x}\mathrm{d}x=-\frac{3}{2} \int_I u^{I,0}_x(\Phi^\epsilon_{x})^2\mathrm{d}x \le C \|\Phi^\epsilon_{x}\|_{L^2(I)}^2.
\end{aligned}
$$

In conclusion, putting the estimates of $\Gamma_1$, $\Gamma_2$ and $\Gamma_3$ into (\ref{sigma-for}), we get (\ref{4-8}).
\end{proof}

\begin{Corollary}\label{Phi-U-H^1-all}Under the conditions of Theorem \ref{assumption}, there exists a positive constant $\epsilon_0$ satisfying
\begin{equation}\label{epsilon2}
\begin{aligned}
 &\epsilon_0\le  \min\{(\frac{1}{2}-\frac{1}{{3M}+1})\frac{M}{C },\frac{{2M}-1}{C},  \frac{1}{4M^2(C+1)^2}\},
\end{aligned}
\end{equation}
such that if $\epsilon\le\epsilon_0$, there holds that
\begin{equation}\label{Uxx-Phix-L2L22-all}
\sup_{t\in[0,T_1]}(\|U^\epsilon_x\|_{L^2(I)}^2+\|\Phi^\epsilon_x\|_{L^2(I)}^2+\epsilon\|\Phi^\epsilon_{xx}\|_{L^2(I)}^2)+\|U_{xx}^\epsilon\|_{L_{T_1}^2 L^2}^2+\epsilon^2\|\Phi^\epsilon_{xxx}\|_{L_{T_1}^2 L^2}^2\le  {B_2(T)}{\sqrt{\epsilon}},
\end{equation}
where $B_2(s)= C(1+s)+Cs(1+s) +Cs(1+s) (B_1(s)+1) \exp\{C (B_1 (s)+1)\}$ for any $s\in[0,T]$.
\end{Corollary}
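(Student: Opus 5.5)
We combine Lemmas \ref{Phi-U-H^1-1}--\ref{Phi-U-H^1-4} into one energy inequality. Add (\ref{4-5}) and (\ref{4-6}): the cross terms $\int_I\epsilon\rho^\epsilon\Phi^\epsilon_{xxx}U^\epsilon_{xx}\,\mathrm{d}x$ and $\int_I\epsilon\rho^\epsilon_xU^\epsilon_x\Phi^\epsilon_{xxx}\,\mathrm{d}x$ cancel exactly, which is the point of the two-sided structure. Then add (\ref{4-8}). Finally, add $\frac{1}{12M+4}$ times (\ref{4-7}), or more generally $\delta$ times with $\delta$ comparable to $1/(12M+4)$. This provides the dissipation $\delta\epsilon^2\|\sqrt{\rho^\epsilon}\Phi^\epsilon_{xxx}\|_{L^2}^2$. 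The weights $\frac{3}{2(12M+4)}$ on the right of (\ref{4-6}) are then absorbed by this dissipation, using $\rho^\epsilon\ge\frac{1}{2M}$ from Lemma \ref{clm-rho-bdd}. The term $\delta\int_I\epsilon U^\epsilon_{xx}\Phi^\epsilon_{xxx}\,\mathrm{d}x$ on the left of (\ref{4-7}) is bounded by Young's inequality into $\frac{\delta}{4}\epsilon^2\|\sqrt{\rho^\epsilon}\Phi^\epsilon_{xxx}\|^2+C\delta\|U^\epsilon_{xx}\|^2$. The $U_{xx}$ terms, including $C\epsilon\|U^\epsilon_{xx}\|^2$ from (\ref{4-7}), are absorbed into $\int_I (U^\epsilon_{xx})^2\,\mathrm{d}x$, since their coefficients are at most a fixed fraction once $\epsilon\le\epsilon_0$. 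This smallness is where the constraints in (\ref{epsilon2}) enter.

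The resulting inequality reads
$$\frac{\mathrm{d}}{\mathrm{d}t}E(t)+c\bigl(\|U^\epsilon_{xx}\|_{L^2}^2+\epsilon^2\|\Phi^\epsilon_{xxx}\|_{L^2}^2\bigr)\le C\bigl(1+\|u^\epsilon_x\|_{L^2}^2+\|U^\epsilon_x\|_{L^2}^2+\epsilon\|U^\epsilon_x\|_{L^2}^2\bigr)E(t)+R(t),$$
where
$$E=\tfrac12\|\sqrt{\rho^\epsilon}U^\epsilon_x\|^2+\tfrac12\|\Phi^\epsilon_x\|^2+\tfrac\epsilon2\|\Phi^\epsilon_{xx}\|^2-\delta\epsilon\int_I\Phi^\epsilon_{xx}\Phi^\epsilon_t\,\mathrm{d}x .$$
The indefinite term in $E$ is controlled by Corollary \ref{clm:some-impor}: $\epsilon|\int\Phi^\epsilon_{xx}\Phi^\epsilon_t|\le\frac\epsilon4\|\Phi^\epsilon_{xx}\|^2+C\epsilon(1+\|U^\epsilon_x\|^2)$. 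Hence $E$ is equivalent to the target quantity up to an additive $C\epsilon$. The remainder $R$ collects $C\sqrt\epsilon$, $C\|W\|^2$, $\frac{C}{\sqrt\epsilon}\|U^\epsilon_x\|^2$, $C\|L^\epsilon_{tt}\|^2$ and $C\epsilon^2\sqrt\epsilon\|\rho^{B,2}_{tt}\|^2$. For $\|W\|^2$ we use Lemma \ref{clm:important}: combined with Corollary \ref{Phi-U-L2-all}, it gives $\|W\|^2\le C\sqrt\epsilon+C\|\Phi^\epsilon_x\|^2+C\epsilon\|\Phi^\epsilon_{xx}\|^2$, which is linear in $E$.

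For the time integration, the key inputs come from Corollary \ref{Phi-U-L2-all}: $\int_0^{T_1}\|U^\epsilon_x\|^2\le B_1\epsilon$, so $\int_0^{T_1}\frac{1}{\sqrt\epsilon}\|U^\epsilon_x\|^2\le B_1\sqrt\epsilon$. This is exactly what produces the rate $\sqrt\epsilon$. The Gronwall coefficient also needs controlling. $\int_0^{T_1}\|u^\epsilon_x\|^2$ is bounded by Proposition \ref{exi-nsk}, or directly from the decomposition (\ref{zhankai}). $\int_0^{T_1}\|U^\epsilon_x\|^2\le B_1\epsilon$ as well. Together these give an exponent $C(B_1+1)$. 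We also have $\int_0^T\|L^\epsilon_{tt}\|^2\le C\epsilon^4$ by Lemma \ref{L-M-well-pose}, and $\|\rho^{B,2}_{tt}\|_{L^2_TL^2_y}$ bounded by Proposition \ref{B-b-Pro}. For the initial data, (\ref{initial-value}) with $m>2$ gives $\|U^\epsilon_x(0)\|^2+\|\Phi^\epsilon_x(0)\|^2+\epsilon\|\Phi^\epsilon_{xx}(0)\|^2\le C\epsilon^{m-1}\le C\sqrt\epsilon$. We still need the initial cross term $\epsilon\int\Phi_{xx}\Phi_t(0)$, which is handled by the same bound on $\|\Phi^\epsilon_t\|$. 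Gronwall's inequality then yields (\ref{Uxx-Phix-L2L22-all}) with a $B_2$ of the stated form.

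The main obstacle is the bookkeeping of absorbed constants. The dissipative term from (\ref{4-7}) carries the small weight $\delta$, yet it must absorb the contributions $\frac{3}{2(12M+4)}\epsilon^2\|\sqrt\rho\Phi_{xxx}\|^2$ coming from (\ref{4-6}), which have weight $1$. I would therefore choose the multiplier of (\ref{4-7}) to be of order one, say $1/2$, rather than $\delta$. The cross term $\frac12\epsilon\int U_{xx}\Phi_{xxx}$ is then bounded by $\frac18\epsilon^2\|\sqrt\rho\Phi_{xxx}\|^2+C\|U_{xx}\|^2$. This $C\|U_{xx}\|^2$ is too large to absorb into $\|U_{xx}\|^2$ directly. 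So I would reweight instead: multiply (\ref{4-5}), (\ref{4-6}) and (\ref{4-8}) by a large constant $K\sim M$ before adding (\ref{4-7}). That makes $K\|U_{xx}\|^2$ available to absorb $C\|U_{xx}\|^2$. Since (\ref{4-6}) and (\ref{4-5}) enter with the same factor $K$, the cancellation of the bad cross terms survives. Checking that the resulting constants are consistent, with the cost showing up in the conditions on $\epsilon_0$ in (\ref{epsilon2}), is the delicate step.
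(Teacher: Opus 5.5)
Your final plan, multiplying (\ref{4-5}), (\ref{4-6}), (\ref{4-8}) by a constant $K\sim M$ and adding (\ref{4-7}) with order-one weight, is the paper's argument, followed by the same Gronwall step: the paper takes $K=2M$ and weight one. With that choice the $\Phi^\epsilon_{xxx}$ terms on the right total $\frac{3M}{12M+4}+\frac{1}{12M+4}=\frac14$ and the $U^\epsilon_{xx}$ terms total $\frac{2M}{6M+2}+C\epsilon\le\frac{2M}{4}$. The cross term is then absorbed via $\int_I\epsilon U^\epsilon_{xx}\Phi^\epsilon_{xxx}\,\mathrm{d}x+\frac12\int_I\frac{(U^\epsilon_{xx})^2}{\rho^\epsilon}\,\mathrm{d}x+\frac12\int_I\epsilon^2\rho^\epsilon(\Phi^\epsilon_{xxx})^2\,\mathrm{d}x\ge0$ together with $\frac34\cdot 2M\ge\frac{3}{4\rho^\epsilon}$. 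This settles the constant check you left open; note that $K$ is bounded above as well as below, because the $\frac{3}{2(12M+4)}\epsilon^2\|\sqrt{\rho^\epsilon}\Phi^\epsilon_{xxx}\|^2$ term of (\ref{4-6}) is also scaled by $K$, and $K=2M$ lies inside that window.
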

\begin{proof}
We will prove Corollary \ref{Phi-U-H^1-all} by using Propositions  \ref{exi-ns},  \ref{exi-nsk} and \ref{B-b-Pro}.

Using (\ref{equ-epsilon}), (\ref{equ-NS}), (\ref{IBb-3})-(\ref{initial-value}), and definition of $L^\epsilon$ and $M^\epsilon$, we get the estimate of initial value of system (\ref{zhankai-equ}) as follows:
\begin{equation}\begin{aligned}\label{ini-val-phi-111}
&\|\Phi^\epsilon_x(\cdot,0) \|_{L^2(I)}^2+\|U^\epsilon_x(\cdot,0)\|_{L^2(I)}^2 +\epsilon\|\Phi^\epsilon_{xx}(\cdot,0) \|_{L^2(I)}^2\\
=&\frac{1}{\epsilon}\|\partial_x(\rho^{\epsilon}_0-\rho^{I,0}_0)\|_{L^2(I)}^2 +\frac{1}{\epsilon}\|\partial_x(u^\epsilon_0 -u^{I,0}_0)\|_{L^2(I)}^2+\|\partial_x^2(\rho^{\epsilon}_0-\rho^{I,0}_0)\|_{L^2(I)}^2\le C\epsilon^{m-1},
\end{aligned}
 \end{equation} for any $m> 2$.
In addition, we have
\begin{equation}\label{2}
\begin{aligned}
&\|\Phi^\epsilon_t(\cdot,0)\|_{L^2(I)}^2 = \|[-(\rho^\epsilon U^\epsilon)_x-\sum_{i=1}^5 h_i ](\cdot,0)\|_{L^2(I)}^2 \le C \sqrt\epsilon.
\end{aligned}
\end{equation}
(\ref{ini-val-phi-111}) and (\ref{2}) yields
\begin{equation}
\begin{aligned}
&-\epsilon \int_I\Phi^\epsilon_{xx}(x,0)\Phi^\epsilon_{t}(x,0)\mathrm{d}x \le \epsilon \|\Phi^\epsilon_{xx}(\cdot,0)\|_{L^2(I)}  \|\Phi^\epsilon_t(\cdot,0)\|_{L^2(I)} \le C\epsilon.
\end{aligned}
\end{equation}

From Lemma \ref{clm-rho-bdd}, it holds that $$ \|\rho^\epsilon, (\rho^\epsilon)^{-1}\|_{C([0,T_1;L^\infty(I)])} \le 2M.$$

We firstly add (\ref{4-5}), (\ref{4-6}) and (\ref{4-8}) together, multiplying the result by $2M$, then add the resulting inequality to inequality (\ref{4-7}), and finally integrating this combined result with respect to $t$ over the interval $[0,t]$ for any $0<t\le T_1$, we get
$$\begin{aligned}
&\frac{2M}{2} \int_I [\rho^\epsilon (U^\epsilon_x)^2+\epsilon(\Phi^\epsilon_{xx})^2] \mathrm{d}x +2M\int_0^{t}\|U^\epsilon_{xx}\|_{L^2(I)}^2 \mathrm{d}t+\frac{2M}{2}\int_I(\Phi^\epsilon_{x})^2\mathrm{d}x\\
&+\int_0^{t} \int_I\epsilon U^\epsilon_{xx} \Phi^\epsilon_{xxx}\mathrm{d}x \mathrm{d}+\int_0^{t}\epsilon^2 \|\sqrt {\rho^\epsilon}\Phi^\epsilon_{xxx}\|_{L^2(I)}^2\mathrm{d} t{-\epsilon \int_I\Phi^\epsilon_{xx}\Phi^\epsilon_{t}\mathrm{d}x}\\
\le &[ \frac{2M}{6M+2}+C \epsilon ]  \int_0^{t} \|U^\epsilon_{xx}\|_{L^2(I)}^2 \mathrm{d}t+ \frac{1}{4} \int_0^{t}\epsilon^2 \|\sqrt {\rho^\epsilon}\Phi^\epsilon_{xxx}\|_{L^2(I)}^2 \mathrm{d}t\\
& + C(1+T_1)\sqrt\epsilon +C\int_0^{t}(\|\Phi^\epsilon_{x}\|_{L^2(I)}^2 +\epsilon \|\Phi^\epsilon_{xx}\|_{L^2(I)}^2)\mathrm{d}t\\
& + C\int_0^{t} (\| u^\epsilon_x\|_{L^2(I)}^2 + 1+\|U^\epsilon_x\|_{L^2(I)}^2 +\epsilon \|\Phi^\epsilon_{xx}\|_{L^2(I)}^2) \|U^\epsilon_x \|_{L^2(I)}^2\mathrm{d}t.
\end{aligned}
$$

Choosing $\epsilon_0$ small enough such that $\epsilon\le \epsilon_0$ and that
$$\begin{aligned}
 &  \frac{{2M}}{6M+2}+ C\epsilon \le \frac{{2M}}{6M+2}+ C\epsilon_0   \le \frac{{2M}}{4},
\end{aligned}
$$
i.e.,
\begin{equation}\label{ep0-2}
\begin{aligned}
 \epsilon_0  \le (\frac{1}{2}-\frac{1}{{3M}+1})\frac{{M}}{C },
\end{aligned}
\end{equation}
and using
$$\begin{aligned}
&- \epsilon\int_I\Phi^\epsilon_{xx} \Phi^\epsilon_{t}\mathrm{d}x\ge -\frac{1}{2}\int_I\epsilon(\Phi^\epsilon_{xx})^2 \mathrm{d}x- \frac{\epsilon}{2}\|\Phi^\epsilon_t\|_{L^\infty_{T_1}L^2(I)}^2\\
& \ge  -\frac{1}{2}\int_I\epsilon(\Phi^\epsilon_{xx})^2 \mathrm{d}x- \frac{\epsilon}{2}(C+ C\|\sqrt{\rho^\epsilon}U^\epsilon_x\|_{L^2(I)}^2)\\
\end{aligned}
$$
and (\ref{some-h-2})$_5$, we have
\begin{equation}\label{3}
\begin{aligned}
& \int_I [\frac{{2M}-C\epsilon}{2}\rho^\epsilon (U^\epsilon_x)^2+\frac{{2M}-1}{2}\epsilon(\Phi^\epsilon_{xx})^2] \mathrm{d}x +\frac{3}{4}\int_0^{t} \int_I{2M}(U^\epsilon_{xx})^2 \mathrm{d}x \mathrm{d}t+\frac{{2M}}{2}\int_I(\Phi^\epsilon_{x})^2\mathrm{d}x\\
&+\int_0^{t} \int_I\epsilon U^\epsilon_{xx} \Phi^\epsilon_{xxx}\mathrm{d}x \mathrm{d}+\frac{3}{4}\int_0^{t}\epsilon^2 \|\sqrt {\rho^\epsilon}\Phi^\epsilon_{xxx}\|_{L^2(I)}^2\mathrm{d} t\\
\le&  C(1+T_1)\sqrt\epsilon +C\int_0^{t}(\|\Phi^\epsilon_{x}\|_{L^2(I)}^2 +\epsilon \|\Phi^\epsilon_{xx}\|_{L^2(I)}^2)\mathrm{d}t\\
& + C\int_0^{t} (\| u^\epsilon_x\|_{L^2(I)}^2 + 1+\|U^\epsilon_x\|_{L^2(I)}^2 +\epsilon \|\Phi^\epsilon_{xx}\|_{L^2(I)}^2) \|U^\epsilon_x \|_{L^2(I)}^2\mathrm{d}t.
\end{aligned}
\end{equation}
In view of (\ref{ep0-1}) and (\ref{ep0-2}), we let
\begin{equation}\label{ep0-3}
0<\epsilon \le \epsilon_0 \le \min\{\frac{1}{2^2M^2( C+1)^2},(\frac{1}{2}-\frac{1}{{3M}+1})\frac{M}{C },\frac{2{M}-1}{C}\}.
\end{equation}
(\ref{3}) together with the known inequalities: $$\int_I\epsilon U^\epsilon_{xx} \Phi^\epsilon_{xxx}\mathrm{d}x+  \frac{1}{2}\int_I \frac{(U^\epsilon_{xx})^2}{\rho^\epsilon} \mathrm{d}x+\frac{1}{2}\int_I\epsilon^2 \rho^\epsilon(\Phi^\epsilon_{xxx})^2\mathrm{d}x\ge 0$$
and $\frac{6M}{4}\ge \frac{3}{4 \rho^\epsilon}$ yields
\begin{equation}\label{H1-for-detail}
\begin{aligned}
& \int_I [\frac{1}{2}\rho^\epsilon (U^\epsilon_x)^2+(\frac{{2M}-1}{2})\epsilon(\Phi^\epsilon_{xx})^2] \mathrm{d}x +\frac{1}{4}\int_0^t\int_I \frac{(U^\epsilon_{xx})^2}{\rho^\epsilon} \mathrm{d}x\mathrm{d}t\\
&+\frac{{2M}}{2}\int_I(\Phi^\epsilon_{x})^2\mathrm{d}x+\frac{1}{4}\int_0^{t}\epsilon^2 \|\sqrt {\rho^\epsilon}\Phi^\epsilon_{xxx}\|_{L^2(I)}^2\mathrm{d} t\\
\le&  C(1+T_1)\sqrt\epsilon +C\int_0^{t}(\|\Phi^\epsilon_{x}\|_{L^2(I)}^2 +\epsilon \|\Phi^\epsilon_{xx}\|_{L^2(I)}^2)\mathrm{d}t\\
& + C\int_0^{t} (\| u^\epsilon_x\|_{L^2(I)}^2+ 1+\|U^\epsilon_x\|_{L^2(I)}^2 +\epsilon \|\Phi^\epsilon_{xx}\|_{L^2(I)}^2) \|U^\epsilon_x \|_{L^2(I)}^2\mathrm{d}t.\\
\end{aligned}
\end{equation}
Using Gronwall's inequality, we have
\begin{equation}\label{gronwall-H1}
\begin{aligned}
& \int_I [(U^\epsilon_x)^2+\epsilon(\Phi^\epsilon_{xx})^2] \mathrm{d}x+\int_I(\Phi^\epsilon_{x})^2\mathrm{d}x\\
\le&  C(1+T_1)\sqrt\epsilon +C(1+T_1)\sqrt\epsilon (B_1(T_1)+1) \exp\{C (B_1(T_1)+1)\}:=B_2(T_1) \sqrt\epsilon\\
\le& B_2(T) \sqrt\epsilon.\\
\end{aligned}
\end{equation}
Then, putting (\ref{gronwall-H1}) into (\ref{H1-for-detail}), we have
\begin{equation}\label{gronwall-H1}
\begin{aligned}
& \sup_{t\in[0,T_1]}(\|U^\epsilon_x\|_{L^2(I)}^2+\|\Phi^\epsilon_x\|_{L^2(I)}^2+\epsilon\|\Phi^\epsilon_{xx}\|_{L^2(I)}^2)+\|U_{xx}^\epsilon\|_{L_{T_1}^2 L^2}^2+\epsilon^2\|\Phi^\epsilon_{xxx}\|_{L_{T_1}^2 L^2}^2\\
\le&  C(1+T_1)\sqrt\epsilon+ T_1C(1+T_1)\sqrt\epsilon \\
&+CT_1(1+T_1)\sqrt\epsilon (B_1(T_1)+1) \exp\{C (B_1(T_1)+1)\}:=B_2(T_1) \sqrt\epsilon\\
\le& B_2(T) \sqrt\epsilon.\\
\end{aligned}
\end{equation}

 Hence, we get (\ref{Uxx-Phix-L2L22-all}).
\end{proof}

\begin{Corollary}\label{Phi-Linfty-shoulian}
Under the conditions of Theorem \ref{assumption}, there holds that
\begin{equation}\label{Ux-Phix-L2L22}
\sup_{t\in[0,T_1]}\|\Phi^\epsilon\|_{L^\infty(I)}\le \frac{1}{2}.
\end{equation}
\end{Corollary}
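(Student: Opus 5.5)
The plan is to combine the two energy bounds already obtained, Corollary \ref{Phi-U-L2-all} and Corollary \ref{Phi-U-H^1-all}, with the one-dimensional Gagliardo--Nirenberg (Agmon) inequality on the bounded interval $I$:
\[
\|\Phi^\epsilon\|_{L^\infty(I)}^2\le C\|\Phi^\epsilon\|_{L^2(I)}^2+C\|\Phi^\epsilon\|_{L^2(I)}\|\Phi^\epsilon_x\|_{L^2(I)},
\]
which is the inequality recorded in (\ref{some-rho-1})$_4$. Both corollaries hold under the a priori assumption (\ref{ini-val-phi-1}) for every $t\in[0,T_1]$, so they may be inserted pointwise in time.

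First, (\ref{Ux-Phix-L2L22-1}) gives $\sup_{t\in[0,T_1]}\|\Phi^\epsilon\|_{L^2(I)}^2\le B_1(T)\epsilon$. Next, (\ref{Uxx-Phix-L2L22-all}) gives $\sup_{t\in[0,T_1]}\|\Phi^\epsilon_x\|_{L^2(I)}^2\le B_2(T)\sqrt\epsilon$. Substituting these into the interpolation inequality yields, for all $t\in[0,T_1]$,
\[
\|\Phi^\epsilon\|_{L^\infty(I)}^2\le CB_1(T)\epsilon+C\sqrt{B_1(T)B_2(T)}\,\epsilon^{\frac12}\epsilon^{\frac14}\le C_*(T)\,\epsilon^{\frac34},
\]
where $C_*(T):=C\big(B_1(T)+\sqrt{B_1(T)B_2(T)}\big)$. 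This constant depends only on $T$ and the initial data, not on $\epsilon$ or $T_1$.

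Finally, I shrink $\epsilon_0$ by imposing the extra condition $C_*(T)\epsilon_0^{3/4}\le \frac14$, in addition to (\ref{ep0-3}). This gives $\sup_{t\in[0,T_1]}\|\Phi^\epsilon\|_{L^\infty(I)}\le \frac12$, which is (\ref{Ux-Phix-L2L22}). I would record this as the final smallness condition defining $\epsilon_0$, namely (\ref{ep0-4}).

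The only delicate point is bookkeeping. $B_1,B_2$ were produced by Gronwall arguments that used (\ref{ini-val-phi-1}) and the earlier restrictions on $\epsilon_0$, so the new restriction must not feed back into those constants. This holds because $B_1,B_2$ depend only on $T$ and the data. The restriction is therefore compatible with the earlier ones, and the bootstrap (\ref{ini-val-phi-1}) $\Rightarrow$ (\ref{Ux-Phix-L2L22}) closes without circularity.
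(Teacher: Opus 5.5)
Your argument is correct and has the paper's structure: one-dimensional interpolation, then one further smallness condition on $\epsilon_0$. The difference is the bound on $\|\Phi^\epsilon_x\|_{L^2(I)}$ that you feed into it.

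The paper uses only Corollary \ref{Phi-U-L2-all}. Its term $\epsilon\|\Phi^\epsilon_x\|_{L^2(I)}^2\le B_1\epsilon$ already gives $\|\Phi^\epsilon_x\|_{L^2(I)}^2\le B_1$, hence $\|\Phi^\epsilon\|_{L^\infty(I)}\le C\sqrt{B_1(T)}(\sqrt\epsilon+\epsilon^{1/4})\le 2C\sqrt{B_1(T)}\,\epsilon^{1/4}$. The extra condition is then $\epsilon_0\le (4^4C^4B_1^2(T))^{-1}$. This bound is in fact already recorded on the right-hand side of (\ref{some-rho-1})$_4$, so you could have cited it directly.

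You instead invoke Corollary \ref{Phi-U-H^1-all}. That gives the sharper rate $\epsilon^{3/8}$, but it makes closing the bootstrap depend on the heaviest estimate in the paper: the dissipative bound on $\epsilon\rho^\epsilon\Phi^\epsilon_{xxx}$ and the extra restrictions (\ref{epsilon2}). As you observe, there is no circularity, since that corollary is derived from (\ref{ini-val-phi-1}) alone. The paper's route is simply more economical, because only smallness, not a rate, is needed at this step.
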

\begin{proof}
Using Sobolev's inequality and Corollary \ref{Phi-U-L2-all}, we have
\begin{equation}
\begin{aligned}
\sup_{t\in[0,T_1]}\|\Phi^\epsilon\|_{L^\infty(I)}\le & \sup_{t\in[0,T_1]}( C\|\Phi^\epsilon\|_{L^2(I)}+C\|\Phi^\epsilon\|_{L^2(I)}^{\frac{1}{2}}\|\Phi^\epsilon_x\|_{L^2(I)}^{\frac{1}{2}})\\
\le & C\sqrt{B_1(T)} (\sqrt\epsilon +\epsilon^{\frac{1}{4}}) \le 2C\sqrt{B_1(T)}\epsilon^{\frac{1}{4}}.
\end{aligned}
\end{equation}
Choosing
\begin{equation}\label{ep0-4}
\epsilon \le \epsilon_0 \le  \min\{\frac{1}{2^2M^2( C+1)^2},(\frac{1}{2}-\frac{1}{{3M}+1})\frac{M}{C },\frac{2{M}-1}{C},\frac{1}{4^4 C^4 B_1^2(T)}\},
\end{equation}
we get (\ref{Ux-Phix-L2L22}).
\end{proof}

\medskip

With Corollaries \ref{Phi-U-L2-all}, \ref{Phi-U-H^1-all} and \ref{Phi-Linfty-shoulian}, the proof of Theorem \ref{assumption} is completed.

\subsection{Proof of  Corollary \ref{following-4}}\label{3-3-1}

For any fixed $\epsilon$, system (\ref{zhankai-equ}) has a local solution $(\Phi^\epsilon, U^\epsilon)$. From the initial conditions (\ref{ini-val-phi-11}) and (\ref{ini-val-phi-111}),
using Sobolev's inequality, it holds that
$$\begin{aligned}
\|\Phi^\epsilon(\cdot,0)\|_{L^\infty(I)}\le &C\|\Phi^\epsilon(\cdot,0)\|_{L^2(I)}+C\|\Phi^\epsilon(\cdot,0)\|_{L^2(I)}^{\frac{1}{2}}\|\Phi^\epsilon_x(\cdot,0)\|_{L^2(I)}^{\frac{1}{2}}\\
\le& C  \sqrt\epsilon +C  \epsilon^{\frac{1}{4}} \le C \epsilon^{\frac{1}{4}}.
\end{aligned}
$$
Choosing
\begin{equation}\label{ep1}
\epsilon_1 \le \min\{\epsilon_0, \frac{1}{16 C^4 }\}
\end{equation} where $\epsilon_0$ is determined by (\ref{ep0-4}), and letting $\epsilon\le\epsilon_1$, we have
$$
\|\Phi^\epsilon(\cdot,0)\|_{L^\infty(I)}\le C\epsilon^{\frac{1}{4}} \le \frac{1}{2}<1.
$$
 Using the continuity of the solution with respect to time, there exists a time $T_1(\epsilon) \in(0,T]\cap(0,T^\epsilon)$ such that the {\it a priori} assumption (\ref{ini-val-phi-1}) in Theorem \ref{assumption} holds.

Then, from Theorem \ref{assumption}, suppose that $T_1=T_1(\epsilon)$, it holds that
\begin{equation}
\begin{aligned}
&\|\Phi^\epsilon(\cdot,T_1(\epsilon))\|_{L^2(I)}^2 + \|U^\epsilon(\cdot,T_1(\epsilon))\|_{L^2(I)}^2+\|\Phi^\epsilon_x(\cdot,T_1(\epsilon))\|_{L^2(I)}^2 \le B_1 \epsilon,\\
&\|\Phi^\epsilon_x(\cdot,T_1(\epsilon))\|_{L^2(I)}^2 + \|U^\epsilon(\cdot,T_1(\epsilon))\|_{L^2(I)}^2+\|\Phi^\epsilon_{xx}(\cdot,T_1(\epsilon))\|_{L^2(I)}^2 \le B_2 \sqrt\epsilon,\\
&\|\Phi^\epsilon(\cdot,T_1(\epsilon))\|_{L^\infty(I)} \le \frac{1}{2},
\end{aligned}
\end{equation}
which satisfies the initial condition in Theorem \ref{assumption}. Using the continuity of the solution with respect to time, there exists a positive constant $\delta>0$ such that $T_1(\epsilon)+\delta \in(0,T]\cap(0,T^\epsilon)$, and the system (\ref{zhankai-equ}) has a solution in time $[T_1(\epsilon),T_1(\epsilon)+\delta]$ satisfying
$$
\begin{aligned}
&\|\Phi^\epsilon\|_{C([T_1(\epsilon),T_1(\epsilon)+\delta];L^\infty(I))}\le 1.\\
\end{aligned}
$$
Thus, we choose $T_1=T_1(\epsilon)+\delta$ in Theorem \ref{assumption}, we get (\ref{1234}) in time interval $[0,T_1(\epsilon)+\delta]$.
\begin{itemize}
\item If $T< T^\epsilon$, the above steps can be extended to time interval $[0,T]$.

\item If $T\ge T^\epsilon$, we end our proof by contradiction. Assume that the maximun existence time $T^\epsilon < T$. Then from Theorem \ref{assumption},  we can extend the result of (\ref{1234}) in time interval $[0,T^\epsilon]$. Using the continuity of the solution again, we conclude that the solution of system (\ref{zhankai-equ}) can be extend to $[0,T^\epsilon+\delta']$ for a small $\delta'>0$ satisfying $T^\epsilon +\delta' \le T$, which is a contradiction with the definition of maximun existence time $T^\epsilon$. Thus, we end our proof.
\end{itemize}

\subsection{{Proof of Theorem \ref{following}}}

Under the result of Corollary \ref{following-4}, we get
$$
\begin{aligned}
&\|u^\epsilon-u^{I,0} \|_{L_{T}^\infty L^\infty}^2\le C\|\epsilon(u^{B,2}+u^{b,2})+\epsilon\sqrt{\epsilon}(u^{B,3}+u^{b,3})+M^\epsilon + \sqrt{\epsilon}U^\epsilon\|_{L_{T}^\infty L^\infty}^2\\
\le & C\epsilon^2+ C {\epsilon}\|U^\epsilon\|_{L_{T}^\infty L^2}^2 + C {\epsilon}\|U^\epsilon\|_{L_{T}^\infty L^2}  \|U^\epsilon_x\|_{L_{T}^\infty L^2} \le C \epsilon^{\frac{7}{4}},
\end{aligned}
$$
$$
\begin{aligned}
&\|\rho^\epsilon-\rho^{I,0} \|_{L_{T}^\infty L^\infty}^2\le C\|\sqrt{\epsilon}(\rho^{B,1}+\rho^{b,1})+{\epsilon}(\rho^{B,2}+\rho^{b,2})+L^\epsilon +\sqrt{\epsilon} \Phi^\epsilon\|_{L_{T}^\infty L^\infty}^2\\
\le & C\epsilon+ C {\epsilon}\|\Phi^\epsilon\|_{L_{T}^\infty L^2}^2 + C {\epsilon}\|\Phi^\epsilon\|_{L_{T}^\infty L^2}  \|\Phi^\epsilon_x\|_{L_{T}^\infty L^2} \le  C \epsilon,
\end{aligned}
$$
$$
\begin{aligned}
&\|\rho^\epsilon_x-\rho^{I,0}_x -\sqrt{\epsilon}(\rho^{B,1}+\rho^{b,1})_x \|_{L_{T}^\infty L^\infty}^2\le C\|\epsilon(\rho^{B,2}+\rho^{b,2})_x+L^\epsilon_x + \sqrt\epsilon\Phi^\epsilon_x\|_{L_{T}^\infty L^\infty}^2\\
\le & C\epsilon+ C\epsilon \|\Phi^\epsilon_x\|_{L_{T}^\infty L^2}^2 + C \epsilon\|\Phi^\epsilon_x\|_{L_{T}^\infty L^2}  \|\Phi^\epsilon_{xx}\|_{L_{T}^\infty L^2} \le C \epsilon .
\end{aligned}
$$
Hence, the proof of Theorem \ref{following} is completed.

\appendix  % 从这里开始是附录
\renewcommand{\thesection}{Appendix\ \Alph{section}}  % 章节标题格式
\renewcommand{\theequation}{A.\arabic{equation}}     % 公式编号 A.1
\renewcommand{\thetable}{A.\arabic{table}}           % 表格编号 A.1
\renewcommand{\thefigure}{A.\arabic{figure}}         % 图片编号 A.1

% 重置你的自定义环境的计数器
\renewcommand{\theTheorem}{A.\arabic{Theorem}}
\renewcommand{\theLemma}{A.\arabic{Lemma}}
\renewcommand{\theDefinition}{A.\arabic{Definition}}
\renewcommand{\theProposition}{A.\arabic{Proposition}}
\renewcommand{\theCorollary}{A.\arabic{Corollary}}
\renewcommand{\theRemark}{A.\arabic{Remark}}
\renewcommand{\theclaim}{A.\arabic{claim}}  % 添加这一行

% 重置所有计数器为0
\setcounter{equation}{0}
\setcounter{Theorem}{0}
\setcounter{Lemma}{0}
\setcounter{Definition}{0}
\setcounter{Proposition}{0}
\setcounter{Corollary}{0}
\setcounter{Remark}{0}
\setcounter{claim}{0}
\setcounter{table}{0}
\setcounter{figure}{0}

	\section{Formal derivation of the boundary layer equations}\label{appendixA}
For any $(\rho^{I,i},u^{I,i})$, $(\rho^{B,j},u^{B,j})$, $(\rho^{b,k},u^{b,k})$, $i,j,k \in \mathbb{N}$, firstly, putting (\ref{equ-NS}) and (\ref{Plantl})  into (\ref{equ-epsilon}), and using Taylor expansion
$$\begin{aligned}
(\rho^\epsilon)^{\gamma-2}=& [\rho^{I,0} +\rho^{B,0}+\rho^{b,0}+\rho^\epsilon-(\rho^{I,0}+\rho^{B,0}+\rho^{b,0})]^{\gamma-2}\\
=& (\rho^{I,0}+\rho^{B,0}+\rho^{b,0} )^{\gamma-2}+ (\gamma-2) (\rho^{I,0}+\rho^{B,0}+\rho^{b,0} )^{\gamma-3} \sum_{i=1}^{\infty}\epsilon^{\frac{i}{2}}(\rho^{I,i} +\rho^{B,i}+\rho^{b,i})\\
&+ O \left([\sum_{i=1}^{\infty}\epsilon^{\frac{i}{2}}(\rho^{I,i} +\rho^{B,i}+\rho^{b,i})]^2\right),
\end{aligned}
$$
we have
\begin{equation}\label{zhankaiNSK}
\begin{aligned}
&\sum_{i=0}^{\infty}\epsilon^{\frac{i}{2}} (\rho_t^{I,i} +\rho_t^{B,i}+\rho_t^{b,i})+\sum_{i,j=0}^{\infty}\epsilon^{\frac{i+j}{2}} (\rho^{I,i} +\rho^{B,i}+\rho^{b,i})(u_x^{I,j} +{\epsilon^{-\frac{1}{2}}}u_y^{B,j}+{\epsilon^{-\frac{1}{2}}}u_z^{b,j})\\
&\sum_{i,j=0}^{\infty}\epsilon^{\frac{i+j}{2}} (\rho_x^{I,i} +{\epsilon^{-\frac{1}{2}}}\rho_y^{B,i}+{\epsilon^{-\frac{1}{2}}}\rho_z^{b,i})(u^{I,j} +u^{B,j}+u^{b,j})=0,\\
\end{aligned}
\end{equation}
\begin{equation}\label{zhankaiNSK-1}
\begin{aligned}
&\sum_{i,j=0}^{\infty}\epsilon^{\frac{i+j}{2}} (\rho^{I,i} +\rho^{B,i}+\rho^{b,i})(u_t^{I,j} +u_t^{B,j}+u_t^{b,j})\\
&+\sum_{i,j,k=0}^{\infty}\epsilon^{\frac{i+j+k}{2}} (\rho^{I,i} +\rho^{B,i}+\rho^{b,i})(u^{I,j} +u^{B,j}+u^{b,j})(u_x^{I,k} +{\epsilon^{-\frac{1}{2}}}u_y^{B,k}+{\epsilon^{-\frac{1}{2}}}u_z^{b,k})\\
&+\gamma \{(\rho^{I,0}+\rho^{B,0}+\rho^{b,0} )^{\gamma-2}+ (\gamma-2) (\rho^{I,0}+\rho^{B,0}+\rho^{b,0} )^{\gamma-3} \sum_{i=1}^{\infty}\epsilon^{\frac{i}{2}}(\rho^{I,i} +\rho^{B,i}+\rho^{b,i})\\
&+ O \left([\sum_{i=1}^{\infty}\epsilon^{\frac{i}{2}}(\rho^{I,i} +\rho^{B,i}+\rho^{b,i})]^2\right)\}\sum_{j,k=0}^{\infty}\epsilon^{\frac{j+k}{2}} (\rho^{I,j} +\rho^{B,j}+\rho^{b,j}) (\rho_x^{I,k} +{\epsilon^{-\frac{1}{2}}}\rho_y^{B,k}+{\epsilon^{-\frac{1}{2}}}\rho_z^{b,k})\\
&-\sum_{i=0}^{\infty}\epsilon^{\frac{i}{2}} (u_{xx}^{I,j} +{\epsilon^{-1}}u_{yy}^{B,j}+{\epsilon^{-1}}u_{zz}^{b,j})\\
&-\epsilon\sum_{i,j=0}^{\infty} \epsilon^{\frac{i+j}{2}} (\rho^{I,i} +\rho^{B,i}+\rho^{b,i})(\rho^{I,j}_{xxx} +\epsilon^{-\frac{3}{2}}\rho_{yyy}^{B,j}+\epsilon^{-\frac{3}{2}}\rho_{zzz}^{b,j})=0.
\end{aligned}
\end{equation}

From (\ref{Plantl}) and (\ref{initial-value}), we formally have the initial value
\begin{equation}\label{ini-B-I}
\begin{aligned}
&%(\rho^{I,0},u^{I,0})|_{t=0}=(\rho_0^{I,0},u_0^{I,0}),
(\rho^{I,1},u^{I,1})|_{t=0}=(0,0),~
(\rho^{B,j},u^{B,j})|_{t=0}=(0,0),~(\rho^{b,j},u^{b,j})|_{t=0}=(0,0),~for~j=0,1,2.
\end{aligned}
\end{equation}

From Taylor expansion near $x=0$ and $x=1$ as follows:
$$\begin{aligned}
u^{I,0}(x,t)=&u^{I,0}(\sqrt{\epsilon}y,t)=\overline{u^{I,0}} + \sqrt{\epsilon} y\overline{u^{I,0}_x}  +  \frac{\epsilon y^2}{2} \overline{u^{I,0}_{xx}} +  \frac{\epsilon \sqrt{\epsilon} y^3}{6} \overline{u^{I,0}_{xxx}}+O(\epsilon^2),\\
u^{I,0}(x,t)=&u^{I,0}(\sqrt{\epsilon}z+1,t)=\widetilde{u^{I,0}} + \sqrt{\epsilon} z\widetilde{u^{I,0}_x}  +  \frac{\epsilon z^2}{2} \widetilde{u^{I,0}_{xx}} +  \frac{\epsilon \sqrt{\epsilon} z^3}{6} \widetilde{u^{I,0}_{xxx}}+O(\epsilon^2),\\
\rho^{I,0}(x,t)=&\rho^{I,0}(\sqrt{\epsilon}y,t)=\overline{\rho^{I,0}} + \sqrt{\epsilon} y\overline{\rho^{I,0}_x}  +  \frac{\epsilon y^2}{2} \overline{\rho^{I,0}_{xx}} +  \frac{\epsilon \sqrt{\epsilon} y^3}{6} \overline{\rho^{I,0}_{xxx}}+O(\epsilon^2),\\
\rho^{I,0}(x,t)=&\rho^{I,0}(\sqrt{\epsilon}z+1,t)=\widetilde{\rho^{I,0}} + \sqrt{\epsilon} z\widetilde{\rho^{I,0}_x}  +  \frac{\epsilon z^2}{2} \widetilde{\rho^{I,0}_{xx}} +  \frac{\epsilon \sqrt{\epsilon} z^3}{6} \widetilde{\rho^{I,0}_{xxx}}+O(\epsilon^2),\\
(\rho^{I,0})^s(x,t)=&(\rho^{I,0})^s(\sqrt{\epsilon}y,t)=\overline{(\rho^{I,0})^s} + \sqrt{\epsilon} ys(\overline{\rho^{I,0}})^{s-1}\overline{\rho^{I,0}_x}  +  \frac{\epsilon y^2}{2} s(\overline{\rho^{I,0}})^{s-1}\overline{\rho^{I,0}_{xx}} \\
&+  \frac{\epsilon y^2}{2} s(s-1)(\overline{\rho^{I,0}})^{s-2}(\overline{\rho^{I,0}_{x}} )^2+O(\epsilon\sqrt{\epsilon}),\\
(\rho^{I,0})^s(x,t)=&\rho^{I,0}(\sqrt{\epsilon}z+1,t)=\widetilde{(\rho^{I,0})^s} + \sqrt{\epsilon} zs(\widetilde{\rho^{I,0}})^{s-1}\widetilde{\rho^{I,0}_x}  +  \frac{\epsilon z^2}{2} s(\widetilde{\rho^{I,0}})^{s-1}\widetilde{\rho^{I,0}_{xx}} \\
&+  \frac{\epsilon z^2}{2} s(s-1)(\widetilde{\rho^{I,0}})^{s-2}(\widetilde{\rho^{I,0}_{x}})^2+O(\epsilon\sqrt{\epsilon}),\\
\end{aligned}
$$
we have
\begin{table}[H]
\centering
\small
\caption{$\rho^{\epsilon}$}
\label{tab:rho_estimates}
\begin{tabular}{lccc}
\toprule
 & \textbf{$\rho^{\epsilon}_t$} & \textbf{$\rho^\epsilon_x$} & \textbf{$\rho^\epsilon$} \\
\midrule
Order $\epsilon^{-\frac{1}{2}}$ & $-$ & $\rho^{B,0}_y$  & $-$ \\
\addlinespace[0.5em]  % 0.5em 行距
Order $\epsilon^{0}$ & $\overline{\rho^{I,0}_t}+\rho^{B,0}_t$, & $\overline{\rho^{I,0}_x}+\rho^{B,1}_y$,  & $\overline{\rho^{I,0}}+\rho^{B,0}$, \\
\addlinespace[0.5em]  % 0.5em 行距
Order $\epsilon^{\frac{1}{2}}$ & $\overline{\rho^{I,1}_t}+y\overline{\rho^{I,0}_{xt}}+\rho^{B,1}_t $,  & $\overline{\rho^{I,1}_x}+y\overline{\rho^{I,0}_{xx}}+\rho^{B,2}_y$,  & $\overline{\rho^{I,1}}+y\overline{\rho^{I,0}_{x}}+\rho^{B,1}$,  \\
\addlinespace[0.5em]  % 0.5em 行距
Order $\epsilon^{1}$ & {\parbox{3cm}{\centering $\overline{\rho^{I,2}_t}+y\overline{\rho^{I,1}_{xt}}+\frac{y^2}{2}\overline{\rho^{I,0}_{xxt}}+\rho^{B,1}_t, $}}
  & {\parbox{3cm}{\centering $\overline{\rho^{I,2}_x}+y\overline{\rho^{I,1}_{xx}}+\frac{y^2}{2}\overline{\rho^{I,0}_{xxx}}+\rho^{B,3}_{y}$,}}   & {\parbox{3cm}{\centering $\overline{\rho^{I,2}}+y\overline{\rho^{I,1}_{x}}+\frac{y^2}{2}\overline{\rho^{I,0}_{xx}}+\rho^{B,2}$.}}   \\
\bottomrule
\end{tabular}
\end{table}

\begin{table}[H]
\centering
\small
\caption{ $u^\epsilon$}
\label{tab:rho_estimates}
\begin{tabular}{lccc}
\toprule
 & \textbf{$u^{\epsilon}_t$} & \textbf{$u^\epsilon_x$} & \textbf{$u^\epsilon$} \\
\midrule
Order $\epsilon^{-\frac{1}{2}}$ & $-$ & $u^{B,0}_y$  & $-$ \\
\addlinespace[0.5em]  % 0.5em 行距
Order $\epsilon^{0}$ & $\overline{u^{I,0}_t}+u^{B,0}_t$, & $\overline{u^{I,0}_x}+u^{B,1}_y$,  & $\overline{u^{I,0}}+u^{B,0}$, \\
\addlinespace[0.5em]  % 0.5em 行距
Order $\epsilon^{\frac{1}{2}}$ & $\overline{u^{I,1}_t}+y\overline{u^{I,0}_{xt}}+u^{B,1}_t $,  & $\overline{u^{I,1}_x}+y\overline{u^{I,0}_{xx}}+u^{B,2}_y$,  & $\overline{u^{I,1}}+y\overline{u^{I,0}_{x}}+u^{B,1}$,  \\
\addlinespace[0.5em]  % 0.5em 行距
Order $\epsilon^{1}$ & {\parbox{3cm}{\centering $\overline{u^{I,2}_t}+y\overline{u^{I,1}_{xt}}+\frac{y^2}{2}\overline{u^{I,0}_{xxt}}+u^{B,1}_t, $}}
  & {\parbox{3cm}{\centering $\overline{u^{I,2}_x}+y\overline{u^{I,1}_{xx}}+\frac{y^2}{2}\overline{u^{I,0}_{xxx}}+u^{B,3}_{y}$,}}   & {\parbox{3cm}{\centering $\overline{u^{I,2}}+y\overline{u^{I,1}_{x}}+\frac{y^2}{2}\overline{u^{I,0}_{xx}}+u^{B,2}$.}}   \\
\bottomrule
\end{tabular}
\end{table}

\begin{table}[H]
\centering
\small
\caption{$\rho^{\epsilon}_{xxx}$, $u^\epsilon_{xx}$ and $\gamma (\rho^\epsilon)^{\gamma-2}$ }
\label{tab:uxx_estimates}
\begin{tabular}{lccc}
\toprule
 & \textbf{$\rho^{\epsilon}_{xxx}$} & \textbf{$u^\epsilon_{xx}$} &  $\gamma (\rho^\epsilon)^{\gamma-2}$\\
\midrule
Order $\epsilon^{-\frac{3}{2}}$ & $\rho^{B,0}_{yyy},$ & $-$  & $-$\\
\addlinespace[0.5em]  % 0.5em 行距
Order $\epsilon^{-1}$ & $\rho^{B,1}_{yyy},$ & $u^{B,0}_{yy},$  & $-$ \\
\addlinespace[0.5em]  % 0.5em 行距
Order $\epsilon^{-\frac{1}{2}}$ & $\rho^{B,2}_{yyy}$, & $u^{B,1}_{yy},$  &  $-$\\
\addlinespace[0.5em]  % 0.5em 行距
Order $\epsilon^{0}$ & $\overline{\rho^{I,0}_{xxx}}+\rho^{B,3}_{yyy}$, & $\overline{u^{I,0}_{xx}}+u^{B,2}_{yy}$, & {\parbox{3.5cm}{\centering $\gamma (\overline{\rho^{I,0}}+\rho^{B,0})^{\gamma-2} $,}}   \\
\addlinespace[0.5em]  % 0.5em 行距
Order $\epsilon^{\frac{1}{2}}$ & $...... $,  & $\overline{u^{I,1}_{xx}}+y\overline{u^{I,0}_{xxx}}+u^{B,3}_{yy}$,  & $.......$ \\
\bottomrule
\end{tabular}
\end{table}
\noindent where the expansion near $x=1$ is similar.

Order $\epsilon^{-1}$ of (\ref{zhankaiNSK-1}):
$$
\begin{aligned}
u^{B,0}_{yy} + u^{b,0}_{zz}=0,
\end{aligned}
$$
i.e.
\begin{equation}\label{u-B0}
u^{B,0}=0,~u^{b,0}=0.
\end{equation}

Order $\epsilon^{0}$ of (\ref{zhankaiNSK}):
Taking $z\rightarrow -\infty$ and $y\rightarrow +\infty$ respectively, we get
\begin{equation}\label{rho-B0}
\begin{aligned}
&\rho^{B,0}_t +\rho^{B,0} \overline{u^{I,0}_x}+\rho_y^{B,0}(\overline{u^{I,1}}+y\overline{u^{I,0}_x}+u^{B,1} )+(\overline{\rho^{I,0} }+\rho^{B,0} )u^{B,1}_y =0,
\end{aligned}
\end{equation}
\begin{equation}\label{rho-b0}
\begin{aligned}
&\rho^{b,0}_t+\rho^{b,0}\widetilde{u^{I,0}_x}
+\rho_z^{b,0}(\widetilde{u^{I,1} }+y\widetilde{u^{I,0}_x }+u^{b,1})+(\widetilde{\rho^{I,0}} +\rho^{b,0})u^{b,1}_z =0.
\end{aligned}
\end{equation}
From $\rho^\epsilon_x|_{x=0,1}= 0$ and (\ref{Plantl})$_1$, it holds that
\begin{equation}\label{rho-I0-bou}
\rho^{B,0}_y|_{y=0}= 0,~\rho^{b,0}_z|_{z=0}= 0.
\end{equation}

Order $\epsilon^{-\frac{1}{2}}$ of (\ref{zhankaiNSK-1}):
Taking $z\rightarrow -\infty$ and $y\rightarrow +\infty$ respectively, we get
\begin{equation}\label{u-B1-yy}
\begin{aligned}
&u_{yy}^{B,1}\!+(\overline{\rho^{I,0}}+ \rho^{B,0})\rho_{yyy}^{B,0}-\gamma (\overline{\rho^{I,0}}+\rho^{B,0})^{\gamma-1} \rho^{B,0}_y =0,
\end{aligned}
\end{equation}
\begin{equation}\label{u-b1-zz}
\begin{aligned}
&u_{zz}^{b,1}+(\widetilde{\rho^{I,0}}+\rho^{b,0})\rho_{zzz}^{b,0}-\gamma (\widetilde{\rho^{I,0}}+\rho^{b,0})^{\gamma-1}\rho^{b,0}_z=0.
\end{aligned}
\end{equation}

It is easy to see that system (\ref{ini-B-I})$_3$ (\ref{rho-B0}), (\ref{rho-I0-bou})$_1$ and (\ref{u-B1-yy}) has a unique soultion
\begin{equation}\label{sol-rho-B0-u-B1}
(\rho^{B,0},u^{B,1})=(0,0).
\end{equation}
% and the system (\ref{ini-B-I})$_4$, (\ref{rho-b0}), (\ref{rho-I0-bou})$_2$ and  (\ref{u-b1-zz}) has a unique solution

Similarly, we have
\begin{equation}\label{sol-rho-b0-u-b1}
(\rho^{b,0},u^{b,1})=(0,0).
\end{equation}

System of $(\rho^{I,1}, u^{I,1})$:
\begin{equation}\label{I-1}
\left\{
\begin{aligned}
&\rho^{I,1}_t +(\rho^{I,0}u^{I,1}+\rho^{I,1}u^{I,0})_x=0,\\
&\rho^{I,0} u_t^{I,1}+\rho^{I,1}u_t^{I,0}+\rho^{I,1} u^{I,0}u_x^{I,0}+\rho^{I,0} u^{I,1}u_x^{I,0}+\rho^{I,0} u^{I,0}u_x^{I,1}\\
&+\gamma (\rho^{I,0})^{\gamma-1}\rho^{I,1}_x+\gamma(\gamma-1) (\rho^{I,0})^{\gamma-2}\rho^{I,1}\rho^{I,0}_x=u^{I,1}_{xx},\\
& (\rho^{I,1},u^{I,1})|_{t=0}=(0,0),\\
& u^{I,1}|_{x=0,1}=0,
\end{aligned}
\right.
\end{equation}
where we get the boundary condition from (\ref{equ-epsilon})$_4$, (\ref{sol-rho-B0-u-B1}) and (\ref{sol-rho-b0-u-b1}).

Obviously, $(\rho^{I,1}, u^{I,1})=(0,0)$ is the solution of system (\ref{I-1}). Morevoer, estimate directly, the soultion is unique.

Order $\epsilon^{\frac{1}{2}}$ of (\ref{zhankaiNSK}) and order $\epsilon^{0}$ of (\ref{zhankaiNSK-1}):
Taking $z\rightarrow -\infty$ and $y\rightarrow +\infty$ respectively, we get
$$\left\{
\begin{aligned}
&\rho^{B,1}_t+y\overline{u^{I,0}_x}\rho^{B,1}_y+\overline{\rho^{I,0}}u^{B,2}_y+\rho^{B,1}\overline{u^{I,0}_x}=0,\\
&\gamma (\overline{\rho^{I,0}})^{\gamma-1} \rho^{B,1}_y
=u^{B,2}_{yy}+\overline{\rho^{I,0}}\rho^{B,1}_{yyy} ,
\end{aligned}
\right.
$$
$$\left\{
\begin{aligned}
&\rho^{b,1}_t+z\widetilde{u^{I,0}_x}\rho^{b,1}_z+\widetilde{\rho^{I,0}}u^{b,2}_z+\rho^{b,1}\widetilde{u^{I,0}_x}=0,\\
&\gamma (\widetilde{\rho^{I,0}})^{\gamma-1}\rho^{b,1}_z
=u^{b,2}_{zz} +\widetilde{\rho^{I,0}}\rho^{b,1}_{zzz} .
\end{aligned}
\right.
$$
Estimate them directly, we have
\begin{equation}\label{rhoB1-uB2}
\left\{
\begin{aligned}
&\rho^{B,1}_t+y\overline{u^{I,0}_x}\rho^{B,1}_y+[\gamma (\overline{\rho^{I,0}})^{\gamma}+\overline{u^{I,0}_x}]\rho^{B,1}=(\overline{\rho^{I,0}})^2\rho^{B,1}_{yy},\\
&u^{B,2}=-\overline{\rho^{I,0}}\rho^{B,1}_{y}-\int_y^{+\infty}\gamma (\overline{\rho^{I,0}})^{\gamma-1} \rho^{B,1}(\xi,t)\mathrm{d}\xi ,
\end{aligned}
\right.
\end{equation}
\begin{equation}\label{rhob1-ub2}
\left\{
\begin{aligned}
&\rho^{b,1}_t+z\widetilde{u^{I,0}_x}\rho^{b,1}_z+[\gamma (\widetilde{\rho^{I,0}})^{\gamma}+\widetilde{u^{I,0}_x}]\rho^{b,1}=(\widetilde{\rho^{I,0}})^2\rho^{b,1}_{zz},\\
&u^{b,2}
= -\widetilde{\rho^{I,0}}\rho^{b,1}_{z}+\int_{-\infty}^z\gamma (\widetilde{\rho^{I,0}})^{\gamma-1}\rho^{b,1}(\zeta,t)\mathrm{d}\zeta .
\end{aligned}
\right.
\end{equation}
From $\rho^\epsilon_x|_{x=0,1}= 0$ and (\ref{Plantl})$_1$, it holds that
\begin{equation}\label{rho-B1-y-bou}
\rho^{B,1}_y|_{y=0}= -\overline{\rho^{I,0}_x},~\rho^{b,1}_z|_{z=0}= -\widetilde{\rho^{I,0}_x}.
\end{equation}

Order $\epsilon$ of (\ref{zhankaiNSK}) and order $\epsilon^{\frac{1}{2}}$ of (\ref{zhankaiNSK-1}):
Taking $z\rightarrow -\infty$ and $y\rightarrow +\infty$ respectively, using
$$\begin{aligned}
\rho^{I,2}_t + \rho^{I,2}_x u^{I,0} +  \rho^{I,2} u^{I,0}_x+ \rho^{I,0}_x u^{I,2} +  \rho^{I,0} u^{I,2}_x=0,
\end{aligned}
$$
 it holds that
$$
\left\{
\begin{aligned}
&\rho^{B,2}_t+\overline{\rho^{I,0}_x}u^{B,2}+\overline{\rho^{I,0}}u^{B,3}_y +\rho^{B,1}u^{B,2}_y+ \rho^{B,1}_y(\overline{u^{I,2}}+u^{B,2})+\rho^{B,2}\overline{u^{I,0}_x}+y\overline{u^{I,0}_{xx}}\rho^{B,1}\\
&+y\overline{\rho^{I,0}_x}u^{B,2}_y+y\overline{u^{I,0}_{x}}\rho^{B,2}_y+\frac{y^2}{2}\overline{u^{I,0}_{xx}}\rho^{B,1}_y=0,\\
&\gamma (\overline{\rho^{I,0}})^{\gamma-1} \rho^{B,2}_y+\gamma (\gamma-1) (\overline{\rho^{I,0}})^{\gamma-2} \rho^{B,1} (\overline{\rho^{I,0}_x}+\rho^{B,1}_y)+\gamma (\gamma-1)y (\overline{\rho^{I,0}})^{\gamma-2} \overline{\rho^{I,0}_x}\rho^{B,1}_y \\
&= u_{yy}^{B,3}+\overline{ \rho^{I,0} }\rho^{B,2}_{yyy}+ \rho^{B,1} \rho^{B,1}_{yyy}+y\overline{\rho^{I,0}_x}\rho^{B,1}_{yyy},
\end{aligned}\right.
$$
$$
\left\{
\begin{aligned}
&\rho^{b,2}_t+\widetilde{\rho^{I,0}_x}u^{b,2}+\widetilde{\rho^{I,0}}u^{b,3}_z+\rho^{b,1}u^{b,2}_z+\rho^{b,1}_z(\widetilde{u^{I,2}}+u^{b,2})+\rho^{b,2}\widetilde{u^{I,0}_x}+z\widetilde{u^{I,0}_{xx}}\rho^{b,1}\\
&+z\widetilde{\rho^{I,0}_x}u^{b,2}_z+z\widetilde{u^{I,0}_{x}}\rho^{b,2}_z+\frac{z^2}{2}\widetilde{u^{I,0}_{xx}}\rho^{b,1}_z=0,\\
&\gamma (\widetilde{\rho^{I,0}})^{\gamma-1} \rho^{b,2}_z+\gamma (\gamma-1) (\widetilde{\rho^{I,0}})^{\gamma-2} \rho^{b,1} (\widetilde{\rho^{I,0}_x}+\rho^{b,1}_z)+\gamma (\gamma-1) z(\widetilde{\rho^{I,0}})^{\gamma-2} \widetilde{\rho^{I,0}_x}\rho^{b,1}_z \\
&=u_{zz}^{b,3}+ \widetilde{\rho^{I,0}} \rho^{b,2}_{zzz}+ \rho^{b,1}\rho^{b,1}_{zzz}+z\widetilde{\rho^{I,0}_x} \rho^{b,1}_{zzz}.
\end{aligned}\right.
$$
Thus, we get
\begin{equation}\label{rhoB2-uB3}
\left\{
\begin{aligned}
&\rho^{B,2}_t+\overline{\rho^{I,0}_x}u^{B,2} +\rho^{B,1}u^{B,2}_y+ \rho^{B,1}_y(\overline{u^{I,2}}+u^{B,2})+\rho^{B,2}\overline{u^{I,0}_x}+y\overline{u^{I,0}_{xx}}\rho^{B,1}\\
&+y\overline{\rho^{I,0}_x}u^{B,2}_y+y\overline{u^{I,0}_{x}}\rho^{B,2}_y+\frac{y^2}{2}\overline{u^{I,0}_{xx}}\rho^{B,1}_y \\
&=-\gamma (\overline{\rho^{I,0}})^{\gamma} \rho^{B,2}-\frac{1}{2}\gamma (\gamma-1) (\overline{\rho^{I,0}})^{\gamma-1} (\rho^{B,1})^2 -\gamma (\gamma-1)y (\overline{\rho^{I,0}})^{\gamma-1} \overline{\rho^{I,0}_x}\rho^{B,1}\\
&+(\overline{ \rho^{I,0} })^2\rho^{B,2}_{yy}+\overline{ \rho^{I,0} }\rho^{B,1} \rho^{B,1}_{yy}-\frac{1}{2}\overline{ \rho^{I,0} }(\rho^{B,1}_y )^2+y\overline{ \rho^{I,0} }\overline{\rho^{I,0}_x}\rho^{B,1}_{yy}-\overline{ \rho^{I,0} }\overline{\rho^{I,0}_x}\rho^{B,1}_{y},\\
&u^{B,3}
=-\gamma (\overline{\rho^{I,0}})^{\gamma-1} Q_2-\frac{1}{2}\gamma (\gamma-1) (\overline{\rho^{I,0}})^{\gamma-2} \int_y^{+\infty} (\rho^{B,1})^2(\xi,t) \mathrm{d}\xi\\
& -\gamma (\gamma-1) (\overline{\rho^{I,0}})^{\gamma-2} \overline{\rho^{I,0}_x}(yQ_{1,1}+Q_{1,2})-\overline{ \rho^{I,0} }\rho^{B,2}_{y}-\rho^{B,1} \rho^{B,1}_{y}\\
&-\frac{3}{2}\int_y^{+\infty} (\rho^{B,1}_\xi )^2(\xi,t)\mathrm{d}\xi-y\overline{\rho^{I,0}_x}\rho^{B,1}_{y}+2\overline{\rho^{I,0}_x}\rho^{B,1},\\
\end{aligned}\right.
\end{equation}
$$
\left\{
\begin{aligned}
&\rho^{b,2}_t+\widetilde{\rho^{I,0}_x}u^{b,2}+\rho^{b,1}u^{b,2}_z+\rho^{b,1}_z(\widetilde{u^{I,2}}+u^{b,2})+\rho^{b,2}\widetilde{u^{I,0}_x}+z\widetilde{u^{I,0}_{xx}}\rho^{b,1}\\
&+z\widetilde{\rho^{I,0}_x}u^{b,2}_z+z\widetilde{u^{I,0}_{x}}\rho^{b,2}_z+\frac{z^2}{2}\widetilde{u^{I,0}_{xx}}\rho^{b,1}_z\\%=-\widetilde{\rho^{I,0}}u^{b,3}_z\\
&=-\gamma (\widetilde{\rho^{I,0}})^{\gamma} \rho^{b,2}-\frac{1}{2}\gamma (\gamma-1) (\widetilde{\rho^{I,0}})^{\gamma-1} (\rho^{b,1})^2 -\gamma (\gamma-1)z (\widetilde{\rho^{I,0}})^{\gamma-1} \widetilde{\rho^{I,0}_x}\rho^{b,1}\\
&+(\widetilde{ \rho^{I,0} })^2\rho^{b,2}_{zz}+\widetilde{ \rho^{I,0} }\rho^{b,1} \rho^{b,1}_{zz}-\frac{1}{2}\widetilde{ \rho^{I,0} }(\rho^{b,1}_z )^2+z\widetilde{ \rho^{I,0} }\widetilde{\rho^{I,0}_x}\rho^{b,1}_{zz}-\widetilde{ \rho^{I,0} }\widetilde{\rho^{I,0}_x}\rho^{b,1}_{z},\\
&u^{b,3}
=\gamma (\widetilde{\rho^{I,0}})^{\gamma-1} q_2+\frac{1}{2}\gamma (\gamma-1) (\widetilde{\rho^{I,0}})^{\gamma-2} \int_{-\infty}^{z} (\rho^{b,1})^2(\zeta,t) \mathrm{d}\zeta\\
& +\gamma (\gamma-1) (\widetilde{\rho^{I,0}})^{\gamma-2} \widetilde{\rho^{I,0}_x}(zq_{1,1}-q_{1,2})-\widetilde{ \rho^{I,0} }\rho^{b,2}_{z}-\rho^{b,1} \rho^{b,1}_{z}\\
&+\frac{3}{2}\int_{-\infty}^{z} (\rho^{b,1}_\zeta )^2(\zeta,t)\mathrm{d}\zeta-z\widetilde{\rho^{I,0}_x}\rho^{b,1}_{z}+2\widetilde{\rho^{I,0}_x}\rho^{b,1},\\
\end{aligned}\right.
$$
where we define
$$Q_{1,1}(y,t)=\int_y^{+\infty}\rho^{B,1}(\xi,t) \mathrm{d}\xi ,~Q_{1,2}(y,t)=\int_y^{+\infty} \int_s^{+\infty} \rho^{B,1}(\xi,t) \mathrm{d}\xi \mathrm{d}s,~
Q_2=\int_y^{+\infty}\rho^{B,2}(\xi,t) \mathrm{d}\xi ,
$$
$$
q_{1,1}(y,t)=\int_{-\infty}^z \rho^{b,1}(\zeta,t) \mathrm{d}\zeta , ~q_{1,2}(y,t)=\int_{-\infty}^z  \int_{-\infty}^s \rho^{b,1}(\zeta,t) \mathrm{d}\zeta\mathrm{d}s,~q_2= \int_{-\infty}^z \rho^{b,2}(\zeta,t)\mathrm{d}\zeta.
$$

From $\rho^\epsilon_x|_{x=0,1}= 0$ and (\ref{Plantl})$_1$, it holds that
\begin{equation}\label{rho-B2-bou}
\rho^{B,2}_y|_{y=0}= -\overline{\rho^{I,1}_x}=0,~\rho^{b,2}_z|_{z=0}= -\widetilde{\rho^{I,1}_x}=0.
\end{equation}

From the boundary condition (\ref{equ-epsilon})$_3$ and  (\ref{equ-NS})$_3$, there holds
\begin{equation}\label{uI2}
\begin{aligned}
\overline{u^{I,2}}=& u^{I,2}(0,t)=-u^{B,2}(0,t)
=\overline{\rho^{I,0}}\overline{\rho^{I,0}_x}+\int_0^{+\infty}\gamma (\overline{\rho^{I,0}})^{\gamma-1} \rho^{B,1}(\xi,t)\mathrm{d}\xi,\\
\end{aligned}
\end{equation}
$$\begin{aligned}
\widetilde{u^{I,2}}=& u^{I,2}(1,t)=-u^{b,2}(1,t)
= \widetilde{\rho^{I,0}}\widetilde{\rho^{I,0}_x}-\int_{-\infty}^0\gamma (\widetilde{\rho^{I,0}})^{\gamma-1}\rho^{b,1}(\zeta,t)\mathrm{d}\zeta .
\end{aligned}
$$

\section*{Acknowledgments}
This work is supported by the National Natural Science Foundation of China $\#12471209$ and \#12101140, and by the Guangzhou Basic and Applied Research Projects $\#SL2024A04J01206$.

\end{document}